\numberwithin{equation}{section}
\newtheorem{de}{Definition}[section]
\newtheorem{thm}[de]{Theorem}
\newtheorem{cor}[de]{Corollary}
\newtheorem{prop}[de]{Proposition}
\newtheorem{lem}[de]{Lemma}
\newtheorem{rem}[de]{Remark}
\newcommand\eps{\varepsilon}
\newcommand\E{\mathcal{E}}
\newcommand\F{\mathcal{F}}
\newcommand\V{\mathcal{V}}
\newcommand\G{\mathcal{G}}
\newcommand\calK{\mathcal{K}}
\newcommand\M{\mathcal{M}}
\newcommand\He{\mathcal{H}}
\newcommand\trho{\tilde{\rho}}
\newcommand\tilm{{\widetilde{\mu}}}
\renewcommand{\textbf}[1]{\begingroup\bfseries\mathversion{bold}#1\endgroup}
\DeclareMathOperator*{\dive}{div}
\DeclareMathOperator*{\argmin}{argmin}
\DeclareMathOperator*{\Rn}{\mathbb{R}^\textit{n}}
\DeclareMathOperator*{\R}{\mathbb{R}}
\DeclareMathOperator*{\Pa}{\mathcal{P}}
\DeclareMathOperator*{\Paa}{\mathcal{P}^{ac}}
\DeclareMathOperator*{\Paad}{\mathcal{P}^{ac}_2}
\DeclareMathOperator*{\Le}{\mathcal{L}}
\title{On cross-diffusion systems for two populations subject to a common congestion effect}
\date{}
\author {Maxime Laborde \thanks{\scriptsize Department of Mathematics and Statistics, McGill University, Montreal, CANADA
(\texttt{maxime.laborde@mcgill.ca})}}
\begin{document}

\maketitle

\begin{abstract}
In this paper, we investigate the existence of solution for systems of Fokker-Planck equations coupled through a common nonlinear congestion. Two different kinds of congestion are considered: a porous media congestion or {\it soft} congestion and the {\it hard} congestion given by the constraint $\rho_1+\rho_2 \leqslant 1$. We show that these systems can be seen as gradient flows in a Wasserstein product space and then we obtain a constructive method to prove the existence of solutions. Therefore it is natural to apply it for numerical purposes and some numerical simulations are included.
\end{abstract}

\textbf{Keywords:}  Wasserstein gradient flows, Jordan-Kinderlehrer-Otto scheme, crowd motion, nonlinear cross-diffusion systems.


\textbf{MS Classification:} 35K40, 49J40, 49J45.

\section{Introduction}

The modelling of crowd behaviour has become a very active field of applied mathematics in recent years. These models permit to understand many phenomena such as cell migration, tumor growth, etc. Several models already exist to tackle this problem. The first one, microscopic, consists in seeing a population as a high number of individuals which satisfy ODEs, see for instance \cite{MV} and the second is macroscopic and consists in describing a population by a density $\rho$ satisfying a PDE, where $\rho(t,x)$ represents the density of individuals in $x$ at time $t$. In the latter framework, different methods to handle the congestion effect have been proposed. The first one consists in saying that the motion has to be slower when the density is very high, see for example \cite{crippalm,clm,clm2} for a  different approach with applications to crowd dynamics. Another way of modelling the congestion effect is to use a threshold: the density evolves as we would expect until it touches a maximal level and then the motion has to be adapted in these regions (to not increase the density there), see for example \cite{MRCS} for crowd motion model and \cite{MRCS1} for application to dendritic growth. For a comparison between microscopic and macroscopic models, we refer to \cite{MRCSV}. In \cite{MS}, M{\'e}sz{\'a}ros and Santambrogio proposed a model for hard congestion where individuals are subject to a Brownian diffusion. This corresponds to modified a Fokker-Planck equation with an $L^\infty$ constraint on the density.\\

 Since in macroscopic models, we have mass conservation, the theory of optimal transportation is a very natural tool to attack them. In \cite{MRCS}, the authors investigated a model of room evacuation. They showed that if the desired velocity field of the individuals is given by a gradient, say $V=\nabla D$, where $D$ is the distance to a given target, then the problem has a gradient flow structure in the Wasserstein space and the velocity field has to be adapted by a pressure field to handle congestion effect. More recently in \cite{MS}, a splitting scheme has been introduced to handle velocity fields which are not necessarily gradient field. The scheme consists in combining steps where the density follows the unconstrained Fokker-Planck equation and Wasserstein projections onto the set of densities which cannot exceed $1$.\\ 

A natural variant of the model of \cite{MS}, consists in considering two (or more) populations, each of whom is subject to an advection term coming from different potential gradients but coupled through the constraint that the total density cannot exceed a given threshold, say $1$, and then subject to a common pressure field.  Note that variant problems with  total density {\em equal} to $1$ are treated in \cite{DMMRC,BE,guittet,CGM} and for more general cross-diffusion systems, we refer, for instance, to \cite{LPR,DLMT,J,JZ,KMVcrossdiff}. For a linear diffusion (corresponding to a Brownian noise on each species), the two-species crowd dynamic is expressed by the PDEs 

\begin{equation}
\label{crossfiff-system crowd}
\left\{\begin{array}{lll}
\partial_t \rho_1 -\Delta \rho_1- \dive(\rho_1 (\nabla V_1+\nabla p))=0,\\
\partial_t \rho_2 -\Delta \rho_2- \dive(\rho_2 (\nabla V_2+\nabla p))=0,\\
p\geqslant 0, \rho_1+\rho_2 \leqslant 1, \; p(1-\rho_1-\rho_2)=0,\\
\rho_1(0, \cdot) = \rho_{1,0}, \, \rho_2(0, \cdot) = \rho_{2,0}
\end{array}\right.
\end{equation}
on $\Omega$ a convex compact subset of $\R^n$ with smooth boundary such that
\begin{eqnarray}
\label{ass:size subset}
|\Omega | >2.
\end{eqnarray}
The assumption \eqref{ass:size subset} is made to ensure that the subset 
$$\mathcal{K} := \left\{ (\rho_1, \rho_2) \in \Paa (\Omega)^2 \, : \, \rho_1 +\rho_2 \leqslant 1 \, a.e.\right\}$$
 is neither empty nor trivial. We put no-flux boundary conditions to preserve the mass in $\Omega$, 
$$ (\nabla \rho_1 + \rho_1 (\nabla V_1+\nabla p) ) \cdot \nu =0 \text{ and } (\nabla \rho_2 + \rho_2 (\nabla V_2+\nabla p) )\cdot \nu =0 \text{ a.e. on } {\R}^+ \times \partial \Omega,
$$
where $\nu$ is the outward unit normal to $\partial \Omega$.\\

In this paper, we show that this system is the gradient flow for the Wasserstein product distance of the energy
\[\E_\infty(\rho_1, \rho_2):=\left\{\begin{array}{ll}
\sum_{i=1}^2\int_{\Omega} (\rho_i \log(\rho_i)+ V_i \rho_i)+\int_{\Omega} \chi_{[0,1]} (\rho_1(x)+\rho_2(x))\mbox{d}x &\text{ if } \rho_i \log(\rho_i) \in L^1(\Omega),\\
+\infty, &\text{ otherwise, }
\end{array}\right.\]
where $\chi_{[0,1]}$ is the indicator function of $[0,1]$,
$$ \chi_{[0,1]}(z):=\left\{\begin{array}{ll}
0 & \text{ if } z \in [0,1],\\
+\infty &\text{ otherwise. }
\end{array}\right.$$
In addition, for a different energy of the form
\begin{multline*}
\E_m(\rho_1, \rho_2):=\\
\left\{ \begin{array}{ll}
 \sum_{i=1}^2 \int_{\Omega} (\rho_i\log(\rho_i)+V_i \rho_i )+\int_\Omega \frac{1}{m-1}(\rho_1(x)+\rho_2(x))^{m} \,dx &\text{ if } \rho_i \log(\rho_i),  (\rho_1+\rho_2)^{m}\in L^1(\Omega),\\
 +\infty & \text{ otherwise, }
\end{array}\right.
\end{multline*}
for $m > 1$,
the gradient flow of $\E_m$ leads to the following nonlinear system
\begin{equation}
\label{systdiffcoup}
\left\{ \begin{array}{ll}
\partial_t \rho_1= \Delta \rho_1 + \dive\left(\rho_1 \nabla\left( V_1 +\frac{m}{m-1}(\rho_1+\rho_2)^{m-1}\right)\right)\\
\partial_t \rho_2= \Delta \rho_2 + \dive\left(\rho_2 \nabla\left( V_2 +\frac{m}{m-1}(\rho_1+\rho_2)^{m-1}\right)\right)\\
\rho_1(0, \cdot) = \rho_{1,0}, \, \rho_2(0, \cdot) = \rho_{2,0}
\end{array}\right.
\end{equation}
with no flux boundary conditions.
Then for a given small time step $h>0$, the JKO scheme for this energy reads, 
\begin{equation}\label{jkosyste}
 (\rho_1^{k+1}, \rho_2^{k+1})=\argmin_{(\rho_1,\rho_2)} \Big\{\sum_{i=1}^2 \frac{1}{2h} W_2^2(\rho_i, \rho_i^k)+\E_m(\rho_1, \rho_2) \Big\}
 \end{equation}
which, in the particular case of the linear diffusion crowd motion problem with two species, takes the form

\[ (\rho_1^{k+1}, \rho_2^{k+1})=\argmin_{\ \rho_1+\rho_2 \leqslant 1} \Big\{\sum_{i=1}^2 \left(\frac{1}{2h} W_2^2(\rho_i, \rho_i^k)+\int_{\Omega} (\rho_i \log(\rho_i)+ V_i \rho_i) \right) \Big\}.\]
We want to mention that the results in this paper have been obtained in the authors's PhD thesis, \cite{L_these}, back in 2016. Note that recently, in \cite{KM}, Kim and M{\'e}sz{\'a}ros studied problems \eqref{systdiffcoup} and \eqref{crossfiff-system crowd} without individual diffusions. They prove existence of weak solution in dimension $1$ for segregated initial conditions and ordered drifts. In any dimension, they prove existence of very weak solutions. The difficulty is to handle the cross diffusive term which needs to have strong compactness in $\rho_1,\rho_2$ and $\rho_1 +\rho_2$. Here, this difficulty is overcome by assuming that individuals of each populations are subject to a Brownian diffusion. This allows us to obtain separated estimates on $\rho_i$ and $\rho_1+\rho_2$. In \cite{LM}, Laurençot and Matioc give a similar result in $\R$ and $m=2$. In this paper, we extend this result on $\Omega \subset \R^n$ and with $m\in [1,+\infty]$. Furthermore, taking advantage of the gradient flow structure, we give numerical simulations implemented by the augmented Lagrangian scheme introduced in \cite{BCL}. We want to point out that uniqueness of systems \eqref{crossfiff-system crowd} and \eqref{systdiffcoup} is still an open question due to the lack of geodesic convexity of the common energy and we do not adress this problem in this paper. We refer to \cite{KM} for further discussions on this subject.
\\

 This paper is organized as follows. In section \ref{crossfiff-section 2: preliminaries and main results}, we introduce our assumptions and we state our main results. In section \ref{crossfiff-section 3: proof entropy}, we prove the existence of a weak solution for system \eqref{systdiffcoup}. The key ingredient is the flow interchange argument (see \cite{MMCS,DFM,L} for example) which gives separated estimates on the gradient of $\rho_1+\rho_2$ and on the gradient of $\rho_i$. Section \ref{crossfiff-section 4: proof crowd} provides the proof of existence of a weak solution for system \eqref{crossfiff-system crowd}. In this section we use again the flow interchange argument to obtain stronger estimates. In section \ref{crossfiff-section 5: link and uniqueness}, we focus on the particular case where $\nabla V_1= \nabla V_2$. In this case, we are able to show the convergence when $m \rightarrow +\infty$ of a solution to \eqref{systdiffcoup} to a solution to \eqref{crossfiff-system crowd} and we prove a $L^1$-contraction theorem. In the final section \ref{crossfiff-secction 6: simulations}, numerical simulations are presented. 

\section{Preliminaries and main results}
\label{crossfiff-section 2: preliminaries and main results}
 Throughout the paper, $\Omega$ is a smooth convex bounded subset of $\Rn$. We start to recall some results from the optimal tranportation theory and then we will state our main results. 
\subsection{Wasserstein space}
For a detailed exposition, we refer to reference textbooks \cite{V1,V2,AGS,S}. We denote $\mathcal{M}^+(\Omega)$ the set of nonnegative finite Radon measures on $\Omega$, $\Pa(\Omega)$ the space of probability measures on $\Omega$, and $\Paa(\Omega)$, the subset of $\Pa(\Omega)$ of probability measures on $\Omega$ absolutely continuous with respect to the Lebesgue measure. \\
For all $\rho,\mu \in \Pa(\Omega)$, we denote $\Pi(\rho,\mu)$, the set of probability measures on $\Omega \times \Omega$ having $\rho$ and $\mu$ as first and second marginals, respectively, and an element of $\Pi(\rho,\mu)$ is called a transport plan between $\rho$ and $\mu$. Then for all $\rho,\mu \in \Pa(\Omega)$, we denote by $W_2(\rho,\mu)$ the Wasserstein distance between $\rho$ and $\mu$, defined as
$$ W_2^2(\rho,\mu)= \min\left\{ \iint_{\Omega \times \Omega} |x-y|^2 \,d\gamma(x,y) \, : \, \gamma \in \Pi(\rho,\mu)\right\}.$$
Since this optimal transportation problem is a linear problem under linear constraints, it admits a dual formulation given by
$$ W_2^2(\rho,\mu)=\sup\left\{ \int_\Omega \varphi(x) \,d\rho(x) +\int_\Omega \psi(y) \, d\mu(y) \, : \varphi, \psi \in \mathcal{C}(\Omega) \text{ s.t. }  \varphi(x) +\psi(y) \leqslant |x-y|^2 \right\}.$$
Optimal solutions to the dual problem are called Kantorovich potentials between $\rho$ and $\mu$. If $\rho \in \Paa(\Omega)$, a well-known result proved by Brenier, \cite{B}, states that the optimal transport plan, $\gamma$, is unique and is induced by an optimal transport map, $T$, i.e  $\gamma$ is of the form  $(Id \times T)_{\#} \rho$, where $T_{\#}\rho =\mu$ and $T$ is the gradient of a convex function. Moreover, the optimal transport map is given by $T=Id -\nabla \varphi$ where $(\varphi , \psi)$ is a pair of Kantorovich potentials between $\rho$ and $\mu$.\\
It is well known that $\Pa(\Omega)$ endowed with the Wasserstein distance defines a metric space and since $\Omega$ is compact, $W_2$ metrizes the narrow convergence of probability measures.

\subsection{Assumptions and main results }

 For $i \in \{1,2\}$, we define $\V_i \, : \, \Pa(\Omega) \rightarrow \R$ the potential energy associated to $V_i \in W^{1,\infty}(\Omega)$ as
$$ \V_i(\rho) := \int_\Omega V_i(x) \, d\rho(x).$$  
We introduce the Entropy $\He$ defined, for all probabilty measures $\rho$, as
$$ \He(\rho) := \left\{ \begin{array}{ll}
\int_{\Omega} H(\rho(x) ) \, dx & \text{ if } \rho \ll \Le_{|\Omega},\\
+\infty & \text{ otherwise, } 
\end{array} \right., \qquad H(z):=z\log(z) \text{ for all }z\in \mathbb{R}^+.$$
Finally, for $m \in [1,+\infty)$, we define $\F_{m} \, : \, \M^+(\Omega) \rightarrow \R \cup \{ +\infty \}$ as
\begin{eqnarray}
\label{part3-definefm}
\F_m(\rho) := \left\{ \begin{array}{ll}
\int_{\Omega} F_{m}(\rho(x) ) \, dx & \text{ if } \rho \ll \Le_{|\Omega},\\
+\infty & \text{ otherwise, } 
\end{array} \right., \qquad
 F_{m}(z):= \left\{ \begin{array}{ll}
z \log z & \text{ if } m=1, \\
\frac{z^{m}}{m-1} & \text{ if } m >1.
\end{array}\right. \text{for all }z\in \mathbb{R}^+,
\end{eqnarray}
and, for $m=+\infty$, $\F_\infty \, : \, \M^+(\Omega) \rightarrow \R \cup \{ +\infty \}$ is defined by
$$ \F_\infty(\rho) := \left\{ \begin{array}{ll}
0 & \text{ if } \| \rho \|_\infty \leqslant 1 ,\\
+\infty & \text{ otherwise. } 
\end{array} \right.$$
\begin{de}[Weak solution]~
\label{weak solution}
\begin{itemize}
\item
We say that $(\rho_1,\rho_2) \, : \, [0, +\infty) \rightarrow \Paa(\Omega)^2$ is a weak solution to \eqref{systdiffcoup} if for all $i \in \{1,2\}$ and for all $T<+\infty$, $\rho_i \in \mathcal{C}^{0,1/2}([0,T],\Paa(\Omega)) \cap L^{2-1/m}((0,T),W^{1,2-1/m}(\Omega))\cap L^{2m-1}((0,T) \times \Omega) $, $\rho_i \nabla F_m'(\rho_1 + \rho_2) \in L^{2-1/m}((0,T)\times \Omega)$ and for all $\phi \in \mathcal{C}_c^\infty([0,+\infty) \times \Rn)$,

\begin{eqnarray*}
\begin{split}
 \int_0^{+\infty}\int_\Omega \left[ \rho_i \partial_t \phi - (\rho_i \nabla V_i +\rho_i \nabla F_m'(\rho_1 + \rho_2)+ \nabla \rho_i)\cdot \nabla \phi \right]  \,dx dt = -\int_\Omega \phi(0,x) \rho_{i,0}(x)\,dx.
 \end{split}
\end{eqnarray*}

\item We say that $(\rho_1,\rho_2,p) \, : \, [0, +\infty) \rightarrow \Paa(\Omega)^2 \times H^1(\Omega)$ is a weak solution to \eqref{crossfiff-system crowd} if for all $i \in \{1,2\}$ and for all $T<+\infty$, $\rho_i \in \mathcal{C}^{0,1/2}([0,T],\Paa(\Omega)) \cap L^2((0,T),H^1(\Omega))$, $p \in L^2((0,T),H^1(\Omega))$ with $p \geqslant 0$, $\rho_1 + \rho_2 \leqslant 1$ and $p(1-\rho_1-\rho_2)=0$ a.e. in $[0,T] \times \Omega$. In addition, for all $\phi \in \mathcal{C}_c^\infty([0,+\infty) \times \Rn)$,

$$ \int_0^{+\infty}\int_\Omega  \left[ \rho_i \partial_t \phi - (\rho_i \nabla V_i +\rho_i \nabla p + \nabla \rho_i)\cdot \nabla \phi \right]  \,dx dt = -\int_\Omega \phi(0,x) \rho_{i,0}(x)\,dx.$$
\end{itemize}
\end{de}
The main results of this paper are

\begin{thm}
\label{crossfiff-existence entropy}
Assume that $\rho_{1,0},\rho_{2,0} \in \Paa(\Omega)$ satisfy
\begin{eqnarray}
\label{crossfiff-CI}
\He(\rho_{1,0})+\He(\rho_{2,0})+\F_m(\rho_{1,0}+\rho_{2,0}) <+\infty,
\end{eqnarray}
then \eqref{systdiffcoup} admits at least one weak solution. 
\end{thm}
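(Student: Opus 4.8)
The plan is to construct the solution via the JKO scheme \eqref{jkosyste} and pass to the limit as $h\to 0$, using a flow-interchange argument to upgrade compactness. First I would check that the minimization problem \eqref{jkosyste} is well-posed: the functional $(\rho_1,\rho_2)\mapsto \sum_i \frac1{2h}W_2^2(\rho_i,\rho_i^k)+\E_m(\rho_1,\rho_2)$ is lower semicontinuous for the narrow convergence (the Wasserstein terms because $\Omega$ is compact, $\He$ and $\F_m$ because $H$ and $z\mapsto z^m/(m-1)$ are convex and superlinear, and $\V_i$ because $V_i$ is continuous), it is bounded below, and the sublevel sets are narrowly compact; hence a minimizer $(\rho_1^{k+1},\rho_2^{k+1})$ exists. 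Under assumption \eqref{crossfiff-CI} the scheme starts with finite energy, so by comparing with the competitor $(\rho_1^k,\rho_2^k)$ one gets the standard total-square-distance bound $\sum_k\sum_i \frac1{2h}W_2^2(\rho_i^{k+1},\rho_i^k)\le \E_m(\rho_{1,0},\rho_{2,0})-\inf\E_m<\infty$, which yields the $\mathcal C^{0,1/2}$ equicontinuity in time of the piecewise-constant (or piecewise-geodesic) interpolants $\rho_i^h$, and a subsequential limit $\rho_i$ via a refined Arzelà–Ascoli argument in $(\Paa(\Omega),W_2)$.

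Next I would derive the Euler–Lagrange equations for the discrete step. Perturbing $\rho_i^{k+1}$ along a flow generated by a smooth vector field $\zeta_i$ (i.e. comparing with $(\id+\epsilon\zeta_i)_\#\rho_i^{k+1}$) and differentiating at $\epsilon=0$, one obtains, for each $i$,
\[
\int_\Omega \frac{\varphi_i^{k+1}}{h}\cdot\zeta_i\,\rho_i^{k+1}\,dx = \int_\Omega \nabla\bigl(V_i + F_m'(\rho_1^{k+1}+\rho_2^{k+1})\bigr)\cdot\zeta_i\,\rho_i^{k+1}\,dx + \int_\Omega \nabla\cdot(\rho_i^{k+1}\zeta_i)\,dx,
\]
where $\varphi_i^{k+1}$ is the Kantorovich potential from $\rho_i^{k+1}$ to $\rho_i^k$; this is the discrete weak formulation. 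The genuinely delicate point is that to even write this — in particular to make sense of $\nabla F_m'(\rho_1^{k+1}+\rho_2^{k+1})$ and of $\nabla\rho_i^{k+1}$, and then to pass to the limit in the nonlinear drift — one needs $H^1$-type bounds on both $\rho_i^h$ and on $\rho_1^h+\rho_2^h$ (indeed on powers of the sum), which do not follow from the energy monotonicity alone.

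This is where the flow-interchange argument enters, and I expect it to be the main obstacle. The idea is to use the heat semigroup (the gradient flow of $\He$ in the Wasserstein metric) as an auxiliary flow: one computes $\frac{d}{ds}\big|_{s=0^+}\E_m(S_s\rho_1^{k+1},S_s\rho_2^{k+1})$, where $S_s$ denotes heat flow applied to each component, and uses the minimality of $(\rho_1^{k+1},\rho_2^{k+1})$ together with the Evolution-Variational-Inequality characterization of $S_s$ to bound the dissipation of $\E_m$ along the heat flow by $\tfrac1h\bigl(\text{something}\bigr)$ times the decrease of $\He$. The dissipation of $\He$ along the heat flow is the Fisher information $\int |\nabla\rho_i|^2/\rho_i$, and the cross term produces, after summation over $k$, a bound of the form $\int_0^T\!\!\int_\Omega |\nabla(\rho_1^h+\rho_2^h)^{m/2}|^2 + \sum_i |\nabla\sqrt{\rho_i^h}|^2 \le C$, hence the separate estimates on $\rho_i$ and on $\rho_1+\rho_2$ advertised in the introduction; interpolation then gives the $L^{2-1/m}W^{1,2-1/m}$ and $L^{2m-1}$ integrability claimed in Definition \ref{weak solution}. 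The care needed here is to justify the differentiation of $\E_m$ along the heat flow (regularity of the perturbed curves, exchanging limit and integral, controlling the potential terms $\V_i$ which are Lipschitz hence harmless), and to verify that the sign of the cross-diffusion contribution is favorable — i.e. that the coupling through the convex function of $\rho_1+\rho_2$ only helps the estimate. With these bounds in hand, passing $h\to0$ in the discrete weak formulation is routine: strong $L^1$ (or $L^2$) compactness of $\rho_i^h$ and $\rho_1^h+\rho_2^h$ from the $H^1$ bounds plus time-equicontinuity, weak convergence of $\rho_i^h\nabla F_m'(\rho_1^h+\rho_2^h)$ identified using the strong convergence of the sum, and convergence of the Kantorovich term $\varphi_i^h/h \to$ velocity field, yield that the limit is a weak solution in the sense of Definition \ref{weak solution}.
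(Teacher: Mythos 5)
Your overall strategy coincides with the paper's: JKO scheme, the standard energy/total-square-distance estimates, flow interchange with the heat semigroup to obtain $L^2((0,T),H^1(\Omega))$ bounds on $\rho_{i,h}^{1/2}$ and $(\rho_{1,h}+\rho_{2,h})^{m/2}$, Rossi--Savar\'e/Aubin--Lions compactness, and a strong--weak argument to pass to the limit in the cross-diffusion term. There are, however, two places where what you wrote would not go through as stated. The main one is the claim that ``interpolation then gives the $L^{2-1/m}W^{1,2-1/m}$ and $L^{2m-1}$ integrability'': parabolic interpolation of the flow-interchange bound $(\rho_{1,h}+\rho_{2,h})^{m/2}\in L^2_t H^1_x$ against the $L^\infty_t L^1_x$ mass bound yields $\rho_{1,h}+\rho_{2,h}\in L^{2m-1}((0,T)\times\Omega)$ only for small $m$ (roughly $m\leqslant 1+2/n$), so for general $m$ this step fails. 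In the paper these bounds are instead extracted from the discrete Euler--Lagrange identity \eqref{crossfiff-variation premiere}: squaring it against $\rho_{i,h}^{k+1}$ and using the Fisher information (from the flow interchange), $\nabla V_i\in L^\infty$ and the total square distance estimate gives $(\rho_{1,h}+\rho_{2,h})^{1/2}\nabla F_m'(\rho_{1,h}+\rho_{2,h})\in L^2$, hence $(\rho_{1,h}+\rho_{2,h})^{m-1/2}\in L^2_tH^1_x$ and the $L^{2m-1}$ bound, and then the $L^{2-1/m}$ bounds on $\rho_{i,h}\nabla F_m'$ and $\nabla\rho_{i,h}$ by H\"older (Proposition \ref{prop:improvment regularity}). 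Since Definition \ref{weak solution} requires exactly these spaces, you need this (or an equivalent) argument, not interpolation.

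The second point is your derivation of the optimality condition by horizontal perturbations $(\id+\eps\zeta)_{\#}\rho_{i,h}^{k+1}$ of a single component: differentiating the coupled term $\F_m(\rho_{1}+\rho_{2})$ when only $\rho_1$ is pushed forward is delicate (the naive change-of-variables computation produces a $\nabla\rho_2$ term that is not controlled at this stage), and this is precisely why the paper uses vertical perturbations $(1-\eps)\rho_{i,h}^{k+1}+\eps\trho_i$ together with the positivity Lemma \ref{crossfiff-rho positif} (which guarantees uniqueness of the Kantorovich potential) to obtain \eqref{crossfiff-variation premiere} $\rho_{i}$-a.e. This part of your plan is fixable --- e.g. by one-sided convexity arguments once the flow-interchange regularity of the minimizers is in hand, or by switching to the vertical perturbations --- but as written it is a step you would still have to justify.
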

and
\begin{thm}
\label{crossfiff-existence crowd motion}
Assume that $\Omega$ satisfies \eqref{ass:size subset}. If $(\rho_{1,0},\rho_{2,0})\in \calK$ satisfies 
$$ \He(\rho_{1,0})+\He(\rho_{2,0}) <+\infty ,$$
then there exists at least one weak solution to \eqref{crossfiff-system crowd}. 
\end{thm}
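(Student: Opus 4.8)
The plan is to obtain the solution as the limit of the JKO (minimizing movement) scheme associated with the energy $\E_\infty = \He + \He + \V_1 + \V_2 + \F_\infty(\rho_1+\rho_2)$ on the product Wasserstein space $(\Paa(\Omega)^2, W_2\otimes W_2)$. Fix $h>0$ and, starting from $(\rho_{1,0},\rho_{2,0})\in\calK$, define inductively $(\rho_1^{k+1},\rho_2^{k+1})$ as a minimizer of $(\rho_1,\rho_2)\mapsto \frac{1}{2h}W_2^2(\rho_1,\rho_1^k)+\frac{1}{2h}W_2^2(\rho_2,\rho_2^k)+\He(\rho_1)+\He(\rho_2)+\V_1(\rho_1)+\V_2(\rho_2)$ over $\calK$. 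Existence of a minimizer at each step follows from the direct method: $\calK$ is nonempty (by \eqref{ass:size subset}) and narrowly closed, $W_2$ is narrowly lower semicontinuous, $\He$ is narrowly lower semicontinuous and bounded below on $\Pa(\Omega)$ since $\Omega$ is bounded, and $\V_i$ is continuous because $V_i\in W^{1,\infty}(\Omega)$. Iterating the minimality inequality against the competitor $(\rho_1^k,\rho_2^k)$ gives the standard energy/total-squared-distance estimate $\sum_k \frac{1}{2h}\big(W_2^2(\rho_1^{k+1},\rho_1^k)+W_2^2(\rho_2^{k+1},\rho_2^k)\big)\le \E_\infty(\rho_{1,0},\rho_{2,0})-\inf\E_\infty<+\infty$, which is finite precisely because $\He(\rho_{1,0})+\He(\rho_{2,0})<+\infty$ and $\F_\infty(\rho_{1,0}+\rho_{2,0})=0$. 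Define the piecewise-constant interpolants $\rho_i^h(t)=\rho_i^{k}$ for $t\in[kh,(k+1)h)$; the estimate yields a uniform $\tfrac12$-Hölder bound in $W_2$ on a suitable interpolant and hence, via a refined Ascoli–Arzelà argument in the space of narrowly continuous curves, a subsequence with $\rho_i^h\to\rho_i$ in $\mathcal{C}^{0,1/2}([0,T],\Paa(\Omega))$ for every $T$, with $\rho_1(t)+\rho_2(t)\le 1$ a.e. preserved in the limit by closedness of $\calK$.

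Next I would derive the Euler–Lagrange (optimality) conditions at each step, which is where the pressure $p$ and the PDE appear. Perturbing $\rho_i^{k+1}$ along transport maps and using that the subdifferential of the constraint term $\int_\Omega\chi_{[0,1]}(\rho_1+\rho_2)$ produces a common pressure $p^{k+1}\ge 0$ with $p^{k+1}(1-\rho_1^{k+1}-\rho_2^{k+1})=0$, one obtains, in weak form, that $\frac{\id - T_i^{k+1}}{h}=\nabla V_i+\nabla \log\rho_i^{k+1}/\rho_i^{k+1}+\nabla p^{k+1}$ in the appropriate sense, where $T_i^{k+1}$ is the optimal map from $\rho_i^{k+1}$ to $\rho_i^k$; testing against $\phi\in\mathcal{C}_c^\infty$ and summing over $k$ gives a discrete version of the weak formulation in Definition \ref{weak solution} with an $O(h)$ error. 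To pass to the limit in the nonlinear and gradient terms I need strong compactness, and this is the crux: I would run the flow-interchange argument (as announced in the introduction, following \cite{MMCS,DFM,L}) using the heat semigroup as auxiliary flow. Computing the dissipation of $\He$ along the heat flow acting on the JKO minimizer produces, after summation, a uniform bound of the form $\int_0^T\int_\Omega |\nabla\sqrt{\rho_i^h}|^2 \lesssim 1$ and, exploiting that the constraint term is also displacement-monotone along the heat flow, a separate uniform bound $\int_0^T\|\nabla(\rho_1^h+\rho_2^h)\|_{L^2}^2\lesssim 1$ together with a bound on $\|\nabla\rho_i^h\|_{L^2_{t,x}}$; hence $\rho_i^h$ is bounded in $L^2((0,T),H^1(\Omega))$. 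Combined with the $W_2$-equicontinuity in time, an Aubin–Lions–Simon type lemma upgrades narrow convergence to strong $L^2((0,T)\times\Omega)$ convergence of $\rho_i^h$ (and of $\rho_1^h+\rho_2^h$).

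For the pressure, I would test the discrete Euler–Lagrange relation appropriately to get a uniform $L^2((0,T),H^1(\Omega))$ bound on $p^h$ (using $\nabla p^h = \nabla((\rho_1^h+\rho_2^h)$-type quantities)/$h$ controlled through the same flow-interchange estimates, plus the Lipschitz bounds on $V_i$ and the $H^1$ control on $\rho_i^h$), extract a weakly convergent subsequence $p^h\rightharpoonup p$ in $L^2((0,T),H^1(\Omega))$ with $p\ge0$, and then identify the complementarity condition $p(1-\rho_1-\rho_2)=0$ a.e.\ in the limit: this follows from strong $L^2$ convergence of $\rho_1^h+\rho_2^h$ together with weak $L^2$ convergence of $p^h$, since $\int_0^T\int_\Omega p^h(1-\rho_1^h-\rho_2^h)=0$ and the integrand's limit can be evaluated by splitting into the strong and weak factors. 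Finally, passing to the limit in the discrete weak formulation: the linear terms $\rho_i^h\partial_t\phi$ and $\nabla\rho_i^h\cdot\nabla\phi$ pass by weak convergence, $\rho_i^h\nabla V_i\cdot\nabla\phi$ passes by strong $L^2$ convergence of $\rho_i^h$, and the delicate term $\rho_i^h\nabla p^h\cdot\nabla\phi$ passes because $\rho_i^h\to\rho_i$ strongly in $L^2$ while $\nabla p^h\rightharpoonup\nabla p$ weakly in $L^2$, so the product converges in the sense of distributions against the test function $\nabla\phi\in L^\infty$. The main obstacle throughout is exactly this passage to the limit in $\rho_i^h\nabla p^h$: it is what forces the individual Brownian diffusions into the model (they provide the separate $H^1$ estimates on each $\rho_i^h$ that make the product term's strong/weak splitting legitimate), and the flow-interchange computation giving those estimates — in particular checking that the hard-congestion term $\F_\infty(\rho_1+\rho_2)$ has the right monotonicity along the heat flow — is the technical heart of the proof.
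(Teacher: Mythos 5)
Your plan follows the paper's route: the JKO scheme constrained to $\calK$ as in \eqref{crossfiff-JKO crowd}, the flow interchange with the heat semigroup (admissible precisely because the heat flow decreases the $L^\infty$ norm and hence preserves the constraint), Rossi--Savar\'e compactness, a discrete pressure arising as multiplier of the joint constraint, an $L^2((0,T),H^1(\Omega))$ bound on it, and strong--weak splitting to pass to the limit in $\rho_{i,h}\nabla p_h$ and in the complementarity relation. However, two steps are genuinely under-justified, and they are exactly where the technical work, and the assumption \eqref{ass:size subset}, lie. First, the construction of a single \emph{common} discrete pressure: perturbing along transport maps is delicate under the hard constraint, since push-forwards need not stay in $\calK$; the argument actually proceeds by convex (``vertical'') perturbations inside $\calK$, giving the variational inequality \eqref{crossfiff-inequality measure}, followed by a dedicated analysis on the saturated set $S=\{\rho_{1,h}^k+\rho_{2,h}^k=1\}$ (Proposition \ref{prop:Euler-Lagrange}) showing that $\psi_{1,h}^k$ and $\psi_{2,h}^k$ are constant off $S$ and coincide, up to these constants, with a nonpositive common function on $S$, so that $p_h^k=(C_i-\psi_{i,h}^k)_+$ is well defined and the same for $i=1,2$. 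This in turn requires Lemma \ref{lem:rho positif crowd} ($\rho_{i,h}^k>0$ a.e.\ and $\log\rho_{i,h}^k\in L^1$), whose proof uses the constant competitor $1/|\Omega|$ and hence $|\Omega|>2$; none of this is routine subdifferential calculus and your sketch simply asserts it.

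Second, your pressure bound: the Euler--Lagrange identity combined with the flow-interchange estimates only controls the weighted quantity $\int_\Omega|\nabla p_h|^2(\rho_{1,h}+\rho_{2,h})$. One must use $p_h=0$ on $\{\rho_{1,h}+\rho_{2,h}<1\}$ to drop the weight, and then a Poincar\'e-type inequality exploiting $|\{p_h=0\}|\geqslant|\Omega|-2>0$ to control $p_h$ itself and not only its gradient (Proposition \ref{crossfiff-estimate pressure}); your sketch invokes \eqref{ass:size subset} only for nonemptiness of $\calK$ and offers no mechanism to bound the mean of $p_h$, which is the crux of getting weak $L^2_tH^1_x$ compactness of the pressure. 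A smaller correction: the constraint term contributes nothing to the flow-interchange dissipation (its only role is that the heat flow keeps the competitor admissible), so it does not ``produce'' the bound on $\nabla(\rho_{1,h}+\rho_{2,h})$; the individual $L^2((0,T),H^1(\Omega))$ bounds on $\rho_{i,h}$ follow from the estimate on $\nabla\rho_{i,h}^{1/2}$ together with $\rho_{i,h}\leqslant 1$ (or, as in the paper, from the identity \eqref{crossfiff-first variation pressure} once the pressure is bounded). With these points repaired, your limit passages, including the identification of $p(1-\rho_1-\rho_2)=0$ by pairing the strongly convergent densities with the weakly convergent pressure (handled in the paper via Lemma \ref{crossfiff-limit pressure}), do match the paper's proof.
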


\begin{rem}[Remarks on possible extensions:]~\\

\begin{itemize}
\item These models can be generalized to more than two species. Moreover, instead of assuming that individuals of different populations take the same space, we can generalize to densities evolving under the constraints on $\alpha_1 \rho_1 + \alpha_2 \rho_2$, for $\alpha_1, \alpha_2 >0$. Then system \eqref{systdiffcoup} becomes

\begin{equation*}
\partial_t \rho_i= \dive(\rho_i \nabla V_i) +\Delta \rho_i +\alpha_i\dive(\rho_i\nabla F'_{m}(\alpha_1\rho_1+\alpha_2\rho_2)), \; i=1, \; 2.
\end{equation*}
and system with hard congestion becomes 
\begin{equation*}
\left\{\begin{array}{lll}
\partial_t \rho_1 -\Delta \rho_1- \dive(\rho_1 (\nabla V_1+\nabla p))=0,\\
\partial_t \rho_2 -\Delta \rho_2- \dive(\rho_2 (\nabla V_2+\nabla p))=0,\\
p\geqslant 0, \alpha_1\rho_1+\alpha_2 \rho_2 \leqslant 1, \; p(1-\alpha_1 \rho_1-\alpha_2 \rho_2)=0. 
\end{array}\right.
\end{equation*}
\item These results can be generalized to more general velocities. Indeed, using the semi-implicit scheme introduced by DiFrancesco and Fagioli in \cite{DFF} and developped in \cite{L} or the splitting method introduced in \cite{CL1}, we can treat vector fields depending on the densities and which come not necessarily from a potential. These extensions allow to treat nonlocal interactions between different species, of the form $V_i[\rho_1,\rho_2]=K_{i,1} \ast \rho_1 + K_{i,2} \ast \rho_2$ where $K_{i,j} \in W^{1,\infty}$, which are subject to a common congestion effect .

\item To simplify the exposition, during the whole paper, we deal with linear self-diffusion terms but it is possible to extend Theorems \ref{crossfiff-existence entropy} and \ref{crossfiff-existence crowd motion} to nonlinear self-diffusions. In particular, we can deal with porous medium diffusion of the form $\Delta \rho_i^{q_i}$. This can be done replacing the Entropy $\He(\rho_i)$ by the functional $\F_{q_i}(\rho_i)$. In the analysis, the individual estimates found in Proposition \ref{crossfiff-estimation grad} and in Proposition \ref{crossfiff-estimate grad crowd} become $L^2((0,T),H^1(\Omega))$ estimates on  $\rho_{1,h}^{q_1 /2}$ and $\rho_{2,h}^{q_2 /2}$ (see for example \cite{L}) without modifying the joint estimate. In addition, discret solutions are not globally supported anymore, i.e. Lemma \ref{crossfiff-rho positif} and Lemma \ref{lem:rho positif crowd} do not hold, but Proposition \ref{prop:variation premiere} and Proposition \ref{prop:Euler-Lagrange} can be recovered, see for example \cite{S} for $m <+\infty$ and \cite{KM} in the case $m=\infty$.

\end{itemize}
\end{rem}

\section{Coupling through common soft congestion}
\label{crossfiff-section 3: proof entropy}

In this section, we prove Theorem \ref{crossfiff-existence entropy} using the implicit JKO scheme, firstly introduced by Jordan, Kinderlherer and Otto in \cite{JKO}. Given a time step $h>0$, we construct by induction two sequences $\rho_{1,h}^k$ and $\rho_{2,h}^k$ with the following scheme: $\rho_{i,h}^0=\rho_{i,0}$ and for all $k \geqslant 0$,
\begin{eqnarray}
\label{crossfiff-scheme entropy}
(\rho_{1,h}^{k+1},\rho_{2,h}^{k+1}) \in \argmin_{(\rho_1,\rho_2) \in \Paa(\Omega)^2} \left\{ \sum_{i=1}^2 \left( W_2^2(\rho_i,\rho_{i,h}^{k})+2h\left(\He(\rho_i) +\V_i(\rho_i) \right) \right) +2h \F_m( \rho_1+ \rho_2)  \right\}.
\end{eqnarray}
These sequences are well-defined by standard compactness and l.s.c argument. Then we define the piecewise constant interpolations $\rho_{i,h} \, : \, \R^+ \rightarrow \Paa(\Omega)$ by
$$ \rho_{i,h}(t):= \rho_{i,h}^{k+1}, \qquad \text{if } t \in (kh,(k+1)h].$$

In the first part of this section, we study the convergence of these sequences and then we give the proof of Theorem \ref{crossfiff-existence entropy}.
\subsection{Estimates and convergences}
We start retrieving classical estimates coming from the JKO scheme, \cite{JKO}, and then, we develop stronger estimates using the flow interchange argument, \cite{MMCS,DFM}. First, the minimization scheme gives

\begin{prop}
\label{crossfiff-estimation sum}
For all $T <+\infty$ and for all $i \in \{ 1 ,2 \}$, there exists a constant $C <+\infty$ such that for all $k \in \mathbb{N}$ and for all $h$ with $kh\leqslant T$ and let $N=\lfloor \frac{T}{h} \rfloor$, we have
\begin{eqnarray}
& \He(\rho_{i,h}^k) \leqslant C, \label{crossfiff-estimation moment}\\
& \F_m(\rho_{1,h}^k+\rho_{2,h}^{k}) \leqslant C,\label{crossfiff-estimation entropie}\\
& \sum_{k=0}^{N-1} W_2^2(\rho_{i,h}^{k},\rho_{i,h}^{k+1}) \leqslant Ch.\label{crossfiff-estimation distance}
\end{eqnarray}
\end{prop}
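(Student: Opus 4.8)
The plan is to derive the three estimates in Proposition~\ref{crossfiff-estimation sum} from the standard JKO variational comparison, exactly as in \cite{JKO}. Fix $T<+\infty$ and $k$ with $kh\leqslant T$.

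\textbf{Step 1: Competitor comparison.} The pair $(\rho_{1,h}^{k+1},\rho_{2,h}^{k+1})$ minimizes the functional in \eqref{crossfiff-scheme entropy}, so in particular testing against the competitor $(\rho_{1,h}^{k},\rho_{2,h}^{k})$ gives
\begin{multline*}
\sum_{i=1}^2\Big(W_2^2(\rho_{i,h}^{k+1},\rho_{i,h}^{k})+2h\big(\He(\rho_{i,h}^{k+1})+\V_i(\rho_{i,h}^{k+1})\big)\Big)+2h\F_m(\rho_{1,h}^{k+1}+\rho_{2,h}^{k+1})\\
\leqslant 2h\sum_{i=1}^2\big(\He(\rho_{i,h}^{k})+\V_i(\rho_{i,h}^{k})\big)+2h\F_m(\rho_{1,h}^{k}+\rho_{2,h}^{k}).
\end{multline*}
Dividing by $2h$ and summing this telescoping inequality from $j=0$ to $k-1$ yields
\begin{equation*}
\frac{1}{2h}\sum_{j=0}^{k-1}\sum_{i=1}^2 W_2^2(\rho_{i,h}^{j+1},\rho_{i,h}^{j})+\sum_{i=1}^2\He(\rho_{i,h}^{k})+\sum_{i=1}^2\V_i(\rho_{i,h}^{k})+\F_m(\rho_{1,h}^{k}+\rho_{2,h}^{k})\leqslant \sum_{i=1}^2\He(\rho_{i,0})+\sum_{i=1}^2\V_i(\rho_{i,0})+\F_m(\rho_{1,0}+\rho_{2,0}),
\end{equation*}
and the right-hand side is finite by the hypothesis \eqref{crossfiff-CI} together with $V_i\in W^{1,\infty}(\Omega)$. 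This immediately gives \eqref{crossfiff-estimation distance} (summing over $k\leqslant N$), since all four groups of terms on the left are bounded below: $\V_i(\rho)\geqslant -\|V_i\|_\infty$, $\F_m\geqslant 0$ (for $m>1$) or $\F_1\geqslant -C|\Omega|$ by the elementary bound $z\log z\geqslant -1/e$, and similarly $\He\geqslant -|\Omega|/e$.

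\textbf{Step 2: Individual bounds.} To extract \eqref{crossfiff-estimation moment} and \eqref{crossfiff-estimation entropie} from the same inequality, I move the lower-bounded terms to the right-hand side: for instance
\begin{equation*}
\He(\rho_{i,h}^{k})\leqslant \sum_{j=1}^2\big(\He(\rho_{j,0})+\V_j(\rho_{j,0})\big)+\F_m(\rho_{1,0}+\rho_{2,0})+\|V_i\|_\infty+\He(\rho_{\ell,h}^{k})^{-}+\dots
\end{equation*}
where the last terms are the (uniformly bounded) negative parts of the discarded quantities; more cleanly, since $\He(\rho)\geqslant -|\Omega|/e$ and $\F_m\geqslant -C$, the chain inequality directly gives $\He(\rho_{i,h}^{k})\leqslant C$ and $\F_m(\rho_{1,h}^{k}+\rho_{2,h}^{k})\leqslant C$ with $C$ depending only on the data and $\Omega$, uniformly in $k$ and $h$ with $kh\leqslant T$ (in fact uniformly in all $k$). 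This is routine; the only mild care needed is the use of the uniform lower bounds on $\He$ and $\F_1$ on the compact set $\Omega$ of finite measure.

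There is essentially no serious obstacle here: the argument is the classical ``energy dissipation'' estimate of the JKO scheme, and the presence of the extra coupling term $\F_m(\rho_1+\rho_2)$ changes nothing because it too is bounded below on $\M^+(\Omega)$. The only point worth stating carefully is that the constant $C$ is independent of $h$, which is automatic since the right-hand side of the telescoped inequality is the initial energy, independent of $h$. I would also remark that the well-posedness of the scheme (existence of minimizers) referenced just before the proposition follows from the direct method: the functional is lower semicontinuous for narrow convergence (lower semicontinuity of $W_2^2$, of $\He$ by convexity and superlinearity of $H$, of $\F_m$, and continuity of $\V_i$ since $V_i$ is continuous), and $\Paa(\Omega)^2$ restricted to a sublevel set is narrowly compact by Prokhorov together with the superlinearity of $H$ forcing absolute continuity of any narrow limit.
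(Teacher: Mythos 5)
Your proof is correct and is exactly the paper's argument: the paper simply takes $\rho_i=\rho_{i,h}^k$ as competitors in \eqref{crossfiff-scheme entropy} and refers to \cite{JKO} for the resulting telescoping energy-dissipation estimate, which is precisely what you carry out in detail, including the uniform lower bounds on $\He$, $\V_i$ and $\F_m$ needed to separate the three estimates.
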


\begin{proof}
These results are obtained easily taking $\rho_i=\rho_{i,h}^k$ as competitors in \eqref{crossfiff-scheme entropy}, see \cite{JKO}. 
\end{proof}

\begin{rem}
\label{crossfiff-estimate distance ind m}
Notice that estimate \eqref{crossfiff-estimation distance} does not depend on $m$. This Remark will be useful in section \ref{crossfiff-section 5: link and uniqueness} to show that a solution to \eqref{systdiffcoup} converges to a solution to \eqref{crossfiff-system crowd}.
\end{rem}

In the next proposition, stronger estimates are obtained in order to pass to the limit in the nonlinear diffusive term. The main argument to prove this proposition is the flow interchange argument, introduced in \cite{MMCS}. First we recall the definition of a $\kappa$-flow.
\begin{de}
A semigroup $\mathfrak{S}_\Psi \, : \, \R^+ \times \mathcal{P}^{ac}(\Omega) \rightarrow \mathcal{P}^{ac}(\Omega)$ is a $\kappa$-flow for the functional $\Psi \, :\,\mathcal{P}^{ac}(\Omega)\rightarrow \R \cup \{ +\infty\}$ with respect to  $W_2$ if, for all $\rho \in \mathcal{P}^{ac}(\Omega)$, the curve $s \mapsto \mathfrak{S} _\Psi^s[\rho]$ is absolutely continuous on $\R^+$ and satisfies the evolution variational inequality (EVI) 
\begin{eqnarray}
\label{eq:EVI}
\frac{1}{2}\frac{d^+}{d\sigma}\mid_{\sigma=s} W_2^2(\mathfrak{S} _\Psi^s[\rho], \tilde{\rho}) +\frac{\kappa}{2}W_2^2(\mathfrak{S} _\Psi^s[\rho], \tilde{\rho}) \leqslant \Psi(\tilde{\rho})-\Psi(\mathfrak{S} _\Psi^s[\rho]),
\end{eqnarray}
for all $s>0$ and for all $\tilde{\rho} \in \mathcal{P}^{ac}(\Omega)$ such that $\Psi(\tilde{\rho}) < +\infty$, where 
$$ \frac{d^+}{dt}f(t) := \limsup_{s\rightarrow 0^+} \frac{f(t+s) -f(t)}{s}.$$
\end{de}

In \cite{AGS}, the authors showed that the fact a functional admits a $\kappa$-flow is equivalent to $\kappa$-displacement convexity.

\begin{prop}
\label{crossfiff-prop f.g.i sum}
For all $T>0$, there exists a constant $C_T>0$ such that,  
\begin{eqnarray}
\label{crossfiff-estimation grad}
\| \rho_{1,h}^{1/2} \|_{L^2((0,T),H^1(\Omega))}^2 +\| \rho_{2,h}^{1/2} \|_{L^2((0,T),H^1(\Omega))}^2 +\frac{1}{m}\| (\rho_{1,h}+\rho_{2,h})^{m/2} \|_{L^2((0,T),H^1(\Omega))}^2 \leqslant C_T.
\end{eqnarray}
\end{prop}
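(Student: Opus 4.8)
The plan is to apply the flow interchange argument of \cite{MMCS} with the heat semigroup as the auxiliary flow. Since $\Omega$ is convex, the heat flow with no-flux boundary conditions, which I denote $\mathfrak{S}_{\He}^{\sigma}$, is the $0$-flow for the Entropy $\He$ on $(\Paa(\Omega),W_2)$ in the sense of \eqref{eq:EVI} (this is the classical $\mathrm{EVI}_0$ characterization, see \cite{AGS}); hence the product map $(\mu_1,\mu_2)\mapsto(\mathfrak{S}_{\He}^{\sigma}[\mu_1],\mathfrak{S}_{\He}^{\sigma}[\mu_2])$ is the $0$-flow, with respect to $W_2^2\oplus W_2^2$, for $\Psi(\mu_1,\mu_2):=\He(\mu_1)+\He(\mu_2)$. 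Fix a step $k$ and abbreviate $\rho_i:=\rho_{i,h}^{k+1}$, $\mu_i:=\rho_{i,h}^{k}$, $\eta_{i,\sigma}:=\mathfrak{S}_{\He}^{\sigma}[\rho_i]$, and $J(\nu_1,\nu_2):=\sum_{i=1}^2\big(\He(\nu_i)+\V_i(\nu_i)\big)+\F_m(\nu_1+\nu_2)$. First I would use the pair $(\eta_{1,\tau},\eta_{2,\tau})$ as a competitor in \eqref{crossfiff-scheme entropy}: after dividing by $2h$ and rearranging this gives, for every $\tau>0$,
\[
J(\rho_1,\rho_2)-J(\eta_{1,\tau},\eta_{2,\tau})\leqslant\frac{1}{2h}\sum_{i=1}^2\Big(W_2^2(\eta_{i,\tau},\mu_i)-W_2^2(\rho_i,\mu_i)\Big),
\]
and integrating the $\mathrm{EVI}_0$ inequality for $\mathfrak{S}_{\He}$ on $(0,\tau)$, together with the fact that $\He$ is nonincreasing along the heat flow, bounds the right-hand side by $\frac{\tau}{h}\sum_{i=1}^2\big(\He(\mu_i)-\He(\eta_{i,\tau})\big)$.

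Next I would divide by $\tau$ and let $\tau\downarrow0$. On the right-hand side, $\He(\eta_{i,\tau})\to\He(\rho_i)$ since $\He(\rho_i)<+\infty$ by \eqref{crossfiff-estimation moment}, $\He$ decreases along the heat flow, and $\He$ is lower semicontinuous; so the right-hand side tends to $\frac1h\sum_{i=1}^2\big(\He(\mu_i)-\He(\rho_i)\big)$. On the left-hand side, parabolic regularization gives, for every $\sigma>0$, $\eta_{i,\sigma}\in C^{\infty}(\overline{\Omega})$ with $\eta_{i,\sigma}\geqslant c(\sigma)>0$, so that the following computation is licit: integrating by parts with the Neumann condition, using $F_m''(z)=mz^{m-2}$ (valid for all $m\geqslant1$) and the identities $\int_\Omega|\nabla\eta_i|^2/\eta_i\,dx=4\|\nabla\eta_i^{1/2}\|_{L^2(\Omega)}^2$ and $\int_\Omega F_m''(\eta_1+\eta_2)|\nabla(\eta_1+\eta_2)|^2\,dx=\frac4m\|\nabla(\eta_1+\eta_2)^{m/2}\|_{L^2(\Omega)}^2$, one gets
\[
-\frac{d}{d\sigma}J(\eta_{1,\sigma},\eta_{2,\sigma})=\sum_{i=1}^2\Big(4\|\nabla\eta_{i,\sigma}^{1/2}\|_{L^2(\Omega)}^2+\int_\Omega\nabla V_i\cdot\nabla\eta_{i,\sigma}\,dx\Big)+\frac{4}{m}\|\nabla(\eta_{1,\sigma}+\eta_{2,\sigma})^{m/2}\|_{L^2(\Omega)}^2.
\]
Since $\|\eta_{i,\sigma}^{1/2}\|_{L^2(\Omega)}=1$, Cauchy--Schwarz and Young's inequality give $\big|\int_\Omega\nabla V_i\cdot\nabla\eta_{i,\sigma}\,dx\big|\leqslant2\|\nabla\eta_{i,\sigma}^{1/2}\|_{L^2(\Omega)}^2+\tfrac12\|\nabla V_i\|_{L^\infty}^2$, so
\[
-\frac{d}{d\sigma}J(\eta_{1,\sigma},\eta_{2,\sigma})\geqslant\sum_{i=1}^2\|\nabla\eta_{i,\sigma}^{1/2}\|_{L^2(\Omega)}^2+\frac{4}{m}\|\nabla(\eta_{1,\sigma}+\eta_{2,\sigma})^{m/2}\|_{L^2(\Omega)}^2-C_0,\qquad C_0:=\tfrac12\sum_{i=1}^2\|\nabla V_i\|_{L^\infty}^2.
\]
Writing $J(\rho_1,\rho_2)-J(\eta_{1,\tau},\eta_{2,\tau})=\int_0^\tau\big(-\tfrac{d}{d\sigma}J(\eta_{1,\sigma},\eta_{2,\sigma})\big)\,d\sigma$, dividing by $\tau$, and using the lower semicontinuity under narrow convergence of $\nu\mapsto\|\nabla\nu^{1/2}\|_{L^2(\Omega)}^2$ and of $\nu\mapsto\|\nabla\nu^{m/2}\|_{L^2(\Omega)}^2$ together with $\eta_{i,\sigma}\to\rho_i$ narrowly as $\sigma\downarrow0$, the $\liminf$ of the left-hand side as $\tau\downarrow0$ is at least $\sum_i\|\nabla\rho_i^{1/2}\|_{L^2(\Omega)}^2+\frac4m\|\nabla(\rho_1+\rho_2)^{m/2}\|_{L^2(\Omega)}^2-C_0$. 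Combining the two sides yields the per-step estimate
\[
\sum_{i=1}^2\|\nabla(\rho_{i,h}^{k+1})^{1/2}\|_{L^2(\Omega)}^2+\frac{4}{m}\|\nabla(\rho_{1,h}^{k+1}+\rho_{2,h}^{k+1})^{m/2}\|_{L^2(\Omega)}^2\leqslant C_0+\frac1h\sum_{i=1}^2\big(\He(\rho_{i,h}^{k})-\He(\rho_{i,h}^{k+1})\big).
\]

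Then I would multiply by $h$ and sum over $k=0,\dots,N-1$ with $N=\lfloor T/h\rfloor$: the Entropy contributions telescope, and using the elementary lower bound $\He\geqslant-|\Omega|/\mathrm{e}$ on $\mathcal P(\Omega)$ together with $\He(\rho_{i,0})<+\infty$ from \eqref{crossfiff-CI},
\[
h\sum_{k=0}^{N-1}\Big(\sum_{i=1}^2\|\nabla(\rho_{i,h}^{k+1})^{1/2}\|_{L^2(\Omega)}^2+\frac{4}{m}\|\nabla(\rho_{1,h}^{k+1}+\rho_{2,h}^{k+1})^{m/2}\|_{L^2(\Omega)}^2\Big)\leqslant T\,C_0+\sum_{i=1}^2\He(\rho_{i,0})+\frac{2|\Omega|}{\mathrm{e}}.
\]
By definition of the piecewise constant interpolants, the left-hand side equals $\sum_i\|\nabla\rho_{i,h}^{1/2}\|_{L^2((0,Nh)\times\Omega)}^2+\frac4m\|\nabla(\rho_{1,h}+\rho_{2,h})^{m/2}\|_{L^2((0,Nh)\times\Omega)}^2$. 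Since $\|\rho_{i,h}^{1/2}(t)\|_{L^2(\Omega)}^2=1$ and, by \eqref{crossfiff-estimation entropie}, $\|(\rho_{1,h}+\rho_{2,h})^{m/2}(t)\|_{L^2(\Omega)}^2\leqslant2+(m-1)C$ for a.e.\ $t$, adding these zero-order bounds, using $Nh\geqslant T-h$ and $\tfrac1m\leqslant\tfrac4m$, and enlarging the constant accordingly gives \eqref{crossfiff-estimation grad}.

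The hard part is the rigorous justification of the differentiation of $J$ along the heat flow and of the passage to the limit $\tau\downarrow0$. The first point rests on parabolic smoothing, which for $\sigma>0$ makes $\eta_{i,\sigma}$ smooth on $\overline{\Omega}$ and bounded away from $0$, thereby legitimizing the integration by parts that produces the term involving $F_m''(\eta_1+\eta_2)$ (for the drift term, even $W^{1,\infty}$ regularity of $V_i$ suffices, because one integrates by parts only once). The second point rests on the lower semicontinuity with respect to narrow convergence of the Dirichlet-type functionals above, which is precisely what lets one bound the limit of the time averages $\frac1\tau\int_0^\tau(\cdot)\,d\sigma$ from below by the value at $\rho_{i,h}^{k+1}$. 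Everything else—the $\mathrm{EVI}_0$ characterization of the heat flow as the gradient flow of $\He$ on the convex domain $\Omega$, the monotonicity of $\He$ along it, and the lower bound on $\He$—is classical and is what makes the telescoping estimate close.
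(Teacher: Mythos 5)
Your proof is correct and follows essentially the same route as the paper: the flow interchange argument with the heat semigroup as the auxiliary $0$-flow for $\He$, the EVI to control the Wasserstein terms, differentiation of the energy along the heat flow with Young's inequality absorbing the drift, lower semicontinuity of the (generalized) Fisher informations, and a telescoping sum of the entropies. The only cosmetic difference is that you work with integrated difference quotients in $\tau$ rather than one-sided derivatives at $s=0^+$, which changes nothing essential.
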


\begin{proof}
We use the flow interchange argument, introduced in \cite{MMCS}, to find a stronger estimate as in \cite{DFM,L}. In other words, we perturb $\rho_{1,h}^{k}$ and $\rho_{2,h}^{k}$ by the heat flow. Let $\eta_i$ be the solution to
\begin{eqnarray}
\label{crossfiff-chaleur Flow Inter}
\left\{\begin{array}{ll}
\partial_t \eta_i = \Delta \eta_i & \text{ in } (0,T) \times \Omega,\\
\nabla \eta_i \cdot \nu =0 & \text{ in } (0,T) \times \partial \Omega,\\
{\eta_i}_{|t=0}=\rho_{i,h}^k.
\end{array}\right.
\end{eqnarray}
\\
Since the Entropy is geodesically convex then the heat flow is a $0$-flow of the Entropy $\He$, and satisfies the Evolution Variational Inequality, \eqref{eq:EVI}, see \cite{JKO,V1,AGS,DS,S},
\begin{eqnarray}
\label{crossfiff-E.V.I}
\frac{1}{2}\frac{d^+}{d\sigma}_{|\sigma=s} W_2^2(\eta_i(s),\rho) \leqslant \He(\rho) - \He(\eta_i(s)),
\end{eqnarray}
for all $s>0$ and $\rho \in \Paad(\Omega)$.
\\
Taking $(\eta_1(s),\eta_2(s))$ as a competitor in the minimization \eqref{crossfiff-scheme entropy}, we get 
\begin{multline}
\label{crossfiff-derivative scheme}
\sum_{i=1}^2\frac{1}{2}{\frac{d^+}{d s}  W_2^2(\eta_i(s),\rho_{i,h}^{k-1})}_{|s=0}
+h{\frac{d^+}{d s}\left( \sum_{i=1}^2 \left(\He(\eta_i(s)) +\V_i(\eta_i(s)) \right) +\F_m(\eta_1(s)+ \eta_2(s))\right)}_{|s=0} \geqslant 0.  
\end{multline}
Since $\eta_i(s)$ is a smooth positive function for $s>0$, the following computations are justified
\begin{eqnarray}
\begin{split} 
\partial_s \Big(\sum_{i=1}^2\big(  &\He(\eta_i(s)) \left. +\V_i(\eta_i(s)) \right) + \F_m(\eta_1(s)+ \eta_2(s) ) \Big) \\
&=\sum_{i=1}^2 \left(\int_\Omega \Delta \eta_i(s) ((1+\log(\eta_i(s))) +V_i \right) + \int_\Omega \Delta (\eta_1(s)+\eta_2(s)) F_m'(\eta_1(s)+\eta_2(s))\\
&=- \sum_{i=1}^2 \left( \int_\Omega  \frac{|\nabla \eta_i(s)|^2}{\eta_i(s)} + \int_\Omega \nabla V_i \cdot \nabla \eta_i(s) \right)- \int_\Omega |\nabla (\eta_1(s)+\eta_2(s))|^2F_m''(\eta_1(s)+\eta_2(s)).
\end{split}
\end{eqnarray}
\\
In addition, Young's inequality gives
\begin{eqnarray*}
-\int_\Omega \nabla V_i(s) \cdot \nabla \eta_i & \leqslant & \int_\Omega |\nabla V_i||\nabla \eta_i(s)|
\leqslant  \frac{1}{2} \int_\Omega |\nabla V_i|^2\eta_i(s) +\frac{1}{2}\int_\Omega \frac{|\nabla \eta_i(s)|^2}{\eta_i(s)}\\
\end{eqnarray*}
Then, we have
\begin{multline}
\partial_s \Big(\sum_{i=1}^2 \left(\He(\eta_i(s)) +\V_i(\eta_i(s)) \right) + \F_m(\eta_1(s)+ \eta_2(s)) \Big) \\
\leqslant \sum_{i=1}^2 \left( - \frac{1}{2}\int_\Omega  \frac{|\nabla \eta_i(s)|^2}{\eta_i(s)} +\frac{1}{2} \int_\Omega |\nabla V_i|^2 \eta_i(s)  \right)- \int_\Omega |\nabla (\eta_1(s)+\eta_2(s))|^2F_m''(\eta_1(s)+\eta_2(s)).
\end{multline}
By definition of $F_m$, for $m\geqslant 1$, $F_m''(z) =m z^{m-2}$ for all $z\geqslant0$ and, since $V_i \in W^{1,\infty}(\Omega)$,
\begin{multline}
\partial_s \Big(\sum_{i=1}^2 \left(\He(\eta_i(s))  +\V_i(\eta_i(s)) \right) + \F_m(\eta_1(s)+ \eta_2(s)) \Big) \\
\leqslant C- \frac{1}{2} \sum_{i=1}^2  \int_\Omega |\nabla \eta_i(s)^{1/2}|^2 - \frac{4}{m}\int_\Omega |\nabla (\eta_1(s)+\eta_2(s))^{m/2}|^2.
\end{multline}
By a lower semi-continuity argument,
\begin{multline*}
 \frac{1}{2}\sum_{i=1}^2  \int_\Omega |\nabla (\rho_{i,h}^k)^{1/2}|^2 + \frac{4}{m}\int_\Omega |\nabla (\rho_{1,h}^k+\rho_{2,h}^k)^{m/2}|^2 \\
  \leqslant C -{\frac{d^+}{d s}\left( \sum_{i=1}^2 (\He(\eta_i(s)) +\V_i(\eta_i(s)) ) +\F_m(\eta_1(s)+ \eta_2(s))\right)}_{|s=0} .
\end{multline*}
Combining with \eqref{crossfiff-derivative scheme} and \eqref{crossfiff-E.V.I}, we obtain
\begin{eqnarray*}
h \sum_{i=1}^2  \int_\Omega |\nabla (\rho_{i,h}^k)^{1/2}|^2 + \frac{4h}{m}\int_\Omega |\nabla (\rho_{1,h}^k+\rho_{2,h}^k)^{m/2}|^2 \leqslant \sum_{i=1}^2 \left( \He(\rho_{i,h}^{k-1}) -\He(\rho_{i,h}^{k}) \right) +Ch.
\end{eqnarray*}
Then summing over $k$, we obtain
$$
\| \rho_{1,h}^{1/2} \|_{L^2((0,T),H^1(\Omega))}^2 +\| \rho_{2,h}^{1/2} \|_{L^2((0,T),H^1(\Omega))}^2 +\frac{1}{m}\| (\rho_{1,h}+\rho_{2,h})^{m/2} \|_{L^2((0,T),H^1(\Omega))}^2 \leqslant C_T,
$$
where we use the fact that $\| \rho_{i,h}^{1/2} \|_{L^2((0,T) \times \Omega)}^2 =T$ and $\frac{1}{m}\| (\rho_{1,h}+\rho_{2,h})^{m/2} \|_{L^2((0,T)\times \Omega)}^2 \leqslant CT$ by \eqref{crossfiff-estimation entropie}.
\end{proof}

\begin{rem}
The bound on $\| \rho_{i,h}^{1/2} \|_{L^2((0,T),H^1(\Omega))}$ does not depend on $m$. However, if we multiply the Entropy $\He$ by a small parameter $\eps >0$ in the JKO scheme \eqref{crossfiff-scheme entropy}, individual bounds blow up as $\eps$ goes to $0$.
\end{rem}

Now we can deduce the following convergences.

\begin{prop}
\label{prop:strong-convergence entropy}
For all $T<+\infty$, there exist $\rho_1$ and $\rho_2$ in $\mathcal{C}^{0,1/2}([0,T],\Paa(\Omega))$ such that, up to a subsequence,
\begin{enumerate}
\item $\rho_{i,h}$ converges to $\rho_i$ in $L^\infty([0,T],\Paa(\Omega))$, \label{uniform convergence in time entropy}
\item $\rho_{i,h}$ converges strongly to $\rho_i$ in $L^1((0,T) \times \Omega)$,
\item $(\rho_{1,h}+\rho_{2,h})^{m/2}$ converges strongly to $(\rho_1 +\rho_2)^{m/2}$ and $\nabla (\rho_{1,h}+\rho_{2,h})^{m/2}$ converges weakly to $\nabla (\rho_1 +\rho_2)^{m/2}$ in $L^2((0,T) \times \Omega)$. 
\end{enumerate}

\end{prop}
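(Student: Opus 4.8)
The plan is to extract, from the a priori estimates already established, enough compactness to pass to the limit in the piecewise-constant interpolants. For item (1), I would invoke a refined Arzel\`a–Ascoli argument: estimate \eqref{crossfiff-estimation distance} together with the Cauchy–Schwarz trick $W_2(\rho_{i,h}(t),\rho_{i,h}(s))\leqslant C(|t-s|+h)^{1/2}$ shows the interpolants are (almost) uniformly $1/2$-H\"older in $(\Paa(\Omega),W_2)$; since $\Paa(\Omega)\subset\Pa(\Omega)$ is compact for $W_2$ ($\Omega$ compact), a generalized Arzel\`a–Ascoli theorem (see \cite{AGS}, Prop. 3.3.1) yields a subsequence converging uniformly on $[0,T]$ to a limit $\rho_i\in\mathcal{C}^{0,1/2}([0,T],\Paa(\Omega))$, with the limiting H\"older bound surviving. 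The limit of course inherits $\He(\rho_i(t))\leqslant C$ and $\F_m(\rho_1(t)+\rho_2(t))\leqslant C$ by lower semicontinuity of $\He,\F_m$ along narrow convergence.

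For items (2) and (3), the key point is that $W_2$-convergence gives narrow convergence in $x$ pointwise in $t$, but I need strong $L^1$ convergence in $(t,x)$; this is where the gradient bound of Proposition \ref{crossfiff-prop f.g.i sum} enters. From $\|\rho_{i,h}^{1/2}\|_{L^2((0,T),H^1)}\leqslant C_T$ and $\|\rho_{i,h}^{1/2}\|_{L^2((0,T)\times\Omega)}^2=T$ one gets a uniform $L^2((0,T),H^1(\Omega))$ bound on $\rho_{i,h}^{1/2}$, hence (Rellich) relative compactness of $\{\rho_{i,h}^{1/2}(t,\cdot)\}$ in $L^2(\Omega)$ for a.e. $t$; combined with an equi-integrability/time-regularity input — here the $W_2$-continuity estimate plays the role of a weak time-derivative bound via a duality argument, or one simply uses that $\rho_{i,h}\to\rho_i$ narrowly in $x$ for each $t$ plus the spatial compactness — a standard Aubin–Lions–type lemma (e.g. the version in \cite{RS} or the "extended Aubin–Lions" used in \cite{DFM,L}) upgrades this to strong convergence of $\rho_{i,h}^{1/2}$, hence of $\rho_{i,h}$, in $L^1((0,T)\times\Omega)$ (and actually in $L^p$ for suitable $p$). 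Identifying the limit with $\rho_i$ is immediate since narrow $x$-convergence for a.e. $t$ already pins it down. Then for (3) I apply the same Aubin–Lions reasoning to $u_h:=(\rho_{1,h}+\rho_{2,h})^{m/2}$: the third term of \eqref{crossfiff-estimation grad} gives $u_h$ bounded in $L^2((0,T),H^1(\Omega))$, while $\|u_h\|_{L^2((0,T)\times\Omega)}^2\leqslant CT$ from \eqref{crossfiff-estimation entropie}, and the $W_2$-equicontinuity of $\rho_{1,h},\rho_{2,h}$ transfers to a weak time-equicontinuity of $u_h$; this yields strong $L^2$ convergence of $u_h$ to some limit, which must be $(\rho_1+\rho_2)^{m/2}$ since $\rho_{1,h}+\rho_{2,h}\to\rho_1+\rho_2$ in $L^1$ (from (2)), passing to a further subsequence with a.e. convergence and continuity of $z\mapsto z^{m/2}$. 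Finally, $\nabla u_h$ is bounded in $L^2((0,T)\times\Omega)$, so up to a subsequence it converges weakly there, and the weak limit is $\nabla(\rho_1+\rho_2)^{m/2}$ by the strong convergence of $u_h$ and the closedness of the distributional gradient.

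The main obstacle is the application of the Aubin–Lions–type compactness: one does not have a genuine bound on $\partial_t\rho_{i,h}$ in any negative Sobolev space directly from the scheme — only the telescoping $W_2$-estimate \eqref{crossfiff-estimation distance}. The standard fix is the Rossi–Savar\'e / extended Aubin–Lions lemma, which replaces the time-derivative hypothesis by the weaker requirement that the curves be "almost equicontinuous" in a metric (here $W_2$) that is weaker than the norm controlled by the tight functional (here the $H^1$-type bound on $\rho_{i,h}^{1/2}$), together with a superlinear coercive functional — the entropy $\He$ (or $\F_m$) — whose sublevels are $W_2$-compact. I would state this lemma precisely, verify its hypotheses ($W_2$-$1/2$-H\"older bound up to the $O(h)$ error, uniform $L^2((0,T),H^1)$ bound on the square roots, uniform entropy bound), and conclude. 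A secondary technical care is that the interpolants are only piecewise constant, so the H\"older estimate holds up to an additive $h^{1/2}$; this is harmless since $h\to0$, but must be tracked so that the limit curve genuinely lies in $\mathcal{C}^{0,1/2}([0,T],\Paa(\Omega))$.
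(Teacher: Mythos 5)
Your proposal is correct and follows essentially the same route as the paper: the refined Ascoli--Arzel\`a theorem of \cite{AGS} for the uniform-in-time convergence, then the Rossi--Savar\'e extension of the Aubin--Lions lemma \cite{RS} with $W_2$ as pseudo-distance and the $H^1$ bounds on $\rho_{i,h}^{1/2}$ and $(\rho_{1,h}+\rho_{2,h})^{m/2}$ from \eqref{crossfiff-estimation grad} as the coercive functionals, followed by weak $L^2$ compactness of the gradients. The only point to make explicit is that the ``transfer'' of $W_2$-equicontinuity to the sum rests on the inequality $W_2^2\left(\frac{\rho_1+\rho_2}{2},\frac{\mu_1+\mu_2}{2}\right)\leqslant \frac{1}{2}W_2^2(\rho_1,\mu_1)+\frac{1}{2}W_2^2(\rho_2,\mu_2)$, so that Rossi--Savar\'e is applied to the curve $t\mapsto \frac{\rho_{1,h}+\rho_{2,h}}{2}$ in $L^m(\Omega)$; the paper then passes to the power $m/2$ via Krasnoselskii's theorem, whereas you use a.e. convergence and identification of the limit, which works equally well.
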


\begin{proof}
\begin{enumerate}
\item The first convergence is classical. We use the refined version of Ascoli-Arzelà's Theorem, \cite[Proposition 3.3.1]{AGS}, and we immediately deduce that there exists a subsequence such that, for $i=1,2$, $\rho_{i,h}$ converges to $\rho_i \in \mathcal{C}^{1/2}([0,T],\Paa(\Omega))$ in $L^\infty([0,T],\Paa(\Omega))$.\\

The next two strong convergence results are obtained applying an extension of the Aubin-Lions Lemma proved by Rossi and Savaré in \cite[Theorem 2]{RS}. In the sequel, we work with the convergent subsequence obtained in the first step. 

\item
Let $\G \, : \, L^1(\Omega) \rightarrow (-\infty, +\infty]$ and $g \, : \, L^1(\Omega)\times L^1(\Omega)\rightarrow [0, +\infty]$   defined by
$$ \G(\rho):=\left\{ \begin{array}{ll}
\|\rho^{1/2} \|_{H^1(\Omega)} & \text{ if } \rho \in \Paa(\Omega) \text{ and } \rho^{1/2} \in H^1(\Omega)\\
+\infty & \text{ otherwise, }
\end{array}\right.$$
and 
$$ g(\rho,\mu):=\left\{ \begin{array}{ll}
W_2(\rho,\mu) & \text{ if }  \rho,\mu \in \Pa(\Omega)\\
+\infty & \text{ otherwise, }
\end{array}\right.$$

$\G$ is l.s.c and its sublevels are relatively compact in $L^1(\Omega)$ (see \cite{DFM,L}) and $g$ is a pseudo-distance. According to \eqref{crossfiff-estimation distance} and \eqref{crossfiff-estimation grad}, we have 
$$ \sup_{h\leqslant 1}\int_0^T \G(\rho_{i,h}(t) ) \, dt <+\infty, \text{ and } \lim_{\tau \searrow 0}\sup_{h\leqslant 1}\int_0^{T-\tau} g(\rho_{i,h}(t+\tau), \rho_{i,h}(t)) \, dt =0,$$
then applying Rossi-Savaré's Theorem, there exists a subsequence, not-relabeled, such that for $i=1,2$, $\rho_{i,h}$ converges in measure with respect to $t$ in $L^1(\Omega)$ to $\rho_i$. Moreover by Lebesgue's dominated convergence Theorem, $\rho_{i,h}$ converges to $\rho_i$ strongly in $L^1((0,T)\times \Omega)$ .

\item With the same argument, we get a strong convergence on a nonlinear quantity of $\rho_{1,h}+\rho_{2,h}$. Let $\G$  define by
$$ \G(\rho):=\left\{ \begin{array}{ll}
\|\rho^{m/2} \|_{H^1(\Omega)} & \text{ if } \rho \in \Paa(\Omega) \text{ and } \rho^{m/2} \in H^1(\Omega)\\
+\infty & \text{ otherwise, }
\end{array}\right.$$
and $g$ defined as before. We want to apply Theorem 2 of \cite{RS} in $L^m(\Omega)$ over the sequence $\frac{\rho_{1,h}+\rho_{2,h}}{2}$. By \eqref{crossfiff-estimation grad}, we obtain
$$ \sup_{h\leqslant 1}\int_0^T \G\left(\frac{\rho_{1,h}(t)+\rho_{2,h}(t)}{2} \right) \, dt <+\infty.$$
Since, it is well-known that for all $\rho_1,\rho_2,\mu_1,\mu_2 \in \Paa(\Omega)$,
$$ W_2^2\left(\frac{\rho_1+\rho_2}{2},\frac{\mu_1+\mu_2}{2}\right) \leqslant \frac{1}{2} W_2^2(\rho_1,\mu_1) + \frac{1}{2} W_2^2(\rho_2,\mu_2),$$
by \eqref{crossfiff-estimation distance}, we obtain
$$\lim_{\tau \searrow 0}\sup_{h\leqslant 1}\int_0^{T-\tau} g\left(\frac{\rho_{1,h}+\rho_{2,h}}{2}(t+\tau), \frac{\rho_{1,h}+\rho_{2,h}}{2}(t)\right) \, dt =0.$$
Theorem 2 in \cite{RS} and Lebesgue's dominated convergence Theorem imply that $\rho_{1,h}+\rho_{2,h}$ converges strongly to $\rho_{1}+\rho_{2}$ in $L^m((0,T)\times \Omega)$. In addition, Krasnoselskii's Theorem, \cite[Chapter 2]{DF}, implies that $(\rho_{1,h}+\rho_{2,h})^{m/2}$ converges to $(\rho_{1}+\rho_{2})^{m/2}$ in $L^2((0,T)\times \Omega)$. To conclude, $\nabla (\rho_{1,h}+\rho_{2,h})^{m/2}$ is bounded in $L^2((0,T)\times \Omega)$, thanks to \eqref{crossfiff-estimation grad}, then $\nabla (\rho_{1,h}+\rho_{2,h})^{m/2}$ weakly converges to $\nabla (\rho_{1}+\rho_{2})^{m/2}$ in $L^2((0,T)\times \Omega)$. 
\end{enumerate}
\end{proof}

\begin{rem}
It is possible to obtain a strong convergence result in $L^1((0,T)\times \Omega)$ for the pressure $F'_m(\rho_{1,h} +\rho_{2,h})$. Indeed, since $\rho_{1,h} +\rho_{2,h}$ strongly converges in $L^m((0,T)\times \Omega)$, then up to a subsequence, $F_m'(\rho_{1,h} +\rho_{2,h}) \rightarrow F_m'(\rho_{1} +\rho_{2})$ a.e. In addition using De La Vallée Poussin's Theorem, we show that $(F'_m(\rho_{1,h} +\rho_{2,h}))_h$ is uniformly integrable. We conclude applying Vitali's convergence Theorem.
\end{rem}

\begin{rem}
\label{rem: drop diffusion porous}
Notice that we can drop one individual diffusion. Assume that we drop the individual Entropy in the JKO scheme \eqref{crossfiff-scheme entropy} for one of the two densities, for instance $\rho_2$. The difficulty is to obtain a strong convergence for the sequence $(\rho_{2,h})_h$. Proposition \ref{prop:strong-convergence entropy} gives the strong convergence of $\rho_{1,h}$ and $\rho_{1,h} +\rho_{2,h}$ in $L^1((0,T) \times \Omega)$ and $L^m((0,T)\times \Omega)$ respectively, and then pointwise on $(0,T) \times \Omega$. Consequently, $\rho_{2,h}= (\rho_{1,h}+\rho_{2,h}) - \rho_{1,h}$ converges pointwise on $(0,T) \times \Omega$. Moreover,
$$ \int_0^T \int_\Omega \rho_{2,h}(t,x)^m \,dxdt \leqslant  \int_0^T \int_\Omega (\rho_{1,h}(t,x)+\rho_{2,h}(t,x))^m \,dxdt \leqslant C_T.$$
Then Vitali's convergence Theorem implies that $\rho_{2,h}$ strongly converges to $\rho_2$ in $L^1((0,T) \times \Omega)$.
\end{rem}

\subsection{Existence of weak solutions to \eqref{systdiffcoup}}

In this section, we start by giving the optimality conditions for \eqref{crossfiff-scheme entropy}. Instead of using {\it horizontal perturbations}, $\rho_{i,\eps} = {\Phi_\eps}_{\#} \rho_{i,h}^{k+1}$, as introduced in \cite{JKO} by Jordan, Kinderlherer and Otto, we will perturb $\rho_{i,h}^{k+1}$ with {\it vertical perturbations} introduced in 
 \cite{BS,CS}, and revisited in \cite{Sa,S}, which consist in taking $\rho_{i,\eps}= (1-\eps) \rho_{i,h}^{k+1} + \eps \trho_i$, for any $\trho_i \in L^\infty(\Omega)$. Before giving the optimality conditions for \eqref{crossfiff-scheme entropy}, we state the following Lemma.

\begin{lem}
\label{crossfiff-rho positif}
For all $k \geqslant 1$, $\rho_{i,h}^k>0$ a.e. and $\log(\rho_{i,h}^k)  \in L^1(\Omega)$.
\end{lem}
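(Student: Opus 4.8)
The plan is to show that the minimizer $(\rho_{1,h}^{k+1}, \rho_{2,h}^{k+1})$ has both components strictly positive almost everywhere, arguing by contradiction: if one of the densities, say $\rho_{1,h}^k$, vanishes on a set of positive measure, we construct an admissible competitor that strictly decreases the functional in \eqref{crossfiff-scheme entropy}. Since the statement is for $k\geqslant 1$, I would exploit that the previous density $\rho_{i,h}^{k-1}$ is a genuine probability measure (so that its mass is spread out) and that the congestion term $\F_m$ still leaves room because the total mass is $2$ while the constraint region has measure $|\Omega|$.

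First I would set up the competitor. Fix $i=1$ and suppose $A:=\{\rho_{1,h}^k=0\}$ has positive Lebesgue measure. For small $\eps>0$, define $\rho_{1,\eps}$ by removing a little mass $\eps$ from the region where $\rho_{1,h}^k$ is large (or just uniformly rescaling by $(1-\eps)$ on $A^c$) and redistributing it on $A$, say proportionally to $\mathbf{1}_A$, keeping $\rho_{2,\eps}=\rho_{2,h}^k$ unchanged. One must check this keeps the pair admissible: $\rho_{1,\eps}\in\Paa(\Omega)$ is clear by construction, and $\rho_{1,\eps}+\rho_{2,\eps}\leqslant 1$ needs care only on $A$, where $\rho_{2,h}^k$ could already be close to $1$; here I would instead put the mass on the subset of $A$ where $\rho_{2,h}^k\leqslant 1-\delta$ for suitable $\delta$, which has positive measure since otherwise $\rho_{2,h}^k$ would have too much mass given $|\Omega|$ bounds — or, more simply, note $\F_m$ with $m<\infty$ is finite-valued so the only constraint when $m<\infty$ is nonnegativity, and the hard-congestion case $m=\infty$ is handled separately (cf. Lemma~\ref{lem:rho positif crowd}).

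Next I would estimate the change in the functional. The key asymmetry is the classical one: the entropy term $\He(\rho_1)=\int \rho_1\log\rho_1$ has derivative $\log\rho_1+1\to-\infty$ as $\rho_1\to 0^+$, so moving an infinitesimal amount of mass onto $A$ produces a gain in $\He$ of order $-\eps\log\eps$ (superlinear, favorable), whereas all the other terms — the Wasserstein cost $W_2^2(\rho_{1,\eps},\rho_{1,h}^{k-1})$, the potential $\V_1$, and the congestion $\F_m(\rho_{1,\eps}+\rho_{2,\eps})$, which are all locally Lipschitz in $\eps$ near $\rho_{1,h}^{k+1}$ — change by at most $O(\eps)$. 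For the Wasserstein term I would use that $W_2^2$ is bounded and that transporting a mass $\eps$ within the bounded set $\Omega$ costs at most $\eps\,\diam(\Omega)^2$. For the congestion term, $F_m'$ is bounded on the relevant range (or one stays strictly below the threshold), giving an $O(\eps)$ bound. Hence for $\eps$ small the competitor strictly lowers the total, contradicting minimality; therefore $\rho_{1,h}^k>0$ a.e., and symmetrically for $\rho_{2,h}^k$.

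Finally, once strict positivity a.e.\ is established, $\log(\rho_{i,h}^k)\in L^1(\Omega)$ follows: the positive part is controlled because $\He(\rho_{i,h}^k)<+\infty$ (Proposition~\ref{crossfiff-estimation sum}) bounds $\int \rho_i\log\rho_i$ and hence $\int_{\{\rho_i\geqslant 1\}}\log\rho_i$, while for the negative part one uses that $\rho_{i,h}^k$ is a probability density on the bounded set $\Omega$ together with the just-proven positivity; more quantitatively, the same competitor estimate, optimized in $\eps$, yields a lower bound of the form $\rho_{1,h}^k\gtrsim \exp(-C/\dist)$ type control, or one argues abstractly that $\int|\log\rho_i|<\infty$ because otherwise perturbing would again decrease the energy. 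I expect the main obstacle to be the admissibility of the competitor in the presence of the coupling constraint — ensuring there is genuinely room to place extra mass of $\rho_1$ where $\rho_1$ currently vanishes without violating $\rho_1+\rho_2\leqslant 1$ — which is exactly where assumption \eqref{ass:size subset} (or, for $m<\infty$, the softness of $\F_m$) enters; the energy-comparison part is then routine.
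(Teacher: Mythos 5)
Your strategy for the positivity part is essentially the paper's: the paper proves this lemma by invoking \cite[Lemma 8.6]{S}, which is exactly the vertical perturbation $\rho_{1,\eps}=(1-\eps)\rho_{1,h}^k+\eps\trho$ you describe — the entropy's marginal cost of putting mass $\eps$ on the zero set is of order $\eps\log\eps$, which beats the $O(\eps)$ variations of $W_2^2$, $\V_i$ and $\F_m$ (for the latter, convexity of $\F_m$ together with $\F_m(\trho+\rho_{2,h}^k)<+\infty$, which holds because $\trho$ is bounded and $\rho_{2,h}^k\in L^m(\Omega)$ by \eqref{crossfiff-estimation entropie}, gives the $O(\eps)$ bound). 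Your worry about admissibility under $\rho_1+\rho_2\leqslant 1$ is moot: this lemma concerns the soft-congestion scheme \eqref{crossfiff-scheme entropy}, whose admissible set is all of $\Paa(\Omega)^2$, so neither \eqref{ass:size subset} nor any room-to-place-mass argument is needed here (that issue belongs to Lemma~\ref{lem:rho positif crowd}); you do eventually note this. Likewise, nothing about $\rho_{i,h}^{k-1}$ being spread out is used: $k\geqslant 1$ only matters because the statement is about minimizers, not about the initial data.

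The genuine gap is the second half of the statement, $\log(\rho_{i,h}^k)\in L^1(\Omega)$. Your first proposed justification — that the negative part of $\log\rho_{i,h}^k$ is controlled by a.e.\ positivity together with $\rho_{i,h}^k$ being a probability density on a bounded set — is false: a density can be strictly positive a.e., have finite entropy, and still satisfy $\int_\Omega(\log\rho)_-=+\infty$, since $z\log z\to 0$ as $z\to 0^+$ gives no lower control on $\log\rho$ where $\rho$ is tiny; and the ``$\exp(-C/\mathrm{dist})$'' lower bound is not something this perturbation produces. The correct route is the one you only gesture at in your final clause, and it is how \cite[Lemma 8.6]{S} actually argues: take $\trho\equiv 1/|\Omega|$, use convexity of $H$ so that the difference quotients $\eps^{-1}\big(\He(\rho_{1,\eps})-\He(\rho_{1,h}^k)\big)$ decrease, as $\eps\downarrow 0$, to $\int_\Omega(\trho-\rho_{1,h}^k)\big(\log\rho_{1,h}^k+1\big)\,dx$ (monotone convergence along the convex interpolation), while the difference quotients of $W_2^2$, $\V_1$ and $\F_m$ remain bounded; minimality then forces this limit to be finite, hence $\int_\Omega\trho\,\log\rho_{1,h}^k>-\infty$, i.e. $(\log\rho_{1,h}^k)_-\in L^1(\Omega)$ because $\trho$ is a positive constant, the positive part being controlled by $\He(\rho_{1,h}^k)<+\infty$ as you say. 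Note that this single quantitative inequality also yields $\rho_{1,h}^k>0$ a.e.\ as a byproduct, so positivity and the $L^1$ bound on the logarithm are one computation, not two separate arguments; as written, your proposal proves the first and leaves the second unestablished.
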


\begin{proof}
The proof is the same as \cite[Lemma 8.6]{S}.

\end{proof}

This Lemma ensures the uniqueness (up to a constant) of the Kantorovich potential in the transport from $\rho_{i,h}^{k+1}$ to $\rho_{i,h}^k$ and then, we can easily compute the first variation of $W_2(\cdot,\rho_{i,h}^k)$ according to \cite[Proposition 7.17]{S}. 

\begin{prop}
\label{prop:variation premiere}
For $i \in \{1,2\}$, $\rho_{i,h}^{k+1}$ satisfies
\begin{eqnarray}
\label{crossfiff-variation premiere}
\nabla V_i +\nabla \log(\rho_{i,h}^{k+1}) + \nabla F_m'(\rho_{1,h}^{k+1}+\rho_{2,h}^{k+1}) + \frac{\nabla \varphi_{i,h}^{k+1}}{h} = 0 \qquad \rho_{i,h}^{k+1}-a.e,
\end{eqnarray}
where $\varphi_{i,h}^{k+1}$ is the (unique) Kantorovich potential from $\rho_{i,h}^{k+1}$ to $\rho_{i,h}^{k}$.

\end{prop}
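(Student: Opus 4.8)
The plan is to derive the optimality conditions for the minimization problem \eqref{crossfiff-scheme entropy} using the vertical perturbations $\rho_{i,\varepsilon} = (1-\varepsilon)\rho_{i,h}^{k+1} + \varepsilon\widetilde\rho_i$ with $\widetilde\rho_i \in L^\infty(\Omega) \cap \Paa(\Omega)$. First I would fix one index $i$ (say $i=1$), keep $\rho_{2,h}^{k+1}$ frozen, and perturb only $\rho_{1,h}^{k+1}$; by minimality the function $\varepsilon \mapsto \sum_i W_2^2(\rho_{i,\varepsilon},\rho_{i,h}^k) + 2h(\He(\rho_{i,\varepsilon})+\V_i(\rho_{i,\varepsilon})) + 2h\F_m(\rho_{1,\varepsilon}+\rho_{2,h}^{k+1})$ has a nonnegative right derivative at $\varepsilon = 0^+$. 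I would then compute the three contributions to this derivative term by term. For the Wasserstein term, I would invoke \cite[Proposition 7.17]{S}: thanks to Lemma \ref{crossfiff-rho positif}, $\rho_{1,h}^{k+1} > 0$ a.e. and $\log\rho_{1,h}^{k+1} \in L^1(\Omega)$, so the Kantorovich potential $\varphi_{1,h}^{k+1}$ from $\rho_{1,h}^{k+1}$ to $\rho_{1,h}^{k}$ is unique up to an additive constant, and the first variation of $\mu \mapsto \tfrac12 W_2^2(\mu,\rho_{1,h}^k)$ at $\rho_{1,h}^{k+1}$ is exactly $\varphi_{1,h}^{k+1}$. For the potential term, the first variation of $\V_1$ is simply $V_1$. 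For the entropy, the first variation of $\He$ is $\log\rho_{1,h}^{k+1} + 1$, and for $\F_m(\rho_1 + \rho_2)$, differentiating under the integral gives $F_m'(\rho_{1,h}^{k+1}+\rho_{2,h}^{k+1})$ as the first variation in the $\rho_1$ slot.

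Assembling these, the right derivative condition reads
\begin{equation*}
\int_\Omega \left( \frac{\varphi_{1,h}^{k+1}}{h} + V_1 + \log\rho_{1,h}^{k+1} + 1 + F_m'(\rho_{1,h}^{k+1}+\rho_{2,h}^{k+1}) \right)(\widetilde\rho_1 - \rho_{1,h}^{k+1})\,dx \geqslant 0
\end{equation*}
for all admissible $\widetilde\rho_1$. Since $\rho_{1,h}^{k+1} > 0$ a.e. (Lemma \ref{crossfiff-rho positif}), it lies in the interior of the positivity constraint, so one can perturb in both directions and the inequality upgrades to the statement that the bracketed quantity is $\rho_{1,h}^{k+1}$-a.e. equal to a constant $c_1$ (the constant absorbing the mass constraint Lagrange multiplier). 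This gives $\varphi_{1,h}^{k+1}/h + V_1 + \log\rho_{1,h}^{k+1} + F_m'(\rho_{1,h}^{k+1}+\rho_{2,h}^{k+1}) = \text{const}$ a.e. on $\{\rho_{1,h}^{k+1} > 0\} = \Omega$, and taking the gradient kills the constant, yielding \eqref{crossfiff-variation premiere}. The same argument with roles of the indices swapped handles $i=2$.

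The main technical obstacle is justifying the differentiation of each term under the integral sign at $\varepsilon = 0^+$ — in particular, controlling the entropy term, since $H(z) = z\log z$ has an unbounded derivative near $z=0$ and $H''$ blows up. The standard remedy (as in \cite{CS,Sa,S}) is to note that for $\varepsilon$ small the difference quotient $\varepsilon^{-1}(H(\rho_{1,\varepsilon}) - H(\rho_{1,h}^{k+1}))$ is monotone in $\varepsilon$ and dominated from above, so that monotone/dominated convergence applies and produces the derivative $\int_\Omega (\log\rho_{1,h}^{k+1}+1)(\widetilde\rho_1 - \rho_{1,h}^{k+1})$, possibly as a value in $[-\infty,+\infty)$; the a priori bound $\He(\rho_{1,h}^{k+1}) \leqslant C$ from Proposition \ref{crossfiff-estimation sum} together with $\widetilde\rho_1 \in L^\infty$ keeps everything finite. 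The congestion term $\F_m$ is easier: for $m > 1$, $F_m$ is $C^1$ with polynomially bounded derivative and $\rho_{1,\varepsilon}+\rho_{2,h}^{k+1}$ stays bounded in $L^m$ uniformly in $\varepsilon$, so dominated convergence applies directly; for $m=1$ it merges with the entropy computation. Once the differentiation is justified, the passage from the variational inequality to the pointwise Euler–Lagrange identity is routine, and this is exactly the content I would cite from \cite[Chapter 8]{S} and \cite{BS,CS,Sa}.
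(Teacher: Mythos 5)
Your argument is exactly the route the paper takes: vertical perturbations $\rho_{i,\eps}=(1-\eps)\rho_{i,h}^{k+1}+\eps\trho_i$ one species at a time, the first variation of $W_2^2(\cdot,\rho_{i,h}^k)$ via the Kantorovich potential made unique by Lemma \ref{crossfiff-rho positif}, and the upgrade of the variational inequality to a pointwise constant on $\{\rho_{i,h}^{k+1}>0\}=\Omega$ before taking gradients — the paper simply delegates these details to \cite{S} (and \cite{BS,CS,Sa}), which you have spelled out correctly, including the standard monotone/dominated convergence treatment of the entropy term.
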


\begin{proof}
The proof is a straightforward adaptation of classical result, see for instance \cite{S}.

\end{proof}

A classical consequence of the previous Proposition is that $\rho_{1,h}$ and $\rho_{2,h}$ are solutions to a discrete approximation of system \eqref{systdiffcoup}. 
\begin{prop}
\label{crossfiff-equation discrete entropie}
Let $h>0$, for all $T>0$, let $N$ such that $N=\lfloor \frac{T}{h} \rfloor $. Then for all $(\phi_1, \phi_2) \in \mathcal{C}^\infty_c ([0,T)\times \Rn)^2$ and for all $i \in \{1,2\}$,
\begin{eqnarray*}
\begin{split}
\int_0^T & \int_{\Omega} \rho_{i,h}(t,x)  \partial_t \phi_i(t,x) \,dxdt + \int_{\Omega} \rho_{i,0}(x) \phi_i(0,x) \, dx\\
&= h\sum_{k=0}^{N-1}\int_{\Omega} \nabla V_i(x) \cdot  \nabla \phi_i (t_{k},x) \rho_{i,h}^{k+1}(x)\,dx +h\sum_{k=0}^{N-1}\int_{\Omega} \nabla \rho_{i,h}^{k+1}(x) \cdot  \nabla \phi_i (t_{k},x)\,dx\\
&+h\sum_{k=0}^{N-1}\int_{\Omega} \nabla F_m'(\rho_{1,h}^{k+1} + \rho_{2,h}^{k+1}) \cdot  \nabla \phi_i(t_{k},x) \rho_{i,h}^{k+1}(x)\,dx+\sum_{k=0}^{N-1}\int_{\Omega \times \Omega} \mathcal{R}[\phi_i(t_{k},\cdot)](x,y) d\gamma_{i,h}^k (x,y)
\end{split}
\end{eqnarray*}
\\
where $t_k=hk$ ($t_N :=T$) and $\gamma_{i,h}^k$ is the optimal transport plan in $W_2(\rho_{i,h}^{k},\rho_{i,h}^{k+1})$.
Moreover, $ \mathcal{R}$ is defined such that, for all $\phi \in \mathcal{C}^\infty_c([0,T) \times \Rn)$,
$$ |\mathcal{R}[\phi](x,y)| \leqslant \frac{1}{2} \|D^2 \phi \|_{L^\infty ([0,T) \times \Rn)} |x- y|^2.$$

\end{prop}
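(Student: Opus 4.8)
The plan is to derive the discrete weak formulation directly from the Euler--Lagrange equation \eqref{crossfiff-variation premiere} of Proposition \ref{prop:variation premiere}, by testing against $\nabla\phi_i(t_k,\cdot)$ and summing over $k$. First I would fix $i\in\{1,2\}$, $T>0$ and $\phi_i\in\mathcal{C}^\infty_c([0,T)\times\Rn)$, set $N=\lfloor T/h\rfloor$ and $t_k=hk$. For each $k$ between $0$ and $N-1$, I multiply \eqref{crossfiff-variation premiere} (written at step $k+1$) by $\nabla\phi_i(t_k,x)$ and integrate against $\rho_{i,h}^{k+1}$. This produces, after multiplying by $h$, the identity
\begin{multline*}
-\int_\Omega \nabla\varphi_{i,h}^{k+1}(x)\cdot\nabla\phi_i(t_k,x)\,\rho_{i,h}^{k+1}(x)\,dx
= h\int_\Omega \nabla V_i\cdot\nabla\phi_i(t_k,x)\,\rho_{i,h}^{k+1}(x)\,dx\\
+ h\int_\Omega \nabla\rho_{i,h}^{k+1}(x)\cdot\nabla\phi_i(t_k,x)\,dx
+ h\int_\Omega \nabla F_m'(\rho_{1,h}^{k+1}+\rho_{2,h}^{k+1})\cdot\nabla\phi_i(t_k,x)\,\rho_{i,h}^{k+1}(x)\,dx,
\end{multline*}
where I used $\nabla\log(\rho_{i,h}^{k+1})\,\rho_{i,h}^{k+1}=\nabla\rho_{i,h}^{k+1}$, which is legitimate since $\rho_{i,h}^{k+1}>0$ a.e.\ and $(\rho_{i,h}^{k+1})^{1/2}\in H^1(\Omega)$ by Lemma \ref{crossfiff-rho positif} and Proposition \ref{crossfiff-prop f.g.i sum}.

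Next I would handle the left-hand side, which is exactly the ``displacement'' term. The key observation is that if $\gamma_{i,h}^k$ is the optimal plan between $\rho_{i,h}^k$ and $\rho_{i,h}^{k+1}$, then $T=\id-\nabla\varphi_{i,h}^{k+1}$ pushes $\rho_{i,h}^{k+1}$ onto $\rho_{i,h}^k$, so that for any smooth $\psi$,
$$\int_{\Omega\times\Omega}\bigl(\psi(x)-\psi(y)\bigr)\,d\gamma_{i,h}^k(x,y)=\int_\Omega\bigl(\psi(x)-\psi(x-\nabla\varphi_{i,h}^{k+1}(x))\bigr)\rho_{i,h}^{k+1}(x)\,dx.$$
A first-order Taylor expansion of $\psi$ with $\psi=\phi_i(t_k,\cdot)$ gives
$$\psi(x)-\psi(x-\nabla\varphi_{i,h}^{k+1}(x))=\nabla\psi(x)\cdot\nabla\varphi_{i,h}^{k+1}(x)+\mathcal{R}[\phi_i(t_k,\cdot)](x,y),$$
with the quadratic remainder controlled by $\frac12\|D^2\phi_i\|_{L^\infty}|x-y|^2$, which is precisely the definition of $\mathcal{R}$ in the statement. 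Hence $-\int_\Omega\nabla\varphi_{i,h}^{k+1}\cdot\nabla\phi_i(t_k,\cdot)\rho_{i,h}^{k+1}\,dx=\int_{\Omega\times\Omega}\bigl(\phi_i(t_k,y)-\phi_i(t_k,x)\bigr)d\gamma_{i,h}^k-\int_{\Omega\times\Omega}\mathcal{R}[\phi_i(t_k,\cdot)]\,d\gamma_{i,h}^k$, up to sign bookkeeping.

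Finally I would sum over $k=0,\dots,N-1$ and recognize a discrete integration by parts in time. Writing $\phi_i(t_k,y)=\int\phi_i(t_k,\cdot)\,d\rho_{i,h}^k$ and $\phi_i(t_k,x)=\int\phi_i(t_k,\cdot)\,d\rho_{i,h}^{k+1}$, the telescoping Abel summation
$$\sum_{k=0}^{N-1}\Bigl(\int\phi_i(t_k,\cdot)\,d\rho_{i,h}^k-\int\phi_i(t_k,\cdot)\,d\rho_{i,h}^{k+1}\Bigr)$$
rearranges into $\sum_k\int\bigl(\phi_i(t_{k+1},\cdot)-\phi_i(t_k,\cdot)\bigr)d\rho_{i,h}^{k+1}+\int\phi_i(0,\cdot)\,d\rho_{i,0}$ (the boundary term at $k=N$ vanishes because $\phi_i$ is compactly supported in $[0,T)$), and since $\rho_{i,h}$ is constant equal to $\rho_{i,h}^{k+1}$ on $(t_k,t_{k+1}]$, the first sum is $\int_0^T\int_\Omega\rho_{i,h}\,\partial_t\phi_i\,dx\,dt$. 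Collecting all the pieces yields the claimed identity. The only real care needed is the justification of the pointwise Taylor expansion of $\phi_i$ along the transport ray (so that the error is genuinely bounded by $|x-y|^2$ times $\|D^2\phi_i\|_\infty$ for $\gamma_{i,h}^k$-a.e.\ $(x,y)$), and the measurability/integrability of $\nabla F_m'(\rho_{1,h}^{k+1}+\rho_{2,h}^{k+1})\rho_{i,h}^{k+1}$, which follows from $\nabla(\rho_{1,h}^{k+1}+\rho_{2,h}^{k+1})^{m/2}\in L^2$ together with the $L^\infty$-type bounds on the densities from Proposition \ref{crossfiff-estimation sum}; I expect this integrability check to be the main (though minor) obstacle, the rest being the standard JKO-to-PDE bookkeeping.
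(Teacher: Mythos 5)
Your proposal is correct and follows essentially the same route as the paper: multiply the Euler--Lagrange identity \eqref{crossfiff-variation premiere} by $\rho_{i,h}^{k+1}$, test against $\nabla\phi_i(t_k,\cdot)$, convert the Kantorovich-potential term via the optimal map $\id-\nabla\varphi_{i,h}^{k+1}$ and a second-order Taylor expansion (yielding the remainder $\mathcal{R}$), then sum over $k$ with an Abel/telescoping argument to recover the discrete time derivative and the initial datum. One small correction of attribution only: the integrability of $\nabla F_m'(\rho_{1,h}^{k+1}+\rho_{2,h}^{k+1})\,\rho_{i,h}^{k+1}$ comes not from any $L^\infty$ bound but from writing it as $2\,\tfrac{\rho_{i,h}^{k+1}}{\rho_{1,h}^{k+1}+\rho_{2,h}^{k+1}}(\rho_{1,h}^{k+1}+\rho_{2,h}^{k+1})^{m/2}\nabla(\rho_{1,h}^{k+1}+\rho_{2,h}^{k+1})^{m/2}$ and using the $L^m$ bound \eqref{crossfiff-estimation entropie} together with the $H^1$ bound of Proposition \ref{crossfiff-prop f.g.i sum}, exactly as in the paper's limit passage.
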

\begin{proof}
 We multiply by $\rho_{i,h}^{k+1}$ and take the $L^2$-inner product between the l.h.s. of \eqref{crossfiff-variation premiere} and $\nabla \phi_i(t_k, \cdot)$, for any $\phi_i \in \mathcal{C}^\infty_c ([0,T)\times \Rn)$ and the proof is the same as in \cite{A,L}, for example.
\end{proof}

Another consequence of \eqref{crossfiff-variation premiere} is an improvment of the regularity of $\rho_{i,h}$.

\begin{prop}
\label{prop:improvment regularity}
For all $T>0$ and $i=1,2$, we have
\begin{itemize}
\item $(\rho_{1,h} +\rho_{2,h})^{1/2} \nabla F_m'(\rho_{1,h} +\rho_{2,h}) $ is bounded in $L^2((0,T)\times \Omega)$,

\item $\rho_{i,h}, \rho_{1,h} + \rho_{2,h}$ are bounded  in $L^{2m-1}(((0,T)\times \Omega)$,

\item $\nabla F_m'(\rho_{1,h} +\rho_{2,h}) \rho_{i,h}$ is bounded in $L^{2-1/m}((0,T) \times \Omega)$ and $\rho_{i,h}$ is bounded in $L^{2-1/m}((0,T), W^{1,2-1/m}(\Omega))$.
\end{itemize}
\end{prop}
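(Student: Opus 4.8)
The plan is to extract these three regularity improvements from the Euler--Lagrange equation \eqref{crossfiff-variation premiere} together with the estimates already collected in Propositions~\ref{crossfiff-estimation sum} and \ref{crossfiff-prop f.g.i sum}. First I would multiply \eqref{crossfiff-variation premiere} by $\rho_{i,h}^{k+1}$ and test against $\nabla F_m'(\rho_{1,h}^{k+1}+\rho_{2,h}^{k+1})$ (or rather sum the two equations for $i=1,2$), so that the term $\rho_{i,h}^{k+1}|\nabla F_m'(\rho_{1,h}^{k+1}+\rho_{2,h}^{k+1})|^2$, summed over $i$, becomes $(\rho_{1,h}^{k+1}+\rho_{2,h}^{k+1})|\nabla F_m'(\rho_{1,h}^{k+1}+\rho_{2,h}^{k+1})|^2$; all the remaining terms are controlled by the already-known bounds: $\nabla V_i$ is bounded in $L^\infty$, $\nabla\log\rho_{i,h}^{k+1}$ contributes via $\int|\nabla(\rho_{i,h}^{k+1})^{1/2}|^2$ (estimate \eqref{crossfiff-estimation grad}), $\int(\rho_{1,h}+\rho_{2,h})|\nabla F_m'(\rho_{1,h}+\rho_{2,h})|^2 = \frac{4}{m}\int|\nabla(\rho_{1,h}+\rho_{2,h})^{m/2}|^2$ (again \eqref{crossfiff-estimation grad}), and the Kantorovich-potential term $\frac1h\nabla\varphi_{i,h}^{k+1}$ paired with $\rho_{i,h}^{k+1}\nabla F_m'$ is handled after a Young inequality by absorbing one half of $(\rho_{1,h}+\rho_{2,h})|\nabla F_m'|^2$ into the left side and using $\frac1{h^2}\int|\nabla\varphi_{i,h}^{k+1}|^2\rho_{i,h}^{k+1}\le C W_2^2(\rho_{i,h}^{k},\rho_{i,h}^{k+1})/h^2$; summing $h\,(\cdot)$ over $k$ and invoking \eqref{crossfiff-estimation distance} closes the first bullet.

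For the second bullet I would use the $L^2((0,T),H^1)$ bound on $(\rho_{1,h}+\rho_{2,h})^{m/2}$ from \eqref{crossfiff-estimation grad} combined with the Sobolev / Gagliardo--Nirenberg embedding $H^1(\Omega)\hookrightarrow L^{2n/(n-2)}(\Omega)$ (and interpolation with the $L^1$ bound coming from mass conservation, or with the $L^m$ bound from \eqref{crossfiff-estimation entropie}) to upgrade the space integrability of $(\rho_{1,h}+\rho_{2,h})^{m/2}$, hence of $\rho_{1,h}+\rho_{2,h}$, to $L^{2m-1}((0,T)\times\Omega)$: this is the standard porous-medium-type space-time estimate where the exponent $2m-1$ is exactly what one gets from $\|\rho^{m/2}\|_{L^2_tH^1_x}$ by the usual parabolic interpolation in $n$-dimensions tuned so that the time exponent is $1$. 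Since $0\le\rho_{i,h}\le\rho_{1,h}+\rho_{2,h}$ the same bound holds for each $\rho_{i,h}$.

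For the third bullet I would write $\nabla F_m'(\rho_{1,h}+\rho_{2,h})\,\rho_{i,h} = \big[(\rho_{1,h}+\rho_{2,h})^{1/2}\nabla F_m'(\rho_{1,h}+\rho_{2,h})\big]\cdot \frac{\rho_{i,h}}{(\rho_{1,h}+\rho_{2,h})^{1/2}}$ and apply Hölder in the space-time domain: the first factor is in $L^2$ by the first bullet, and the second factor is bounded in $L^{2m-1}$ because $\rho_{i,h}/(\rho_{1,h}+\rho_{2,h})^{1/2}\le (\rho_{1,h}+\rho_{2,h})^{1/2}$ and $(\rho_{1,h}+\rho_{2,h})^{1/2}\in L^{2(2m-1)}$ by the second bullet; the resulting exponent $\big(\tfrac12 + \tfrac1{2(2m-1)}\big)^{-1} = 2 - \tfrac1m$ gives $L^{2-1/m}$. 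Finally $\nabla\rho_{i,h} = 2(\rho_{i,h})^{1/2}\nabla(\rho_{i,h})^{1/2}$, where $(\rho_{i,h})^{1/2}$ is in $L^{2(2m-1)}$ (second bullet) and $\nabla(\rho_{i,h})^{1/2}$ is in $L^2$ (from \eqref{crossfiff-estimation grad}), so Hölder again yields $\nabla\rho_{i,h}\in L^{2-1/m}$; together with the $L^{2m-1}\subset L^{2-1/m}$ bound on $\rho_{i,h}$ itself this gives the claimed $L^{2-1/m}((0,T),W^{1,2-1/m}(\Omega))$ bound. The main obstacle is bookkeeping the exponents in the interpolation of the second bullet so that $2m-1$ comes out exactly and the time exponent stays $1$ uniformly in $n$ (one must be slightly careful for $n=1,2$ and for $m=1$, where the statement degenerates but remains true), and making sure the discrete Young-inequality absorption in the first bullet is done with an $h$-independent constant; once those are in place the rest is a sequence of Hölder inequalities.
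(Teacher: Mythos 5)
Your first and third bullets are essentially fine, but the way you propose to obtain the second bullet contains a genuine gap. You try to deduce the $L^{2m-1}((0,T)\times\Omega)$ bound on $\rho_{1,h}+\rho_{2,h}$ from the estimate \eqref{crossfiff-estimation grad} on $\nabla(\rho_{1,h}+\rho_{2,h})^{m/2}$ by Sobolev/Gagliardo--Nirenberg interpolation with the $L^1$ (mass) or $L^m$ bound \eqref{crossfiff-estimation entropie}. Setting $v=(\rho_{1,h}+\rho_{2,h})^{m/2}$, the available information is $v\in L^\infty((0,T),L^2(\Omega))$ and $v\in L^2((0,T),H^1(\Omega))$, and the standard parabolic interpolation gives at best $v\in L^{2(n+2)/n}((0,T)\times\Omega)$, i.e. $\rho_{1,h}+\rho_{2,h}\in L^{m(n+2)/n}$; the inequality $m(n+2)/n\geqslant 2m-1$ is equivalent to $m\leqslant n/(n-2)$, so for $n\geqslant 3$ and $m>n/(n-2)$ (e.g. any $m>2$ in dimension $4$) your route does not reach the exponent $2m-1$ — interpolating with the $L^1$ mass bound is even weaker. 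The delicate dimensions are thus the large ones, not $n=1,2$ as you suggest, and no tuning of the exponents repairs this: the dimension-free exponent $2m-1$ cannot come from a Sobolev embedding. It comes instead from your \emph{first} bullet, which you never use for this step: since $(\rho_{1,h}+\rho_{2,h})^{1/2}\nabla F_m'(\rho_{1,h}+\rho_{2,h})=\tfrac{2m}{2m-1}\nabla\left[(\rho_{1,h}+\rho_{2,h})^{m-1/2}\right]$, the first bullet says exactly that $\nabla\left[(\rho_{1,h}+\rho_{2,h})^{m-1/2}\right]$ is bounded in $L^2((0,T)\times\Omega)$; combining this with $\int_\Omega(\rho_{1,h}+\rho_{2,h})^{m-1/2}\leqslant |\Omega|+\int_\Omega(\rho_{1,h}+\rho_{2,h})^{m}\leqslant C$ (from \eqref{crossfiff-estimation entropie}) and the slice-wise Poincar\'e--Wirtinger inequality yields $(\rho_{1,h}+\rho_{2,h})^{m-1/2}$ bounded in $L^2((0,T)\times\Omega)$, which is precisely the $L^{2m-1}$ bound, and $0\leqslant\rho_{i,h}\leqslant\rho_{1,h}+\rho_{2,h}$ gives the individual bounds. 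This is the paper's argument (its laconic ``by Poincar\'e--Wirtinger inequality'').

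Two further remarks. Since your third bullet invokes the $L^{2(2m-1)}$ bound on $(\rho_{1,h}+\rho_{2,h})^{1/2}$ and $\rho_{i,h}^{1/2}$, it inherits the gap until the second bullet is fixed; once fixed, your H\"older computations there are correct and the exponent bookkeeping $\left(\tfrac12+\tfrac1{2(2m-1)}\right)^{-1}=2-\tfrac1m$ is right. Your way of bounding $\nabla\rho_{i,h}$ through the identity $\nabla\rho_{i,h}=2\rho_{i,h}^{1/2}\nabla\rho_{i,h}^{1/2}$ is in fact a small simplification of the paper, which instead returns to \eqref{crossfiff-variation premiere} and estimates the Kantorovich-potential term by a discrete-in-time H\"older argument. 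Finally, for the first bullet your Young-inequality absorption works with $h$-independent constants, but it can be shortened: multiplying \eqref{crossfiff-variation premiere} by $\rho_{i,h}^{k+1/2}$, each of the three remaining terms is already controlled in $L^2$ (by $\|\nabla V_i\|_\infty$, by \eqref{crossfiff-estimation grad}, and by \eqref{crossfiff-estimation distance}), so a triangle inequality and summation over $i$ suffice.
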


\begin{proof}
The first item is a direct consequence of \eqref{crossfiff-variation premiere}, using Proposition \ref{prop:strong-convergence entropy}, see for example \cite{KM}, and by Poincaré-Wirtinger inequality, we prove the second item. Now we will prove the third item. The first part is straightforward applying Hölder's inequality,
$$ \| \nabla F_m'(\rho_{1,h} +\rho_{2,h}) \rho_{i,h} \|_{L^{2-1/m}} \leqslant \|\nabla F_m'(\rho_{1,h} +\rho_{2,h}) \rho_{i,h}^{1/2} \|_{L^2}^{1-1/2m} \| \rho_{i,h} \|_{L^{2m-1}}^{1/2m} < +\infty.$$ 
According to \eqref{crossfiff-variation premiere}, we obtain a.e.
$$ |\nabla \rho_{i,h}^{k+1} |^{2-1/m} \leqslant C\left(  \left| \frac{\nabla \varphi_{i,h}^{k+1}\rho_{i,h}^{k+1}}{h} \right|^{2-1/m} + (|\nabla V_i|\rho_{i,h}^{k+1})^{2-1/m} + (|\nabla F_m'(\rho_{1,h}^{k+1}+\rho_{2,h}^{k+1})|\rho_{i,h}^{k+1})^{2-1/m} \right).$$
We have already seen that $\nabla F_m'(\rho_{1,h} +\rho_{2,h}) \rho_{i,h}$ is bounded in $L^{2-1/m}((0,T) \times \Omega)$ and since $\rho_{i,h} \in L^1 \cap L^{2m-1} ((0,T) \times \Omega)$, $ \| \nabla V_i\rho_{i,h} \|_{L^{2-1/m}} \leqslant C$. To deal with the last term, notice that by Hölder's inequality,
$$ \int_\Omega \left| \frac{\nabla \varphi_{i,h}^{k+1}\rho_{i,h}^{k+1}}{h} \right|^{2-1/m} \leqslant \frac{1}{h^{2-1/m}} W_2(\rho_{i,h}^k, \rho_{i,h}^{k+1})^{2-1/m} \|\rho_{i,h}^{k+1} \|_{L^{2m-1}}^{(2m-1)/2m},$$
and then,
\begin{eqnarray*}
h\sum_{k=0}^{N-1}\int_\Omega \left|\frac{\nabla \varphi_{i,h}^{k+1}\rho_{i,h}^{k+1}}{h} \right|^{2-1/m} & \leqslant & C h^{1/m -1} N^{1/2m} \left( \sum_{k=0}^{N-1} W_2^2(\rho_{i,h}^k, \rho_{i,h}^{k+1}) \right)^{(2m-1)/2m}\\
& \leqslant & C T^{1/2m}\left( \frac{\sum_{k=0}^{N-1} W_2^2(\rho_{i,h}^k, \rho_{i,h}^{k+1})}{h} \right)^{(2m-1)/2m}\\
&\leqslant & C_T,
\end{eqnarray*}
by \eqref{crossfiff-estimate distance ind m} where $T=Nh$. Then $\nabla \rho_{i,h}$ is bounded in $L^{2-1/m}((0,T) \times \Omega)$ and we conclude the proof with Poincaré-Wirtinger inequality.

\end{proof}

Now we are able to prove Theorem \ref{crossfiff-existence entropy}.

\begin{proof}[Proof of Theorem \ref{crossfiff-existence entropy}]

We have to pass to the limit in all terms in Proposition \ref{crossfiff-equation discrete entropie} as $h \searrow 0$. The remainder term converges to $0$ using the total square distance estimate \eqref{crossfiff-estimation distance} and the linear term converges to 
$$ \int_0^T \int_\Omega \rho_i \partial_t \phi_i - \int_0^T \int_\Omega \nabla V_i \cdot \nabla \phi_i \rho_i,$$
when $h$ goes to $0$ thanks to Proposition \ref{prop:strong-convergence entropy}.

Furthermore, since $\nabla \rho_{i,h}$ is bounded in $L^{2-1/m}((0,T)\times \Omega)$, because of Proposition \ref{prop:improvment regularity} and the fact that $\rho_{i,h}$ strongly converges to $\rho_i$ in $L^1( (0,T) \times \Omega)$, we conclude that $\nabla \rho_{i,h}$ converges weakly to $\nabla \rho_i$ in $L^{2-1/m}((0,T)\times \Omega)$. This implies that the individual diffusion term converges to 
$$ \int_0^T \int_\Omega \nabla \phi_i \cdot \nabla\rho_i \,dxdt .$$

It remains to study the convergence of the nonlinear cross diffusion term. First, we remark that $\nabla F_m'(\rho_{1,h}^{k+1} +\rho_{2,h}^{k+1})$ can be rewritten as
$$\nabla F_m'(\rho_{1,h}^{k+1} +\rho_{2,h}^{k+1})=2 \frac{ (\rho_{1,h}^{k+1} +\rho_{2,h}^{k+1})^{m/2}}{\rho_{1,h}^{k+1} +\rho_{2,h}^{k+1}} \nabla (\rho_{1,h}^{k+1} +\rho_{2,h}^{k+1})^{m/2}.$$
Then 
$$ \nabla F_m'(\rho_{1,h}^{k+1} +\rho_{2,h}^{k+1})\rho_{i,h}^{k+1} = 2 G_{1-m/2}(\rho_{1,h}^{k+1},\rho_{2,h}^{k+1})\nabla (\rho_{1,h}^{k+1} +\rho_{2,h}^{k+1})^{m/2},$$
where $G_\alpha \, : \, \R^+ \times \R^+ \rightarrow \R$ is the continuous function (for $\alpha <1$) defined by
$$ G_\alpha(x,y):= \left\{\begin{array}{ll}
\frac{x}{(x+y)^\alpha} & \text{    if }  x>0, y \geqslant 0,\\
0 & \text{    otherwise. }
\end{array}\right.$$
As $m\geqslant 1$, $1-\frac{m}{2} <1$ so $G_{1-m/2}$ is continuous and since, up to a subsequence, $\rho_{i,h}$ converges to $\rho_i$ a.e., we obtain that $G_{1-m/2}(\rho_{1,h},\rho_{2,h})$ converges to $G_{1-m/2}(\rho_{1},\rho_{2})$ a.e. in $(0,T)\times \Omega$. In addition, 
\begin{equation}
\label{crossfiff-majoration G}
\left| G_{1-m/2}(\rho_{1,h},\rho_{2,h}) \right| = \left| (\rho_{1,h}+\rho_{2,h})^{m/2} \frac{\rho_{1,h}}{\rho_{1,h}+\rho_{2,h}} \right| \leqslant (\rho_{1,h}+\rho_{2,h})^{m/2}.
\end{equation} 
Up to a subsequence, $\rho_{i,h}$ and $\rho_{1,h}+\rho_{2,h}$ converge a.e. in $(0,T) \times \Omega$, and, since $  (\rho_{1,h}+\rho_{2,h})^{m/2}$ converges to  $(\rho_{1}+\rho_{2})^{m/2}$ in $L^2((0,T)\times \Omega)$, there exists a function $g \in L^2((0,T)\times \Omega)$ such that,
$$ |  (\rho_{1,h}+\rho_{2,h})^{m/2} | \leqslant g.$$
Then Lebesgue's dominated convergence Theorem implies that $G_{1-m/2}(\rho_{1,h},\rho_{2,h})$ converges strongly in $L^2((0,T)\times \Omega)$ to $G_{1-m/2}(\rho_{1},\rho_{2})$. Moreover, $\nabla (\rho_{1,h}^{k+1} +\rho_{2,h}^{k+1})^{m/2}$ converges weakly in $L^2((0,T)\times \Omega)$, by Proposition \ref{prop:strong-convergence entropy}, then $ \nabla F_m'(\rho_{1,h} +\rho_{2,h})\rho_{i,h}$ converges weakly in $L^1((0,T)\times \Omega)$ to $ \nabla F_m'(\rho_{1} +\rho_{2})\rho_{i}$ and

$$ h\sum_{k=0}^{N-1}\int_{\Omega} \nabla F_m'(\rho_{1,h}^{k+1} + \rho_{2,h}^{k+1}) \cdot  \nabla \phi_i(t_{k},x) \rho_{i,h}^{k+1}(x)\,dx \rightarrow \int_0^T \int_\Omega  \nabla F_m'(\rho_{1} +\rho_{2}) \cdot \nabla \phi_i \rho_{i}\, dxdt.$$
In addition, by Proposition \ref{prop:improvment regularity}, we obtain that $ \nabla F_m'(\rho_1 +\rho_2) \rho_i \in L^{2-1/m}((0,T)\times \Omega)$, which concludes the proof.

\end{proof}

\section{Coupling by hard congestion}
\label{crossfiff-section 4: proof crowd}

In this section we prove the existence of a weak solution to \eqref{crossfiff-system crowd}, i.e. Theorem \ref{crossfiff-existence crowd motion}. This system can be seen as gradient flow in a Wasserstein product space. Using the Jordan-Kinderlherer-Otto scheme, we construct two sequences defined in the following way: let $h>0$ be a time step, we construct a sequence $( \rho_{1,h}^k,\rho_{2,h}^k)$ with $ ( \rho_{1,h}^0,\rho_{2,h}^0)=
( \rho_{1,0},\rho_{2,0})$ and $( \rho_{1,h}^{k+1},\rho_{2,h}^{k+1})$ is a solution to
\begin{eqnarray}
\label{crossfiff-JKO crowd}
\inf_{(\rho_1,\rho_2) \in \calK} \sum_{i=1}^2 
\left[ \frac{1}{2h} W_2^2(\rho_i,\rho_{i,h}^{k}) + \He(\rho_i) + \V_i(\rho_i) \right],
\end{eqnarray}
\\
where $\calK:=\left\{ (\rho_1,\rho_2) \in \Paa(\Omega)^2 \, : \, \rho_1 +\rho_2 \leqslant 1 \right\}$ and $|\Omega|>2$. The direct method shows that these sequences are well-defined. As before, we define the piecewise constant interpolations $\rho_{i,h} \, : \, \R^+ \rightarrow \Paa(\Omega)$ by
$$ \rho_{i,h}(t):= \rho_{i,h}^{k+1}, \qquad \text{if } t \in (kh,(k+1)h].$$

\subsection{Estimates and convergences}

In the following proposition, we list the classical estimates coming from the Wasserstein gradient flow theory.

\begin{prop}
\label{crossfiff-standart estimates crowd}
Let $T>0$. Then there exists $C>0$ such that for $i \in \{1,2\}$ and for all $k\geqslant 0$ such that $k\leqslant N:=\lfloor \frac{T}{h} \rfloor$,
\begin{eqnarray}
\label{eq:standart estimates crowd}
\rho_{1,h}^k + \rho_{2,h}^k \leqslant 1,\qquad
 \He(\rho_{i,h}^k) \leqslant C, \qquad
 \sum_{k=0}^{N-1} W_2^2(\rho_{i,h}^k,\rho_{i,h}^{k+1})\leqslant Ch.
\end{eqnarray}
\end{prop}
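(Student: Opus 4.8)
The plan is to run the classical energy estimate for the JKO scheme, exactly as in \cite{JKO} (see also \cite[Chapter 8]{S}); all three bounds come out of one one-step comparison together with a telescoping sum.

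First I would note that $(\rho_{1,h}^k,\rho_{2,h}^k)$ belongs to $\calK$ by construction — the base case $(\rho_{1,0},\rho_{2,0})\in\calK$ being precisely the standing hypothesis of Theorem \ref{crossfiff-existence crowd motion} — so it is an admissible competitor in \eqref{crossfiff-JKO crowd} at step $k+1$. Since the transport cost from $\rho_{i,h}^k$ to itself vanishes, the minimality of $(\rho_{1,h}^{k+1},\rho_{2,h}^{k+1})$ gives, for every $k\geqslant0$,
$$\sum_{i=1}^2\frac{1}{2h}W_2^2(\rho_{i,h}^{k+1},\rho_{i,h}^{k})+\sum_{i=1}^2\bigl(\He(\rho_{i,h}^{k+1})+\V_i(\rho_{i,h}^{k+1})\bigr)\leqslant\sum_{i=1}^2\bigl(\He(\rho_{i,h}^{k})+\V_i(\rho_{i,h}^{k})\bigr).$$
The first estimate $\rho_{1,h}^k+\rho_{2,h}^k\leqslant1$ is then immediate, being nothing but membership in $\calK$.

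Next I would sum this inequality over $k=0,\dots,m-1$ for arbitrary $m\leqslant N$, so that the entropy-plus-potential terms telescope and the right-hand side collapses to $C_0:=\sum_{i=1}^2\bigl(\He(\rho_{i,0})+\V_i(\rho_{i,0})\bigr)$, which is finite because $\He(\rho_{1,0})+\He(\rho_{2,0})<+\infty$ and $V_i\in W^{1,\infty}(\Omega)\subset L^\infty(\Omega)$. To convert the resulting inequality into the remaining two bounds I need lower bounds for the surviving terms on the left: $\V_i(\rho)\geqslant-\|V_i\|_{L^\infty(\Omega)}$ since $\rho$ is a probability measure, and $\He(\rho)\geqslant-\log|\Omega|$ by Jensen's inequality for $z\mapsto z\log z$ against the reference probability $dx/|\Omega|$ (this is where compactness of $\Omega$ enters). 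Discarding the nonnegative Wasserstein sum and using the $\V_i$ lower bound then yields $\He(\rho_{j,h}^m)\leqslant C_0+\sum_{i=1}^2\|V_i\|_{L^\infty(\Omega)}+\log|\Omega|=:C$, uniformly in $m$ (hence in $k\leqslant N$) and in $h$; discarding instead the entropy-plus-potential terms and bounding each of them below by $-\log|\Omega|-\|V_i\|_{L^\infty(\Omega)}$ gives $\sum_{k=0}^{N-1}\sum_{i=1}^2 W_2^2(\rho_{i,h}^{k+1},\rho_{i,h}^{k})\leqslant 2hC$.

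There is essentially no obstacle in this proof. The one point worth flagging is that the entropy bound must be extracted from the telescoped inequality rather than by iterating the one-step inequality — the latter would only produce a bound growing linearly in $k$, which is useless as $h\to0$ — and that this extraction relies on the two elementary, compactness-based lower bounds for $\He$ and $\V_i$ recorded above.
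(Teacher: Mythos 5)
Your proof is correct and is exactly the argument the paper has in mind: the paper simply cites the classical JKO estimates (as in its Proposition \ref{crossfiff-estimation sum}, proved by taking $\rho_i=\rho_{i,h}^k$ as competitor), i.e. the same one-step comparison, telescoping, and the elementary lower bounds $\He(\rho)\geqslant-\log|\Omega|$, $\V_i(\rho)\geqslant-\|V_i\|_{L^\infty(\Omega)}$, while the constraint $\rho_{1,h}^k+\rho_{2,h}^k\leqslant 1$ is immediate from minimizing over $\calK$.
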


As in the previous section, we need stronger estimates in order to handle the very degenerate cross diffusion term, $\dive(\rho_i\nabla p)$.

\begin{prop}
For all $T>0$, there exists a constant $C_T>0$ such that
\begin{eqnarray}
\label{crossfiff-estimate grad crowd}
\| \rho_{1,h}^{1/2} \|_{L^2((0,T),H^1(\Omega))} +\| \rho_{2,h}^{1/2} \|_{L^2((0,T),H^1(\Omega))}  \leqslant C_T.
\end{eqnarray}
\end{prop}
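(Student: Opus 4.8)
The plan is to mimic the proof of Proposition~\ref{crossfiff-prop f.g.i sum} almost verbatim, using the flow interchange argument with the heat flow applied to the minimizer $(\rho_{1,h}^k,\rho_{2,h}^k)$ of \eqref{crossfiff-JKO crowd}; the single new point compared with the soft-congestion case is to check that the perturbed densities still belong to $\calK$. Concretely, let $\eta_i$ solve the Neumann heat equation \eqref{crossfiff-chaleur Flow Inter} with initial datum $\rho_{i,h}^k$. The heat semigroup with no-flux boundary conditions preserves mass and nonnegativity, so $\eta_i(s)\in\Paa(\Omega)$ for every $s\geqslant 0$ and is smooth and positive for $s>0$ by the strong maximum principle. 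Moreover $\eta_1+\eta_2$ solves the same linear equation with initial datum $\rho_{1,h}^k+\rho_{2,h}^k\leqslant 1$, and since the constant $1$ is a stationary solution, the parabolic comparison principle forces $\eta_1(s)+\eta_2(s)\leqslant 1$ for all $s\geqslant 0$. Hence $(\eta_1(s),\eta_2(s))\in\calK$ is an admissible competitor in \eqref{crossfiff-JKO crowd}.

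With admissibility settled, I would run exactly the computation of Proposition~\ref{crossfiff-prop f.g.i sum}, but with the $\F_m$ contribution simply deleted. Using minimality of $(\rho_{1,h}^k,\rho_{2,h}^k)$ against $(\eta_1(s),\eta_2(s))$ and taking the right derivative at $s=0$, and integrating by parts using the no-flux condition,
$$\partial_s\Big(\sum_{i=1}^2\big(\He(\eta_i(s))+\V_i(\eta_i(s))\big)\Big)=-\sum_{i=1}^2\Big(\int_\Omega\frac{|\nabla\eta_i(s)|^2}{\eta_i(s)}+\int_\Omega\nabla V_i\cdot\nabla\eta_i(s)\Big).$$
Bounding the drift term by Young's inequality (using $V_i\in W^{1,\infty}(\Omega)$ and $\int_\Omega\eta_i(s)=1$) and rewriting $|\nabla\eta_i|^2/\eta_i=4|\nabla\eta_i^{1/2}|^2$ gives $\partial_s(\He(\eta_i(s))+\V_i(\eta_i(s)))\leqslant C-c\int_\Omega|\nabla\eta_i(s)^{1/2}|^2$ for some $c>0$. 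Combining with the EVI \eqref{crossfiff-E.V.I} for the heat flow (a $0$-flow of $\He$), in the form $\tfrac12\tfrac{d^+}{ds}\big|_{s=0}W_2^2(\eta_i(s),\rho_{i,h}^{k-1})\leqslant\He(\rho_{i,h}^{k-1})-\He(\rho_{i,h}^k)$, and the lower semicontinuity argument used there to transfer the Fisher information from $s>0$ down to its value at $s=0$, I obtain the one-step estimate $ch\sum_i\int_\Omega|\nabla(\rho_{i,h}^k)^{1/2}|^2\leqslant\sum_i(\He(\rho_{i,h}^{k-1})-\He(\rho_{i,h}^k))+Ch$.

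Finally I would sum over $k=1,\dots,N=\lfloor T/h\rfloor$: the entropy differences telescope and are controlled by the bound $\He(\rho_{i,h}^k)\leqslant C$ from Proposition~\ref{crossfiff-standart estimates crowd}, which yields $\sum_k h\sum_i\int_\Omega|\nabla(\rho_{i,h}^k)^{1/2}|^2\leqslant C_T$, i.e. a uniform bound on $\nabla\rho_{i,h}^{1/2}$ in $L^2((0,T)\times\Omega)$; since each $\rho_{i,h}(t)$ is a probability density, $\|\rho_{i,h}^{1/2}\|_{L^2((0,T)\times\Omega)}^2=T$, and together these give the claimed $L^2((0,T),H^1(\Omega))$ bound. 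The main (and essentially only) obstacle is the admissibility of the heat-flow competitor, which is why the comparison principle step must be spelled out; everything else is a transcription of the soft-congestion proof with the porous-medium term removed. Note in particular that, unlike the soft case, this argument produces no joint gradient estimate on $\rho_{1,h}+\rho_{2,h}$, and none is needed here since $\rho_{1,h}+\rho_{2,h}\leqslant 1$ is already an $L^\infty$ bound; the $H^1$ control of the pressure is obtained later by a different flow interchange.
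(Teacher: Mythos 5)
Your proposal is correct and follows essentially the same route as the paper: the paper's proof is exactly the flow interchange of Proposition \ref{crossfiff-prop f.g.i sum} rerun for \eqref{crossfiff-JKO crowd}, with admissibility of the heat-flow competitor justified by the fact that the heat semigroup does not increase the $L^\infty$-norm (your comparison-principle argument with the stationary solution $1$ is the same observation spelled out). The remaining steps you describe (Young's inequality on the drift, the EVI \eqref{crossfiff-E.V.I}, lower semicontinuity, telescoping the entropies using \eqref{eq:standart estimates crowd}) are precisely the computations the paper invokes by reference.
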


\begin{proof}
We apply the flow interchange technique as previously, Proposition \ref{crossfiff-prop f.g.i sum}. Keeping the same notations as in the previous section, we denote by $\eta_i$ the heat flow with initial condition $\rho_{i,h}^k$. Since the heat flow decreases the $L^\infty$-norm, $(\eta_1(s),\eta_2(s))$, defined in \eqref{crossfiff-chaleur Flow Inter}, is admissible for the minimization problem \eqref{crossfiff-JKO crowd}, for all $s \geqslant 0$. Then the same computations as in Proposition \ref{crossfiff-prop f.g.i sum} give the result. 
\end{proof}

Consequently, we deduce the following convergences.

\begin{prop}
\label{prop:convergence rho}
For all $T>0$, there exist $\rho_1$ and $\rho_2$ in $\mathcal{C}^{0,1/2}([0,T],\Paa(\Omega))$ such that, up to a subsequence,
\begin{enumerate}
\item $\rho_{i,h}$ converges to $\rho_i$ in $L^\infty([0,T],\Paa(\Omega))$,
\item $\rho_{i,h}$ converges strongly to $\rho_i$ in $L^p((0,T) \times \Omega)$, for all $p \in [1, +\infty)$ and $\nabla \rho_{i,h}$ converges narrowly to $\nabla \rho_{i}$.
\end{enumerate}
\end{prop}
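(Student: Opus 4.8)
The proof follows the pattern of Proposition \ref{prop:strong-convergence entropy}, and is in fact simpler, because the constraint $\rho_{1,h}+\rho_{2,h}\leqslant 1$ supplies a uniform $L^\infty$ bound that removes the need for any nonlinear ($L^m$) compactness step.

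\textbf{Item (1): uniform-in-time convergence.} I would invoke the refined Ascoli--Arzelà theorem \cite[Proposition 3.3.1]{AGS}. The last estimate in \eqref{eq:standart estimates crowd}, via the standard telescoping plus Cauchy--Schwarz argument, yields a uniform $1/2$-Hölder bound for the interpolants $\rho_{i,h}$ in $(\Pa(\Omega),W_2)$; equi-tightness is automatic since $\Omega$ is compact. Hence, along a subsequence, $\rho_{i,h}$ converges in $L^\infty([0,T],(\Pa(\Omega),W_2))$ to a curve $\rho_i\in\mathcal{C}^{0,1/2}([0,T],\Pa(\Omega))$; since $W_2$ metrizes narrow convergence on the compact set $\Omega$, this is the claimed convergence, and $\rho_i(t)\in\Paa(\Omega)$ will follow from the $L^1$ convergence below (or from the uniform bound $\He(\rho_{i,h}^k)\leqslant C$ and lower semicontinuity of $\He$).

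\textbf{Item (2): strong $L^p$ convergence.} For the first part I would apply the Aubin--Lions-type theorem of Rossi--Savaré \cite[Theorem 2]{RS} exactly as in the second step of Proposition \ref{prop:strong-convergence entropy}: take $\G(\rho):=\|\rho^{1/2}\|_{H^1(\Omega)}$ on $\Paa(\Omega)$ (and $+\infty$ otherwise), which is l.s.c. with sublevels relatively compact in $L^1(\Omega)$ — the latter by applying Rellich--Kondrachov to $\rho^{1/2}$ — together with the pseudo-distance $g:=W_2$. Estimate \eqref{crossfiff-estimate grad crowd} gives $\sup_{h\leqslant 1}\int_0^T\G(\rho_{i,h}(t))\,dt<+\infty$, while the third bound in \eqref{eq:standart estimates crowd} gives $\lim_{\tau\searrow 0}\sup_{h\leqslant 1}\int_0^{T-\tau}W_2(\rho_{i,h}(t+\tau),\rho_{i,h}(t))\,dt=0$. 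The theorem then provides a further subsequence for which $\rho_{i,h}\to\rho_i$ in measure with respect to $t$ with values in $L^1(\Omega)$, hence a.e. on $(0,T)\times\Omega$; since $0\leqslant\rho_{i,h}\leqslant\rho_{1,h}+\rho_{2,h}\leqslant 1$, Lebesgue's dominated convergence theorem upgrades this to strong convergence in $L^p((0,T)\times\Omega)$ for every $p\in[1,+\infty)$.

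\textbf{Convergence of the gradients and main obstacle.} Writing $\nabla\rho_{i,h}=2\,\rho_{i,h}^{1/2}\,\nabla\rho_{i,h}^{1/2}$ and using $\rho_{i,h}^{1/2}\leqslant 1$ together with \eqref{crossfiff-estimate grad crowd}, the sequence $\nabla\rho_{i,h}$ is bounded in $L^2((0,T)\times\Omega)$; up to a subsequence it converges weakly there, and the strong $L^1$ convergence of $\rho_{i,h}$ identifies the weak limit as $\nabla\rho_i$ — so in particular $\rho_i\in L^2((0,T),H^1(\Omega))$ — which yields the stated narrow convergence of $\nabla\rho_{i,h}$ (weak $L^2$ convergence being stronger). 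The only delicate point is checking the hypotheses of \cite[Theorem 2]{RS}: the relative $L^1$-compactness of the sublevels of $\G$ and the time-equicontinuity extracted from the discrete $W_2$ estimate. Both are handled exactly as in Proposition \ref{prop:strong-convergence entropy}, so no genuinely new difficulty arises.
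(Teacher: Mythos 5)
Your proof is correct and follows essentially the same route as the paper: the refined Ascoli--Arzelà theorem for item (1), the Rossi--Savaré compactness theorem with $\G(\rho)=\|\rho^{1/2}\|_{H^1(\Omega)}$ and $g=W_2$ plus the uniform bound $\rho_{i,h}\leqslant 1$ and dominated convergence for the $L^p$ convergence, and the factorization $\nabla\rho_{i,h}=2\rho_{i,h}^{1/2}\nabla\rho_{i,h}^{1/2}$ with \eqref{crossfiff-estimate grad crowd} for the gradients. The only (harmless) variation is that you identify the gradient limit via weak $L^2$ compactness of $\nabla\rho_{i,h}$ and the strong $L^1$ convergence of $\rho_{i,h}$, whereas the paper passes to the limit in the product $2\rho_{i,h}^{1/2}\nabla\rho_{i,h}^{1/2}$ by strong--weak convergence; both rest on the same estimates and give the stated (indeed slightly stronger) convergence.
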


\begin{proof}
The total square distance estimate \eqref{eq:standart estimates crowd} and the refined version of Ascoli-Arzelà's Theorem, \cite[Proposition 3.3.1]{AGS}, implies that $\rho_{i,h}$ converges to $\rho_i \in \mathcal{C}^{1/2}([0,T],\Paa(\Omega))$ in $L^\infty([0,T],\Paa(\Omega))$. 
As in Proposition \ref{prop:strong-convergence entropy}, applying \cite[Theorem 2]{RS}, we obtain that $\rho_{i,h}$ converges strongly to $\rho_i$ in $L^1((0,T) \times \Omega)$. And noticing that $\rho_{i,h},\rho_i \leqslant 1$ a.e., we deduce that the strong convergence holds in $L^p((0,T) \times \Omega)$, for all $p \in [1, +\infty)$. To conclude, we remark that $\nabla \rho_{i,h} = 2 \rho_{i,h}^{1/2} \nabla \rho_{i,h}^{1/2}$, $ \rho_{i,h}^{1/2}$ strongly converges to $\rho_{i}^{1/2}$ in $L^2((0,T)\times \Omega)$ and $ \nabla \rho_{i,h}^{1/2}$ weakly converges to $\nabla \rho_{i}^{1/2}$ in $L^2((0,T)\times \Omega)$. 
\end{proof}

We end this section by a lemma implying the uniqueness of the pair of Kantorovich potentials from $\rho_{i,h}^{k+1}$ to $\rho_{i,h}^k$ and then the existence of the first variation of $W_2^2(\cdot,\rho_{i,h}^k)$ (Propositions 7.18 and 7.17 from \cite{S}).\\

\begin{lem}
\label{lem:rho positif crowd}
Minimizers of \eqref{crossfiff-JKO crowd} satisfy $\rho_{i,h}^k >0$ a.e. and $\log(\rho_{i,h}^k) \in L^1(\Omega)$. 
\end{lem}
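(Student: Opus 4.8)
The goal is to show that any minimizer $\rho_{i,h}^k$ of \eqref{crossfiff-JKO crowd} is strictly positive a.e. and has $\log(\rho_{i,h}^k) \in L^1(\Omega)$. I would follow the classical strategy used for the JKO scheme with an entropy term (as in \cite[Lemma 8.6]{S}, already invoked for Lemma \ref{crossfiff-rho positif}), adapted here to the presence of the hard-congestion constraint $\rho_1 + \rho_2 \leqslant 1$. The key point is that the constraint set $\calK$ is ``open enough'' in the right direction: because $|\Omega| > 2$, there is always room to spread mass without hitting the cap $\rho_1 + \rho_2 = 1$.

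First I would fix $i$ and write $\rho = \rho_{i,h}^{k+1}$, with companion $\bar\rho = \rho_{j,h}^{k+1}$ ($j \neq i$). Suppose for contradiction that the set $Z := \{\rho = 0\}$ has positive Lebesgue measure. The idea is to build a competitor by transferring a small amount of mass $\eps > 0$ onto $Z$: let $\rho_\eps := (1-\eps)\rho + \eps u$, where $u$ is an $L^\infty$ density supported in $Z$ (and chosen so that $\rho_\eps + \bar\rho \leqslant 1$ a.e.\ — possible for small $\eps$ since on $Z$ we have $\bar\rho \leqslant 1$ and we can take $u$ bounded, shrinking $\eps$; away from $Z$ nothing changes on the $i$-th slot while the $j$-th slot is untouched). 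Then $(\rho_\eps, \bar\rho) \in \calK$ is admissible. The only subtlety is guaranteeing $|Z| > 0$ allows such a $u$ to exist with total mass $1$; since $\rho$ integrates to $1$ and vanishes on $Z$, and $|\Omega| > 2$ ensures the complement isn't forced to be full, one checks $|Z|$ being positive is compatible with placing unit mass of $u$ on (a subset of) $Z$ of positive measure, or more simply one redistributes only an $\eps$-fraction, keeping $u$ a probability density on $Z$.

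Next I would compare the values of the functional in \eqref{crossfiff-JKO crowd} at $(\rho,\bar\rho)$ and $(\rho_\eps, \bar\rho)$, and show the derivative in $\eps$ at $\eps = 0^+$ is $-\infty$, contradicting minimality. The Wasserstein term $W_2^2(\cdot, \rho_{i,h}^k)$ and the potential term $\V_i$ are Lipschitz (indeed differentiable with bounded slope) in $\eps$ near $0$, so they contribute an $O(1)$ term. The entropy term is the crucial one: $\He(\rho_\eps) = \int_\Omega H((1-\eps)\rho + \eps u)$, and since $u > 0$ on $Z$ where $\rho = 0$, the contribution from $Z$ is $\int_Z H(\eps u) = \eps \int_Z u \log(\eps u) \sim \eps \log \eps \to 0$ with derivative $\sim \log \eps \to -\infty$; on $\{\rho > 0\}$ one uses convexity / the bound $H((1-\eps)\rho) \leqslant H(\rho)$ up to controllable lower-order terms (here $\He(\rho) < +\infty$ is known from Proposition \ref{crossfiff-standart estimates crowd}). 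Hence $\frac{d}{d\eps}\big|_{0^+}[\text{functional}] = -\infty$, which is impossible at a minimizer. Therefore $|Z| = 0$, i.e.\ $\rho > 0$ a.e.

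Finally, for $\log(\rho) \in L^1(\Omega)$: the positive part $\log^+(\rho)$ is integrable because $\rho \log \rho \in L^1$ (entropy bound) controls $\rho \log^+\rho$, and on $\{\rho \geqslant 1\}$, $\log^+\rho \leqslant \rho \log \rho$ (since $\Omega$ bounded handles $\{\rho<1\}$ trivially for $\log^+$); a cleaner route, matching \cite{S}, is to note $\log^+\rho \leqslant \rho$ pointwise so $\int_\Omega \log^+\rho \leqslant 1 < \infty$. The negative part $\log^-(\rho)$ is the delicate half: here one uses that $\rho$ cannot be ``too small on a large set'' — quantitatively, from the first-variation / Euler–Lagrange structure (or again the comparison argument: if $\log^-\rho \notin L^1$, a similar $\eps$-perturbation lifting mass from a region where $\rho$ is tiny decreases the entropy by an amount whose $\eps$-derivative is again $-\infty$), one concludes $\int_\Omega \log^-\rho < +\infty$. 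I expect the main obstacle to be exactly this last point — ruling out $\log^-(\rho) \notin L^1$ — together with the bookkeeping to ensure every perturbed pair stays inside $\calK$; but the assumption $|\Omega| > 2$ and the already-established entropy bound make the argument go through verbatim as in \cite[Lemma 8.6]{S}, the only new ingredient being the verification that perturbations can be chosen to respect the constraint $\rho_1 + \rho_2 \leqslant 1$.
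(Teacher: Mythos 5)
Your core analytic mechanism — the $\eps\log\eps$ gain of the entropy of the $i$-th component on $Z:=\{\rho=0\}$ beating the $O(\eps)$ variation of the Wasserstein and potential terms — is the right one, but your competitor is not guaranteed to be admissible, and this is exactly where the constraint $\calK$ bites. You perturb only the $i$-th slot, $\rho_\eps=(1-\eps)\rho+\eps u$ with $u$ supported in $Z$, keeping $\bar\rho$ fixed, and you claim admissibility ``for small $\eps$'' because $\bar\rho\leqslant 1$ on $Z$. This fails whenever $\bar\rho=1$ on a positive-measure part of $Z$, which is not excluded at this stage: nothing proved so far prevents the other species from saturating the constraint precisely where $\rho$ vanishes. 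On such a set one has $(1-\eps)\rho+\eps u+\bar\rho=1+\eps u>1$ for every $\eps>0$ and every $u>0$ there, so shrinking $\eps$ or bounding $u$ does not help; restricting $u$ to $Z\cap\{\bar\rho<1\}$ is not an option either, since a priori that set could be null. Note that ruling out $\{\bar\rho=1\}$ having positive measure is essentially equivalent to the positivity you are trying to prove, so the problem cannot be decoupled component by component in this way.

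The paper's proof fixes exactly this point by perturbing \emph{both} densities simultaneously toward a fixed admissible pair: $\rho_{i,\eps}=(1-\eps)\rho_{i,h}^{k+1}+\eps\trho$ for $i=1,2$, with $\trho\equiv 1/|\Omega|$ the constant density. The pair $(\trho,\trho)$ lies in $\calK$ precisely because of \eqref{ass:size subset} ($2/|\Omega|\leqslant 1$), and $\calK$ is convex, so the whole interpolation is admissible. The terms contributed by the second component are only $O(\eps)$ (convexity of $\He$ and of $W_2^2(\cdot,\nu)$ along these vertical interpolations, linearity of the potential term), while the entropy of the $i$-th component still produces the $\eps\log\eps$ term because $\trho>0$ everywhere; the argument of \cite[Lemma 8.5]{S} then runs unchanged and, through the same order-$\eps$ comparison, also delivers the integrability of $\log\rho_{i,h}^k$ (the step you only sketch via ``Euler--Lagrange structure''). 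In short, the missing idea is the coupled perturbation toward $(\trho,\trho)$: your appeal to $|\Omega|>2$ (``room to spread mass'') is the right intuition, but it is never turned into an actually admissible competitor, and as written the contradiction argument can break down on the saturated set.
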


\begin{proof}
The proof is the same as in \cite[Lemma 8.5]{S}. Indeed we can use a constant perturbation $\trho$ because $(\trho, \trho)$ is admissible in \eqref{crossfiff-JKO crowd} ($\trho +\trho = 2/|\Omega| \leqslant 1$ by \eqref{ass:size subset}).
\end{proof}

\subsection{Pressure field associated to the constraint}

In this section, we introduce a discrete pressure associated to the constraint $\rho_{1,h}^{k+1} +\rho_{2,h}^{k+1} \leqslant 1$. This common pressure is obtained arguing as in \cite{MRCS} in the case of one population.

\begin{lem}
Let $(\rho_{1,h}^{k+1},\rho_{2,h}^{k+1})$ be the unique solution to \eqref{crossfiff-JKO crowd}. Then for all $(\rho_{1},\rho_{2}) \in \calK $,
\begin{equation}
\label{crossfiff-inequality measure}
\int_{\Omega} \psi_{1,h}^{k+1} (\rho_1 - \rho_{1,h}^{k+1}) + \int_{\Omega} \psi_{2,h}^{k+1} (\rho_2 - \rho_{2,h}^{k+1}) \geqslant 0,
\end{equation}
 
where $\psi_{i,h}^{k+1} = \frac{\varphi_{i,h}^{k+1}}{h} +V_i + 1 +\log(\rho_{i,h}^{k+1})$ and $\varphi_{i,h}^{k+1}$ is the optimal (up to a constant) Kantorovich potential in $W_2(\rho_{i,h}^{k+1},\rho_{i,h}^{k})$.
\end{lem}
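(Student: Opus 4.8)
The plan is to derive the variational inequality \eqref{crossfiff-inequality measure} directly from the optimality of $(\rho_{1,h}^{k+1},\rho_{2,h}^{k+1})$ in the constrained problem \eqref{crossfiff-JKO crowd}, using the vertical (linear) perturbations already invoked in Proposition \ref{prop:variation premiere}. Fix any $(\rho_1,\rho_2)\in\calK$; the key point is that $\calK$ is convex, so for $\eps\in[0,1]$ the pair
\[
(\rho_{1,\eps},\rho_{2,\eps}):=\big((1-\eps)\rho_{1,h}^{k+1}+\eps\rho_1,\ (1-\eps)\rho_{2,h}^{k+1}+\eps\rho_2\big)
\]
still belongs to $\calK$ and hence is admissible in \eqref{crossfiff-JKO crowd}. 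Writing $J(\rho_1,\rho_2)$ for the functional minimized in \eqref{crossfiff-JKO crowd}, optimality gives $\frac{d^+}{d\eps}\big|_{\eps=0^+}J(\rho_{1,\eps},\rho_{2,\eps})\geqslant 0$, and the proof amounts to computing this one-sided derivative term by term and identifying it with the left-hand side of \eqref{crossfiff-inequality measure}.

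The computation splits into three pieces for each $i$. For the entropy term $\He$, convexity of $H(z)=z\log z$ makes $\eps\mapsto\He(\rho_{i,\eps})$ convex, and since by Lemma \ref{lem:rho positif crowd} we have $\rho_{i,h}^{k+1}>0$ a.e.\ with $\log\rho_{i,h}^{k+1}\in L^1(\Omega)$, one gets $\frac{d^+}{d\eps}\big|_{0^+}\He(\rho_{i,\eps})=\int_\Omega(1+\log\rho_{i,h}^{k+1})(\rho_i-\rho_{i,h}^{k+1})$; the integrand is bounded above by an integrable function because $\rho_i\leqslant 1$ and $\log\rho_{i,h}^{k+1}\in L^1$, so the differentiation under the integral (or a monotone/dominated convergence argument on the difference quotients, which are monotone in $\eps$ by convexity) is justified. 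The potential term is linear, contributing $\int_\Omega V_i(\rho_i-\rho_{i,h}^{k+1})$. For the Wasserstein term, Lemma \ref{lem:rho positif crowd} ensures uniqueness (up to an additive constant) of the Kantorovich potential $\varphi_{i,h}^{k+1}$ from $\rho_{i,h}^{k+1}$ to $\rho_{i,h}^{k}$, so by \cite[Proposition 7.17]{S} the first variation of $\frac{1}{2h}W_2^2(\cdot,\rho_{i,h}^{k})$ along the linear perturbation is $\frac{1}{h}\int_\Omega\varphi_{i,h}^{k+1}(\rho_i-\rho_{i,h}^{k+1})$. Summing the three contributions over $i=1,2$ and collecting terms yields exactly
\[
\sum_{i=1}^2\int_\Omega\Big(\tfrac{\varphi_{i,h}^{k+1}}{h}+V_i+1+\log\rho_{i,h}^{k+1}\Big)(\rho_i-\rho_{i,h}^{k+1})\geqslant 0,
\]
which is \eqref{crossfiff-inequality measure} with $\psi_{i,h}^{k+1}$ as defined.

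I expect the main obstacle to be the rigorous justification of the differentiation of the entropy term, since $\log\rho_{i,h}^{k+1}$ is only in $L^1$ and not bounded: one must check that the difference quotients $\eps^{-1}\big(\He(\rho_{i,\eps})-\He(\rho_{i,h}^{k+1})\big)$ converge to the claimed limit and not just that a limsup inequality holds. Convexity of $z\mapsto z\log z$ saves the day — the difference quotients are nondecreasing as $\eps\downarrow 0$, so their limit exists and equals the infimum, and a one-sided (Fatou / monotone convergence) argument pins it down to $\int_\Omega(1+\log\rho_{i,h}^{k+1})(\rho_i-\rho_{i,h}^{k+1})$, the only subtlety being that the positive part of the integrand is dominated (via $\rho_i\le 1$ and $\rho_{i,h}^{k+1}\log\rho_{i,h}^{k+1}\in L^1$, $\log\rho_{i,h}^{k+1}\in L^1$) so no mass escapes. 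The Wasserstein and potential pieces are routine given the cited results and the uniqueness of the Kantorovich potential from Lemma \ref{lem:rho positif crowd}. This mirrors the one-population argument in \cite{MRCS}, and I would simply point to \cite[Propositions 7.17, 7.18]{S} and \cite{MRCS} for the parts that are standard, spelling out only the convexity argument for the entropy and the use of the convexity of $\calK$ to get admissible competitors.
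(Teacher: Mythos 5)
Your proposal is correct and follows essentially the same route as the paper: the paper's proof simply defers to Lemma 3.1 of \cite{MRCS}, which is exactly this convex-combination (vertical) perturbation of the minimizer within the convex set $\calK$, with the one-sided derivative computed term by term using the first variation of $W_2^2$ (Propositions 7.17--7.18 of \cite{S}, available thanks to Lemma \ref{lem:rho positif crowd}) and the convexity of the entropy and linearity of the potential term. Your extra care with the entropy difference quotients (monotonicity plus domination by $1+|\log\rho_{i,h}^{k+1}|$, using $\rho_i\leqslant 1$) is precisely the adaptation needed since \cite{MRCS} has no diffusion term, so nothing is missing.
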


\begin{proof}
The proof of this result is the same as Lemma 3.1 in \cite{MRCS}. 
\end{proof}

\begin{rem}
\label{rem: rewrite linearization to find pressure}
Notice that \eqref{crossfiff-inequality measure} can be rewritten as
$$ \int_{\Omega} \psi_{1,h}^k f_1 + \int_{\Omega} \psi_{2,h}^k f_2 \geqslant 0,$$
for all functions $f_1 ,f_2 \in L^\infty(\Omega)$ such that

\begin{eqnarray}
\label{crossfiff-condition h}
f_1 +f_2 \leqslant \frac{1-\rho_{1,h}^k -\rho_{2,h}^k}{\eps}, \quad f_i \geqslant \frac{-\rho_{i,h}^k}{\eps} \text{ and } \int_\Omega f_i =0,
\end{eqnarray}
for all $0<\eps \ll 1$.
\end{rem}

In the next proposition, we introduce the common discrete pressure.

\begin{prop}
\label{prop:Euler-Lagrange}
There exists $p_h^k \geqslant 0 $ such that for all, $k \geqslant 1$,
$$p_h^k(1-\rho_{1,h}^k - \rho_{2,h}^k)=0 \qquad a.e. $$
In addition, $p_h^k $ satisfies 
\begin{eqnarray}
\label{crossfiff-first variation pressure}
\nabla p_h^k = - \frac{\nabla \varphi_{i,h}^{k}}{h} -\nabla V_i - \nabla \log(\rho_{i,h}^{k}) \qquad a.e,
\end{eqnarray}
for $i=1,2$. 
\end{prop}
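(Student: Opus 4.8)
The plan is to follow the one-population construction of \cite{MRCS} almost verbatim, the genuinely new point being a preliminary \emph{decoupling} of the two populations. I would start from the linearized optimality condition of \eqref{crossfiff-JKO crowd} in the form reformulated in Remark \ref{rem: rewrite linearization to find pressure}: $\int_\Omega \psi_{1,h}^k f_1+\int_\Omega \psi_{2,h}^k f_2\geqslant 0$ for every pair $f_1,f_2\in L^\infty(\Omega)$ satisfying \eqref{crossfiff-condition h} for some small $\eps>0$. Since, by Lemma \ref{lem:rho positif crowd}, $\rho_{i,h}^k>0$ a.e. and $\rho_{i,h}^k\in L^\infty$, any $f_i\in L^\infty(\Omega)$ with $\int_\Omega f_i=0$ and $\supp f_i\subset\{\rho_{i,h}^k>\delta\}$ is admissible once $\eps$ is small enough (and, for the sum-constraint, after also localizing where $1-\rho_{1,h}^k-\rho_{2,h}^k$ is bounded below); this localization is the one technical nuisance of the argument.

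First I would decouple. Taking $f_1=-f_2=f$ with $\int_\Omega f=0$, which is admissible since the sum constraint is then vacuous, and using $\pm f$, one gets $\int_\Omega(\psi_{1,h}^k-\psi_{2,h}^k)f=0$ for all such $f$, hence $\psi_{1,h}^k-\psi_{2,h}^k$ is a.e. constant. Writing $\Psi:=\psi_{1,h}^k$ and using $\int_\Omega f_i=0$, the optimality condition collapses to the \emph{scalar} inequality $\int_\Omega \Psi\, g\geqslant 0$ for every $g\in L^\infty(\Omega)$ with $\int_\Omega g=0$ and $g\leqslant 0$ a.e. on $S:=\{\rho_{1,h}^k+\rho_{2,h}^k=1\}$ (a given such $g$ is realised as $g=f_1+f_2$ with $f_2=0$). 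This is exactly the inequality of the single-species crowd problem, so the remainder is \cite{MRCS}: by \eqref{ass:size subset}, $\int_\Omega(\rho_{1,h}^k+\rho_{2,h}^k)=2<|\Omega|$, so $\Omega\setminus S$ has positive measure; testing with mean-zero $g$ (and $-g$) supported in $\Omega\setminus S$ shows $\Psi\equiv\ell$ a.e. there, where $\ell:=\essinf_{\Omega\setminus S}\Psi$; and testing with $g=-\mathbf{1}_B+\frac{|B|}{|A|}\mathbf{1}_A$ for $B\subset S$ and $A\subset\Omega\setminus S$ of positive measure (such $g$ is $\leqslant 0$ on $S$) yields $\Psi\leqslant\ell$ a.e. on $S$.

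Then I would set $p_h^k:=\ell-\Psi=\ell-\psi_{1,h}^k$. By the previous step $p_h^k\geqslant 0$ a.e. and $p_h^k=0$ a.e. on $\Omega\setminus S$, hence $p_h^k(1-\rho_{1,h}^k-\rho_{2,h}^k)=0$ a.e.; moreover $\nabla p_h^k=-\nabla\psi_{1,h}^k=-\tfrac{\nabla\varphi_{1,h}^k}{h}-\nabla V_1-\nabla\log\rho_{1,h}^k$, which is \eqref{crossfiff-first variation pressure} for $i=1$, while the case $i=2$ follows because $\psi_{1,h}^k-\psi_{2,h}^k$ is constant. I expect the main obstacle to be purely technical, namely justifying the admissibility of all the competitor directions given that $\rho_{i,h}^k$ is only positive a.e., which is exactly the localization noted above; a secondary point is to read \eqref{crossfiff-first variation pressure} at the right integrability -- here $\nabla\varphi_{i,h}^k$ is bounded (a displacement inside $\Omega$), $\nabla V_i\in L^\infty$, and $\rho_{i,h}^k\nabla\log\rho_{i,h}^k=\nabla\rho_{i,h}^k\in L^2$ by \eqref{crossfiff-estimate grad crowd}, so that $\rho_{i,h}^k\nabla p_h^k$ is legitimate in the discrete weak formulation used afterwards.
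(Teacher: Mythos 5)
Your argument is correct and is essentially the paper's own proof: it starts from the same linearized optimality condition (Remark \ref{rem: rewrite linearization to find pressure}), uses the same test directions (perturbations with $f_2=0$ off the saturated set $S=\{\rho_{1,h}^k+\rho_{2,h}^k=1\}$, the antisymmetric pair $(f,-f)$, and a sign test on $S$, with \eqref{ass:size subset} guaranteeing $|\Omega\setminus S|>0$), and defines $p_h^k$ as the nonnegative deficit of $\psi_{1,h}^k$ below its constant value off $S$, exactly as in the paper. The only differences are cosmetic --- you prove $\psi_{1,h}^k-\psi_{2,h}^k$ is constant on all of $\Omega$ first and then run the single-species argument of \cite{MRCS}, whereas the paper first gets each $\psi_{i,h}^k$ constant on $S^c$ and matches the two potentials on $S$ afterwards, and the localization you flag (working where $\rho_{i,h}^k$ and $1-\rho_{1,h}^k-\rho_{2,h}^k$ are bounded below, then exhausting) is implicitly needed in the paper's proof as well.
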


\begin{proof}
 Let $S:=\{ \rho_{1,h}^k +\rho_{2,h}^k =1 \}$ be the set where the constraint is saturated. Firstly, we choose $f_2=0$ on $\Omega$ and $f_1  =0$ on $S$ in Remark \ref{rem: rewrite linearization to find pressure}. Then we have
$$ \int_{S^c} \psi_{1,h}^k f_1  \geqslant 0,$$
for all $f_1  \in L^\infty(\Omega)$. This implies that there exists a constant $C_1$ such that $\psi_{1,h}^k=C_1$ a.e. on $S^c$. Applying the same argument with $f_1 =0$ on $\Omega$ and $f_2 =0$ on $S$, we find a constant $C_2$ such that $\psi_{2,h}^k=C_2$ a.e. on $S^c$.
And since $f_1$ and $f_2$ satisfy \eqref{crossfiff-condition h}, we have
$$ \int_\Omega (\psi_{1,h}^k -C_1 )f_1  +\int_\Omega (\psi_{2,h}^k -C_2 )f_2  \geqslant 0.$$
Now, choosing $f_1 =f$ and $f_2=-f$ on $S$ and by symmetry ($f_1  =-f$ and $f_2=f$), we find
$$ \int_S ((\psi_{1,h}^k -C_1 ) -(\psi_{2,h}^k -C_2 ) ) f =0,$$
for all $f \in L^\infty(\Omega)$. We conclude that $(\psi_{1,h}^k -C_1 )=(\psi_{2,h}^k -C_2 ) =:\psi_h^k$ a.e. on $S$ and consequently
$$ \int_S \psi_h^k(f_1  +f_2) \geqslant 0.$$
On the other hand, since $f_1  +f_2 \leqslant 0$ on $S$, $\psi_h^k \leqslant 0$ a.e on $S$, then we define $p_h^k$ by
$$ p_h^k := (C_1 - \psi_{1,h}^k)_+ = (C_2 - \psi_{2,h}^k)_+.$$
By definition, we have $p_h^k(1-\rho_{1,h}^k -\rho_{2,h}^k)=0$ a.e. and since $\psi_{i,h}^k$ is differentiable a.e., the proof is completed.
\end{proof}

Now, we define the piecewise interpolation $p_{h} \, : \, \R^+ \rightarrow L^{1}(\Omega)$ by
$$ p_{h}(t):= p_{h}^{k+1}, \qquad \text{if } t \in (kh,(k+1)h].$$
Notice that $p_{h}(t) \geqslant 0$ and for all $t\geqslant 0$, $p_h(t)(1-\rho_{1,h}(t)-\rho_{2,h}(t))=0$ a.e.
Therefore, we immediately deduce the following estimate on the pressure.

\begin{prop}
\label{crossfiff-estimate pressure}
For all $T>0$, $  p_h$ is bounded in $L^2((0,T) , H^1( \Omega))$.
\end{prop}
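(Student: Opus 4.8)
The plan is to obtain an $L^2((0,T),H^1(\Omega))$ bound on $p_h$ by combining a pointwise (in $k$) gradient estimate with an $L^2$ bound on $p_h$ itself, then invoking the Poincar\'e--Wirtinger inequality. First I would control $\nabla p_h$ in $L^2((0,T)\times\Omega)$. By the Euler--Lagrange identity \eqref{crossfiff-first variation pressure}, for each $i$ and each $k$,
\[
\nabla p_h^{k+1} = -\frac{\nabla\varphi_{i,h}^{k+1}}{h} - \nabla V_i - \nabla\log(\rho_{i,h}^{k+1}) \qquad a.e.
\]
I will use $i$ such that the term survives multiplication: actually the cleanest route is to multiply this identity by $\rho_{i,h}^{k+1}$ and add the two identities for $i=1,2$. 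Since $\rho_{1,h}^{k+1}\nabla\log\rho_{1,h}^{k+1} = \nabla\rho_{1,h}^{k+1}$ and similarly for $i=2$, and since $\nabla p_h^{k+1}$ is supported where $\rho_{1,h}^{k+1}+\rho_{2,h}^{k+1}=1$ while being multiplied by $\rho_{1,h}^{k+1}$ and $\rho_{2,h}^{k+1}$ respectively, one gets after summation
\[
\nabla p_h^{k+1}(\rho_{1,h}^{k+1}+\rho_{2,h}^{k+1}) = \nabla p_h^{k+1} = -\sum_{i=1}^2\Big(\frac{\rho_{i,h}^{k+1}\nabla\varphi_{i,h}^{k+1}}{h} + \rho_{i,h}^{k+1}\nabla V_i + \nabla\rho_{i,h}^{k+1}\Big).
\]
Then I would estimate each piece in $L^2((0,T)\times\Omega)$: the drift terms are bounded since $V_i\in W^{1,\infty}$ and $\rho_{i,h}\le 1$; the diffusion terms $\nabla\rho_{i,h} = 2\rho_{i,h}^{1/2}\nabla\rho_{i,h}^{1/2}$ are bounded in $L^2$ by \eqref{crossfiff-estimate grad crowd} together with $\rho_{i,h}^{1/2}\le 1$; and the transport terms are handled by writing $\rho_{i,h}^{k+1}|\nabla\varphi_{i,h}^{k+1}|^2/h^2 \le |\nabla\varphi_{i,h}^{k+1}|^2/h^2$, integrating, recognizing $\int_\Omega|\nabla\varphi_{i,h}^{k+1}|^2 = W_2^2(\rho_{i,h}^{k+1},\rho_{i,h}^k)$, multiplying by $h$ and summing over $k$, which yields a bound by $\sum_k W_2^2(\rho_{i,h}^k,\rho_{i,h}^{k+1})/h \le C$ via \eqref{eq:standart estimates crowd}. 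This gives $\|\nabla p_h\|_{L^2((0,T)\times\Omega)}\le C_T$.

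Second, I would bound $\|p_h\|_{L^2((0,T)\times\Omega)}$, or equivalently its average over $\Omega$ at each time. This is the standard argument from \cite{MRCS}: test \eqref{crossfiff-inequality measure} (equivalently the rewritten form in Remark \ref{rem: rewrite linearization to find pressure}) against an admissible pair built from a fixed reference density, or more directly integrate the Euler--Lagrange relation. Because $|\Omega|>2$, there is room to place mass, and one obtains $\int_\Omega p_h^{k+1}\,dx \le C h^{-1} W_2(\rho_{i,h}^{k+1},\rho_{i,h}^k) \cdot(\dots) + C$; together with the Poincar\'e--Wirtinger inequality $\|p_h^{k+1} - \fint_\Omega p_h^{k+1}\|_{L^2}\le C\|\nabla p_h^{k+1}\|_{L^2}$ and the gradient bound just obtained, summing the squares against $h$ and using \eqref{eq:standart estimates crowd} controls $\|p_h\|_{L^2((0,T),L^2(\Omega))}$. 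Combining the two pieces gives the claimed bound $p_h \in L^2((0,T),H^1(\Omega))$ uniformly in $h$.

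The main obstacle is the transport term: getting from the per-step Kantorovich-potential gradient to an $L^2$-in-spacetime bound requires summing $\int_\Omega |\nabla\varphi_{i,h}^{k+1}/h|^2$ after multiplying by $h$, and seeing that this equals $h^{-1}\sum_k W_2^2(\rho_{i,h}^k,\rho_{i,h}^{k+1})$, which is exactly what \eqref{eq:standart estimates crowd} controls (it does not vanish, it is $O(1)$, which is precisely what is needed for an $L^2$ — not better — bound). One must be careful that $\nabla p_h^{k+1}$ is only defined on the saturated set $S=\{\rho_{1,h}^{k+1}+\rho_{2,h}^{k+1}=1\}$, so the factor $(\rho_{1,h}^{k+1}+\rho_{2,h}^{k+1})$ is genuinely equal to $1$ there and the manipulation is legitimate; off $S$ one has $p_h^{k+1}=0$ so $\nabla p_h^{k+1}=0$ a.e. there as well. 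The bound on $\fint_\Omega p_h$ is the other delicate point and relies essentially on assumption \eqref{ass:size subset}; I would cite \cite{MRCS} for the details of that step rather than reproduce it.
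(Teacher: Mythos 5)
Your first step (the gradient bound) is essentially the paper's argument, but it contains a false identity: $\int_\Omega|\nabla\varphi_{i,h}^{k+1}|^2\,dx$ is \emph{not} equal to $W_2^2(\rho_{i,h}^{k+1},\rho_{i,h}^{k})$; the correct identity is $W_2^2(\rho_{i,h}^{k+1},\rho_{i,h}^{k})=\int_\Omega|\nabla\varphi_{i,h}^{k+1}|^2\,\rho_{i,h}^{k+1}\,dx$, the weight being the density from which the optimal map $\id-\nabla\varphi_{i,h}^{k+1}$ transports. Discarding the weight first, as you do when you write $\rho_{i,h}^{k+1}|\nabla\varphi_{i,h}^{k+1}|^2/h^2\leqslant|\nabla\varphi_{i,h}^{k+1}|^2/h^2$, destroys exactly the structure you need: without the weight the only available bound on $|\nabla\varphi_{i,h}^{k+1}|$ is $O(\diam\Omega)$, which after division by $h^2$ and summation gives $O(T/h^2)$ and is useless. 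The repair is one line and brings you back to the paper's computation: since $\rho_{i,h}^{k+1}\leqslant 1$, $\int_\Omega\rho_{i,h}^{k+1\,2}|\nabla\varphi_{i,h}^{k+1}|^2/h^2\leqslant\int_\Omega\rho_{i,h}^{k+1}|\nabla\varphi_{i,h}^{k+1}|^2/h^2=W_2^2(\rho_{i,h}^{k+1},\rho_{i,h}^{k})/h^2$, and $h\sum_k W_2^2(\rho_{i,h}^k,\rho_{i,h}^{k+1})/h^2\leqslant C$ by \eqref{eq:standart estimates crowd}. With this fix, your manipulation (test \eqref{crossfiff-first variation pressure} against $\rho_{i,h}^{k+1}$, sum over $i$, use that $\nabla p_h^{k+1}$ lives on the saturated set so the factor $\rho_{1,h}^{k+1}+\rho_{2,h}^{k+1}$ equals $1$ there, control $\nabla\rho_{i,h}$ through \eqref{crossfiff-estimate grad crowd}) is the same as the paper's.

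The second step is where you have a genuine gap. The estimate $\int_\Omega p_h^{k+1}\leqslant Ch^{-1}W_2(\rho_{i,h}^{k+1},\rho_{i,h}^k)\cdot(\dots)+C$ is neither stated precisely nor derived, the unspecified factor makes it impossible to check that it sums correctly in $k$, and deferring it to \cite{MRCS} does not help because that is not how the pressure is controlled there. In fact no mean-value estimate is needed at all, and you already hold the key fact: $p_h^{k+1}=0$ a.e.\ on $\{\rho_{1,h}^{k+1}+\rho_{2,h}^{k+1}<1\}$, and since $\int_\Omega(\rho_{1,h}^{k+1}+\rho_{2,h}^{k+1})=2$ while the sum is $\leqslant 1$, this set has measure at least $|\Omega|-2>0$ by \eqref{ass:size subset}, uniformly in $k$ and $h$. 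A Poincaré inequality for functions vanishing on a set of prescribed positive measure (this is the argument of \cite{MS} that the paper invokes) then gives $\|p_h^{k+1}\|_{L^2(\Omega)}\leqslant C\|\nabla p_h^{k+1}\|_{L^2(\Omega)}$ with $C$ independent of $k,h$, so the full $L^2((0,T),H^1(\Omega))$ bound follows from the gradient estimate alone, with no further use of \eqref{crossfiff-inequality measure}.
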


\begin{proof}
First, we prove that $\nabla p_h$ is bounded in $L^2((0,T)\times \Omega)$ and then we will conclude using Poincaré's inequality.
By definition of $p_h^{k+1}$, we have
\begin{eqnarray*}
\int_\Omega |\nabla p_h^{k+1}|^2(\rho_{1,h}^{k+1} +\rho_{2,h}^{k+1}) &= & \sum_{i=1}^2 \int_\Omega |\nabla \psi_{i,h}^{k+1}|^2\rho_{i,h}^{k+1}\\
&\leqslant & C \sum_{i=1}^2 \left( \int_\Omega \left| \frac{\nabla \phi_{i,h}^{k+1}}{h}\right|^2\rho_{i,h}^{k+1} + \int_\Omega |\nabla V_i |^2 \rho_{i,h}^{k+1} + \int_\Omega \frac{|\nabla \rho_{i,h}^{k+1}|^2}{\rho_{i,h}^{k+1}}\right)\\
&\leqslant &C \sum_{i=1}^2\left( \frac{1}{h^2} W_2^2(\rho_{i,h}^k,\rho_{i,h}^{k+1}) +C +\| (\rho_{i,h}^{k+1})^{1/2}\|_{H^1(\Omega)}\right),
\end{eqnarray*}
where the last line is obtained using the fact that $\nabla V_i \in L^\infty(\Omega)$. Summing the previous inequalities over $k$ and by \eqref{crossfiff-standart estimates crowd} and \eqref{crossfiff-estimate grad crowd}, we obtain that
$$ \int_0^T \int_\Omega | \nabla p_h(t)|^2 (\rho_{1,h}(t) + \rho_{2,h}(t)) \leqslant C.$$
Since $p_h(t) =0$ a.e. on $\{\rho_{1,h}(t) + \rho_{2,h}(t) <1\}$, we deduce
$$ \int_0^T \int_\Omega | \nabla p_h(t)|^2 = \int_0^T \int_\Omega | \nabla p_h(t)|^2 (\rho_{1,h}(t) + \rho_{2,h}(t))\leqslant C.$$
\\
We conclude with the same argument as \cite{MS}. Using Poincaré's inequality, since $|\{p_h(t) =0 \} | \geqslant |\{\rho_{1,h}(t) + \rho_{2,h}(t) <1 \} | \geqslant |\Omega| -2 >0$, by \eqref{ass:size subset}, we obtain that $p_h$ is bounded in $L^2((0,T), H^1(\Omega))$.
\end{proof}
Using Proposition \ref{crossfiff-estimate pressure}, the regularity of $\rho_i$ can be improved.
\begin{cor}
For all $T>0$ and $i=1,2$, $\rho_{i,h}$ is bounded in $L^2((0,T) , H^1( \Omega))$.
\end{cor}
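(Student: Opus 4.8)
The plan is to derive the $L^2((0,T),H^1(\Omega))$ bound on $\rho_{i,h}$ by combining the gradient bound for the pressure (Proposition~\ref{crossfiff-estimate pressure}) with the gradient bound for $\rho_{i,h}^{1/2}$ (estimate~\eqref{crossfiff-estimate grad crowd}) and the first-order optimality condition~\eqref{crossfiff-first variation pressure}. First I would note that from~\eqref{crossfiff-first variation pressure} one has, $\rho_{i,h}^{k}$-a.e.,
\[
\nabla \rho_{i,h}^{k} = -\rho_{i,h}^{k}\Bigl(\nabla p_h^{k} + \nabla V_i + \tfrac{\nabla\varphi_{i,h}^{k}}{h}\Bigr),
\]
so that $|\nabla \rho_{i,h}^{k}| \leqslant \rho_{i,h}^{k}\bigl(|\nabla p_h^{k}| + |\nabla V_i|\bigr) + \rho_{i,h}^{k}\,\tfrac{|\nabla\varphi_{i,h}^{k}|}{h}$ on $\{\rho_{i,h}^{k}>0\}$, which is a.e.\ $\Omega$ by Lemma~\ref{lem:rho positif crowd}.

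Next I would integrate $|\nabla\rho_{i,h}^{k}|^2$ over $\Omega$ and use $\rho_{i,h}^{k}\leqslant 1$: the first two contributions are controlled by $\int_\Omega |\nabla p_h^{k}|^2 + \|\nabla V_i\|_{L^\infty}^2|\Omega|$, and for the transport term one uses $\int_\Omega \rho_{i,h}^{k}\,|\nabla\varphi_{i,h}^{k}|^2 = \int_{\Omega\times\Omega}|x-y|^2\,d\gamma = W_2^2(\rho_{i,h}^{k},\rho_{i,h}^{k-1})$, combined with $(\rho_{i,h}^{k})^2\leqslant \rho_{i,h}^{k}$, to get $\int_\Omega (\rho_{i,h}^{k})^2\tfrac{|\nabla\varphi_{i,h}^{k}|^2}{h^2} \leqslant \tfrac{1}{h^2}W_2^2(\rho_{i,h}^{k},\rho_{i,h}^{k-1})$. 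Summing over $k$ with $kh\leqslant T$, the pressure term is bounded by Proposition~\ref{crossfiff-estimate pressure}, the potential term by $CT$, and the transport term by $\tfrac{1}{h}\sum_k W_2^2(\rho_{i,h}^{k},\rho_{i,h}^{k-1})\leqslant C$ via the third estimate in~\eqref{eq:standart estimates crowd}. This yields $\|\nabla\rho_{i,h}\|_{L^2((0,T)\times\Omega)}\leqslant C_T$, and since $\|\rho_{i,h}\|_{L^2((0,T)\times\Omega)}\leqslant \sqrt{T|\Omega|}$ because $\rho_{i,h}\leqslant 1$, Poincaré--Wirtinger (or simply adding the two bounds) gives the claimed $L^2((0,T),H^1(\Omega))$ bound.

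There is no serious obstacle here; the only point requiring a little care is the handling of the transport gradient term, since $\nabla\varphi_{i,h}^{k}/h$ is only defined $\rho_{i,h}^{k}$-a.e.\ and need not lie in any Lebesgue space on its own — the extra factor $\rho_{i,h}^{k}$ (rather than $(\rho_{i,h}^{k})^{1/2}$) is exactly what converts the Kantorovich potential into the square Wasserstein distance and makes the sum telescope against the $\sum_k W_2^2 \leqslant Ch$ estimate. One should also be slightly careful that~\eqref{crossfiff-first variation pressure} holds a.e.\ (not merely $\rho_{i,h}^{k}$-a.e.) since $\rho_{i,h}^{k}>0$ a.e.; this is what Lemma~\ref{lem:rho positif crowd} guarantees. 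Alternatively, one can avoid any discussion of $\nabla\varphi_{i,h}^{k}$ pointwise by writing $\nabla\rho_{i,h}^{k} = 2(\rho_{i,h}^{k})^{1/2}\nabla(\rho_{i,h}^{k})^{1/2}$, applying Cauchy--Schwarz with $(\rho_{i,h}^{k})^{1/2}\leqslant 1$, and invoking~\eqref{crossfiff-estimate grad crowd} directly; this is in fact the cleanest route and I would present it this way.
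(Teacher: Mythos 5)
Your main argument is correct and is essentially the paper's own proof: the same pointwise identity from \eqref{crossfiff-first variation pressure}, the same use of $\rho_{i,h}\leqslant 1$, the bound $\int_\Omega \rho_{i,h}^{k}|\nabla\varphi_{i,h}^{k}|^2 = W_2^2(\rho_{i,h}^{k},\rho_{i,h}^{k-1})$ summed against \eqref{eq:standart estimates crowd}, and the pressure bound of Proposition \ref{crossfiff-estimate pressure}. The alternative you sketch at the end is also valid and is genuinely shorter: since $\rho_{i,h}^{1/2}\in H^1(\Omega)$ is bounded by $1$, the Sobolev chain rule gives $\nabla\rho_{i,h}=2\rho_{i,h}^{1/2}\nabla\rho_{i,h}^{1/2}$, hence $|\nabla\rho_{i,h}|\leqslant 2|\nabla\rho_{i,h}^{1/2}|$ and \eqref{crossfiff-estimate grad crowd} alone yields the gradient bound, with no need for the Euler--Lagrange relation, the positivity Lemma \ref{lem:rho positif crowd}, or the pressure estimate; this identity is in fact the one the paper already exploits in the proof of Proposition \ref{prop:convergence rho}. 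The trade-off is only expository: the paper's route presents the corollary as a by-product of the pressure analysis (mirroring the structure used later for $\rho_{i,h}\nabla p_h$), whereas your shortcut shows the $L^2((0,T),H^1(\Omega))$ bound on $\rho_{i,h}$ is independent of the pressure field altogether.
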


\begin{proof}
By \eqref{crossfiff-first variation pressure} combined with $\rho_{i,h}^{k+1}\leqslant 1$, we obtain that
$$ |\nabla \rho_{i,h}^{k+1} |^2 \leqslant C\left(\frac{|\nabla \varphi_{i,h}^{k+1}|^2}{h^2} \rho_{i,h}^{k+1} +|\nabla V_i|^2\rho_{i,h}^{k+1} +|\nabla p_{h}^{k+1} |^2 \right) \qquad a.e.$$
Since, by Proposition \ref{crossfiff-estimate pressure}, $\nabla p_{h}$ is bounded in $L^2((0,T) \times \Omega)$ and 
$$ h\sum_{k=0}^{N-1} \int_\Omega \frac{|\nabla \varphi_{i,h}^{k+1}|^2}{h^2} \rho_{i,h}^{k+1} \leqslant C,$$
because of \eqref{crossfiff-standart estimates crowd}, we have
$$ \|\nabla \rho_{i,h} \|_{L^2((0,T)\times \Omega)} \leqslant C.$$
The proof is concluded noticing that  
$$ \| \rho_{i,h} \|_{L^2((0,T) \times \Omega)} \leqslant \| \rho_{i,h} \|_{L^\infty((0,T) \times \Omega)}^{1/2}\| \rho_{i,h} \|_{L^1((0,T) \times \Omega)}^{1/2} \leqslant T^{1/2}.$$
\end{proof}

To analyse the pressure field $p_h$, we recall the following lemma, \cite{MRCS,MS},

\begin{lem}{\cite[Lemma 3.5]{MS}}
\label{crossfiff-limit pressure}
Let $(p_h)_{h>0}$ be a bounded sequence in $L^2([0,T],H^1(\Omega))$ and $(\rho_h)_{h>0}$ a sequence of piecewise constant curves valued in $\Pa(\Omega)$ which satisfiy $W_2(\rho_h(t),\rho_h(s)) \leqslant C\sqrt{t-s-h}$ for all $s<t \in [0,T]$ and $\rho_h \leqslant C$ for a fixed constant $C$. Suppose that
$$ p_h \geqslant 0, \quad p_h(1-\rho_h)=0, \quad \rho_h \leqslant 1,$$
and that 
$$ p_h \rightharpoonup p \text{ weakly in } L^2([0,T],H^1(\Omega)) \text{  and  } \rho_h \rightarrow \rho \text{ uniformly in } \Pa(\Omega).$$
Then $p(1-\rho) =0$.
\end{lem}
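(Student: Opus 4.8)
The plan is to deduce $p(1-\rho)=0$ directly from its discrete counterpart $p_h(1-\rho_h)=0$ by a weak--strong pairing argument. First note that $(p_h)$ bounded in $L^2((0,T),H^1(\Omega))$ implies $p_h\rightharpoonup p$ weakly in $L^2((0,T)\times\Omega)$, and since the nonnegative cone of $L^2$ is closed and convex, $p\geqslant 0$. Likewise $\rho_h\leqslant 1$ together with the $L^\infty$ bound force, after identifying the weak-$*$ limit with the one produced by the uniform $W_2$-convergence, $\rho_h\ws\rho$ in $L^\infty((0,T)\times\Omega)$ with $0\leqslant\rho\leqslant 1$ a.e. It therefore suffices to prove
\[
\int_0^T\!\!\int_\Omega p\,(1-\rho)\,dx\,dt=0,
\]
because the integrand is then nonnegative and must vanish a.e., which is exactly $p(1-\rho)=0$.

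Starting from $\int_0^T\!\!\int_\Omega p_h(1-\rho_h)\,dx\,dt=0$ I would write
\[
0=\int_0^T\!\!\int_\Omega p_h(1-\rho)\,dx\,dt+\int_0^T\!\!\int_\Omega p_h(\rho-\rho_h)\,dx\,dt .
\]
Since $1-\rho\in L^\infty((0,T)\times\Omega)\subset L^2((0,T)\times\Omega)$ and $p_h\rightharpoonup p$ weakly in $L^2$, the first term converges to $\int_0^T\!\!\int_\Omega p(1-\rho)$. Everything thus hinges on showing the second term tends to $0$, i.e.\ on matching the spatial $H^1$-regularity of $p_h$ against the Wasserstein regularity of $\rho_h$.

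For that I would invoke the classical comparison between a dual Sobolev norm and the quadratic Wasserstein distance for bounded densities: on the convex domain $\Omega$, if $\mu,\nu$ are probability densities with $\|\mu\|_\infty,\|\nu\|_\infty\leqslant M$ and $\int_\Omega(\mu-\nu)=0$, then $\|\mu-\nu\|_{(H^1(\Omega))^{*}}\leqslant\sqrt{M}\,W_2(\mu,\nu)$; this follows by integrating $\nabla\xi\cdot v_t$ along a constant-speed $W_2$-geodesic (on which, by convexity of $\Omega$, the density stays $\leqslant M$) and applying Cauchy--Schwarz, see \cite{S,MS}. Applied to $\rho_h(t)$ and $\rho(t)$, both probability densities bounded by $1$, this gives
\[
\|\rho_h(t)-\rho(t)\|_{(H^1(\Omega))^{*}}\leqslant C\,W_2(\rho_h(t),\rho(t))\leqslant C\sup_{s\in[0,T]}W_2(\rho_h(s),\rho(s))\longrightarrow 0
\]
uniformly in $t$ by the assumed uniform convergence of $\rho_h$ to $\rho$ in $\mathcal{P}(\Omega)$; hence $\rho_h-\rho\to 0$ strongly in $L^2((0,T),(H^1(\Omega))^{*})$. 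Pairing with $p_h$, bounded in $L^2((0,T),H^1(\Omega))$, and noting that the duality pairing reduces to the $L^2$ pairing since $\rho-\rho_h\in L^\infty$, we obtain
\[
\Bigl|\int_0^T\!\!\int_\Omega p_h(\rho-\rho_h)\,dx\,dt\Bigr|\leqslant\|p_h\|_{L^2((0,T),H^1(\Omega))}\,\|\rho-\rho_h\|_{L^2((0,T),(H^1(\Omega))^{*})}\longrightarrow 0 .
\]
Combining the three displays yields $\int_0^T\!\!\int_\Omega p(1-\rho)=0$, which completes the proof.

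The only genuinely delicate point I anticipate is the quantitative estimate $\|\mu-\nu\|_{(H^1)^{*}}\leqslant\sqrt{M}\,W_2(\mu,\nu)$: one must work with the McCann interpolation, use convexity of $\Omega$ to keep the interpolating density bounded by $M$, and keep in mind that on a bounded domain the relevant dual is that of $H^1$ modulo constants — harmless here because $\rho_h-\rho$ has zero spatial mean. The $\sqrt{t-s-h}$-Hölder bound on $\rho_h$ enters, through Ascoli--Arzelà, only to guarantee the uniform $W_2$-convergence taken as a hypothesis, and plays no further role in this argument.
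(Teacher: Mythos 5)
Your argument is correct. Note that the paper itself gives no proof of this lemma: it is quoted verbatim from M\'esz\'aros--Santambrogio \cite[Lemma 3.5]{MS}, so there is no internal proof to compare against; your proposal is a sound self-contained reconstruction of the standard argument behind that citation. The two ingredients you use are exactly the right ones: (i) the weak--strong splitting $\iint p_h(1-\rho_h)=\iint p_h(1-\rho)+\iint p_h(\rho-\rho_h)$, with the first term handled by weak $L^2$ convergence of $p_h$ against the fixed bounded function $1-\rho$, and (ii) the Loeper-type comparison $\|\mu-\nu\|_{(H^1(\Omega))^*}\leqslant \sqrt{M}\,W_2(\mu,\nu)$ for densities bounded by $M$ on the convex $\Omega$ (McCann interpolation plus displacement convexity of the $L^\infty$ bound), which converts the uniform-in-time $W_2$ convergence into strong convergence of $\rho_h-\rho$ in $L^2((0,T),(H^1)^*)$ and lets you pair against the $L^2((0,T),H^1)$-bounded $p_h$. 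You also correctly dispose of the mean-zero/constants issue and of $p\geqslant 0$, $\rho\leqslant 1$ in the limit. A minor observation: in your version the H\"older-in-time bound $W_2(\rho_h(t),\rho_h(s))\leqslant C\sqrt{t-s+h}$ is indeed not used (the uniform convergence hypothesis does all the temporal work), whereas the proof in \cite{MS} exploits that equicontinuity by freezing $\rho_h$ on small time intervals; your shortcut is legitimate since uniform convergence is assumed in the statement.
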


Consequently, one has

\begin{prop}
\label{prop:convergence pressure}
There exists $p \in L^2([0,T],H^1(\Omega))$ such that $p_h$ converges weakly in $ L^2([0,T],H^1(\Omega))$ to $p$, where $p$ satisfies
$$ p \geqslant 0, \quad p(1-\rho_1 -\rho_2)=0, \quad \rho_1 +\rho_2 \leqslant 1 \text{  a.e. in } [0,T]\times \Omega.$$
In addition, $\rho_{i,h}\nabla p_h$ narrowly converges to $\rho_{i}\nabla p$.
\end{prop}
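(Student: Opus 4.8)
The plan is to assemble Proposition~\ref{prop:convergence pressure} directly from the estimates and convergences already established. First I would extract the weak limit: by Proposition~\ref{crossfiff-estimate pressure}, $(p_h)_h$ is bounded in $L^2((0,T),H^1(\Omega))$, which is a Hilbert space, so up to a (not relabeled) subsequence there is $p \in L^2((0,T),H^1(\Omega))$ with $p_h \rightharpoonup p$ weakly. The sign condition $p \geqslant 0$ passes to the limit because $\{q \in L^2 : q \geqslant 0 \text{ a.e.}\}$ is convex and strongly closed, hence weakly closed. The constraint $\rho_1 + \rho_2 \leqslant 1$ a.e.\ in $[0,T]\times\Omega$ follows from Proposition~\ref{prop:convergence rho}: $\rho_{i,h}$ converges strongly (indeed pointwise a.e.\ along a subsequence) to $\rho_i$, and each $\rho_{1,h}+\rho_{2,h} \leqslant 1$ by \eqref{eq:standart estimates crowd}.

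Next I would obtain the complementarity relation $p(1-\rho_1-\rho_2)=0$ by invoking Lemma~\ref{crossfiff-limit pressure} with the choice $\rho_h := \rho_{1,h}+\rho_{2,h}$. To apply it I must check its hypotheses: (i) $(p_h)_h$ bounded in $L^2((0,T),H^1(\Omega))$ --- this is Proposition~\ref{crossfiff-estimate pressure}; (ii) $\rho_h = \rho_{1,h}+\rho_{2,h}$ is a piecewise constant curve with a Hölder-type estimate $W_2(\rho_h(t),\rho_h(s)) \leqslant C\sqrt{t-s+h}$ --- this comes from the total-square-distance bound \eqref{eq:standart estimates crowd} applied to each $\rho_{i,h}$ together with the convexity inequality $W_2^2(\tfrac{\mu_1+\mu_2}{2},\tfrac{\nu_1+\nu_2}{2}) \leqslant \tfrac12 W_2^2(\mu_1,\nu_1)+\tfrac12 W_2^2(\mu_2,\nu_2)$ used already in Proposition~\ref{prop:strong-convergence entropy}, and Cauchy--Schwarz over the intermediate steps; (iii) $0 \leqslant \rho_h \leqslant 1$, $p_h \geqslant 0$, $p_h(1-\rho_h)=0$ --- these are built into the construction of $p_h$ via Proposition~\ref{prop:Euler-Lagrange} and the definition of the interpolant; (iv) $p_h \rightharpoonup p$ weakly in $L^2((0,T),H^1(\Omega))$ and $\rho_h \to \rho_1+\rho_2$ uniformly in $\Pa(\Omega)$, the latter from item~1 of Proposition~\ref{prop:convergence rho}. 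Lemma~\ref{crossfiff-limit pressure} then yields $p(1-\rho_1-\rho_2)=0$.

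Finally, for the narrow convergence of $\rho_{i,h}\nabla p_h$ I would argue as a product of a strongly convergent and a weakly convergent factor. By Proposition~\ref{prop:convergence rho}, $\rho_{i,h} \to \rho_i$ strongly in $L^2((0,T)\times\Omega)$ (indeed in every $L^p$, $p<\infty$), while $\nabla p_h \rightharpoonup \nabla p$ weakly in $L^2((0,T)\times\Omega)$ by the established bound; hence the product $\rho_{i,h}\nabla p_h$ converges weakly in $L^1((0,T)\times\Omega)$ to $\rho_i \nabla p$, which in particular gives convergence when tested against $C_c$ functions, i.e.\ narrow convergence of the associated measures. The main obstacle I anticipate is the Hölder-in-time estimate for the combined curve $\rho_{1,h}+\rho_{2,h}$ needed to feed Lemma~\ref{crossfiff-limit pressure}: one must be careful that the discrete square-distance sum \eqref{eq:standart estimates crowd} only controls consecutive steps, so passing to an estimate $W_2(\rho_h(t),\rho_h(s)) \lesssim \sqrt{t-s+h}$ for arbitrary $s<t$ requires the triangle inequality plus Cauchy--Schwarz over $\lceil (t-s)/h\rceil$ steps, together with the $\ell^2$-contraction of $W_2^2$ under averaging; everything else is a routine application of lower semicontinuity and the strong/weak product trick.
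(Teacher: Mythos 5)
Your proposal is correct and follows essentially the same route as the paper: extract the weak limit from the $L^2((0,T),H^1(\Omega))$ bound of Proposition~\ref{crossfiff-estimate pressure}, apply Lemma~\ref{crossfiff-limit pressure} to $\rho_h:=\rho_{1,h}+\rho_{2,h}$ for the complementarity and constraint conditions, and conclude the narrow convergence of $\rho_{i,h}\nabla p_h$ by the strong $L^2$ convergence of $\rho_{i,h}$ (Proposition~\ref{prop:convergence rho}) paired with the weak $L^2$ convergence of $\nabla p_h$. Your verification of the H\"older-in-time estimate for the summed curve (via the averaging inequality for $W_2^2$ and Cauchy--Schwarz over intermediate steps) is a detail the paper leaves implicit, but it is exactly the intended justification.
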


\begin{proof}

We apply Lemma \ref{crossfiff-limit pressure} to $\rho_h:=\rho_{1,h} + \rho_{2,h}$ and $p_h$. According to Proposition \ref{crossfiff-estimate pressure}, $p_h$ weakly converges in $L^2((0,T), H^1(\Omega))$ to $p$ such that 
\begin{eqnarray}
\label{crossfiff-condition pressure limit}
p \geqslant 0, \quad p(1-\rho_{1}-\rho_{2})=0, \quad \rho_{1}+\rho_{2} \leqslant 1.
\end{eqnarray}
Moreover, using the estimate on $p_h$, we know that $\nabla p_h$ weakly converges to $\nabla p$ in $L^2((0,T)\times \Omega)$. Then since $\rho_{i,h}$ strongly converges to $\rho_i$ in $L^2((0,T) \times \Omega)$ (Proposition \ref{prop:convergence rho}), by strong-weak convergence, we obtain that $\rho_{i,h}\nabla p_h$ narrowly converges to $\rho_{i}\nabla p$.

\end{proof}

\subsection{Existence of weak solutions to \eqref{crossfiff-system crowd}}
Arguing as in Proposition \ref{crossfiff-equation discrete entropie}, $(\rho_{1,h},\rho_{2,h})$ is solution to a discrete approximation of system \eqref{crossfiff-system crowd}.

\begin{prop}
\label{crossfiff-equation discrete crowd}
Let $h>0$, for all $T>0$, let $N$ such that $N=\lfloor \frac{T}{h} \rfloor $. Then for all $(\phi_1, \phi_2) \in \mathcal{C}^\infty_c ([0,T)\times \Rn)^2$ and for all $i \in \{1,2\}$,
\begin{eqnarray*}
\begin{split}
\int_0^T \int_{\Omega} &\rho_{i,h}(t,x)  \partial_t \phi_i(t,x) \,dxdt + \int_{\Omega} \rho_{i,0}(x) \phi_i(0,x) \, dx\\
&= h\sum_{k=0}^{N-1} \int_{\Omega} \nabla V_i(x) \cdot  \nabla \phi_i (t_{k},x) \rho_{i,h}^{k+1}(x)\,dx 
+h\sum_{k=0}^{N-1}\int_{\Omega} \nabla \rho_{i,h}^{k+1}(x) \cdot  \nabla \phi_i (t_{k},x)\,dx\\
&+h\sum_{k=0}^{N-1}\int_{\Omega} \nabla p_h^{k+1} \cdot  \nabla \phi_i(t_{k},x) \rho_{i,h}^{k+1}(x)\,dx
+\sum_{k=0}^{N-1}\int_{\Omega \times \Omega} \mathcal{R}[\phi_i(t_{k},\cdot)](x,y) d\gamma_{i,h}^k (x,y)\\
\end{split}
\end{eqnarray*}
where $t_k=hk$ ($t_N :=T$) and $\gamma_{i,h}^k$ is the optimal transport plan in $W_2(\rho_{i,h}^{k},\rho_{i,h}^{k+1})$.
Moreover, $ \mathcal{R}$ is defined such that, for all $\phi \in \mathcal{C}^\infty_c([0,T) \times \Rn)$,
$$ |\mathcal{R}[\phi](x,y)| \leqslant \frac{1}{2} \|D^2 \phi \|_{L^\infty ([0,T) \times \Rn)} |x- y|^2.$$

\end{prop}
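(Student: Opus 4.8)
The plan is to mimic, line by line, the proof of Proposition \ref{crossfiff-equation discrete entropie}: the only structural change is that the soft‑congestion term $\nabla F_m'(\rho_{1,h}^{k+1}+\rho_{2,h}^{k+1})$ is now replaced by the discrete pressure $\nabla p_h^{k+1}$ constructed in Proposition \ref{prop:Euler-Lagrange}, and that the Euler–Lagrange identity to be exploited is \eqref{crossfiff-first variation pressure} in place of \eqref{crossfiff-variation premiere}. So the starting point, for fixed $i\in\{1,2\}$, $k\geqslant 0$ and $\phi_i\in\mathcal{C}^\infty_c([0,T)\times\Rn)$, is the relation $\nabla V_i+\nabla\log(\rho_{i,h}^{k+1})+\nabla p_h^{k+1}+\tfrac1h\nabla\varphi_{i,h}^{k+1}=0$, valid $\rho_{i,h}^{k+1}$-a.e., where $\varphi_{i,h}^{k+1}$ is the Kantorovich potential in $W_2(\rho_{i,h}^{k+1},\rho_{i,h}^{k})$ (unique up to a constant by Lemma \ref{lem:rho positif crowd}). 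First I would multiply this identity by $\rho_{i,h}^{k+1}$, take the scalar product with $\nabla\phi_i(t_k,\cdot)$ and integrate over $\Omega$. The potential and pressure terms then contribute directly $\int_\Omega\nabla V_i\cdot\nabla\phi_i(t_k,\cdot)\,\rho_{i,h}^{k+1}$ and $\int_\Omega\nabla p_h^{k+1}\cdot\nabla\phi_i(t_k,\cdot)\,\rho_{i,h}^{k+1}$; for the entropy term one uses that, by Lemma \ref{lem:rho positif crowd}, $\rho_{i,h}^{k+1}>0$ a.e. and $\log\rho_{i,h}^{k+1}\in L^1(\Omega)$, together with $(\rho_{i,h}^{k+1})^{1/2}\in H^1(\Omega)$ from \eqref{crossfiff-estimate grad crowd}, so that the chain rule gives $\rho_{i,h}^{k+1}\nabla\log\rho_{i,h}^{k+1}=\nabla\rho_{i,h}^{k+1}$ and this term becomes $\int_\Omega\nabla\rho_{i,h}^{k+1}\cdot\nabla\phi_i(t_k,\cdot)$.

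Next I would handle the transport term. By Brenier's theorem the optimal map from $\rho_{i,h}^{k+1}$ to $\rho_{i,h}^{k}$ is $\mathrm{id}-\nabla\varphi_{i,h}^{k+1}$, and the optimal plan in $W_2(\rho_{i,h}^{k},\rho_{i,h}^{k+1})$ is $\gamma_{i,h}^k=(\mathrm{id}\times(\mathrm{id}-\nabla\varphi_{i,h}^{k+1}))_{\#}\rho_{i,h}^{k+1}$, so that $\int_\Omega\tfrac1h\nabla\varphi_{i,h}^{k+1}\cdot\nabla\phi_i(t_k,\cdot)\,\rho_{i,h}^{k+1}=\tfrac1h\iint_{\Omega\times\Omega}(x-y)\cdot\nabla\phi_i(t_k,x)\,d\gamma_{i,h}^k(x,y)$. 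A second‑order Taylor expansion $\phi_i(t_k,y)-\phi_i(t_k,x)=\nabla\phi_i(t_k,x)\cdot(y-x)+\mathcal{R}[\phi_i(t_k,\cdot)](x,y)$, with $|\mathcal{R}[\phi_i(t_k,\cdot)](x,y)|\leqslant\tfrac12\|D^2\phi_i\|_{L^\infty}|x-y|^2$, combined with the fact that the marginals of $\gamma_{i,h}^k$ are $\rho_{i,h}^{k+1}$ and $\rho_{i,h}^{k}$, turns this into $\tfrac1h\big(\int_\Omega\phi_i(t_k,\cdot)\rho_{i,h}^{k+1}-\int_\Omega\phi_i(t_k,\cdot)\rho_{i,h}^{k}\big)+\tfrac1h\iint_{\Omega\times\Omega}\mathcal{R}[\phi_i(t_k,\cdot)]\,d\gamma_{i,h}^k$, up to sign.

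Finally I would multiply the resulting $k$-th identity by $h$ and sum over $k=0,\dots,N-1$. The telescoping‑type sum $\sum_k\big(\int_\Omega\phi_i(t_k,\cdot)\rho_{i,h}^{k+1}-\int_\Omega\phi_i(t_k,\cdot)\rho_{i,h}^{k}\big)$ is reorganised by discrete integration by parts in $k$; since $\rho_{i,h}$ is piecewise constant and $\phi_i$ is compactly supported in $[0,T)$ (so $\phi_i(t_N,\cdot)=\phi_i(T,\cdot)=0$), this sum is exactly $-\int_0^T\int_\Omega\rho_{i,h}\,\partial_t\phi_i\,dx\,dt-\int_\Omega\rho_{i,0}\,\phi_i(0,\cdot)\,dx$, which rearranges into the announced formula; this is precisely the bookkeeping already carried out in \cite{A,L} and in the proof of Proposition \ref{crossfiff-equation discrete entropie}.

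I expect no genuine obstacle here, since the argument is a routine adaptation of the soft‑congestion case with $\nabla p_h^{k+1}$ substituted for $\nabla F_m'(\rho_{1,h}^{k+1}+\rho_{2,h}^{k+1})$; the one point that must not be skipped is the justification of the chain rule $\rho_{i,h}^{k+1}\nabla\log\rho_{i,h}^{k+1}=\nabla\rho_{i,h}^{k+1}$ and of the uniqueness of $\varphi_{i,h}^{k+1}$, both of which rest on the positivity and log‑integrability provided by Lemma \ref{lem:rho positif crowd} (this is exactly why that lemma was stated) and on the $L^2((0,T),H^1(\Omega))$ bound on $\rho_{i,h}^{1/2}$ from \eqref{crossfiff-estimate grad crowd}.
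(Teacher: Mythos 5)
Your proposal is correct and follows exactly the route the paper takes: the paper proves this proposition by repeating the argument of Proposition \ref{crossfiff-equation discrete entropie} verbatim, with the Euler--Lagrange identity \eqref{crossfiff-first variation pressure} (i.e.\ $\nabla p_h^{k+1}$ in place of $\nabla F_m'(\rho_{1,h}^{k+1}+\rho_{2,h}^{k+1})$) multiplied by $\rho_{i,h}^{k+1}$, paired with $\nabla\phi_i(t_k,\cdot)$, Taylor-expanded along the optimal plan, and summed discretely in $k$, relying on Lemma \ref{lem:rho positif crowd} for the chain rule and the uniqueness of the Kantorovich potential, just as you describe. The only cosmetic slip is the order of the marginals of $\gamma_{i,h}^k$ (the paper takes $\rho_{i,h}^{k}$ as first marginal), which is immaterial for the estimate on $\mathcal{R}$.
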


Combining Propositions \ref{crossfiff-standart estimates crowd}, \ref{prop:convergence rho}, \ref{prop:convergence pressure} and \ref{crossfiff-equation discrete crowd}, the rest of the proof of Theorem \ref{crossfiff-existence crowd motion} is identical to the proof of Theorem \ref{crossfiff-existence entropy} in the previous section.

\begin{rem}
As in Remark \ref{rem: drop diffusion porous}, it is possible to drop one diffusion. Say we drop the individual Entropy for the second species, $\rho_2$. The difficulty is to pass to the limit in the nonlinear term $\rho_{2,h} \nabla p_h$. This term can be rewritten as 
$$ (\rho_{1,h} +\rho_{2,h}) \nabla p_h - \rho_{1,h} \nabla p_h. $$
Taking advantage of the definition of $p_h$, we deduce that $(\rho_{1,h} +\rho_{2,h}) \nabla p_h = \nabla p_h$ a.e and then converges weakly to  $\nabla p=(\rho_1 + \rho_2) \nabla p$ in $L^2((0,T) \times \Omega)$. Moreover, since $ \rho_{1,h}$ strongly converges in $L^2((0,T) \times \Omega)$ by Proposition \ref{prop:convergence rho} and $\nabla p_h$ converges weakly in $L^2((0,T) \times \Omega)$ we can pass to the limit in the second term by strong-weak convergence. Then we deduce that $\rho_{2,h} \nabla p_h$ weakly converges to $\rho_{2} \nabla p$. 
\end{rem}

\section{Systems with a common drift}
\label{crossfiff-section 5: link and uniqueness}

In this section, we focus on the special case where $\nabla V_1= \nabla V_2=: \nabla V \in L^\infty(\Omega)$. Although this asumption is very restrictive, it allows us to obtain better estimates on solutions (Proposition \ref{crossfiff-regularity pm} and Proposition \ref{prop:improvment regularity}) which are hard to get in the general case due to the lack of convexity of $\F_m(\rho_1 +\rho_2)$, see Remark \ref{rem: not convex}. Therefore, in this case, we will be able to prove the convergence of a solution to \eqref{systdiffcoup} to a solution to \eqref{crossfiff-system crowd}, when $m$ goes to $+\infty$. Moreover, under some regularity we give a $L^1$-contraction result for systems \eqref{crossfiff-system crowd} and \eqref{systdiffcoup}.

\begin{rem}
\label{rem: not convex}
It is well-known in the Wasserstein gradient flow theory that the $\lambda$-geodesic convexity of the functional implies a $W_2$-contraction of the flow. Unfortunately, as mentioned in \cite{KM}, in general, $(\rho_1,\rho_2) \in \Paa(\Omega)^2 \mapsto \F_m(\rho_1+\rho_2)$ is not displacement convex. Indeed, for $m=2$, we can rewrite the functional as
$$ \F_2(\rho_1+\rho_2)= \F_2(\rho_1) +\F_2(\rho_2) +2\int_\Omega \rho_1 \rho_2.$$
Let $\rho_2$ be a fixed density, we study the displacement convexity of $\rho \mapsto \F_2(\rho) +2\int_\Omega \rho_2 \rho$. We know, see \cite{MC}, that $\rho \in \Paa(\Omega) \mapsto \F_2(\rho)$ is displacement convex but $\rho \mapsto\int_\Omega \rho_2 \rho$ is displacment convex if $\rho_2$ is $\lambda$-convex.
\end{rem}

To overcome this lack of convexity, we need to obtain a stronger estimate, independent on $m$, on $\nabla  F_m'(\rho_{1,m} +\rho_{2,m})$, where $(\rho_{1,m},\rho_{2,m})$ is a solution to \eqref{systdiffcoup}. In the case of a common drift, this estimate can be found observing that $\rho_m:=\rho_{1,m}+\rho_{2,m}$ is the Wasserstein gradient flow of $\E + \V + \F_m$ and then, solves 
\begin{eqnarray}
\label{crossfiff-eq sum}
\partial_t \mu - \Delta \mu -\dive(\mu \nabla V) - \dive(\mu \nabla F_m'(\mu))=0,
\end{eqnarray}
with initial condition $\mu_{|t=0}=\rho_{1,0}+\rho_{2,0}$.
\begin{prop}
\label{crossfiff-regularity pm}
Let $(\rho_{1,m},\rho_{2,m})$ be a solution to \eqref{crossfiff-eq sum} in $L^2((0,T),H^1(\Omega))$ with $\nabla V_1= \nabla V_2=: \nabla V \in L^\infty(\Omega)$. Then $\rho_m:=\rho_{1,m}+\rho_{2,m}$ is unique and $F_m'(\rho_m)$ is bounded independently of $m$ in $L^2((0,T),H^1(\Omega))$, for all $T<+\infty$.   
\end{prop}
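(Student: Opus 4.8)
The plan is to exploit that $\rho_m := \rho_{1,m}+\rho_{2,m}$ solves the single scalar equation \eqref{crossfiff-eq sum}, which is a Fokker--Planck equation with a porous-medium-type nonlinearity and a bounded drift, so that the coupled structure disappears entirely and all the difficulty is reduced to an a priori estimate for this scalar PDE that is \emph{uniform in $m$}. Uniqueness of $\rho_m$ among $L^2((0,T),H^1(\Omega))$ solutions is the easy part: equation \eqref{crossfiff-eq sum} is the Wasserstein gradient flow of the geodesically $\lambda$-convex functional $\He+\V+\F_m$ (with $\lambda$ depending only on $\|D^2V\|_\infty$, hence not on $m$), and the classical contraction estimate for gradient flows of $\lambda$-convex functionals — or equivalently a direct Kruzhkov/duality argument for the nonlinear Fokker--Planck equation with monotone nonlinearity $z\mapsto z+F_m'(z)$ composed with the renormalization trick — gives a $W_2$- or $L^1$-contraction between two solutions sharing the same initial datum. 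I would cite the $\lambda$-flow theory of \cite{AGS} for this, noting $F_m$ is convex so $\He+\F_m$ is displacement convex and $\V$ is $-\|D^2V\|_\infty$-convex.

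For the main estimate, the idea is to differentiate the energy-dissipation identity for \eqref{crossfiff-eq sum} along the flow, or more robustly to run the flow-interchange argument of Proposition~\ref{crossfiff-prop f.g.i sum} but now perturbing the JKO minimizer for the \emph{scalar} problem by the heat flow and reading off the dissipation of the functional $\F_m$ itself (rather than the entropy). Concretely, at the discrete level one obtains, after testing the Euler--Lagrange equation for $\rho_m$ against $\Delta F_m'(\rho_m)$ (equivalently, perturbing by the heat flow and using the EVI for the entropy as in the proof of Proposition~\ref{crossfiff-prop f.g.i sum}, but swapping the roles so that $\F_m$ is the perturbing functional), an inequality of the form
\[
h\int_\Omega |\nabla F_m'(\rho_{m,h}^{k+1})|^2\,\rho_{m,h}^{k+1} + h\int_\Omega \nabla F_m'(\rho_{m,h}^{k+1})\cdot\nabla\rho_{m,h}^{k+1}
\ \leqslant\ \F_m(\rho_{m,h}^{k}) - \F_m(\rho_{m,h}^{k+1}) + C h,
\]
where the second integral on the left is $\int_\Omega F_m''(\rho_m)|\nabla\rho_m|^2\geqslant 0$ and the drift contributes the $Ch$ term via Young's inequality using $\|\nabla V\|_\infty<\infty$. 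Summing in $k$ and telescoping kills $\F_m(\rho_{m,h}^{k})$, but this only controls the weighted quantity $\int\!\!\int |\nabla F_m'(\rho_m)|^2\rho_m$; to upgrade to an unweighted $L^2((0,T)\times\Omega)$ bound on $\nabla F_m'(\rho_m)$ one writes $\nabla F_m'(\rho_m) = \frac{m}{m-1}\nabla(\rho_m^{m-1})$ and $\rho_m^{m-1}=\rho_m\cdot\rho_m^{m-2}$, and instead tests against $\Delta(\rho_m^{2m-2})$ or uses the Nash-type inequality; alternatively, and I think more cleanly, one uses that $\int\!\!\int F_m''(\rho_m)|\nabla\rho_m|^2 = c_m\int\!\!\int|\nabla \rho_m^{m/2}|^2$ together with an interpolation / maximal-regularity bound, all with $m$-independent constants because $F_m(z)=\frac{z^m}{m-1}$ and the mass of $\rho_m$ equals $2$, so $\|\rho_m\|_{L^m}$ and $\F_m(\rho_m)$ stay bounded uniformly in $m$ (the same way one shows $\F_m(\rho)\to\F_\infty(\rho)$). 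Finally Poincar\'e--Wirtinger upgrades the gradient bound on $F_m'(\rho_m)$ to the full $L^2((0,T),H^1(\Omega))$ norm, since $\int_\Omega F_m'(\rho_m)$ is controlled by $\F_m(\rho_m)+|\Omega|$.

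The step I expect to be the real obstacle is making every constant genuinely independent of $m$, and in particular handling the borderline scaling of $F_m''(z)=m z^{m-2}$ and the prefactor $\frac{m}{m-1}$ in $F_m'$: the naive bound on $\nabla F_m'(\rho_m)=\frac{m}{m-1}\nabla\rho_m^{m-1}$ has an $m$-dependent constant, so one genuinely needs the telescoping of $\F_m$ (whose $m$-uniform bound comes from $|\Omega|>$ the total mass, exactly as in the $m=\infty$ hard-congestion case) to absorb it, rather than a crude interpolation. I would therefore organize the argument so that the quantity actually estimated is $\int_0^T\int_\Omega F_m''(\rho_m)|\nabla\rho_m|^2\,dx\,dt$, show this is $\leqslant C_T$ uniformly in $m$ from the telescoped dissipation inequality, and then observe that $|\nabla F_m'(\rho_m)|^2 = (F_m''(\rho_m))^2|\nabla\rho_m|^2 = F_m''(\rho_m)\cdot F_m''(\rho_m)|\nabla\rho_m|^2$ with $F_m''(\rho_m)=m\rho_m^{m-2}$ bounded in, say, $L^{m/(m-2)}$ uniformly (again by the mass-$2$ bound), so that a single H\"older application closes the loop with $m$-uniform constants; the limiting case $m\to\infty$ is then consistent with $\nabla p\in L^2$ from Section~\ref{crossfiff-section 4: proof crowd}. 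The other place requiring care is justifying the test-function computations: $\rho_{m,h}^{k+1}$ is only known to be positive a.e.\ with $\rho_m^{m/2}\in H^1$ (Lemma~\ref{crossfiff-rho positif}, Proposition~\ref{crossfiff-prop f.g.i sum}), so the manipulations with $\Delta F_m'$ must be done at the level of the heat-flow perturbation and passed to the limit by lower semicontinuity, exactly as in the proof of Proposition~\ref{crossfiff-prop f.g.i sum}, which I would invoke essentially verbatim.
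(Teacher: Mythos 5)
Your overall strategy (reduce to the scalar equation \eqref{crossfiff-eq sum}, get uniqueness from displacement convexity via \cite{AGS}, and extract the $m$-uniform gradient bound from a flow-interchange/dissipation inequality at the JKO level, finishing with Poincar\'e--Wirtinger) is the right one and is close in spirit to the paper. The uniqueness part is fine. The problem is the key step: the $m$-\emph{uniform unweighted} $L^2$ bound on $\nabla F_m'(\rho_m)$, and there your argument as finally organized does not close. Your discrete inequality (perturbing by the flow of $\F_m$, equivalently pairing the Euler--Lagrange identity with $\rho_m\nabla F_m'(\rho_m)$) controls, uniformly in $m$, the \emph{weighted} quantity $\iint \rho_m|\nabla F_m'(\rho_m)|^2$ together with the cross term $\iint \nabla F_m'(\rho_m)\cdot\nabla\rho_m=\iint F_m''(\rho_m)|\nabla\rho_m|^2$. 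But in your last paragraph you discard the weighted term and try to recover $\iint|\nabla F_m'(\rho_m)|^2$ from the cross term alone by H\"older, writing $|\nabla F_m'|^2=F_m''\cdot\bigl(F_m''|\nabla\rho_m|^2\bigr)$ with ``$F_m''(\rho_m)=m\rho_m^{m-2}$ bounded in $L^{m/(m-2)}$ uniformly by the mass-$2$ bound''. This fails twice. First, the mass bound gives no control of $\int\rho_m^m$ at all, and even using the energy bound $\F_m(\rho_m)\leqslant C$ one only gets $\int\rho_m^m\leqslant C(m-1)$, whence $\|F_m''(\rho_m)\|_{L^{m/(m-2)}}=m\bigl(\int\rho_m^m\bigr)^{(m-2)/m}\sim Cm$, which is \emph{not} uniform in $m$. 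Second, H\"older with the pair $\bigl(\tfrac{m}{m-2},\tfrac{m}{2}\bigr)$ would require $F_m''(\rho_m)|\nabla\rho_m|^2\in L^{m/2}$, whereas your dissipation inequality only gives it in $L^1$; with the dual exponent $1$ you would need $F_m''(\rho_m)\in L^\infty$ uniformly, which is false (near $\rho_m\approx 1$ it is of size $m$). The cross term alone genuinely loses a factor $m$ where $\rho_m$ is of order one, so no interpolation can make it carry the unweighted bound uniformly.

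Two remarks on how to repair or replace this step. Your own two-term inequality does suffice if you keep both pieces and split the domain: on $\{\rho_m\geqslant 1/2\}$ use $|\nabla F_m'(\rho_m)|^2\leqslant 2\rho_m|\nabla F_m'(\rho_m)|^2$, while on $\{\rho_m<1/2\}$ write
\begin{equation*}
|\nabla F_m'(\rho_m)|^2=m\rho_m^{m-2}\,\bigl(F_m''(\rho_m)|\nabla\rho_m|^2\bigr)\leqslant m\,2^{2-m}\,F_m''(\rho_m)|\nabla\rho_m|^2,
\end{equation*}
and note $\sup_{m\geqslant1} m\,2^{2-m}<+\infty$; summing the two regions gives the $m$-uniform bound (this elementary splitting is exactly what is missing from your write-up). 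The paper avoids the weighted quantity altogether by a different choice of auxiliary flow in the flow interchange: following \cite[Lemma 5.6]{GLM}, it perturbs the JKO minimizer by the (regularized) porous-medium flow $\partial_s\eta=\Delta\eta^{m-1}+\eps\Delta\eta$, i.e. the $0$-flow of $\tfrac{1}{m-2}\int\eta^{m-1}+\eps\He(\eta)$; since $\nabla\eta^{m-1}=\tfrac{m-1}{m}\nabla F_m'(\eta)$, the dissipation of $\F_m$ along this flow is directly $\tfrac{m-1}{m}\int|\nabla F_m'(\eta)|^2$, unweighted and with an $m$-independent constant, the entropy and $\eps$-terms have a sign, and the telescoped quantity is $\tfrac{1}{m-2}\int\rho^{m-1}$ rather than $\F_m$; then one sends $\eps\to0$ and uses lower semicontinuity and Poincar\'e--Wirtinger as you propose. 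Your final ``clean'' H\"older closing, as stated, is a genuine gap; either the splitting above or the paper's choice of auxiliary flow is needed.
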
 
\begin{proof}
 As we remark above, $\rho_m$ is solution to \eqref{crossfiff-eq sum}.
 By geodesic convexity of $\E$ and $\F_m$, we know that solution to \eqref{crossfiff-eq sum} is unique (see \cite{AGS}). 
To conclude, we reason as in \cite[Lemma 5.6]{GLM}. The proof is based on the flow interchange technique with the (smooth) solution to
$$ \left\{ \begin{array}{ll}
\partial_t \eta =\Delta\eta^{m-1} +\eps \Delta \eta & \text{ in } (0,T)\times \Omega,\\
(\nabla \eta^{m-1} +\eps \nabla \eta) \cdot \nu =0 & \text{ in } (0,T)\times \partial \Omega,\\
\eta_{|t=0}=\rho_{h,m}^k,&
\end{array}\right.$$
where $\rho_{h,m}^k$ is constructed using the JKO scheme. We obtain, when $\eps$ goes to $0$ and using a lower semi-continuity argument, $ \| \nabla F'_m(\rho_{m}) \|_{L^2((0,T),H^1(\Omega))} \leqslant C_T$, for all $T>0$, where $C_T$ is a constant independent on $m$. The $L^1$-estimate of $F_m'(\rho_m)$ and the Poincaré-Wirtinger inequality conclude the proof.

\end{proof}

Now, we show that $(\rho_{1,m},\rho_{2,m})$ converges to a solution to \eqref{crossfiff-system crowd}, $(\rho_{1,\infty},\rho_{2,\infty})$, as $m\nearrow+\infty$.

\begin{thm}
\label{crossfiff-convergence m}
Assume that the initial data satisfy $\rho_{1,0} + \rho_{2,0} \leqslant 1$. Up to a subsequence, as $m\rightarrow +\infty$, a solution to \eqref{systdiffcoup}, $ (\rho_{1,m},\rho_{2,m})$, converges strongly in $L^2((0,T)\times \Omega)$ to $ (\rho_{1,\infty},\rho_{2,\infty})$ and $p_m:=F_m'(\rho_{1,m}+\rho_{2,m})$ converges weakly in $L^2((0,T),H^1(\Omega))$ to $p_\infty$, where $ (\rho_{1,\infty},\rho_{2,\infty},p_\infty)$ is a solution to \eqref{crossfiff-system crowd}.
\end{thm}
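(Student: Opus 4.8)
\textbf{Proof proposal for Theorem \ref{crossfiff-convergence m}.}

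The plan is to combine uniform-in-$m$ estimates with the compactness machinery already developed in Sections \ref{crossfiff-section 3: proof entropy} and \ref{crossfiff-section 4: proof crowd}, and then pass to the limit $m\to+\infty$ in the weak formulation. First I would collect the $m$-independent bounds. From Remark \ref{crossfiff-estimate distance ind m} the total square distance estimate \eqref{crossfiff-estimation distance} is uniform in $m$, and from Proposition \ref{crossfiff-prop f.g.i sum} the bounds on $\|\rho_{i,h}^{1/2}\|_{L^2((0,T),H^1(\Omega))}$ do not depend on $m$ either. The key new ingredient is Proposition \ref{crossfiff-regularity pm}: under the common-drift assumption, $p_m=F_m'(\rho_{1,m}+\rho_{2,m})$ is bounded in $L^2((0,T),H^1(\Omega))$ uniformly in $m$. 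I would also record that $\F_m(\rho_{1,0}+\rho_{2,0})$ stays controlled as $m\to\infty$ since $\rho_{1,0}+\rho_{2,0}\leqslant 1$, so the initial-data hypothesis \eqref{crossfiff-CI} holds with an $m$-independent bound, making all constants in Proposition \ref{crossfiff-estimation sum} uniform in $m$.

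Next I would extract the limits. By the $m$-uniform version of Proposition \ref{prop:strong-convergence entropy} (applying the Rossi--Savaré extension of Aubin--Lions exactly as there, now with $h$ fixed small enough or, cleaner, first letting $h\to0$ for each $m$ to get the solutions $(\rho_{1,m},\rho_{2,m})$ of Theorem \ref{crossfiff-existence entropy} together with their estimates, then letting $m\to\infty$), up to a subsequence $\rho_{i,m}\to\rho_{i,\infty}$ strongly in $L^1((0,T)\times\Omega)$ and a.e., and $\rho_{i,m}\rightharpoonup\rho_{i,\infty}$ in $L^2((0,T),H^1(\Omega))$ using the corollary-type $L^2 H^1$ bound. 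Since $\|p_m\|_{L^2((0,T),H^1(\Omega))}\leqslant C_T$, up to a further subsequence $p_m\rightharpoonup p_\infty$ weakly in $L^2((0,T),H^1(\Omega))$, hence $\nabla p_m\rightharpoonup\nabla p_\infty$ weakly in $L^2((0,T)\times\Omega)$. Strong--weak convergence then gives $\rho_{i,m}\nabla p_m\rightharpoonup\rho_{i,\infty}\nabla p_\infty$ in $L^1((0,T)\times\Omega)$, which is enough to pass to the limit in the cross-diffusion term $\int\rho_{i,m}\nabla p_m\cdot\nabla\phi_i$. Passing to the limit in $\int\rho_{i,m}\partial_t\phi_i$, $\int\nabla\rho_{i,m}\cdot\nabla\phi_i$ and $\int\rho_{i,m}\nabla V\cdot\nabla\phi_i$ is immediate from the above convergences, and the initial term is unchanged; so $(\rho_{1,\infty},\rho_{2,\infty},p_\infty)$ satisfies the weak formulation in Definition \ref{weak solution}.

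It remains to check the constraint structure $p_\infty\geqslant0$, $\rho_{1,\infty}+\rho_{2,\infty}\leqslant1$ and $p_\infty(1-\rho_{1,\infty}-\rho_{2,\infty})=0$ a.e. Nonnegativity of $p_\infty$ is inherited from $p_m=\frac{m}{m-1}(\rho_{1,m}+\rho_{2,m})^{m-1}\geqslant0$ and weak convergence. For the bound $\rho_{1,\infty}+\rho_{2,\infty}\leqslant1$: on the set where the limit exceeds $1$ by a fixed $\delta>0$, the a.e.\ convergence forces $\rho_{1,m}+\rho_{2,m}\geqslant1+\delta/2$ on that set for $m$ large, so $\int_0^T\int_\Omega F_m(\rho_{1,m}+\rho_{2,m})\geqslant\frac{(1+\delta/2)^m}{m-1}|\{\cdots\}|\to+\infty$, contradicting \eqref{crossfiff-estimation entropie}; hence the set is null. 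For the complementarity relation, the natural route is the same one used in Lemma \ref{crossfiff-limit pressure} / Proposition \ref{prop:convergence pressure} for the hard-congestion scheme: one shows $\int_0^T\int_\Omega p_\infty(1-\rho_{1,\infty}-\rho_{2,\infty})\leqslant\liminf_{m}\int_0^T\int_\Omega p_m(1-\rho_{1,m}-\rho_{2,m})$ using weak convergence of $p_m$ against the strongly convergent $1-\rho_{1,m}-\rho_{2,m}$, together with the elementary pointwise inequality $F_m'(z)(1-z)=\frac{m}{m-1}z^{m-1}(1-z)$, whose positive part $\frac{m}{m-1}z^{m-1}(1-z)_+$ is bounded uniformly and tends to $0$ pointwise, so that $\int p_m(1-\rho_{1,m}-\rho_{2,m})\to0$; since both factors of the limiting product have a sign ($p_\infty\geqslant0$, $1-\rho_{1,\infty}-\rho_{2,\infty}\geqslant0$), this yields $p_\infty(1-\rho_{1,\infty}-\rho_{2,\infty})=0$ a.e.

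I expect the main obstacle to be the complementarity relation, i.e.\ showing $p_\infty(1-\rho_{1,\infty}-\rho_{2,\infty})=0$; everything else is a fairly mechanical reshuffling of the compactness arguments already in the paper. Concretely, one must control the positive and negative parts of $p_m(1-\rho_{1,m}-\rho_{2,m})$ separately — the negative part $\frac{m}{m-1}z^{m-1}(z-1)_+$ is where the uniform $L^2 H^1$ bound on $p_m$ from Proposition \ref{crossfiff-regularity pm}, combined with the vanishing of the measure of $\{\rho_{1,m}+\rho_{2,m}>1+\varepsilon\}$, does the real work — and then pass to the limit carefully, using lower semicontinuity for the limiting product and the sign of both factors. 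An alternative, perhaps cleaner, is to invoke Lemma \ref{crossfiff-limit pressure} directly with $\rho_h$ replaced by $\rho_{1,m}+\rho_{2,m}$ and $p_h$ replaced by $p_m$, after checking its hypotheses ($\rho_{1,m}+\rho_{2,m}\leqslant1$ in the limit and the Hölder-in-time bound from \eqref{crossfiff-estimation distance}), which packages this difficulty into an already-proved statement.
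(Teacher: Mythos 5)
Your proposal is correct and follows essentially the same route as the paper: $m$-independent estimates (Remark \ref{crossfiff-estimate distance ind m}, Proposition \ref{crossfiff-prop f.g.i sum}, Proposition \ref{crossfiff-regularity pm}, and the $\F_m$-bound which is uniform since $\rho_{1,0}+\rho_{2,0}\leqslant 1$), Rossi--Savar\'e compactness and strong--weak convergence to pass to the limit in the weak formulation, the constraint $\rho_{1,\infty}+\rho_{2,\infty}\leqslant 1$ from the uniform bound on $\F_m$, and the complementarity relation by showing $\int p_m(1-\rho_{1,m}-\rho_{2,m})\varphi\to 0$ after splitting into the under- and over-saturated parts, exactly as in the paper (which uses the quantitative estimate $\iint(\rho_{1,m}+\rho_{2,m}-1)_+^2\leqslant 2C/m$, i.e. \eqref{crossfiff-estimation sup 1}, plus Cauchy--Schwarz for the over-saturated part, and a pointwise/dominated-convergence argument where your uniform bound $\frac{m}{m-1}z^{m-1}(1-z)_+\leqslant\frac{1}{m-1}$ is an equally valid, slightly cleaner variant). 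One caution: your suggested ``cleaner alternative'' of invoking Lemma \ref{crossfiff-limit pressure} directly does not apply as stated, since that lemma requires $\rho_h\leqslant 1$ and $p_h(1-\rho_h)=0$ \emph{along the sequence}, which fail for finite $m$; and for the negative part it is really the $L^2$-smallness of $(\rho_{1,m}+\rho_{2,m}-1)_+$ (not merely the vanishing measure of $\{\rho_{1,m}+\rho_{2,m}>1+\varepsilon\}$) that lets you beat the only-$L^2$-bounded $p_m$, as in your direct argument and in the paper.
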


\begin{proof}
First we prove the convergence of $\rho_{i,m}$. We start noticing that the estimate \eqref{crossfiff-estimation grad} does not depend on $m$ and then by Remark \ref{crossfiff-estimate distance ind m}, we have
$$ \| \rho_{i,m}^{1/2} \|_{L^2((0,T),H^1(\Omega))} \leqslant C_T \text{   and   } W_2(\rho_{i,m}(t),\rho_{i,m}(s))\leqslant C_T|t-s|^{1/2},$$
for all $t,s \leqslant T$ and where $C_T$ is a contant independent on $m$. Then using the Rossi-Savaré Theorem we obtain that $\rho_{i,m}$ converges to $\rho_{i,\infty}$ in $L^1((0,T)\times \Omega)$. In fact, $\rho_{i,m}$ converges strongly to $\rho_{i,\infty}$ in $L^2((0,T)\times \Omega)$. Indeed, for $m\gg 2$, $ \|\rho_{i,m} \|_{L^m((0,T) \times \Omega)}$ is uniformly bounded in $m$ so $(\rho_{i,m}^2)_m$ is uniformly integrable. Then, $\rho_{i,m}$ converges weakly in $L^2((0,T) \times \Omega)$  to $\rho_{i,\infty}$ and Vitali's convergence Theorem implies that $\|\rho_{i,m} \|_{L^2((0,T) \times \Omega)} =\|\rho_{i,m}^2 \|^{1/2}_{L^1((0,T) \times \Omega)}\rightarrow \|\rho_{i,\infty}^2 \|^{1/2}_{L^1((0,T) \times \Omega)}=\|\rho_{i,\infty} \|_{L^2((0,T) \times \Omega)}$. Furthermore,  $p_m$ converges weakly in $L^2((0,T),H^1(\Omega))$ to $p_\infty$, Proposition \ref{crossfiff-regularity pm}, and obviously $p_\infty \geqslant 0$. Consequently, we can pass to the limit in the weak formulation of the system \eqref{systdiffcoup} to obtain the weak formulation of sytem \eqref{crossfiff-system crowd}.
\\

To conclude the proof, it remains to prove that 
$$ \rho_{1,\infty}+\rho_{2,\infty} \leqslant 1 \text{   and   }  p_\infty(1-\rho_{1,\infty} - \rho_{2,\infty}) = 0 \qquad a.e.$$

We start to show that $ \rho_{1,\infty}+\rho_{2,\infty} \leqslant 1$. The argument is the same as in \cite[Lemma 4.3]{AKY}. The estimate \eqref{crossfiff-estimation entropie} does not depend on $m$ so we have
\begin{eqnarray}
\label{crossfiff-estimation sup 1}
\int_0^T \int_\Omega (\rho_{1,m}+\rho_{2,m}-1)_+^2 \,dxdt \leqslant \frac{2C}{m} \rightarrow 0,
\end{eqnarray}
when $m\rightarrow +\infty$, which implies that $ \rho_{1,\infty}+\rho_{2,\infty} \leqslant 1$ a.e.

To obtain the second part of the claim, we start proving 
$$ \int_0^T \int_\Omega p_m(1-\rho_{1,m}-\rho_{2,m}) \varphi \,dxdt \rightarrow \int_0^T \int_\Omega p_\infty (1-\rho_{1,\infty}-\rho_{2,\infty})\varphi \,dxdt,$$
for all $\varphi \in \mathcal{C}_c^\infty((0,T)\times \Omega)$. With the same argument as before, $\rho_{1,m}+\rho_{2,m} \rightarrow \rho_{1,\infty}+\rho_{2,\infty}$ strongly in $L^2((0,T)\times \Omega)$ and $p_m \rightharpoonup p_\infty$ weakly in $L^2((0,T)\times \Omega)$, then by strong-weak convergence, we obtain the result. 
Now, we show that
$$  \int_0^T \int_\Omega p_m(1-\rho_{1,m}-\rho_{2,m})\varphi \,dxdt \rightarrow 0,$$
for all nonnegative $\varphi \in \mathcal{C}_c^\infty((0,T)\times \Omega)$. We start splitting the integral,
\begin{eqnarray*}
\int_0^T \int_\Omega p_m(1-\rho_{1,m}-\rho_{2,m})\varphi\,dxdt &=& \iint_{\{\rho_{1,m}+\rho_{2,m} \leqslant 1 \}}p_m(1-\rho_{1,m}-\rho_{2,m})\varphi\,dxdt\\
& +&\iint_{\{\rho_{1,m}+\rho_{2,m} \geqslant 1 \}}p_m(1-\rho_{1,m}-\rho_{2,m})\varphi\,dxdt.
\end{eqnarray*}

Remark that, since $\rho_{1,m}+\rho_{2,m} \rightarrow \rho_{1,\infty}+\rho_{2,\infty}$ strongly in $L^1((0,T)\times \Omega)$, up to a subsequence, $\rho_{1,m}(t,x)+\rho_{2,m}(t,x) \rightarrow \rho_{1,\infty}(t,x)+\rho_{2,\infty}(t,x)$ $(t,x)$-a.e. Let $(t,x) \in [0,T] \times \Omega$ be a point where the convergence a.e. holds. If $\rho_{1,\infty}(t,x)+\rho_{2,\infty}(t,x)<1$, then $\rho_{1,m}(t,x)+\rho_{2,m}(t,x) \leqslant (1-\eps)$, for large $m$ and $p_m(t,x) \leqslant \frac{m}{m-1}(1-\eps)^{m-1}\rightarrow 0$, therefore $p_m(t,x)(1-\rho_{1,m}(t,x)-\rho_{2,m}(t,x))\rightarrow 0$. 
On the other hand, if $\rho_{1,\infty}(t,x)+\rho_{2,\infty}(t,x)=1$ and, for large $m$, $\rho_{1,m}(t,x)+\rho_{2,m}(t,x)\leqslant 1$, then $1-\rho_{1,m}(t,x)-\rho_{2,m}(t,x)\rightarrow 0$ and $p_m(t,x) \leqslant \frac{m}{m-1}$ remains bounded. 
Thus,  $p_m(t,x)(1-\rho_{1,m}(t,x)-\rho_{2,m}(t,x))\rightarrow 0$ a.e. and since on $\{\rho_{1,m}+\rho_{2,m} \leqslant 1 \}$, $\rho_{1,m}+\rho_{2,m}$ is bounded by $1$ and $p_m \leqslant \frac{m}{m-1} \leqslant 2,$ by Lebesgue convergence Theorem, we obtain
$$ \iint_{\{\rho_{1,m}+\rho_{2,m} \leqslant 1 \}}p_m(1-\rho_{1,m}-\rho_{2,m})\varphi\,dxdt \rightarrow 0.$$
The convergence of the second term is obtained by applying Cauchy-Schwarz inequality, \eqref{crossfiff-estimation sup 1} and Proposition \ref{crossfiff-regularity pm},
$$  \left| \iint_{\{\rho_{1,m}+\rho_{2,m} \geqslant 1 \}}p_m(1-\rho_{1,m}-\rho_{2,m})\varphi\,dxdt \right| \leqslant \| p_m \|_{L^2((0,T) \times \Omega)}\frac{C}{m^{1/2}}\rightarrow 0,$$
when $m \nearrow +\infty$. Then, for all $\varphi \in \mathcal{C}_c^\infty((0,T)\times \Omega)$,
$$  \int_0^T \int_\Omega p_\infty (1-\rho_{1,\infty}-\rho_{2,\infty})\varphi \,dxdt=0.$$
Since $p_\infty (1-\rho_{1,\infty}-\rho_{2,\infty}) \geqslant 0$, we conclude that $p_\infty (1-\rho_{1,\infty}-\rho_{2,\infty})=0$ a.e. in $(0,T) \times \Omega$.

\end{proof}

To end this section, we give a $L^1$-contraction result for $m \in [1,+\infty] $ under some regularity on solutions but first we establish maximum principle for $m \in [1, +\infty)$.

\begin{prop}
\label{prop:maximum principle}
Assume that $\rho_{i,0} + \rho_{2,0} \leqslant M_0$. For all $m\in [1, +\infty)$ and $T <+\infty$, there exists a constant $M_T>0$ such that $\| \rho_{1,m} + \rho_{2,m}\|_{L^\infty((0,T)\times \Omega)} \leqslant M_T$. In addition, we have $ \nabla \rho_{i,m} , \nabla F_m'(\rho_{1,m} + \rho_{2,m}) \in L^2((0,T) \times \Omega)$.
\end{prop}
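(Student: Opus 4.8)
The plan is to reduce everything to a scalar equation for the sum. Since $\nabla V_1=\nabla V_2=\nabla V$, summing the two weak formulations of Definition \ref{weak solution} for $(\rho_{1,m},\rho_{2,m})$ shows that $\rho_m:=\rho_{1,m}+\rho_{2,m}$ is a weak solution, with no-flux boundary condition and initial datum $\rho_{1,0}+\rho_{2,0}$, of
\begin{equation*}
\partial_t \rho_m = \Delta\rho_m + \Delta(\rho_m^{m}) + \dive(\rho_m\nabla V)\qquad\text{in }(0,T)\times\Omega
\end{equation*}
(for $m=1$ the middle term is again $\Delta\rho_m$, since $\rho_m\nabla F_1'(\rho_m)=\nabla\rho_m$). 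What I want to exploit is that the porous-medium term carries a favourable sign in energy estimates, the linear diffusion $\Delta\rho_m$ is non-degenerate, the drift is lower order and kept in divergence form, and the total mass $\int_\Omega\rho_m(t,\cdot)=2$ is conserved.

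First I would establish the $L^\infty$ bound by a De Giorgi--Stampacchia truncation. For $k\geqslant M_0$ set $w_k:=(\rho_m-k)_+$, so that $w_k(0,\cdot)=0$, and $A_k:=\{\rho_m>k\}\subset(0,T)\times\Omega$. Testing the equation against $w_k$ (after the usual time-regularisation, legitimate since $\rho_m\in L^2((0,T),H^1(\Omega))$), discarding the nonnegative porous-medium contribution, and using Cauchy--Schwarz and Young's inequality, one gets
\begin{equation*}
\frac{d}{dt}\,\frac12\int_\Omega w_k^2 + \int_\Omega|\nabla w_k|^2 \leqslant \|\nabla V\|_{\infty}\int_{A_k(t)}\rho_m|\nabla w_k| \leqslant \frac12\int_\Omega|\nabla w_k|^2 + \frac12\|\nabla V\|_{\infty}^2\int_{A_k(t)}\rho_m^2 .
\end{equation*}
Integrating in time and bounding $\rho_m^2\leqslant 2w_k^2+2k^2$ on $A_k$ gives, with $N_k:=\sup_t\|w_k(t)\|_{L^2(\Omega)}^2+\iint_{(0,T)\times\Omega}|\nabla w_k|^2$, an estimate $N_k \leqslant C\|\nabla V\|_{\infty}^2\big(\iint w_k^2 + k^2|A_k|\big)$. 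The parabolic Gagliardo--Nirenberg--Sobolev inequality yields $\iint w_k^2\leqslant C_T|A_k|^{2/(n+2)}N_k$, so that, since Markov's inequality gives $|A_k(t)|\leqslant 2/k$ hence $|A_k|\leqslant 2T/k\to0$, for $k$ larger than some $k_0=k_0(T,\|\nabla V\|_\infty,n,\Omega,M_0)$ the term $\iint w_k^2$ is absorbed, leaving $N_k\leqslant Ck^2|A_k|$. Plugging this back, $(h-k)^2|A_h|\leqslant\iint w_k^2\leqslant Ck^2|A_k|^{1+2/(n+2)}$ for $h>k\geqslant k_0$, a super-linear recursion to which the standard Stampacchia iteration lemma along $k_j=K(2-2^{-j})$ applies once $K$ is chosen large enough (depending only on $T$, $\|V\|_{W^{1,\infty}}$, $n$, $\Omega$, $M_0$): it forces $|A_{2K}|=0$, i.e.\ $\rho_m\leqslant M_T:=2K$ a.e.\ on $(0,T)\times\Omega$, hence also $\rho_{i,m}\leqslant\rho_m\leqslant M_T$. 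Since the $m$-dependent term was discarded with its good sign throughout, $M_T$ does not depend on $m$.

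The last two assertions then follow by combining this bound with earlier estimates. That $\nabla F_m'(\rho_{1,m}+\rho_{2,m})\in L^2((0,T)\times\Omega)$ is exactly Proposition \ref{crossfiff-regularity pm}, which already gives $F_m'(\rho_m)\in L^2((0,T),H^1(\Omega))$ (for $m=1$ the object entering the weak formulation is rather $\rho_{i,m}\nabla F_1'(\rho_m)=\rho_{i,m}\nabla\rho_m/\rho_m$, controlled by $|\nabla\rho_m|$, whose $L^2$ bound is obtained just below). For $\nabla\rho_{i,m}$ one uses the $m$-independent bound $\|\rho_{i,m}^{1/2}\|_{L^2((0,T),H^1(\Omega))}\leqslant C_T$ coming from Proposition \ref{crossfiff-prop f.g.i sum} (stable under $h\searrow0$), writing $\nabla\rho_{i,m}=2\rho_{i,m}^{1/2}\nabla\rho_{i,m}^{1/2}$ with $\rho_{i,m}^{1/2}\leqslant M_T^{1/2}\in L^\infty$, exactly as in the Corollary at the end of Section \ref{crossfiff-section 4: proof crowd}.

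The main obstacle is the drift: because $V$ is only Lipschitz, $\Delta V$ is unavailable, so one cannot compare $\rho_m$ with the supersolution $M_0e^{\|\Delta V\|_\infty t}$; instead $\dive(\rho_m\nabla V)$ must be kept in divergence form and absorbed into the non-degenerate diffusion $\Delta\rho_m$ -- which is precisely why both diffusive terms (not merely the porous-medium one) are needed, and why one truncates and iterates rather than invoking a bare comparison principle. A secondary, purely bookkeeping, point is to track all constants so that $M_T$ stays independent of $m$, which is what makes the estimate usable in the limit $m\to+\infty$ of Theorem \ref{crossfiff-convergence m}.
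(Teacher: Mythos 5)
Your reduction to the scalar equation \eqref{crossfiff-eq sum} for $\rho_m=\rho_{1,m}+\rho_{2,m}$, and your derivation of the two gradient bounds from the $L^\infty$ bound (via Proposition \ref{crossfiff-regularity pm} for $\nabla F_m'(\rho_m)$ and the $m$-independent estimate of Proposition \ref{crossfiff-prop f.g.i sum} together with $\nabla\rho_{i,m}=2\rho_{i,m}^{1/2}\nabla\rho_{i,m}^{1/2}$), are exactly what the paper does. Where you genuinely diverge is on the $L^\infty$ bound itself: the paper does not reprove it, but identifies $\rho_m$ with the unique solution of \eqref{crossfiff-eq sum} (uniqueness from Proposition \ref{crossfiff-regularity pm}) and cites the known maximum principle for such scalar drift--diffusion equations, while you give a self-contained De Giorgi--Stampacchia truncation proof, discarding the signed porous-medium term, keeping the drift in divergence form and absorbing it into the nondegenerate Laplacian, with the level-set measure controlled through the conserved mass $\int_\Omega\rho_m=2$. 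Your route buys an explicit, quantitative argument in which the $m$-independence of $M_T$ is transparent (in the paper it is only implicit in the cited references), and it does not even need the uniqueness step for the bound itself; the iteration bookkeeping (Markov bound $|A_k|\leqslant 2T/k$, absorption for $k$ large, the superlinear Stampacchia recursion) is correct. The one point you pass over too quickly is the admissibility of $w_k=(\rho_m-k)_+$ as a test function: before the $L^\infty$ bound is available, the flux term $\rho_m\nabla F_m'(\rho_m)$ is only controlled in $L^{2-1/m}$ (Proposition \ref{prop:improvment regularity}), not in $L^2$, and $\rho_m\in L^2((0,T),H^1(\Omega))$ is itself part of what the proposition establishes, so the chain-rule step $\int\partial_t\rho_m\,w_k=\frac{d}{dt}\frac12\int w_k^2$ requires more than the stated time-regularisation — for instance Steklov averaging combined with a further truncation of $w_k$ (exploiting that the porous-medium contribution is a nonnegative integrand, so it can be handled by Fatou), or running the whole truncation argument at the discrete JKO level, which is essentially how the references quoted in the paper obtain the maximum principle. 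This is a repairable technicality rather than a flaw in the strategy.
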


\begin{proof}
It is well known that the solution $\mu$ to \eqref{crossfiff-eq sum} satisfies a maximum principle, see for instance \cite{O, A, PT, L_these, S}. Then by uniqueness of the solution, there exists $M_T$ such that $\| \rho_{1,m} + \rho_{2,m}\|_{L^\infty((0,T)\times \Omega)} \leqslant M_T$. We obtain then 
$$ | \nabla \rho_{i,m} | \leqslant 2M_T^{1/2} | \nabla \rho_{i,m}^{1/2} | \text{ and } |\rho_{i,m}\nabla F_m'(\rho_{1,m} + \rho_{2,m}) | \leqslant M_T|\nabla F_m'(\rho_{1,m} + \rho_{2,m}) |.$$

Since, $\nabla \rho_{i,m}^{1/2}$ and $\nabla F_m'(\rho_{1,m} + \rho_{2,m})$ are in $L^2((0,T) \times \Omega)$ (Proposition \ref{crossfiff-regularity pm}), the proof is concluded. 
\end{proof}

\begin{rem}
\label{Rem:density} 
In the sepcial case of a common drift, by Proposition \ref{prop:maximum principle}, we can improve the regularity of solutions to \eqref{systdiffcoup} in  Definition \ref{weak solution} if we start with $L^\infty$ initial conditions. Then, as in \cite[Remark 3.2 (a)]{KM}, we notice that, by density, we can consider test functions in $W^{1,1}((0,T),L^1(\Omega)) \cap L^2((0,T),H^1(\Omega))$ in Definition \ref{weak solution} for system \eqref{systdiffcoup} and system \eqref{crossfiff-system crowd}.
\end{rem}

\begin{thm}
\label{crossfiff-uniqueness thm}
Let $(\rho_{1,m}^1,\rho_{2,m}^1)$ and $(\rho_{1,m}^2,\rho_{2,m}^2)$ be two solutions to \eqref{systdiffcoup} (or \eqref{crossfiff-system crowd} if $m=+\infty$) with intial conditions $(\rho_{1,0}^1 ,\rho_{2,0}^1)$ and $(\rho_{1,0}^2 , \rho_{2,0}^2)$, respectively. Assume there exists $M_0 >0$ such that
$$\|\rho_{1,0}^1 + \rho_{2,0}^1\|_{L^\infty( \Omega)},\|\rho_{1,0}^2 + \rho_{2,0}^2\|_{L^\infty( \Omega)}  \leqslant M_0.$$
If $\partial_t \rho_{i,m}^1,\partial_t\rho_{i,m}^2 \in L^1((0,T)\times \Omega)$, then 
$$ \|\rho_{i,m}^1(t,\cdot) - \rho_{i,m}^2(t,\cdot) \|_{L^1(\Omega)} \leqslant \|\rho_{i,0}^1 - \rho_{i,0}^2 \|_{L^1(\Omega)}. $$
\end{thm}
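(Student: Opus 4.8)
The plan is to run the Kato–type $L^1$–contraction argument on the difference of the two solutions; every term except the common nonlinear pressure term is routine, and the whole difficulty is concentrated there, where one must use the common–drift structure together with the extra regularity furnished by Proposition~\ref{crossfiff-regularity pm} and Proposition~\ref{prop:maximum principle} (resp.\ Proposition~\ref{prop:convergence pressure} and the corresponding estimates of Section~\ref{crossfiff-section 4: proof crowd} when $m=+\infty$). Write $w_i:=\rho_{i,m}^1-\rho_{i,m}^2$, $\sigma^j:=\rho_{1,m}^j+\rho_{2,m}^j$, and $q:=p^1-p^2$ with $p^j:=F_m'(\sigma^j)$ for $m<+\infty$ and $p^j$ the congestion pressure for $m=+\infty$. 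The assumed $L^\infty$–control of the initial sums gives, via Proposition~\ref{prop:maximum principle} (resp.\ $\sigma^j\le1$ when $m=+\infty$), $\rho_{i,m}^j\in L^2((0,T),H^1(\Omega))\cap L^\infty((0,T)\times\Omega)$ and $\nabla p^j\in L^2((0,T)\times\Omega)$, so by Remark~\ref{Rem:density} the two weak formulations may be tested against functions in $W^{1,1}((0,T),L^1(\Omega))\cap L^2((0,T),H^1(\Omega))$; subtracting them, $w_i$ solves in this sense
\[
\partial_t w_i=\Delta w_i+\dive(w_i\nabla V)+\dive\big(\rho_{i,m}^1\nabla p^1-\rho_{i,m}^2\nabla p^2\big),
\]
and since $\partial_t w_i\in L^1((0,T)\times\Omega)$ by hypothesis, $t\mapsto\int_\Omega j(w_i(t))$ is absolutely continuous with $\frac{d}{dt}\int_\Omega j(w_i)=\int_\Omega j'(w_i)\,\partial_t w_i$ for every convex Lipschitz $j$.

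I would then regularise the sign. Pick smooth nondecreasing $S_\delta$ with $0\le S_\delta'\le1/\delta$, $\supp S_\delta'\subset[-\delta,\delta]$, $S_\delta\to\sign$, set $\Phi_\delta'=S_\delta$ and $\Xi_\delta(r):=\int_0^r s\,S_\delta'(s)\,ds$ (so $\nabla\Xi_\delta(w_i)=w_iS_\delta'(w_i)\nabla w_i$ with $|\nabla\Xi_\delta(w_i)|\le|\nabla w_i|\mathbf 1_{\{|w_i|\le\delta\}}$), and test the $w_i$–equation with $S_\delta(w_i)$. Using the no–flux conditions,
\[
\frac{d}{dt}\int_\Omega\Phi_\delta(w_i)=-\int_\Omega S_\delta'(w_i)|\nabla w_i|^2-\int_\Omega\nabla\Xi_\delta(w_i)\cdot\nabla V-\int_\Omega S_\delta'(w_i)\nabla w_i\cdot\big(\rho_{i,m}^1\nabla p^1-\rho_{i,m}^2\nabla p^2\big).
\]
The first term is $\le0$. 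Since $\nabla w_i=0$ a.e.\ on $\{w_i=0\}$ and $\{|w_i|\le\delta\}\downarrow\{w_i=0\}$, dominated convergence ($\nabla V\in L^\infty$, $\nabla w_i\in L^2$) makes the second term tend to $0$; writing $\rho_{i,m}^1\nabla p^1-\rho_{i,m}^2\nabla p^2=\rho_{i,m}^1\nabla q+w_i\nabla p^2$, the same estimate (now with $\nabla p^2\in L^2$, giving $\nabla\Xi_\delta(w_i)\to0$ in $L^2$) kills the term $-\int\nabla\Xi_\delta(w_i)\cdot\nabla p^2$. Letting $\delta\to0$, with $\Phi_\delta(w_i)\to|w_i|$, and integrating in time reduces the whole statement to proving, for each $i$,
\[
\limsup_{\delta\to0}\ \Big(-\int_0^t\!\!\int_\Omega S_\delta'(w_i)\,\nabla w_i\cdot\rho_{i,m}^1\nabla q\,\Big)\le0 .
\]

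This pressure term is the genuine obstacle, and it is here that the common–drift assumption is essential. For the \emph{total} density there is a Kato structure: $\sigma^j$ solves the scalar equation~\eqref{crossfiff-eq sum}, which can be recast as $\partial_t\sigma=\Delta\beta_m(\sigma)+\dive(\sigma\nabla V)$ with $\beta_m$ nondecreasing (the scalar crowd–motion equation for $m=+\infty$), so the same $\sign$–test plus Kato's inequality give $\|\sigma^1(t)-\sigma^2(t)\|_{L^1}\le\|\sigma_0^1-\sigma_0^2\|_{L^1}$. For an individual component no such structure is available, since $p^j=p(\sigma^j)$ mixes the two densities; to show nonetheless that the displayed term has nonpositive $\limsup$, I would exploit: the monotone dependence $(p^1-p^2)(\sigma^1-\sigma^2)\ge0$, tying $\nabla q$ to $\nabla(\sigma^1-\sigma^2)$; the uniform bounds $0\le\rho_{i,m}^j\le\sigma^j\le M_T$ and $\nabla q\in L^2$ of Propositions~\ref{prop:maximum principle} and~\ref{crossfiff-regularity pm}; and the identity $\sum_i\rho_{i,m}^j=\sigma^j$, which upon summing the $i=1,2$ contributions turns the pressure term into an expression in $\nabla q$ and gradients of the total mass where the sign coming from monotonicity becomes visible. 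The hard part — where essentially all of the work lies — is to make the $\delta\to0$ passage in the pressure term rigorous (the measures $S_\delta'(w_i)\nabla w_i$ concentrate on $\{w_i=0\}$ and must be paired against the $L^2$ field $\rho_{i,m}^1\nabla q$) and to close the estimate with constant exactly $1$, i.e.\ without a Grönwall loss, using that the pressure feels only the total density while each density is separately controlled in $L^\infty\cap L^2_tH^1_x$ thanks to the common–drift regularity.
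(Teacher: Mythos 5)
Your setup of the Kato-type argument (regularised sign, testing with $S_\delta(w_i)$, disposal of the drift and self-diffusion terms) is fine and matches the paper's computation, but the one term you single out as ``the genuine obstacle'' and then leave open is exactly the step the theorem lives on, and the paper never has to face it. The missing idea is that the common-drift hypothesis is used \emph{before} any Kato argument: since $\rho_{1,m}+\rho_{2,m}$ solves the scalar equation \eqref{crossfiff-eq sum}, whose solution is unique (Proposition \ref{crossfiff-regularity pm}; for $m=+\infty$ the pair $(\rho_{1,\infty}+\rho_{2,\infty},p_\infty)$ is unique by \cite{DMM} and $p_\infty\in L^2((0,T),H^1(\Omega))$ by Theorem \ref{crossfiff-existence crowd motion}), the two solutions are driven by one and the same pressure field $p_m\in L^2((0,T),H^1(\Omega))$ — in the regime the proof addresses, the relevant one for uniqueness, the total densities coincide. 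Hence $q=p^1-p^2\equiv 0$ and each $\rho_{1,m}^j$ solves the \emph{linear} Fokker--Planck equation $\partial_t\rho-\Delta\rho-\dive\big(\rho(\nabla V+\nabla p_m)\big)=0$ with a fixed $L^2_{t,x}$ drift. In that situation your own scheme closes immediately, exactly as you treated the $\nabla V$ term: on $\{0<w_1<\delta\}$ one bounds $w_1^2|\nabla V+\nabla p_m|^2\le\delta^2|\nabla V+\nabla p_m|^2$, Young's inequality absorbs the gradient contribution into the nonpositive term, the remainder is $O(\delta)$, and letting $\delta\to0$ and exchanging the roles of the two solutions gives the contraction with constant $1$. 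This identification of the pressures is the content you did not supply.

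By contrast, the route you sketch for $q\neq0$ does not close and, as written, cannot be accepted as a proof. The monotonicity $(p^1-p^2)(\sigma^1-\sigma^2)\geqslant0$ ties $\nabla q$ to $\nabla(\sigma^1-\sigma^2)$, not to $\nabla w_i$ restricted to $\{|w_i|\leqslant\delta\}$, which is what appears in your limiting term; and summing over $i=1,2$ only reproduces the $L^1$-contraction of the \emph{sums}, which you already have from the scalar Kato structure of \eqref{crossfiff-eq sum} and which says nothing about the individual components. There is no mechanism in your outline to control the pairing of the concentrating fields $S_\delta'(w_i)\nabla w_i$ against $\rho_{i,m}\nabla q$, let alone to recover the per-component estimate with constant exactly $1$. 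So the ``hard part'' you defer is not a technical passage to the limit but the whole theorem; the paper's proof bypasses it by the uniqueness of the (total density, pressure) pair under the common drift, and without that observation (or a genuinely new argument handling $q\neq0$) your proposal has a gap precisely where the proof is needed.
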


\begin{proof}
First if $m<+\infty$, since $\rho_{1,m}+\rho_{2,m}$ solves \eqref{crossfiff-eq sum}, then it is unique and according to Proposition \ref{crossfiff-regularity pm}, $p_m:=F_m'(\rho_{1,m}+\rho_{2,m})$ is in $L^2((0,T),H^1(\Omega))$. Moreover, we have already shown in Theorem \ref{crossfiff-existence crowd motion} that the pressure $p_\infty$ associated to the constraint $\rho_{1,\infty}+\rho_{2,\infty} \leqslant 1 $  is in $L^2((0,T), H^1(\Omega))$ and, according to \cite{DMM}, $(\rho_{1,\infty}+\rho_{2,\infty},p_\infty)$ is unique. Then, for $m \in [1,+\infty]$, $\rho_{1,m}^i$ solves 
$$ \partial_t \rho_{1,m}^i -\Delta\rho_{1,m}^i -\dive(\rho_{1,m}^i (\nabla V + \nabla p_m)) =0.$$
Now, by the same argument as \cite{Ouniqueness,A}, we prove the $L^1$-contraction. We prove the result for $i=1$ and the argument is the same for $i=2$. We note $\Omega_T:=(0,T) \times \Omega$. Define the smooth function, for $z \in \R$, $f(z) = e^{-1/z}e^{-1/(1-z)}$ if $z \in (0,1)$ and $0$ otherwise and $M:=\|f\|_{L^\infty}$. Then for $\delta>0$, define the smooth function $\phi_\delta$ by
$$ \phi_\delta(z):= \frac{1}{Z} \int_0^{z/\delta} f(\xi) \, d\xi, \text{ where } Z := \int_0^{1} f(\xi) \, d\xi.$$
Consider $ \zeta_{\delta} := \phi_\delta (\rho_{1,m}^1-\rho_{1,m}^2).$
By definition, $\zeta_\delta \in W^{1,1}((0,T), L^1(\Omega))\cap L^2((0,T), H^1(\Omega)) \cap L^\infty (\Omega_T)$. Then taking $\zeta_\delta$ as an admissible test function in Definition \ref{weak solution}, see Remark \ref{Rem:density}, we obtain

$$ \iint_{\Omega_T} \partial_t(\rho_{1,m}^1-\rho_{1,m}^2)\zeta_{\delta}=-\iint_{\Omega_T}\left((\rho_{1,m}^1-\rho_{1,m}^2)(\nabla V+\nabla p_m) \cdot \nabla \zeta_{\delta}+\nabla ( \rho_{1,m}^1-\rho_{1,m}^2) \cdot \nabla \zeta_{\delta} \right) \,dxdt.$$

We introduce $\Omega_T^\delta:= \Omega_T \cap \{0 < \rho_{1,m}^1-\rho_{1,m}^2 < \delta\}$. Then by definition of $\zeta_{\delta}$ 

\begin{align*}
 \iint_{\Omega_T}& \partial_t(\rho_{1,m}^1-\rho_{1,m}^2)\zeta_{\delta}\\
 &= -\frac{1}{Z\delta}\iint_{\Omega_T^\delta} (\rho_{1,m}^1-\rho_{1,m}^2)(\nabla V +\nabla p_m) \cdot \nabla( \rho_{1,m}^1-\rho_{1,m}^2)f\left(\frac{\rho_{1,m}^1 - \rho_{1,m}^2}{\delta}\right)\,dxdt \\
 & - \frac{1}{Z\delta}\iint_{\Omega_T^\delta} |\nabla ( \rho_{1,m}^1-\rho_{1,m}^2)|^2 f\left(\frac{\rho_{1,m}^1 - \rho_{1,m}^2}{\delta} \right)\,dxdt.
  \end{align*}
 Young's inequality gives 
 \begin{align*}
 \iint_{\Omega_T}& \partial_t(\rho_{1,m}^1-\rho_{1,m}^2)\zeta_{\delta}\\
& \leqslant  \frac{M}{2Z\delta}\iint_{\Omega_T^\delta}(\rho_{1,m}^1-\rho_{1,m}^2)^2|\nabla V +\nabla p_m|^2\,dxdt\\
&-\frac{1}{2Z\delta}\iint_{\Omega_T^\delta}|\nabla ( \rho_{1,m}^1-\rho_{1,m}^2)|^2f\left(\frac{\rho_{1,m}^1 - \rho_{1,m}^2}{\delta}\right)\,dxdt\\
&\leqslant  \frac{M}{2Z} \|\nabla V +\nabla p_m\|_{L^2(\Omega_T)}^2 \delta.
 \end{align*}
 Then, when $\delta \searrow 0$, by Fatou's Lemma,
 $$ \iint_{\Omega_T \cap \{\rho_{1,m}^1-\rho_{1,m}^2 \geqslant 0\} } \partial_t(\rho_{1,m}^1-\rho_{1,m}^2)  \leqslant 0.$$
 Reversing the roles of $\rho_{1,m}^1$ and $\rho_{1,m}^2$, we have
$$ \iint_{\Omega_T} \partial_t(|\rho_{1,m}^1-\rho_{1,m}^2|) \leqslant 0,$$
which concludes the proof.

\end{proof}

\section{Numerical simulations}
\label{crossfiff-secction 6: simulations}
To end this paper, we use the algorithm introduced in \cite{BCL} to present numerical simulations in dimension 2 on the square $\Omega=\left[-\frac{1}{2}, \frac{1}{2}\right]^2$. Simulations are carried out using a $50 \times 50$ discretization in space with a time step $h=0.01$. The first system we study is the transport equation with common porous media congestion, without individual diffusions,
\begin{eqnarray}
\label{eq:porous without diffusion}
\partial_t \rho_i -\alpha_i\dive(\rho_i \nabla F_m'(\alpha_1\rho_1+\alpha_2\rho_2)) -\dive(\rho_i \nabla V_i)=0, \, i =1,2,
\end{eqnarray}
which, at least formally, is the gradient flow in Wasserstein space for the energy
$$ \E(\rho_1,\rho_2):= \int_\Omega V_1\rho_1 + \int_\Omega V_2\rho_2 + \int_\Omega F_m(\alpha_1\rho_1+\alpha_2\rho_2).$$
Arguing as in \cite{BCL},  setting $\phi=(\phi_1, \phi_2)$, $(D\phi_1, D\phi_2):=(\partial_t \phi_1, \nabla \phi_1, \partial_t \phi_2, \nabla \phi_2)$,  $q=(q_1, q_2)=(a_1, b_1, c_1, a_2, b_2, c_2)$,  $\sigma=(\sigma_1, \sigma_2)=((\mu_1, m_1, \tilm_1), (\mu_2, m_2, \tilm_2))$ and defining the convex set $K:=\{(a,b) \in\R^{n+1} \, : \, a+ \frac{1}{2 }|b|^2\leqslant 0\}$, one can rewrite one step of the JKO scheme, \eqref{crossfiff-scheme entropy}, with $\E$ as  a saddle-point problem for the augmented Lagrangian 
\[\begin{split}
L_r(\phi, q , \sigma)&=\sum_{i=1}^2 \int_{\Omega} \phi_i(0, x) \rho_{i,h}^k(x) \mbox{d} x+\sum_{i=1}^2 \int_0^1\int_{\Omega} \chi_K(a_i(t,x), b_i(t,x))  \mbox{d} x  \mbox{d} t\\
&+ \sum_{i=1}^2 \int_0^1\int_{\Omega} \Big(  (\mu_i, m_i)\cdot(  D\phi_i -(a_i, b_i) ) +\frac{r}{2} \vert  D \phi_i -(a_i, b_i)\vert^2 \Big)  \mbox{d} x  \mbox{d} t\\
&+  \sum_{i=1}^2 \int_{\Omega}    \Big(\frac{r}{2} \vert \phi_i(1,x)+c_i(x)\vert^2 \mbox{d} x - (\phi_i(1,x)+c_i(x)) \tilm_i(x)  \Big)\mbox{d}x \\
&+h\E^*\Big(\frac{c_1}{h}, \frac{c_2}{h}\Big),
\end{split}\]
where $\E^*$ is the Legendre tranform of $\E$ extended by $+\infty$ on $(-\infty,0]$. A saddle point of $L_r$ satisfies $\mu_i(1,\cdot)=\tilde{\mu}_i$ and the solution to one JKO step is $\rho_{i,h}^{k+1}=\tilde{\mu}_i$. Then, we use the augmented Lagrangian algorithm, ALG2-JKO, introduced in \cite{BCL} to compute numerically $(\rho_{1,h}^{k+1},\rho_{2,h}^{k+1})$ and we refer to \cite{BCL} for a detailed exposition.

Figure \ref{figure porous media} represents two populations crossing each other subject to porous media congestion with $\alpha_1=\alpha_2=1$ and $m=50$. Initial conditions are given by 
$$ \rho_{1,0} = \mathds{1}_{[-0.45,-0.15]^2} \text{ and } \rho_{2,0} = \mathds{1}_{[0.15,0.45]^2}.$$
The motion is imposed by potentials $V_1(x,y)= 4\| (x,y) - (0.3, 0.3) \|^2$ and $V_2(x,y)= 4\| (x,y) + (0.3, 0.3) \|^2$.
We remark that the two populations have the same behaviour and when they cross each other, the density has to spread. In Figure \ref{figure porous media weight}, we study the same behaviour but subject to the porous medium constraint on $\rho_1+2\rho_2$. We can see that the population where the constraint plays a higher role, $\rho_2$, has to deviate in order to let pass $\rho_1$ through.
Although the theory is not fully understood for system \eqref{eq:porous without diffusion} (see discusions in \cite{KM}), we notice that in Figures \ref{figure porous media} and \ref{figure porous media weight}, it seems that the unique discrete solutions behave numerically stable.\\

 In the two populations crowd motion model with linear diffusion, we saw that we can find a solution as the gradient flow of 
 $$ \E(\rho_1,\rho_2):= \int_\Omega (V_1 +\eps\log(\rho_1))\rho_1 + \int_\Omega (V_2+\eps\log(\rho_2))\rho_2 + \F_\infty(\alpha_1 \rho_1 +\alpha_2 \rho_2).$$
In this context, we use the same initial datas and potentials as previously. The small parameter $\eps=0.01$ in the simulations is taken to reduce the effect of the diffusion.
In Figure \ref{figure crowd motion 2}, we see two populations which cross each other. When they start to cross each other at time $t=0.05$, we remark that the density of $\rho_1$ and $\rho_2$ decrease and the sum is saturated. In this situation, individuals of both populations take the same space. 

Now assume that an individual of the second population takes twice the space than an individual of the first population. Then if we study the one population model (without interaction), populations $\rho_1$ and $\rho_2$ are subject to constraints $ \rho_1(x) \leqslant 1$ and $\rho_2(x) \leqslant \frac{1}{2}$. In our case, where populations interact each other,  $\rho_1$ and $\rho_2$ are subject to the common constraint $\rho_1(x) +2\rho_2(x) \leqslant 1$. Notice that when $\rho_1(x) =0$ or $\rho_2(x) =0$, we recover the expected behaviour, $ \rho_2(x) \leqslant \frac{1}{2}$ and $\rho_1(x) \leqslant 1$. In Figure \ref{figure crowd motion 2 weight}, we represent two populations crossing each other subject to this constraint. Immediately, the second population sprawls to saturate the constraints $\rho_2(x) \leqslant\frac{ 1}{2}$ and then when they start crossing the density of $\rho_1$ and $\rho_2$ decrease and we have $\rho_1(x) +2\rho_2(x)=1$.

 In Figures \ref{figure crowd motion obs} and \ref{figure crowd motion obs weight}, the same situations as in Figures \ref{figure crowd motion 2} and \ref{figure crowd motion 2 weight} are presented adding an obstacle in the middle of $\Omega$. This can be done using a potential with very high value in this area.

 \begin{figure}[h!]

\begin{tabular}{@{\hspace{0mm}}c@{\hspace{1mm}}c@{\hspace{1mm}}c@{\hspace{1mm}}c@{\hspace{1mm}}c@{\hspace{1mm}}c@{\hspace{1mm}}}

\centering
\includegraphics[ scale=0.175]{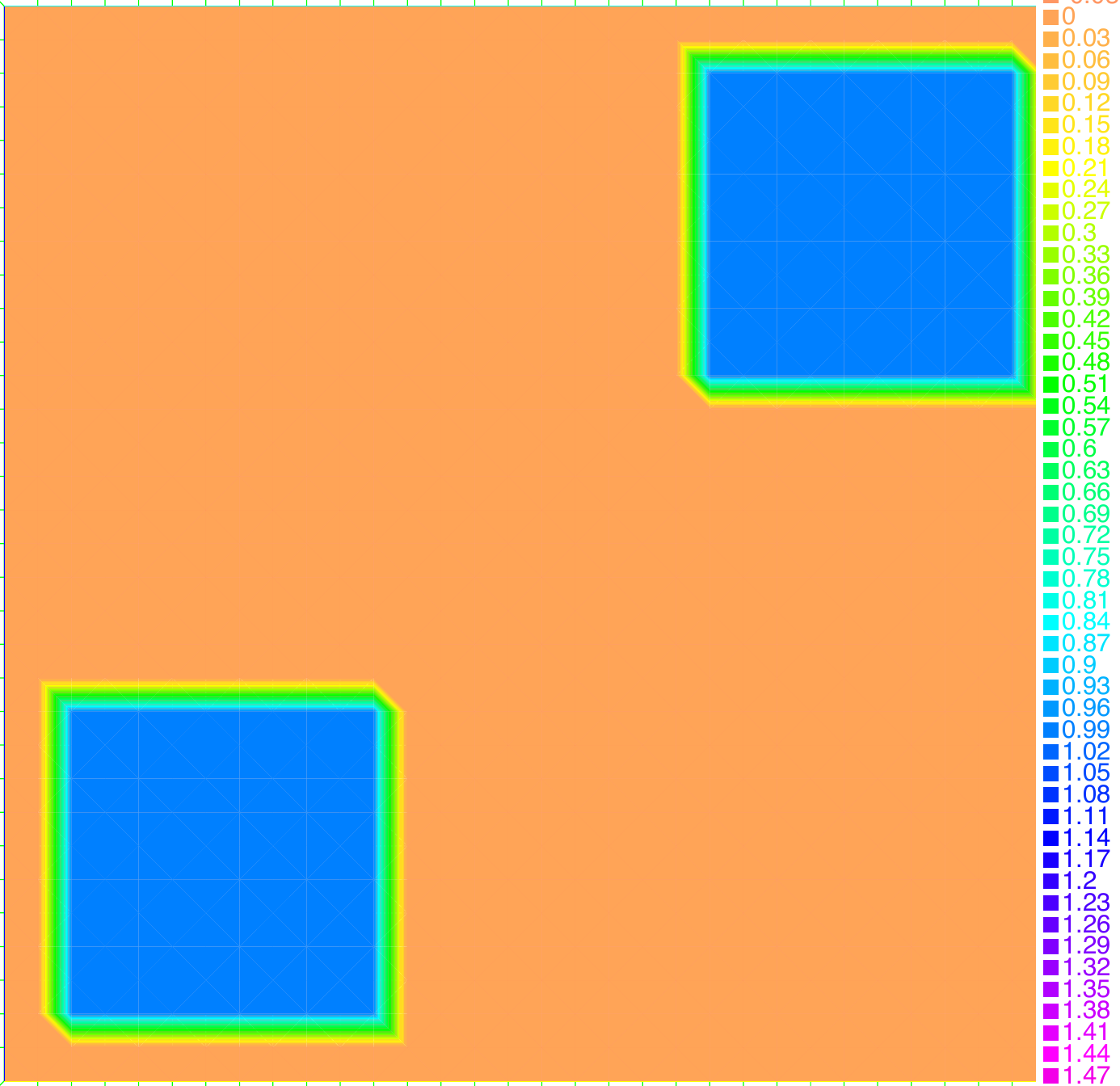}&
\includegraphics[ scale=0.175]{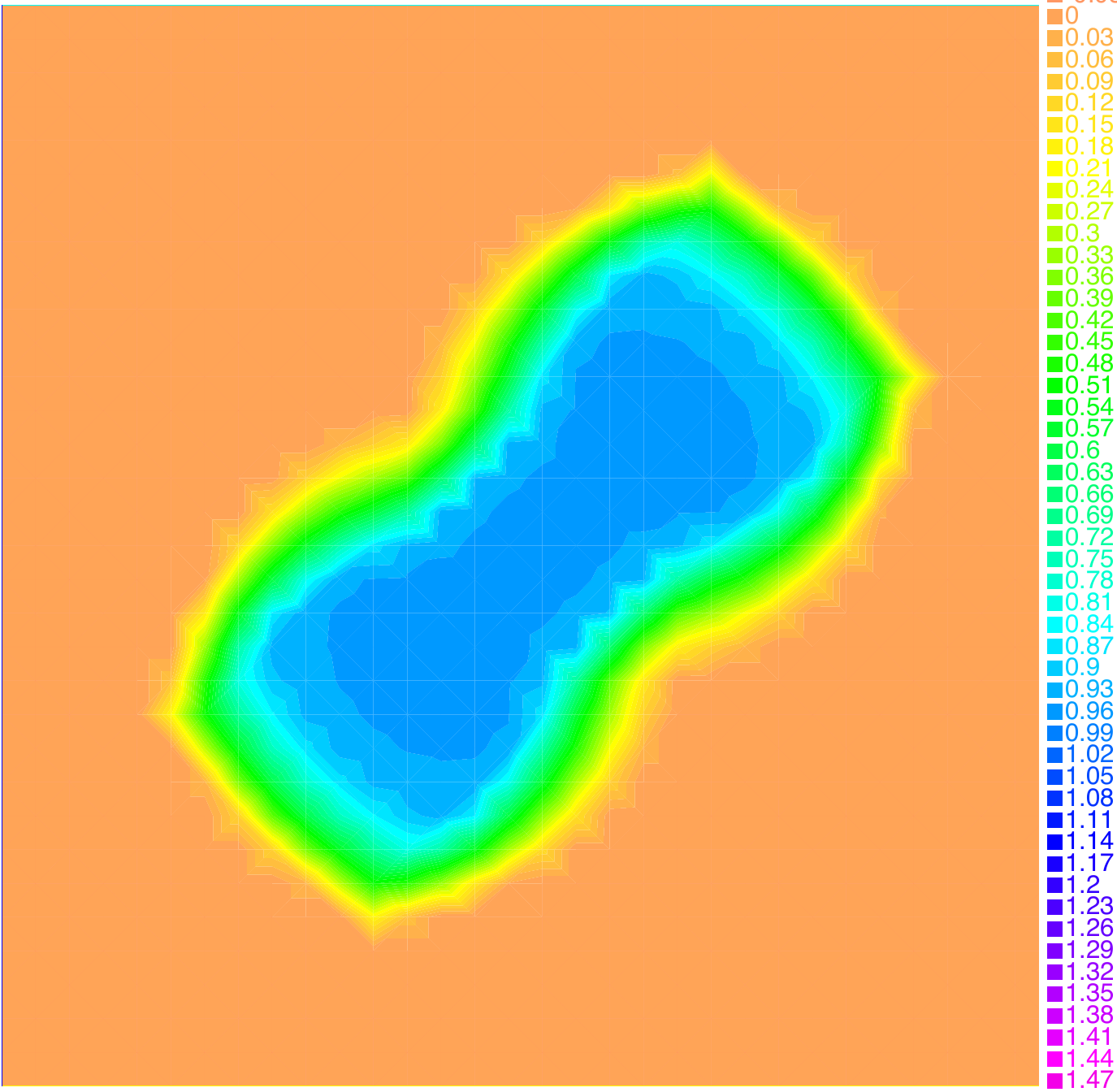}&
\includegraphics[ scale=0.175]{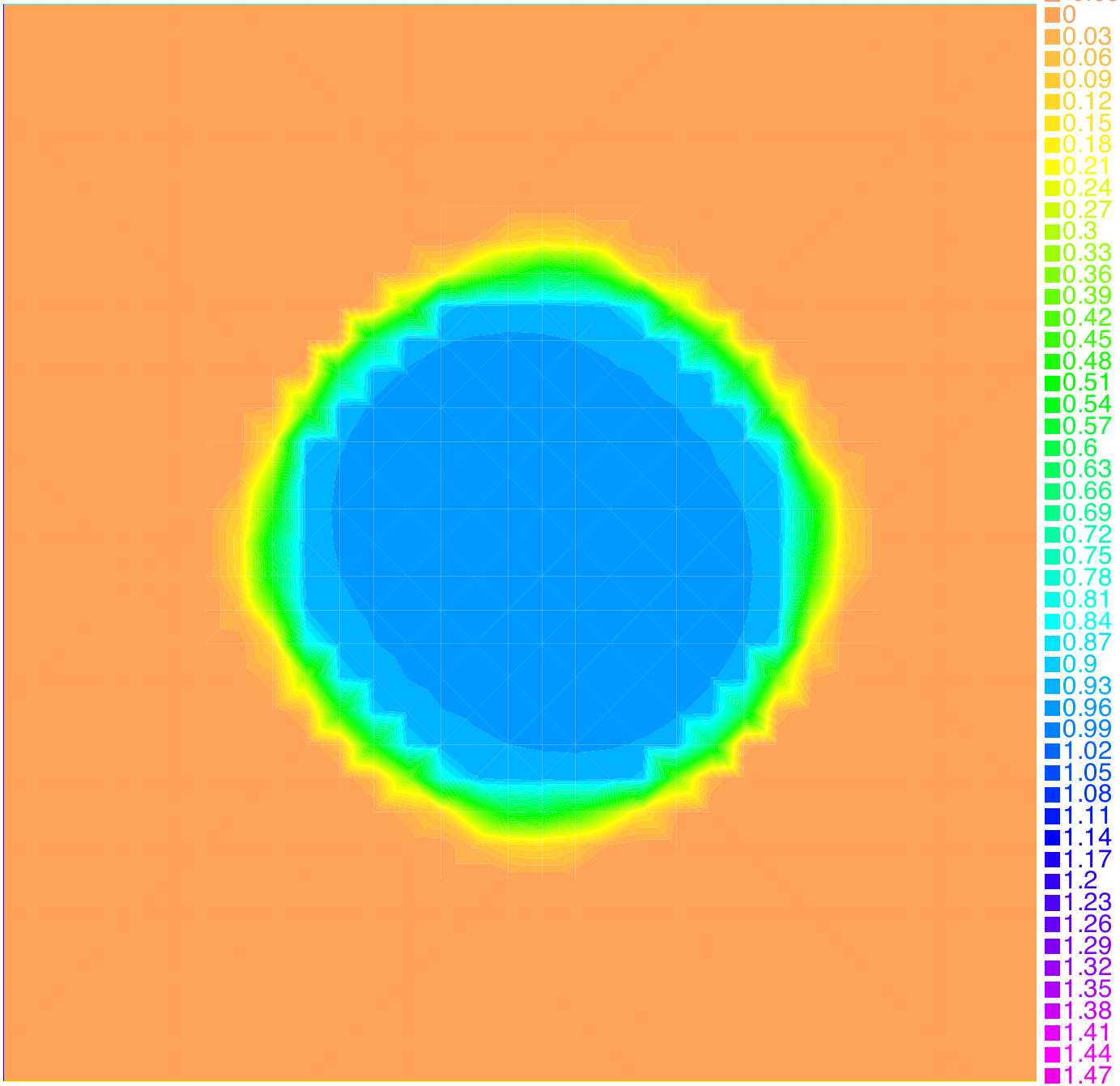}&
\includegraphics[ scale=0.175]{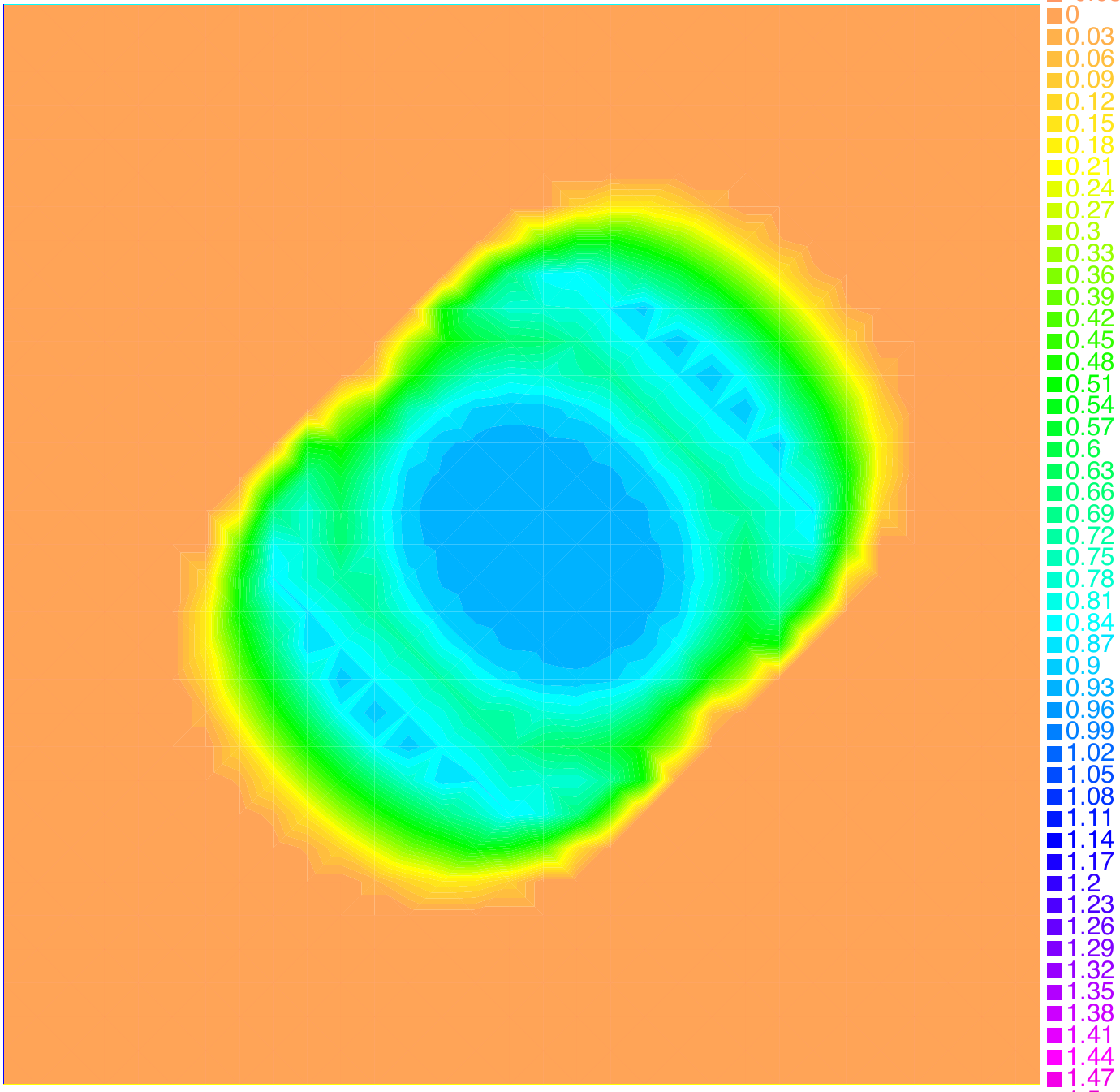}&
\includegraphics[ scale=0.175]{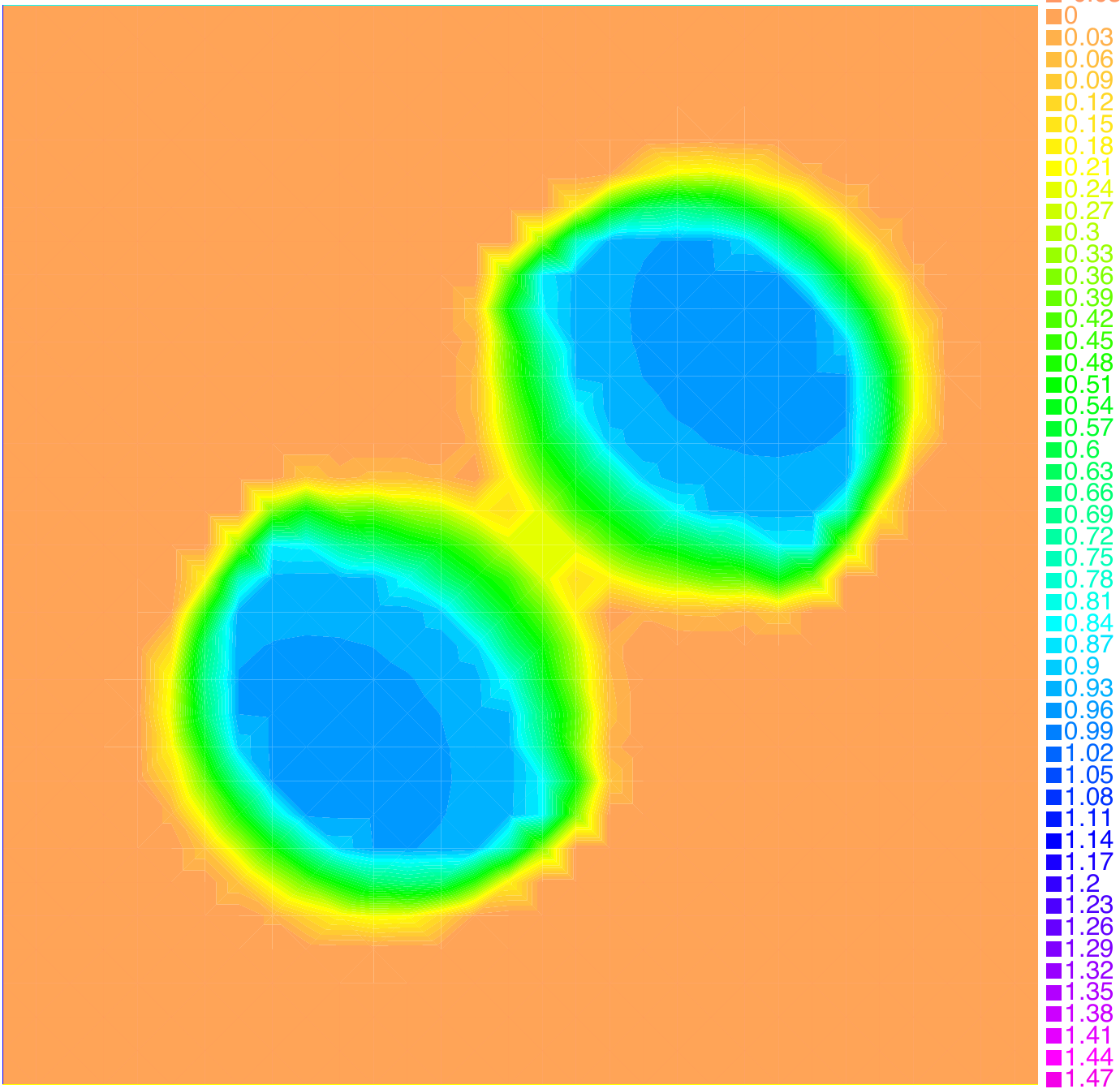}&
\includegraphics[ scale=0.175]{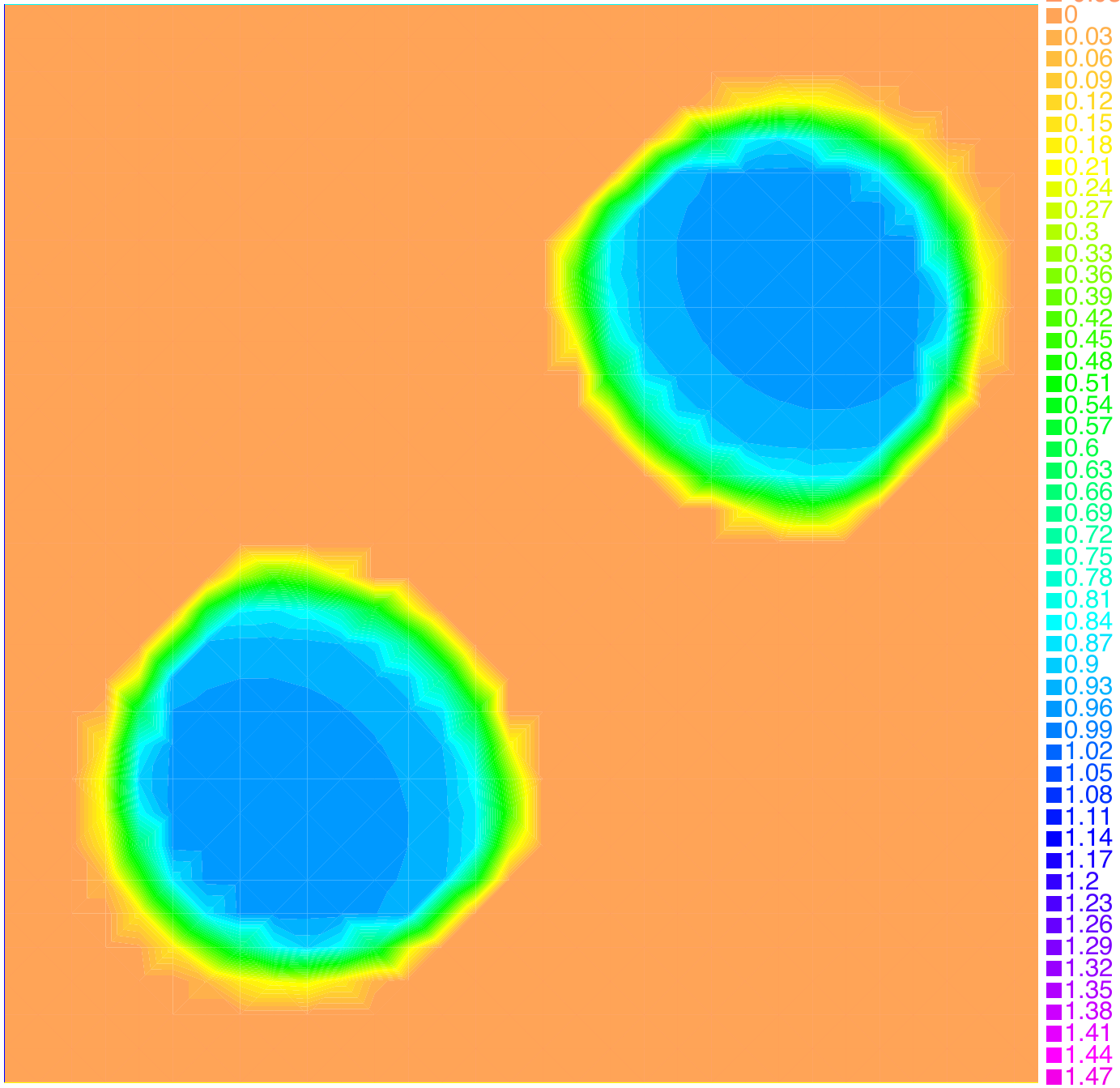}\\
\includegraphics[ scale=0.175]{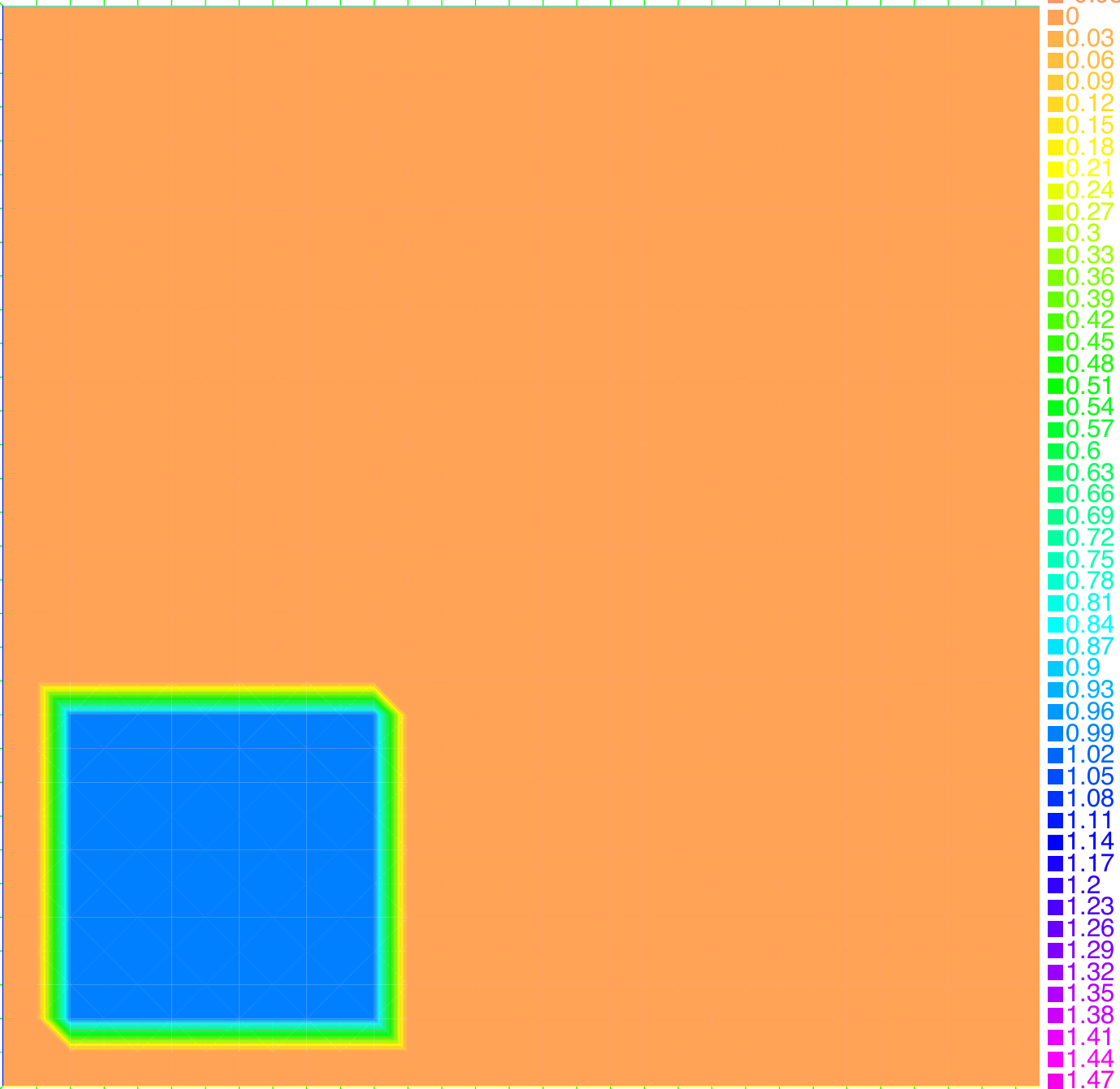}&
\includegraphics[ scale=0.175]{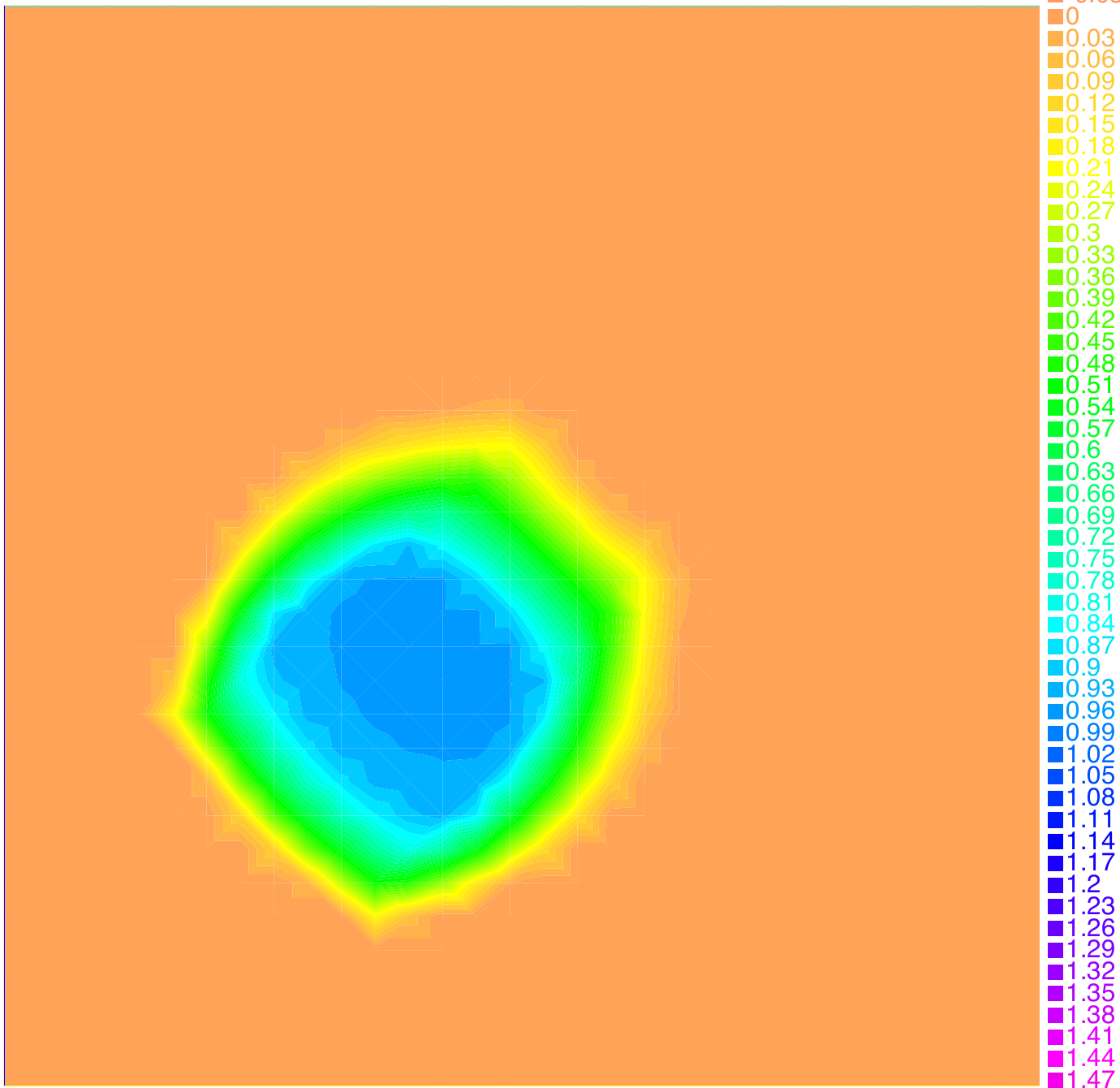}&
\includegraphics[ scale=0.175]{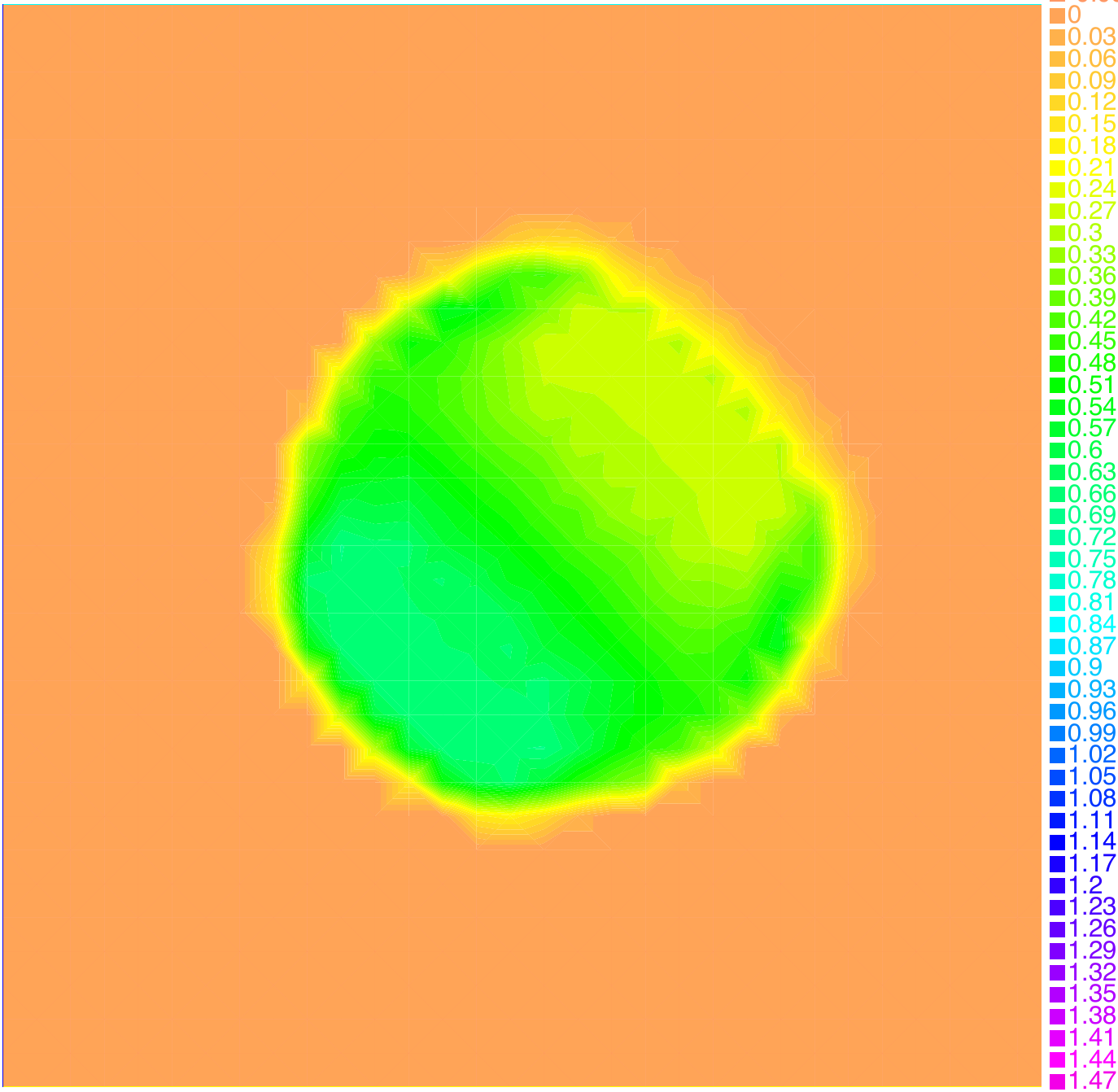}&
\includegraphics[ scale=0.175]{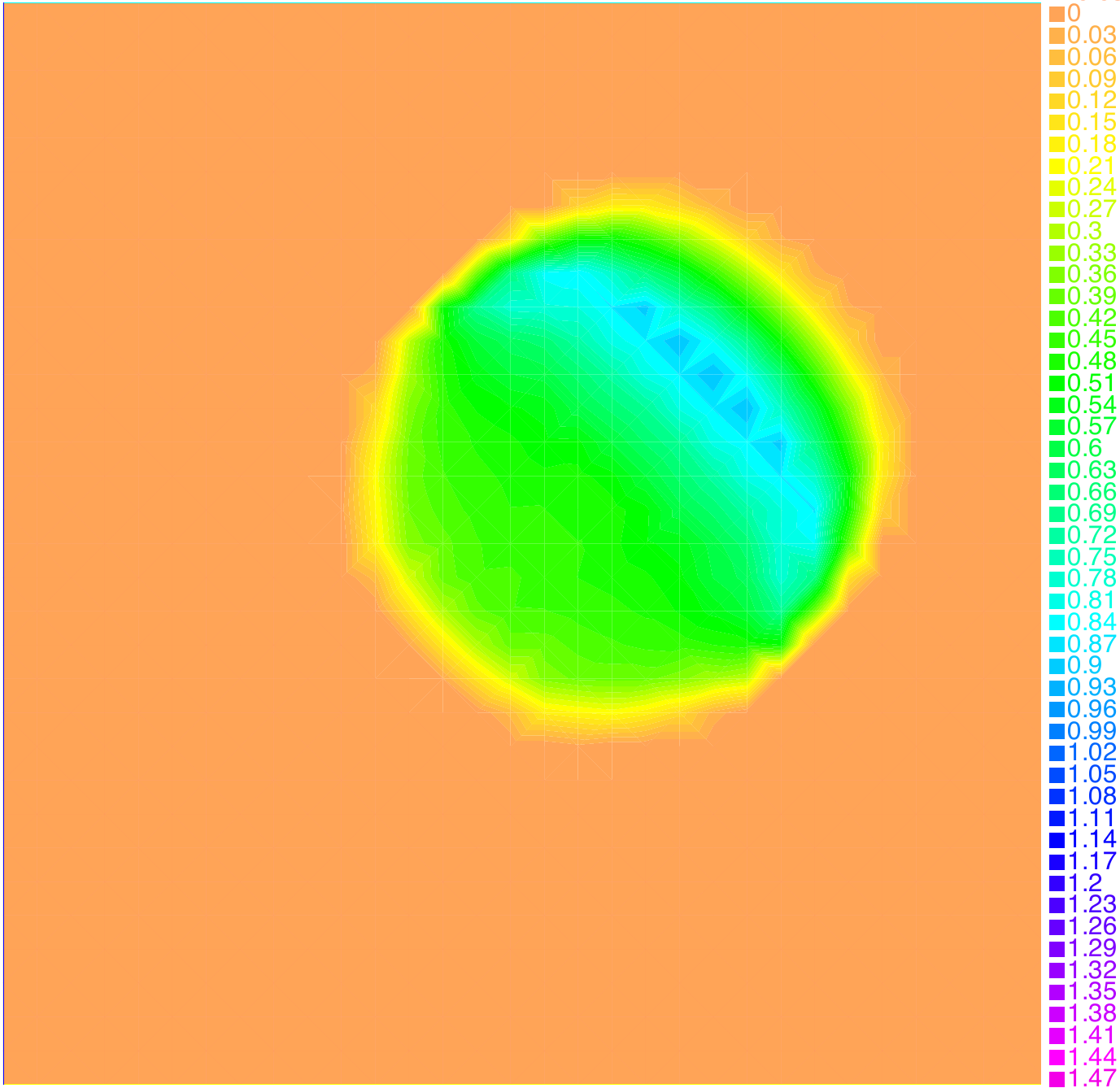}&
\includegraphics[ scale=0.175]{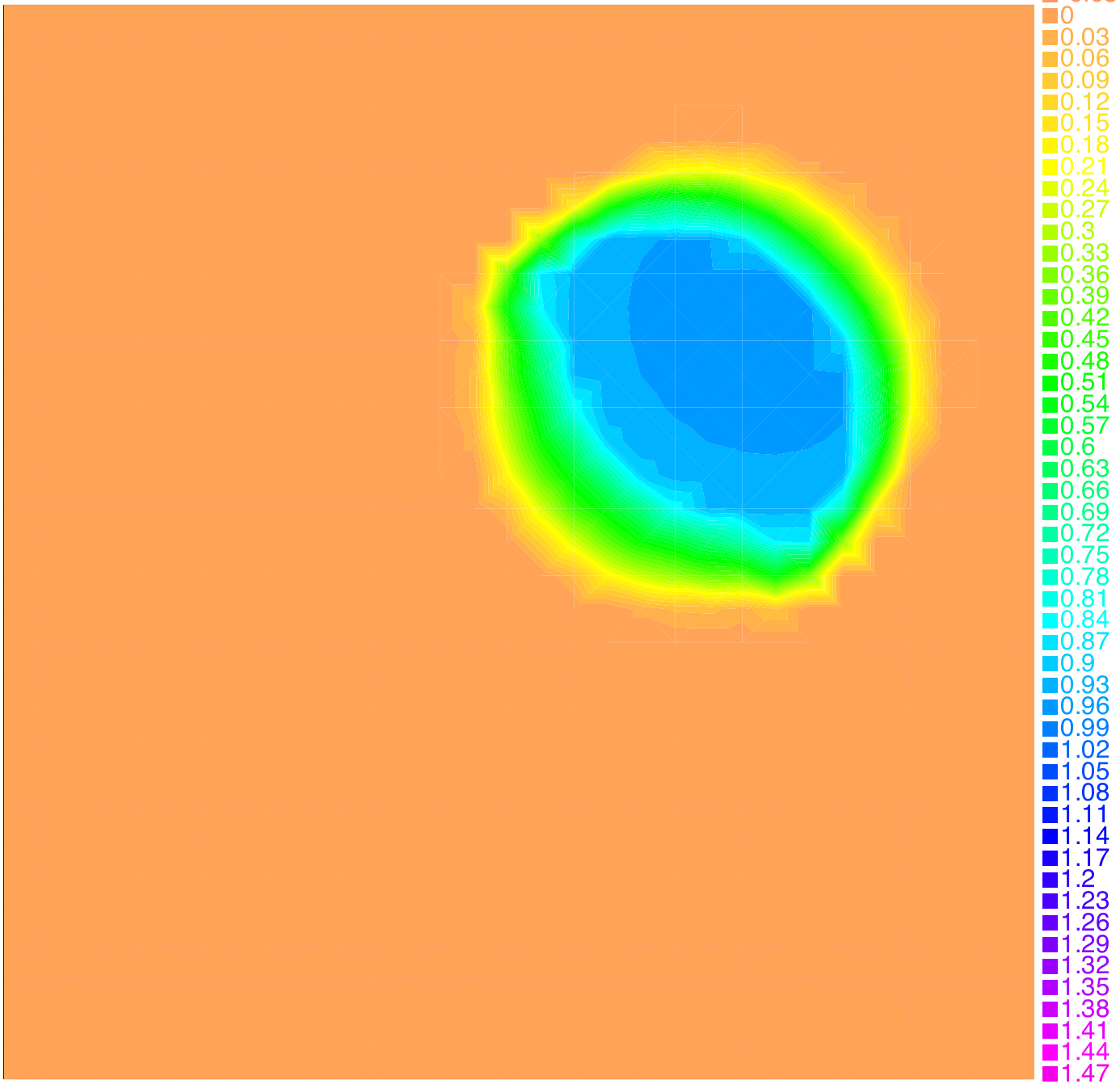}&
\includegraphics[ scale=0.175]{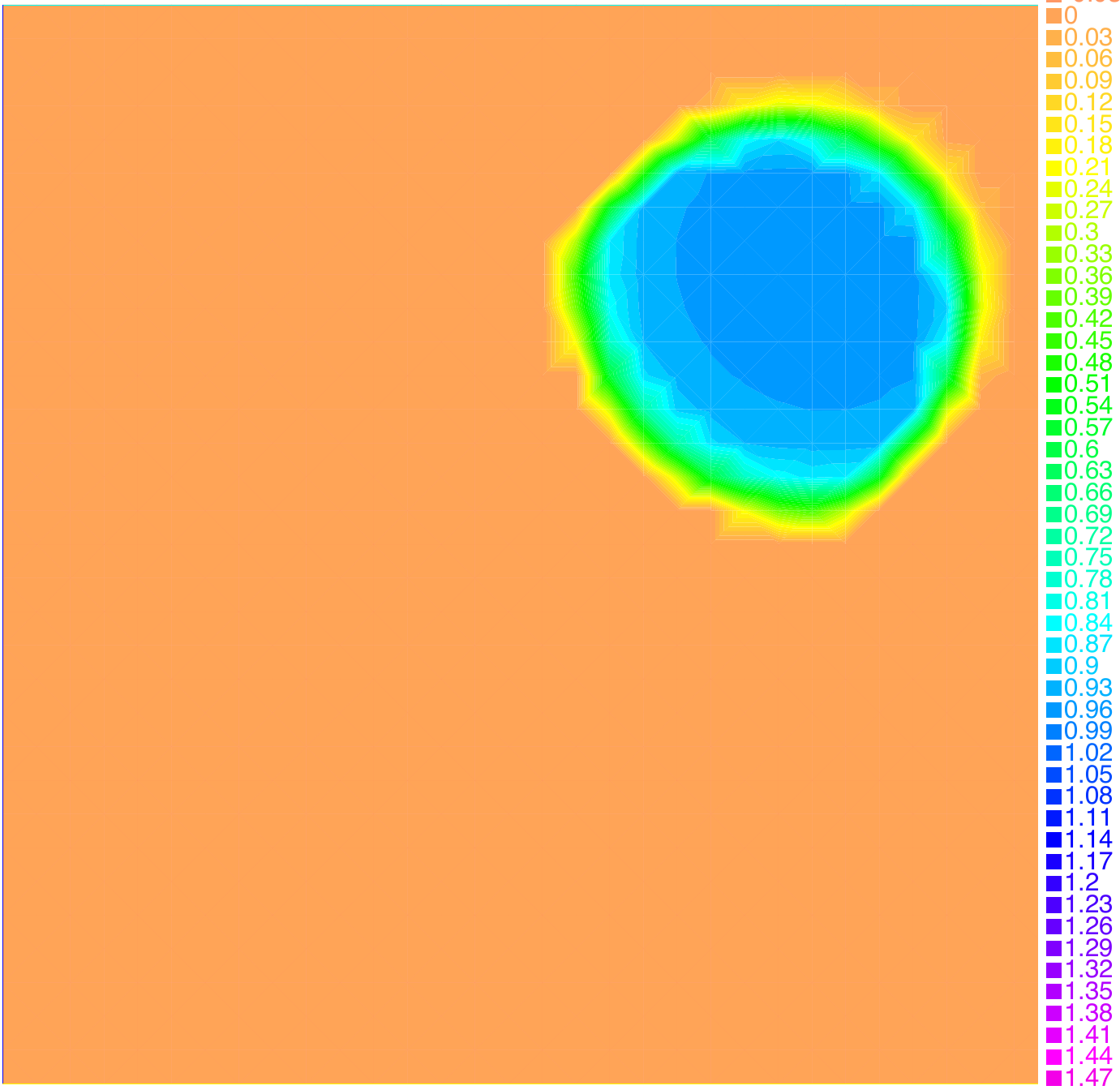}\\
$t=0$ & $t=0.05$ & $t=0.1$ & $t=0.15$ &$t=0.2$ & $t=0.3$\\
\end{tabular}
\caption{\textit{Evolution of two species crossing each other with porous media congestion, $m=50$. Top row: display of $\rho_1+\rho_2$. Bottom row: display of $\rho_1$.}}
\label{figure porous media}
\end{figure}

\begin{figure}[h!]

\begin{tabular}{@{\hspace{0mm}}c@{\hspace{1mm}}c@{\hspace{1mm}}c@{\hspace{1mm}}c@{\hspace{1mm}}c@{\hspace{1mm}}c@{\hspace{1mm}}}

\centering
\includegraphics[ scale=0.175]{c-dens-jko-mpw50-00.pdf}&
\includegraphics[ scale=0.175]{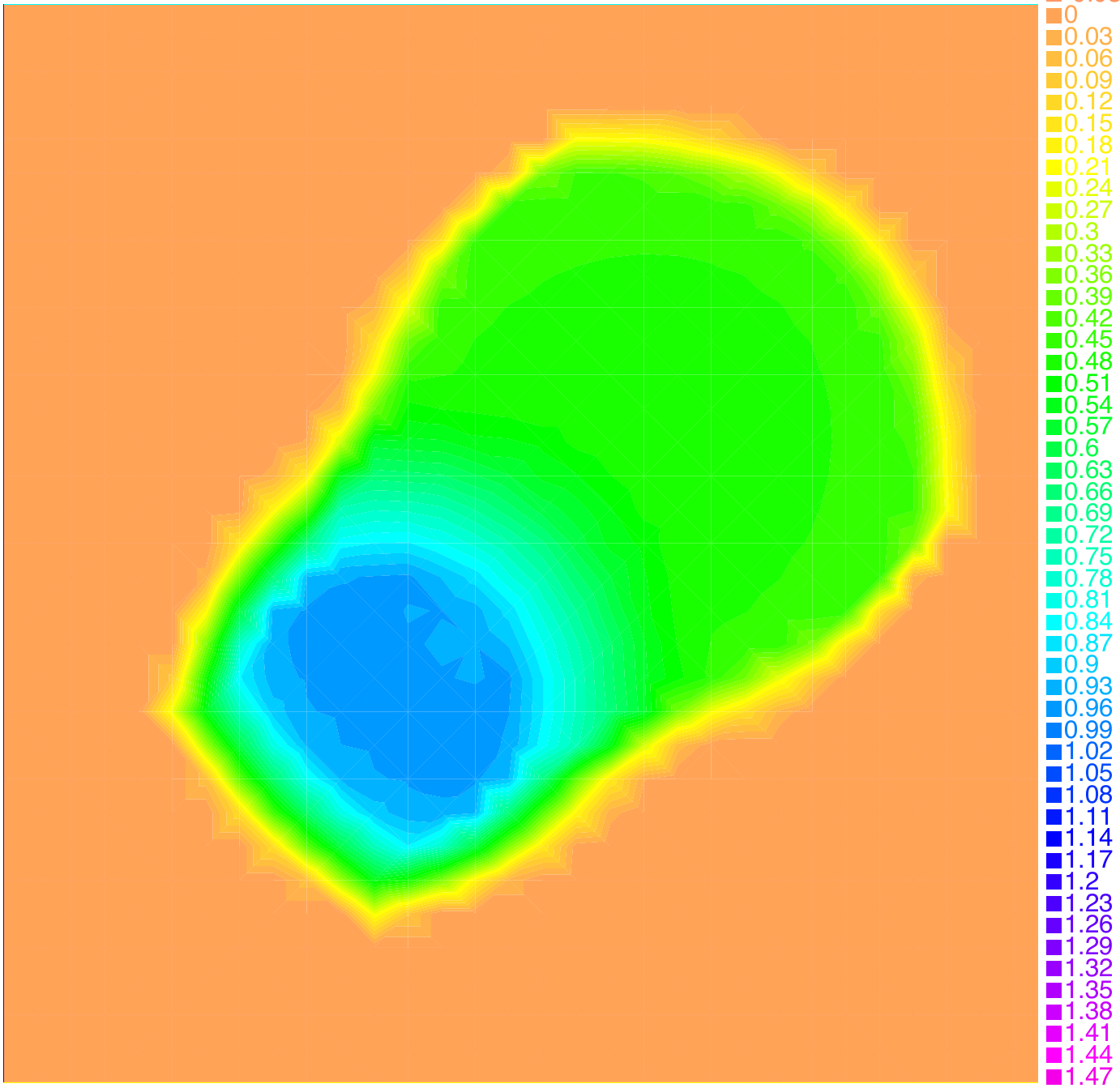}&
\includegraphics[ scale=0.175]{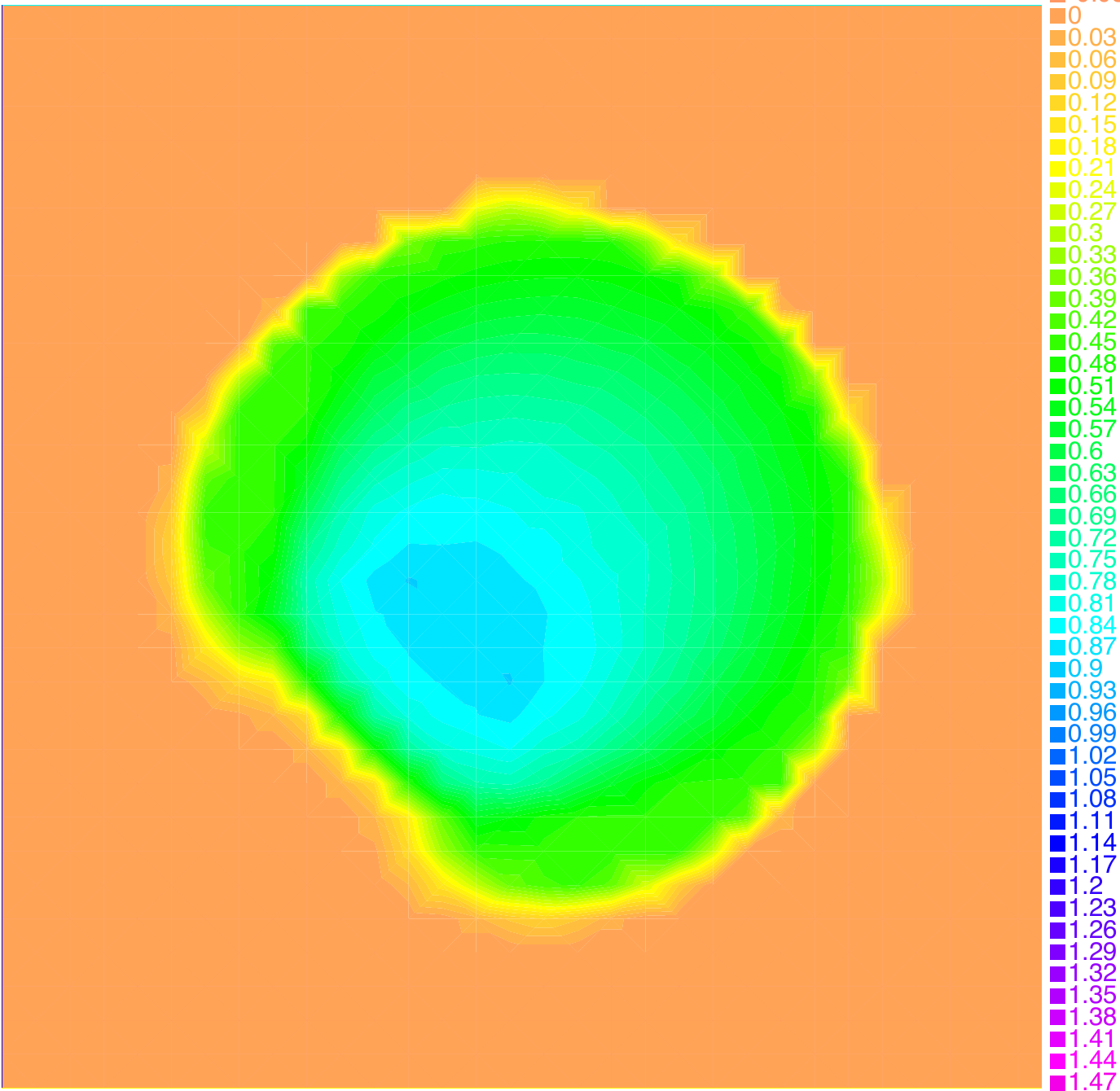}&
\includegraphics[ scale=0.175]{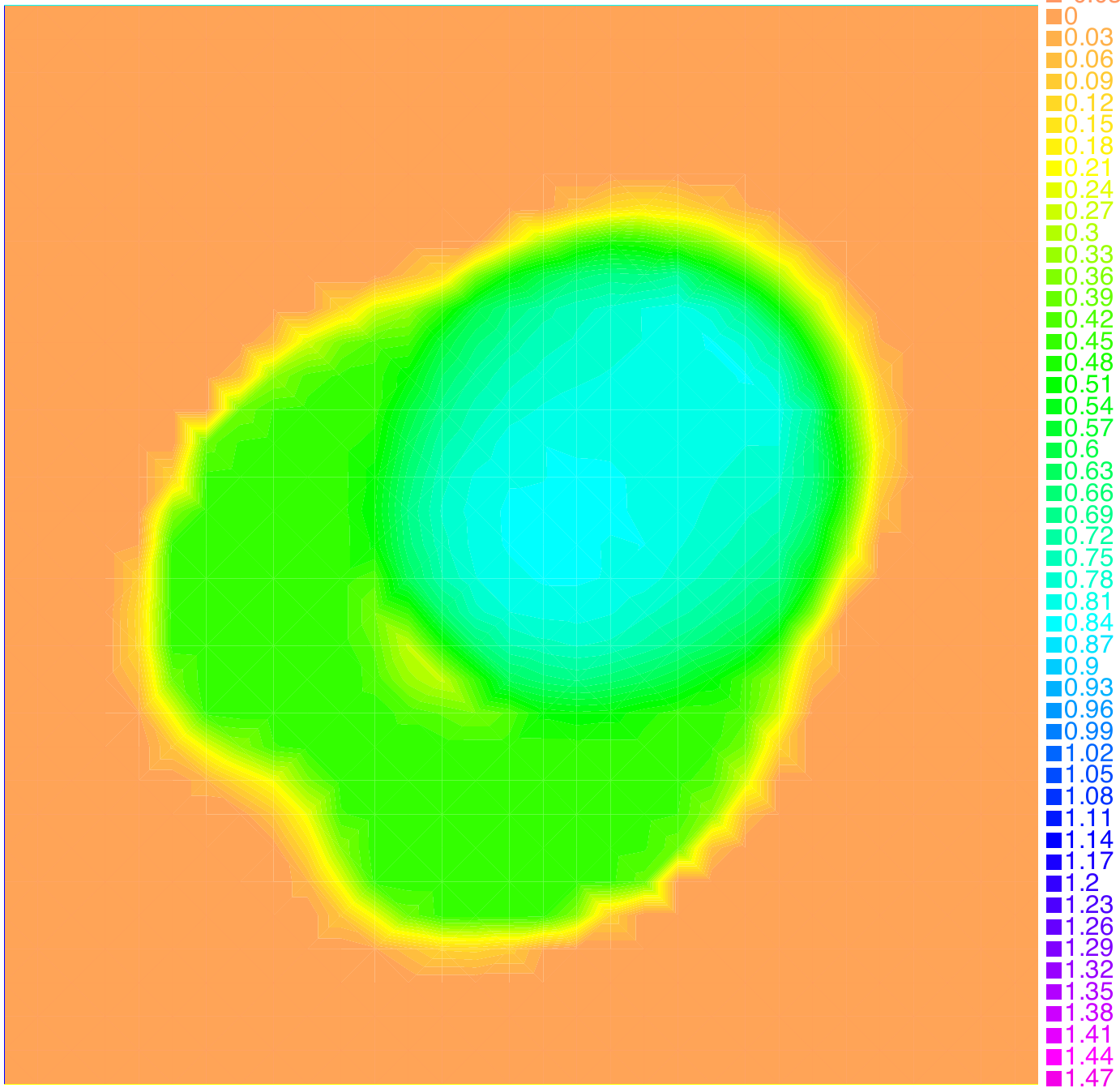}&
\includegraphics[ scale=0.175]{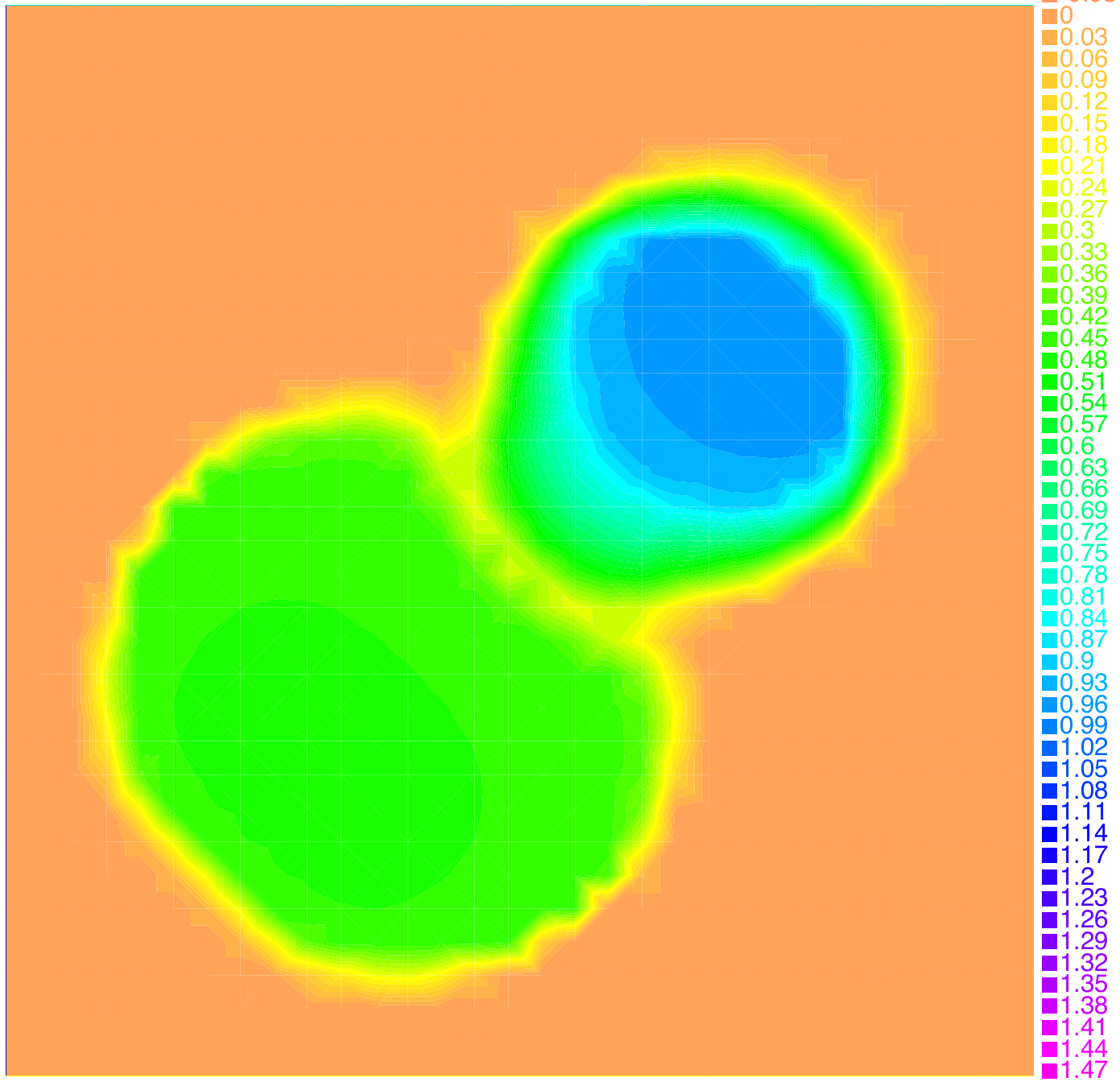}&
\includegraphics[ scale=0.175]{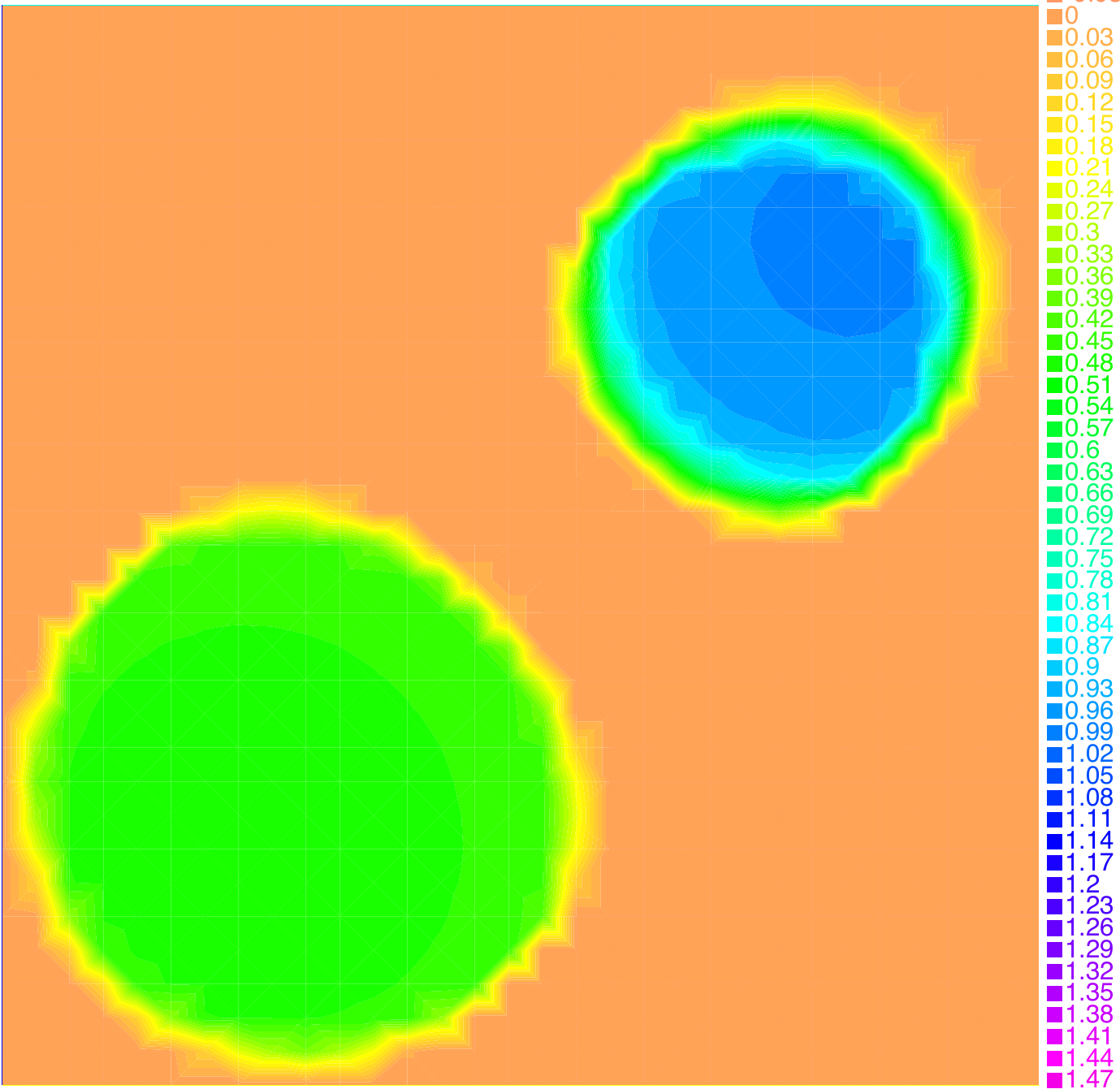}\\
\includegraphics[ scale=0.175]{a-dens-jko-mpw50-00.pdf}&
\includegraphics[ scale=0.175]{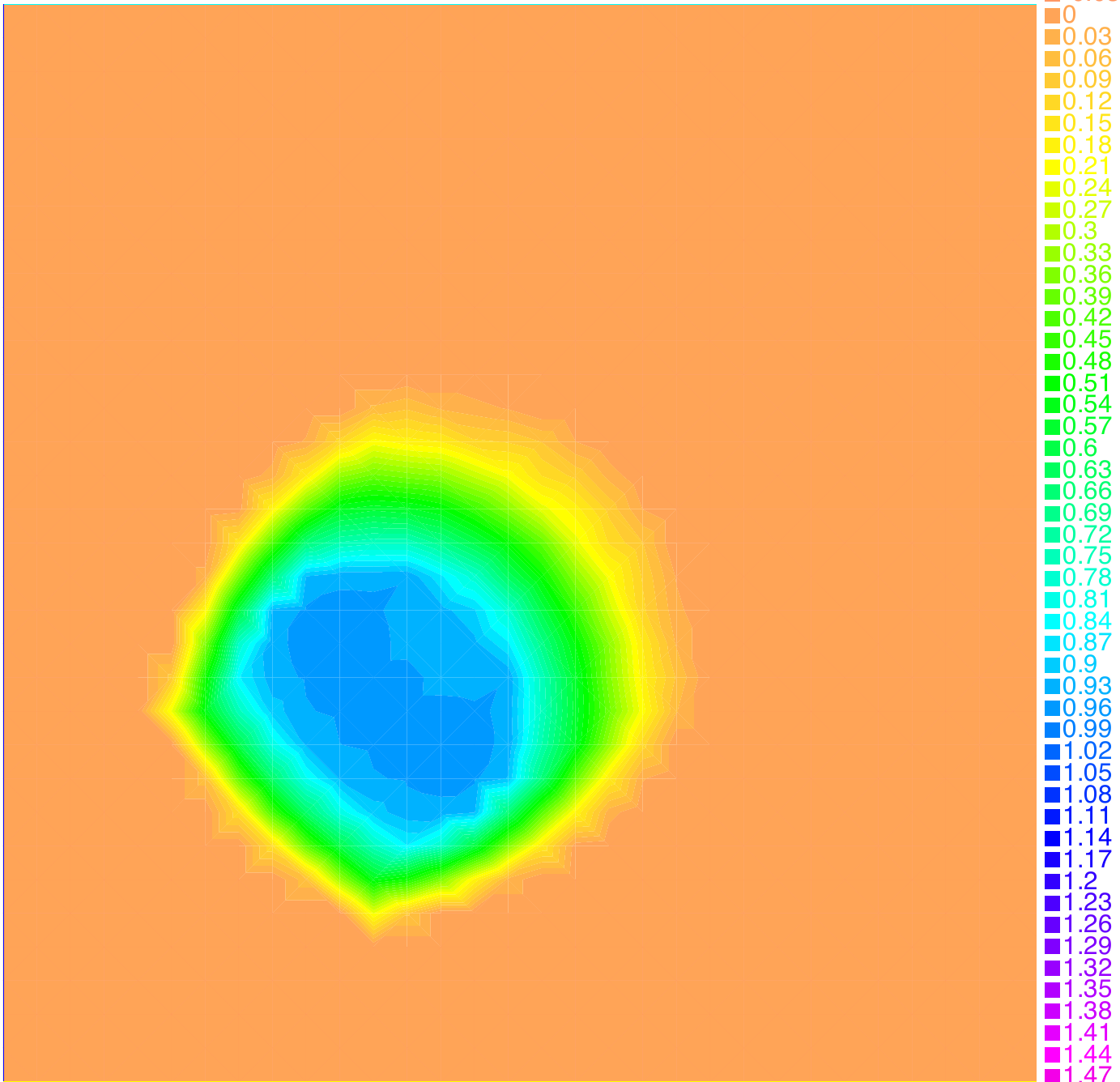}&
\includegraphics[ scale=0.175]{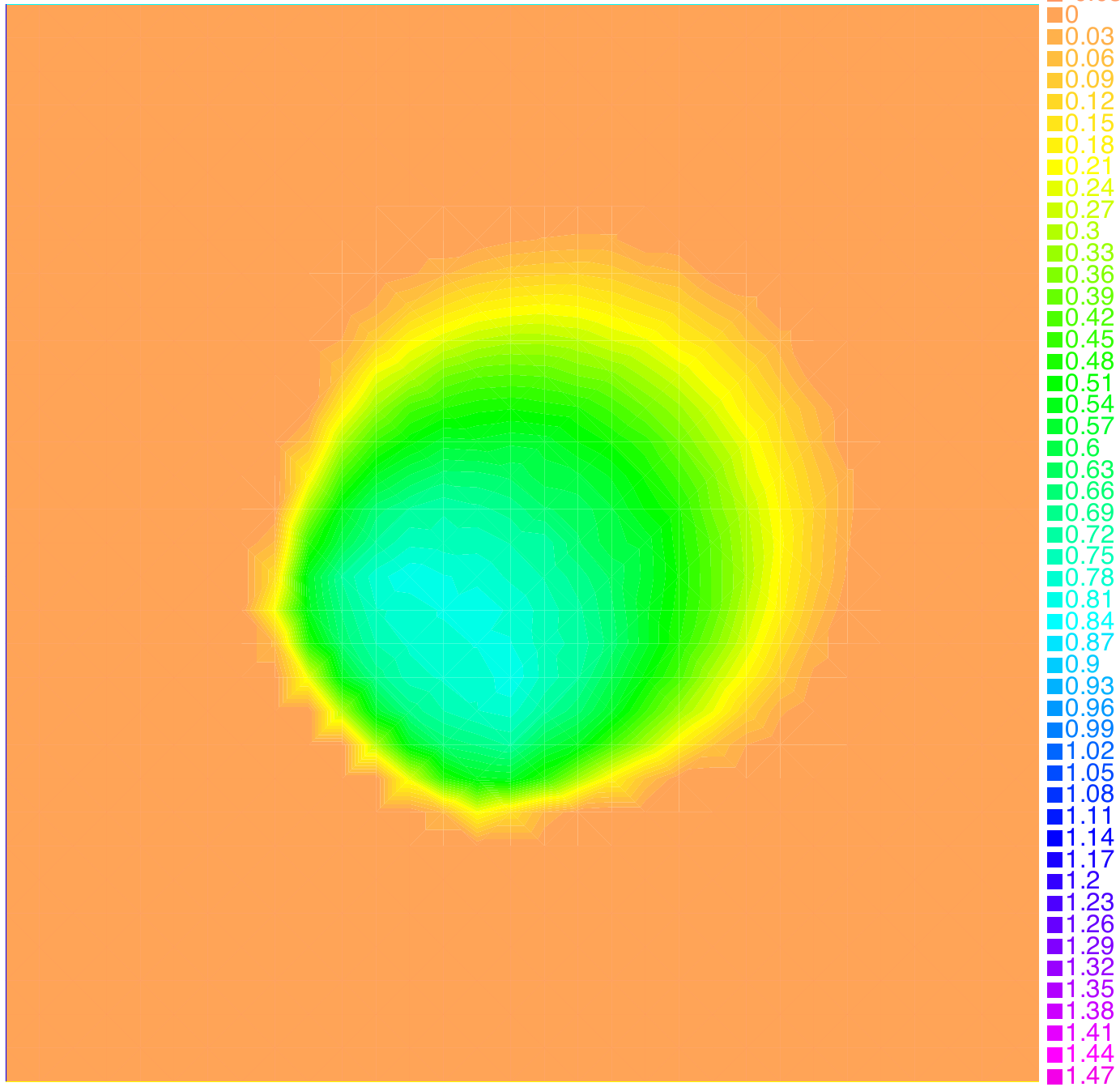}&
\includegraphics[ scale=0.175]{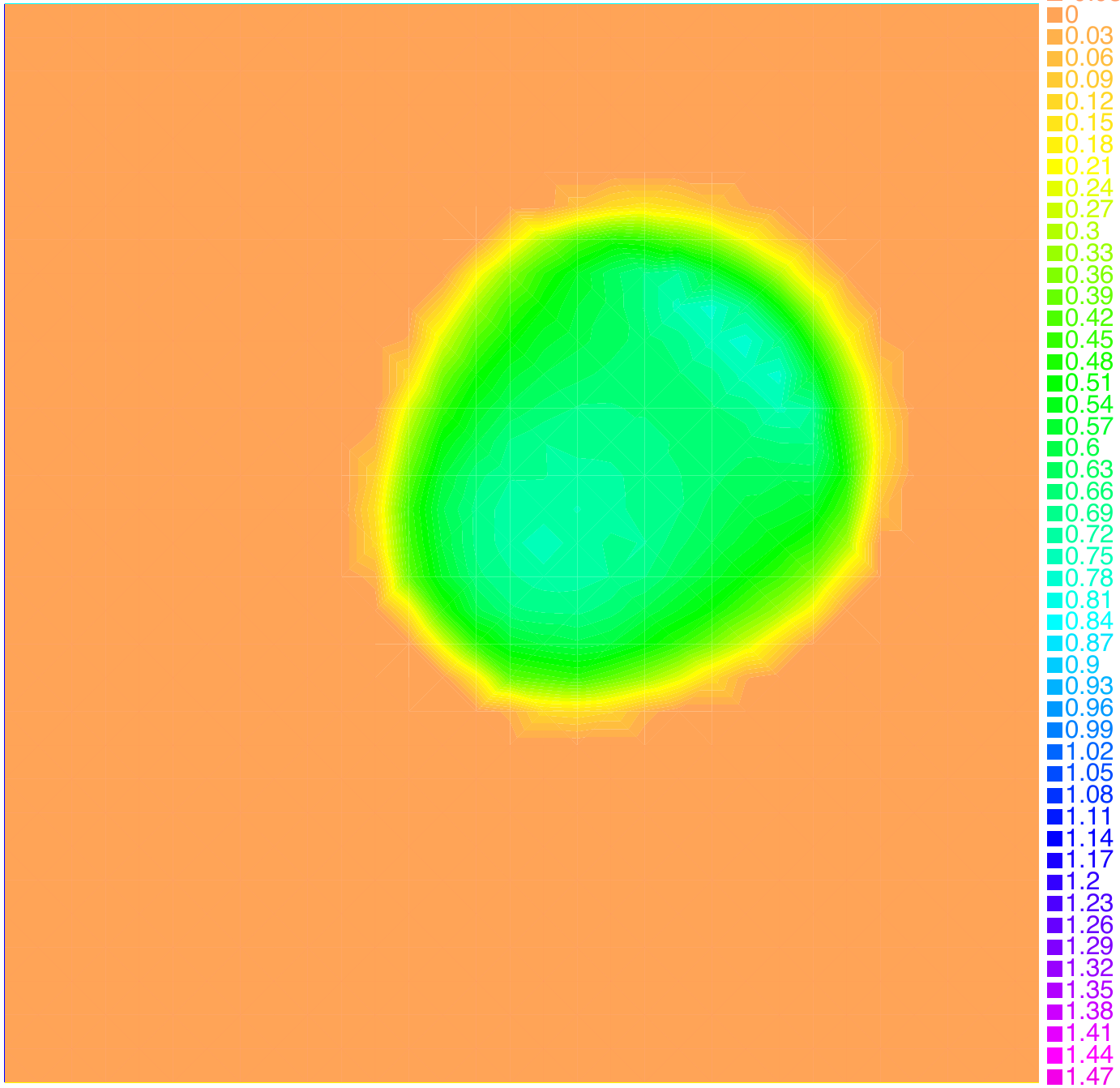}&
\includegraphics[ scale=0.175]{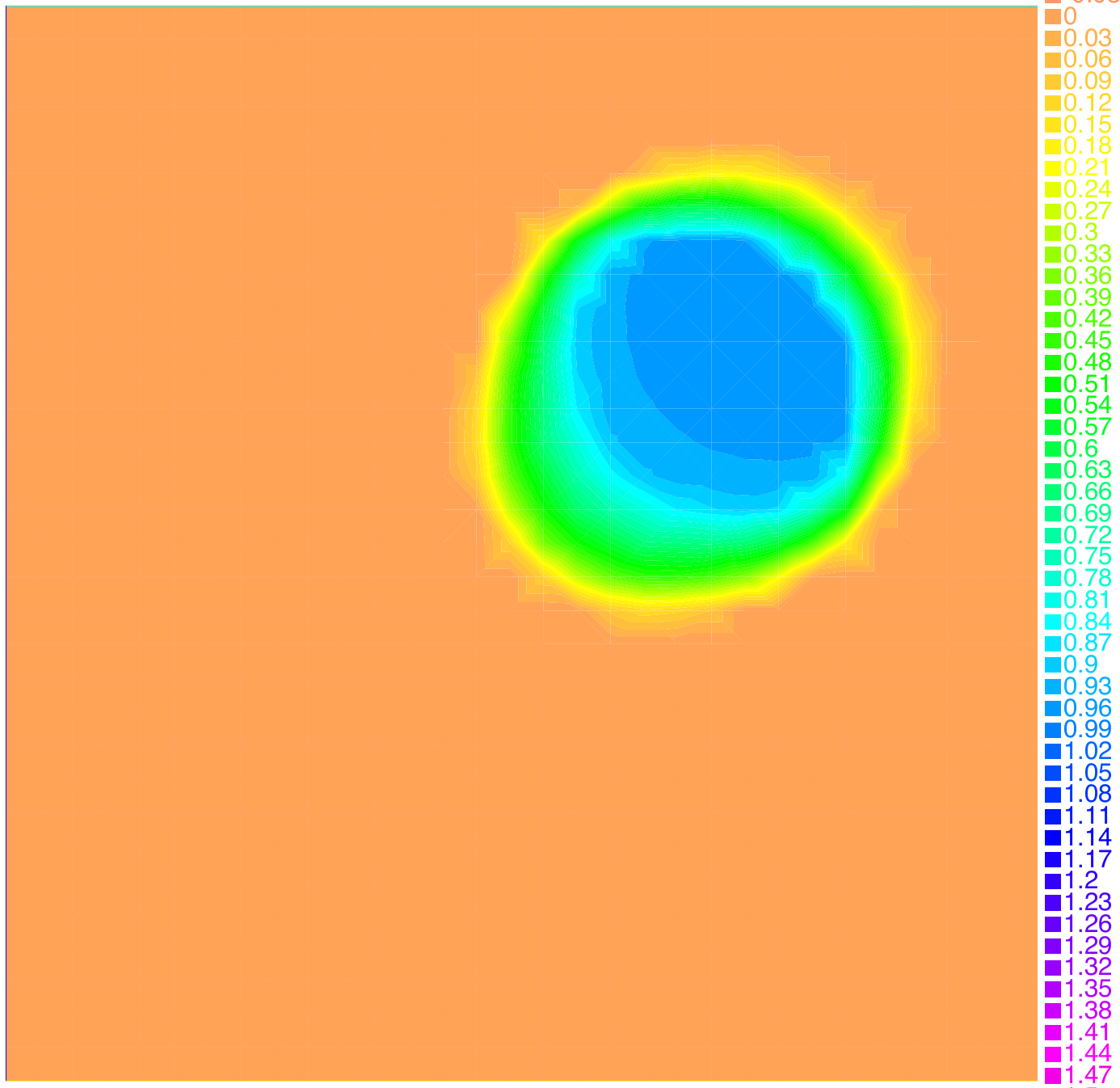}&
\includegraphics[ scale=0.175]{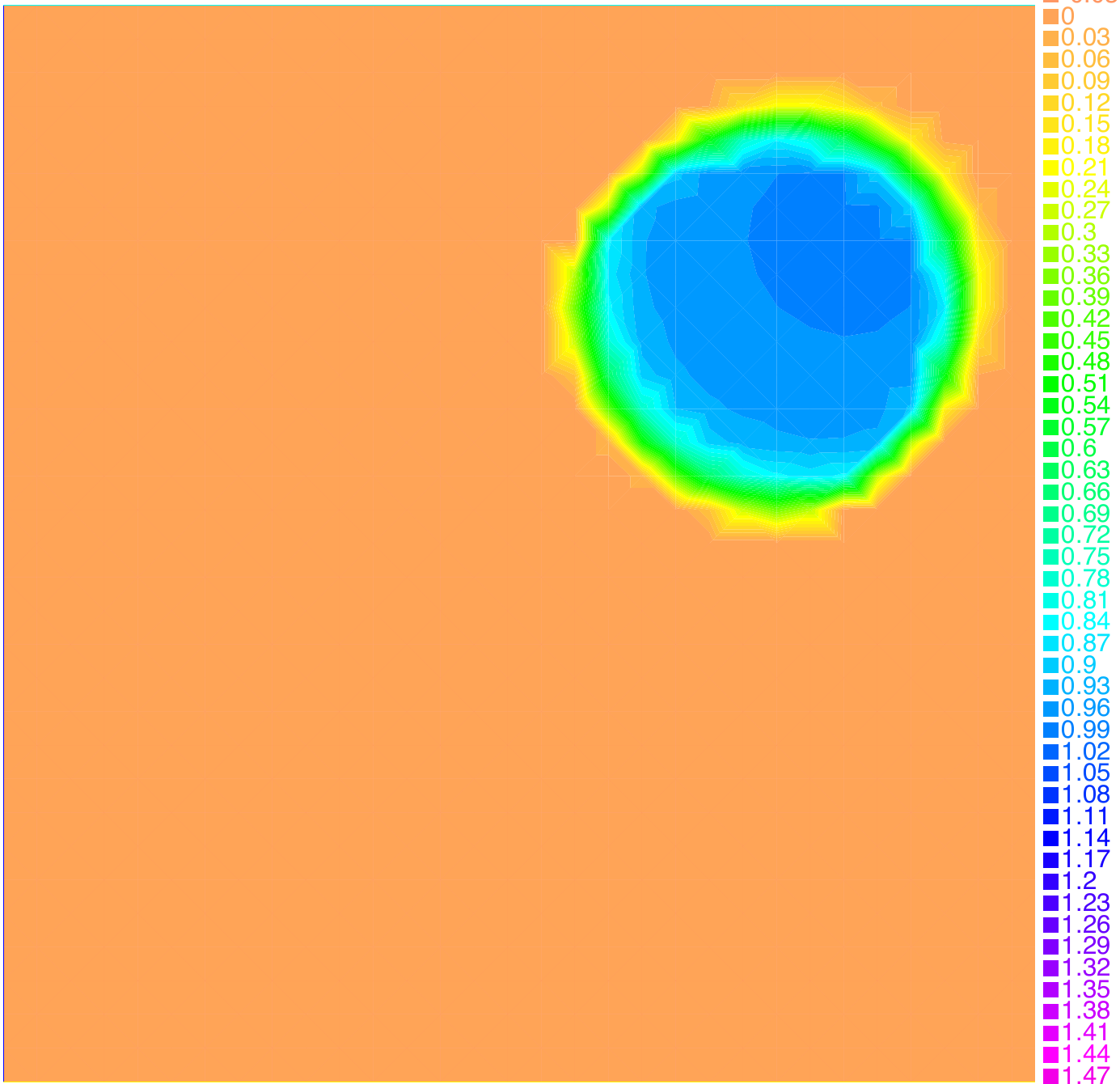}\\
\includegraphics[ scale=0.175]{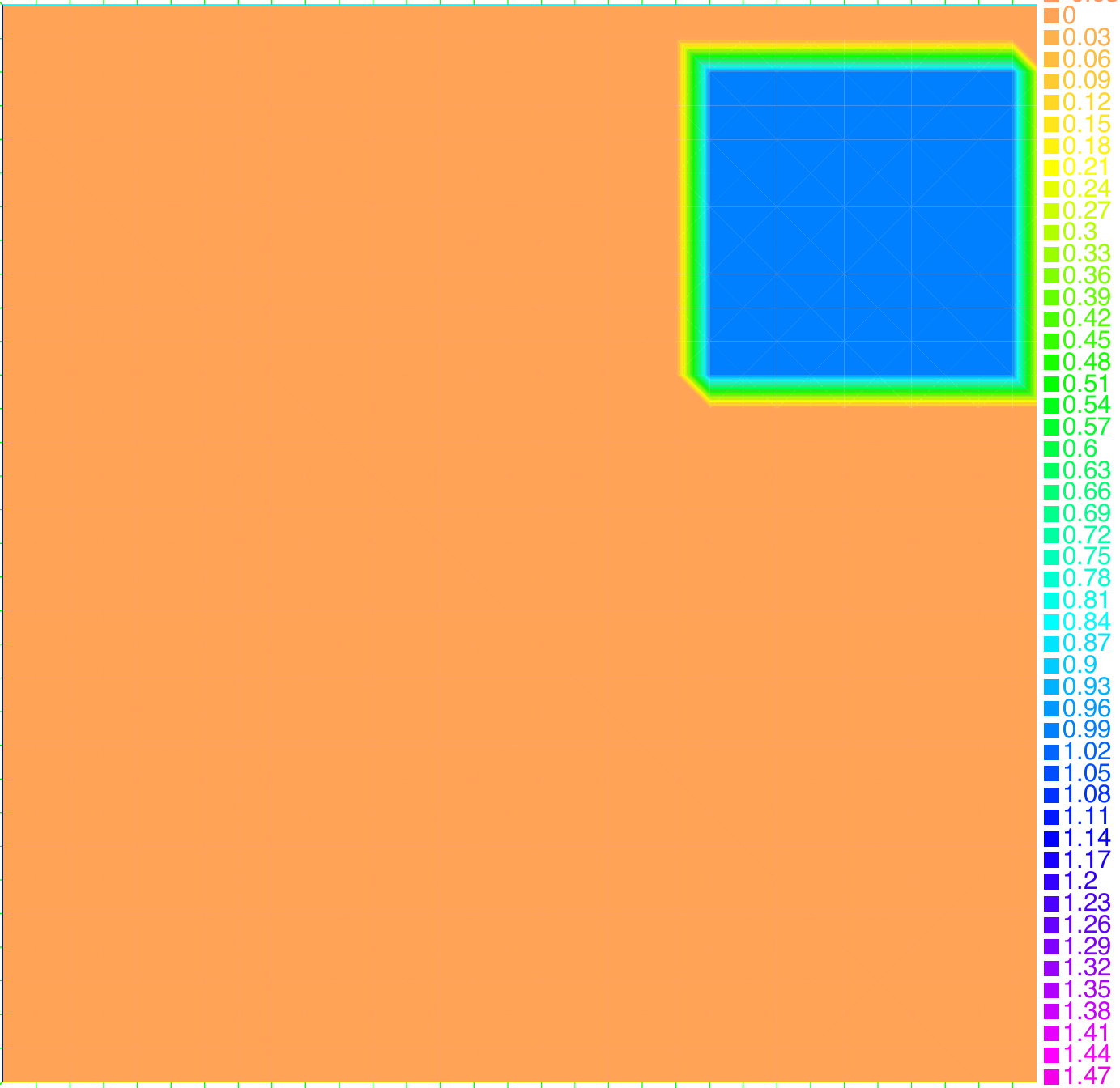}&
\includegraphics[ scale=0.175]{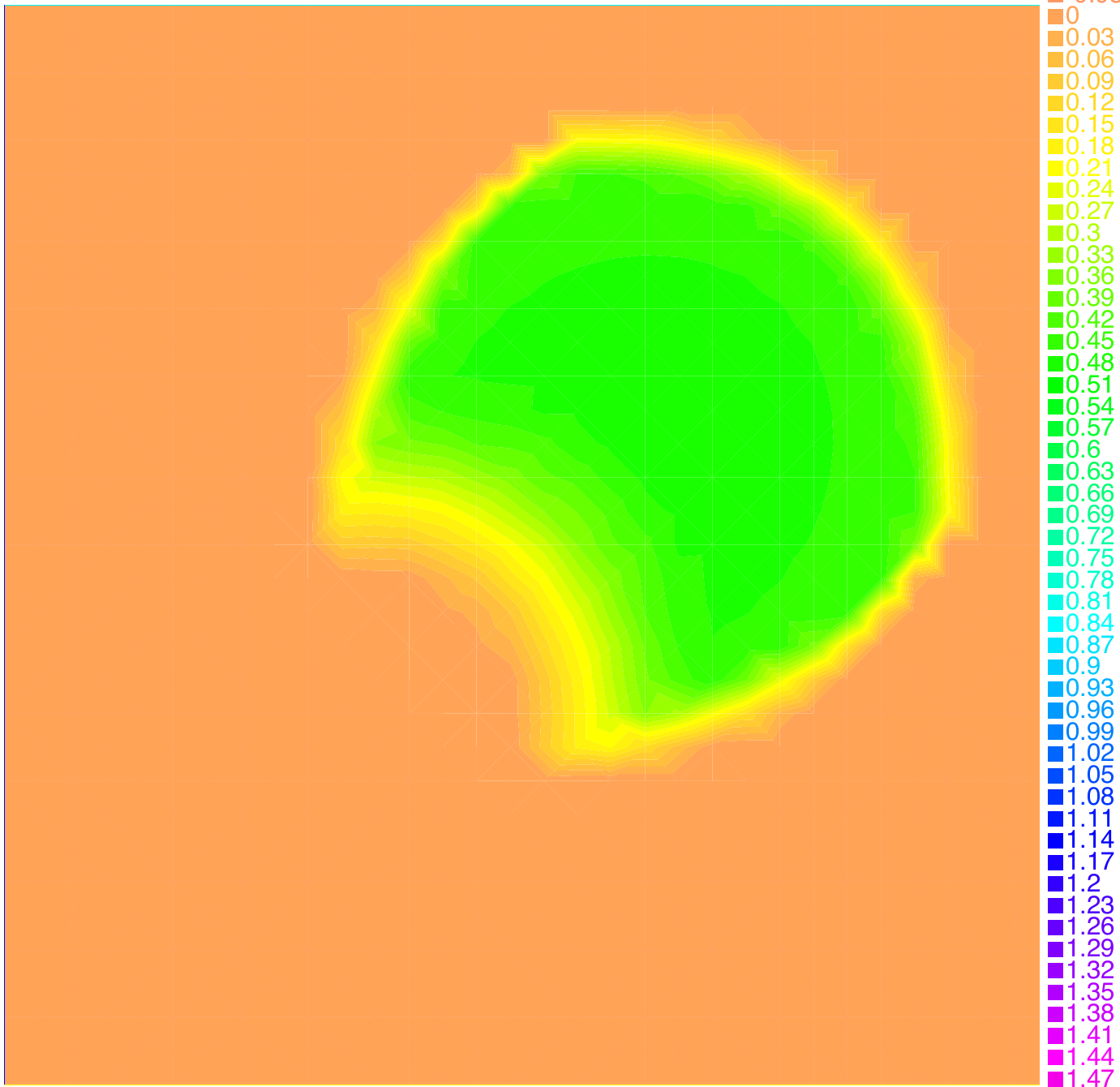}&
\includegraphics[ scale=0.175]{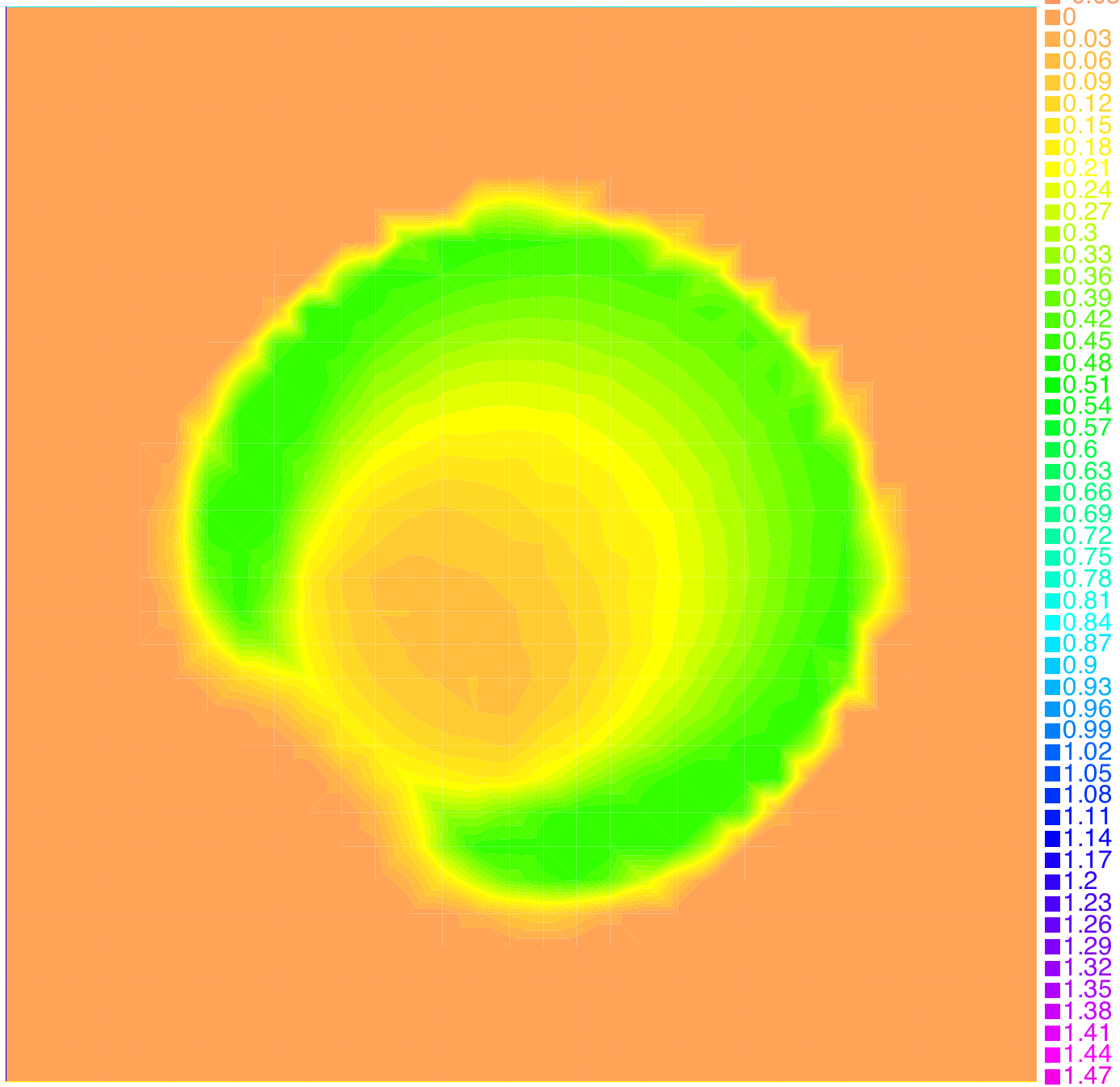}&
\includegraphics[ scale=0.175]{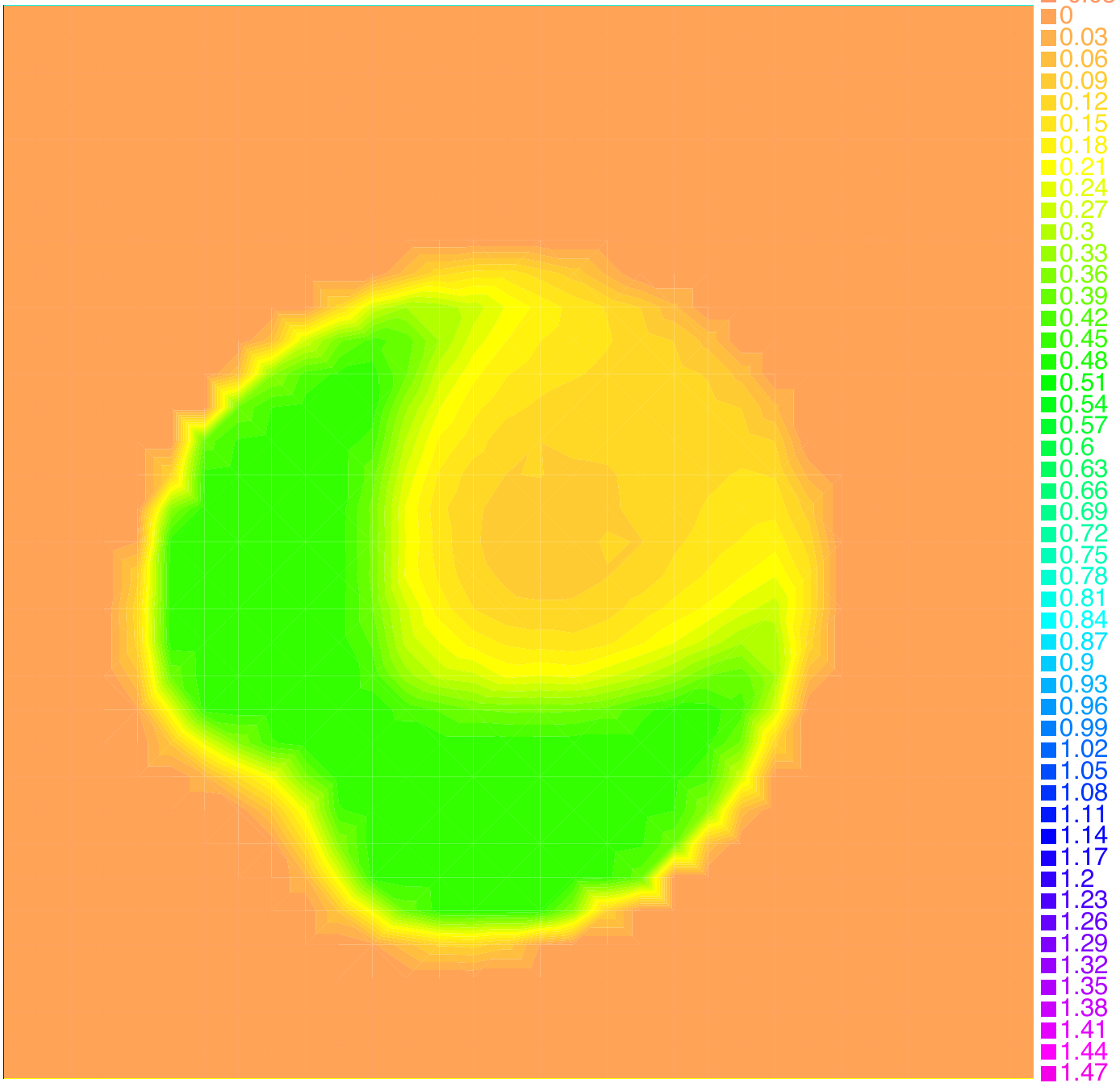}&
\includegraphics[ scale=0.175]{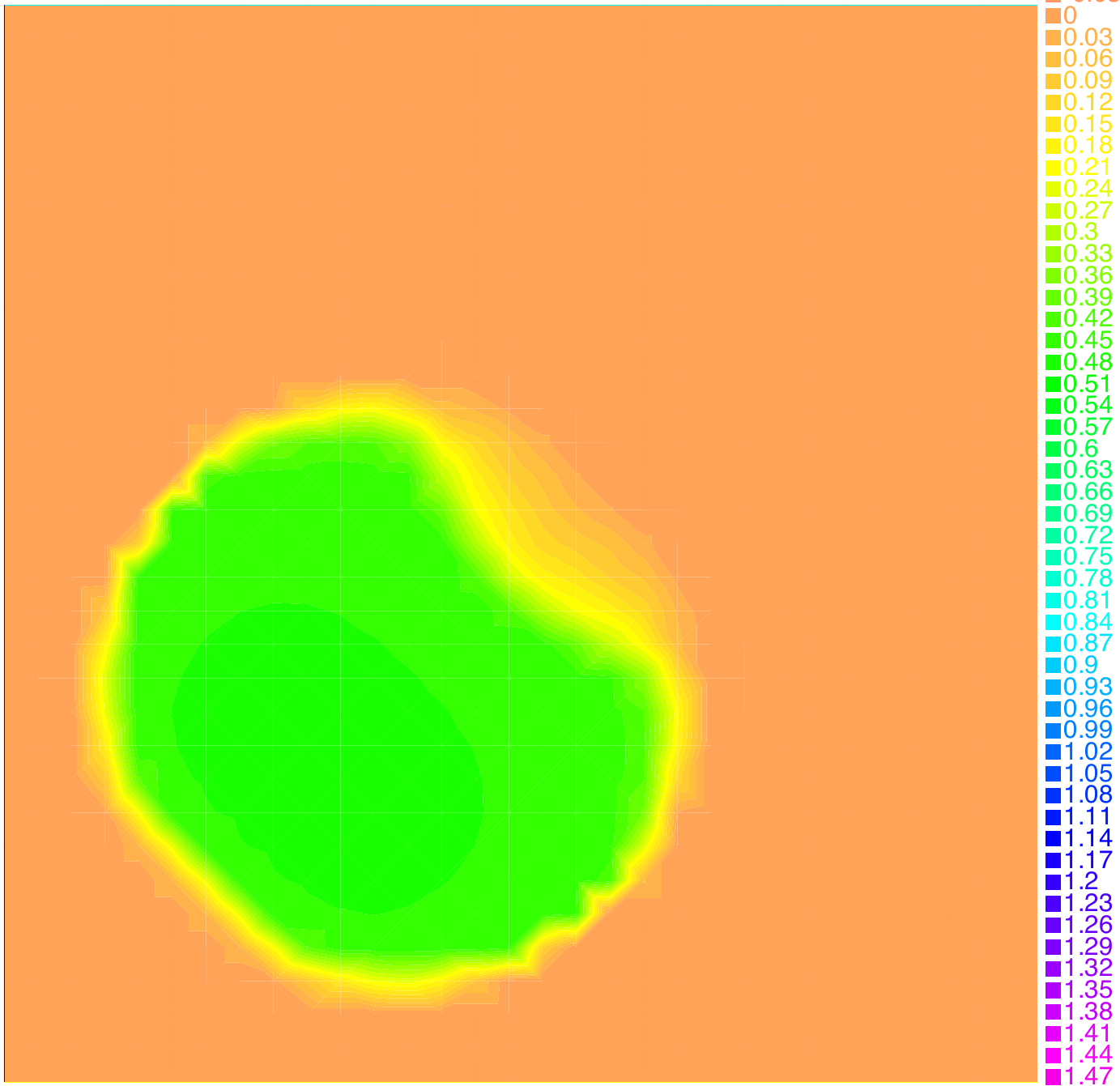}&
\includegraphics[ scale=0.175]{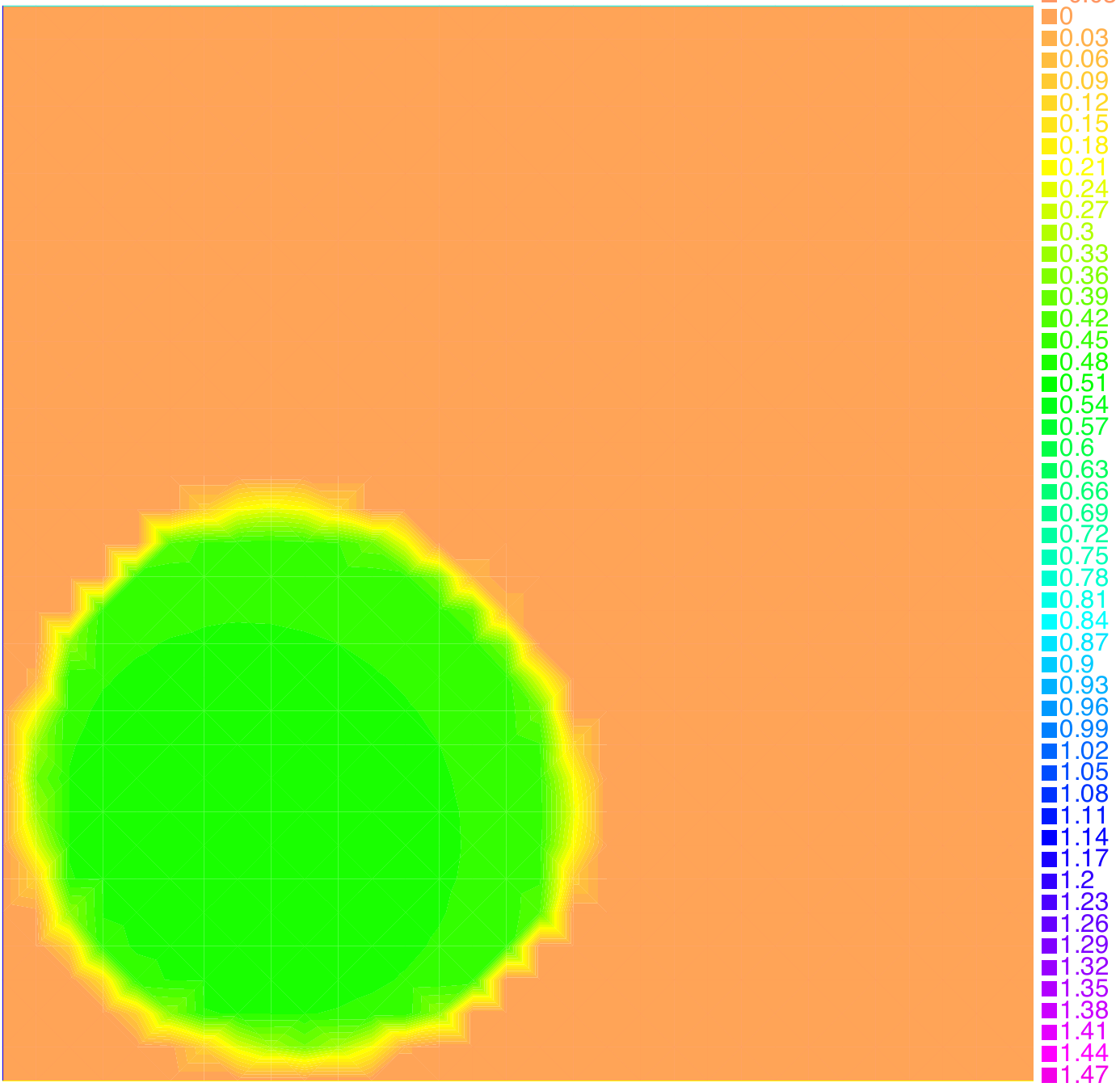}\\
$t=0$ & $t=0.05$ & $t=0.1$ & $t=0.15$ &$t=0.2$ & $t=0.3$\\
\end{tabular}
\caption{\textit{Evolution of two species crossing each other with weighted porous media congestion, $(\rho_1+2\rho_2)^m$, $m=50$. Top row: display of $\rho_1+\rho_2$. Middle row: display of $\rho_1$. Bottom row: display of $\rho_2$.}}
\label{figure porous media weight}
\end{figure} 

\begin{figure}[h!]

\begin{tabular}{@{\hspace{0mm}}c@{\hspace{1mm}}c@{\hspace{1mm}}c@{\hspace{1mm}}c@{\hspace{1mm}}c@{\hspace{1mm}}c@{\hspace{1mm}}}

\centering
\includegraphics[ scale=0.175]{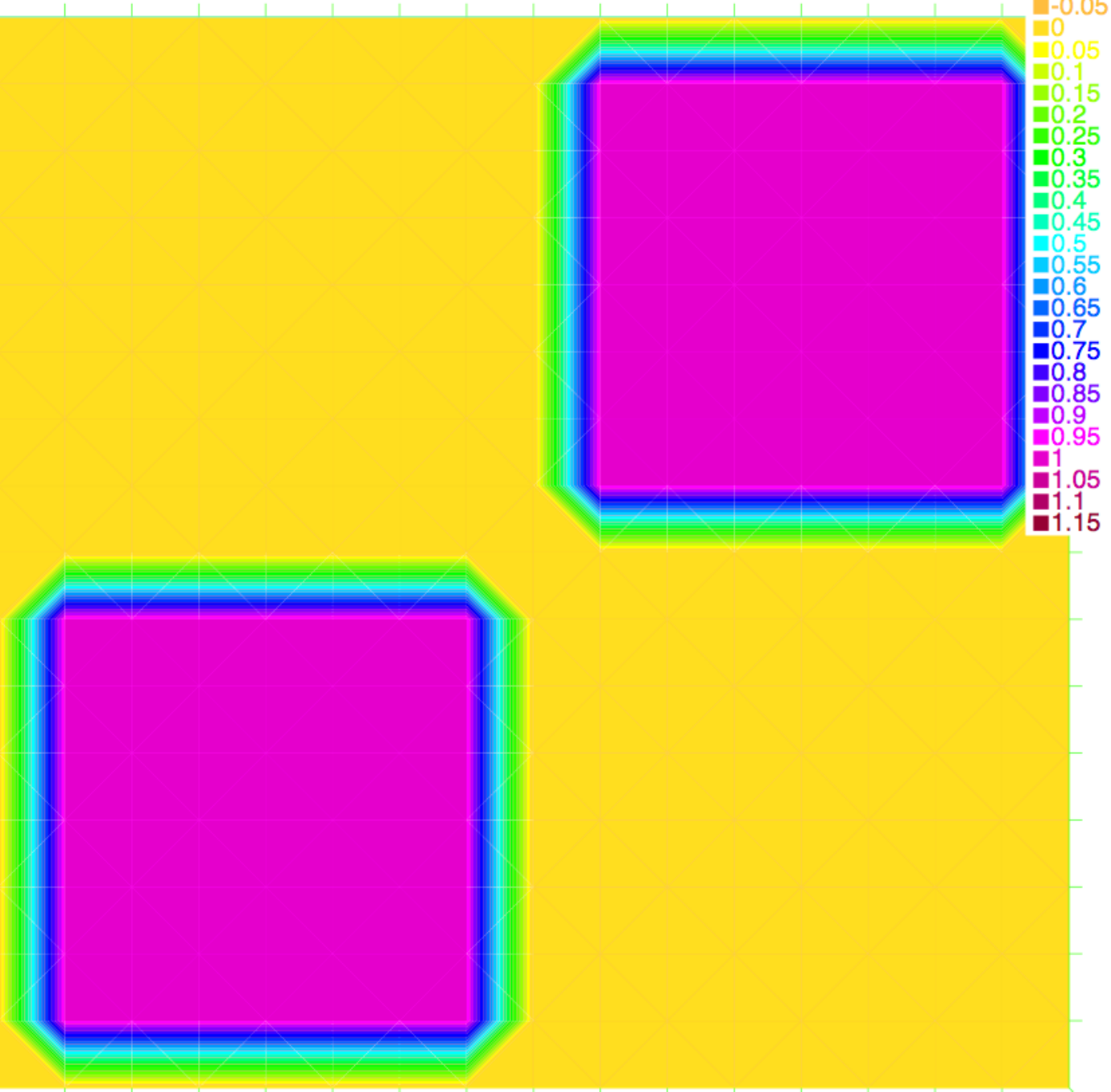}&
\includegraphics[ scale=0.175]{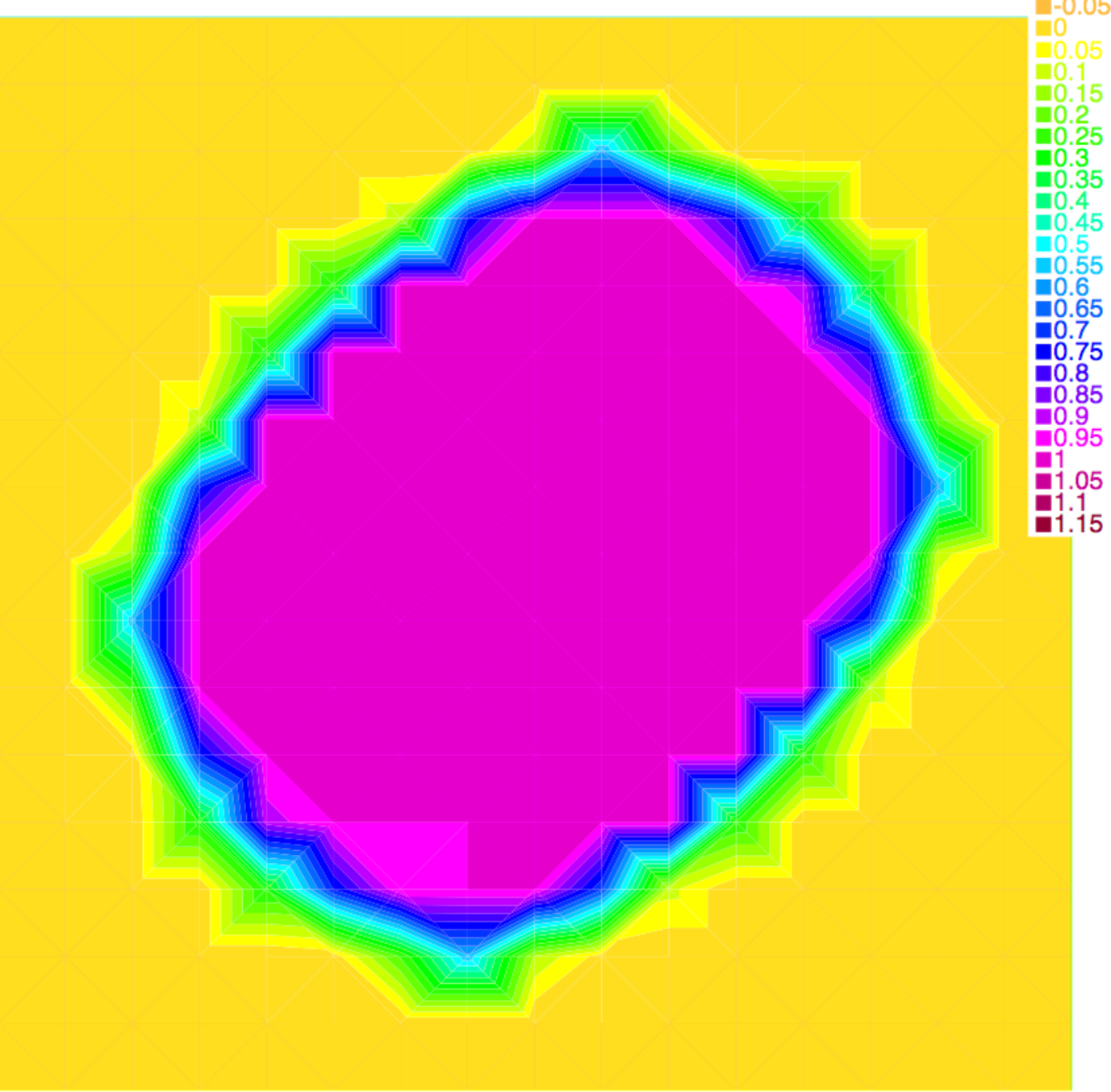}&
\includegraphics[ scale=0.175]{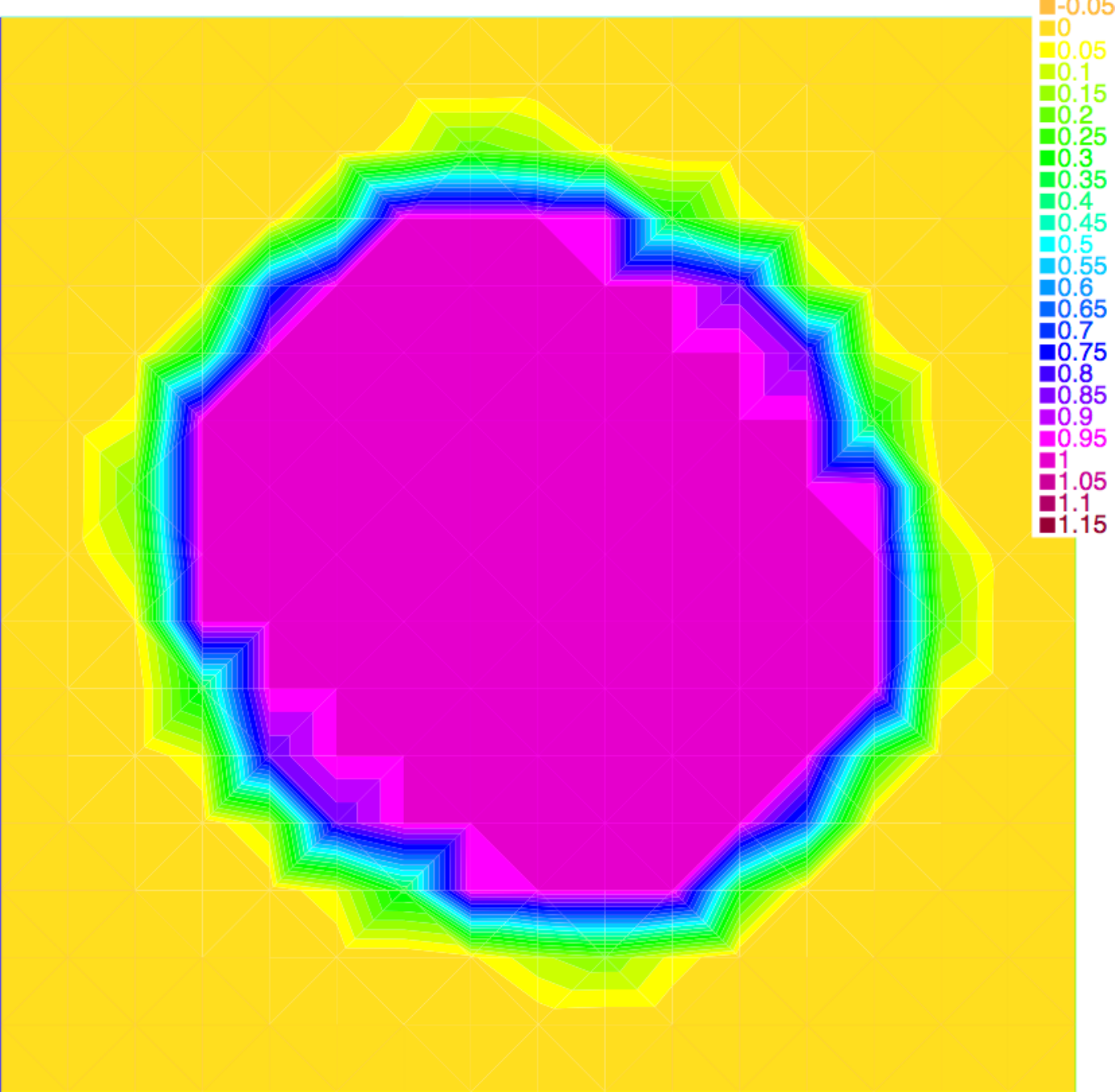}&
\includegraphics[ scale=0.175]{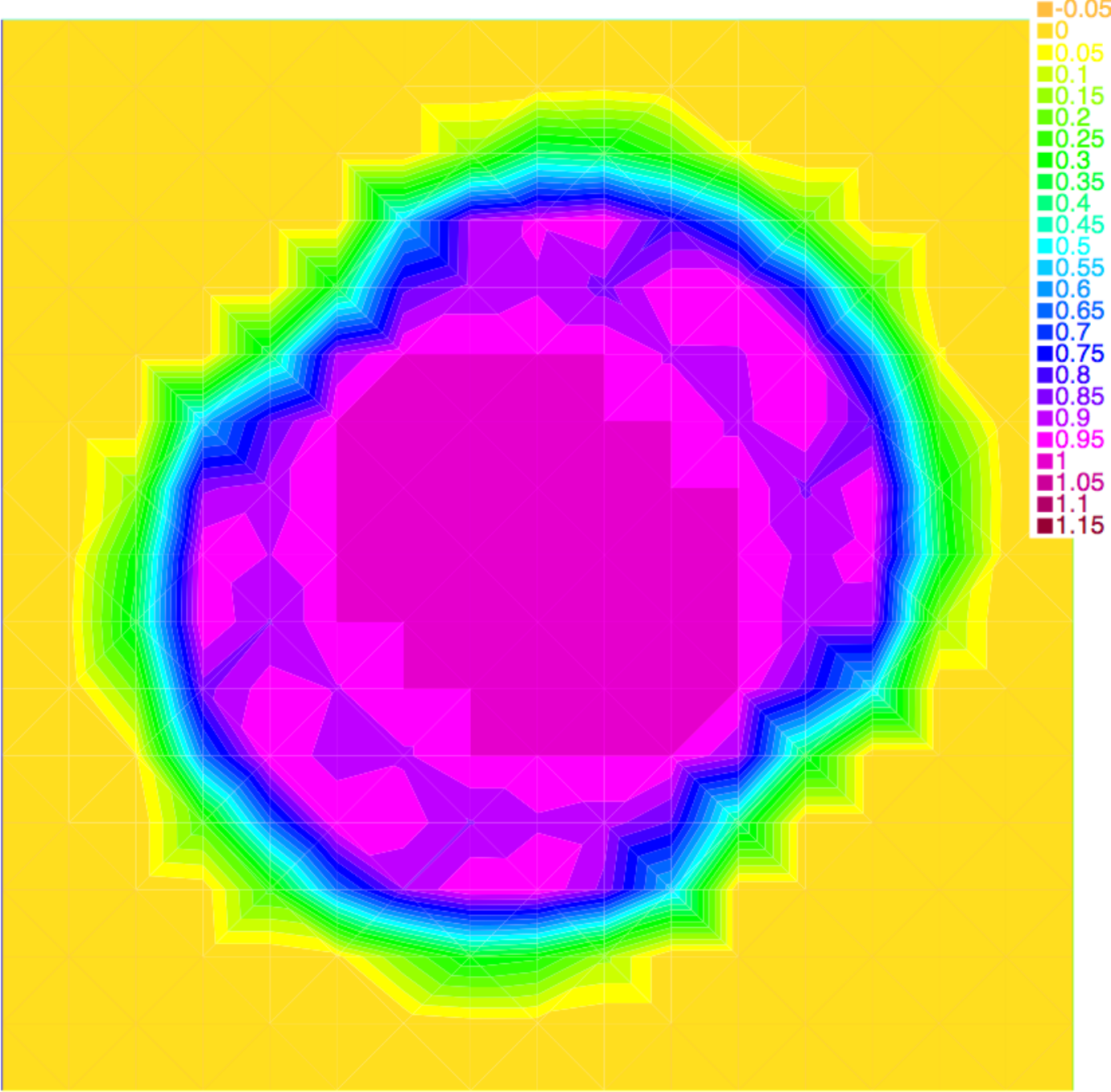}&
\includegraphics[ scale=0.175]{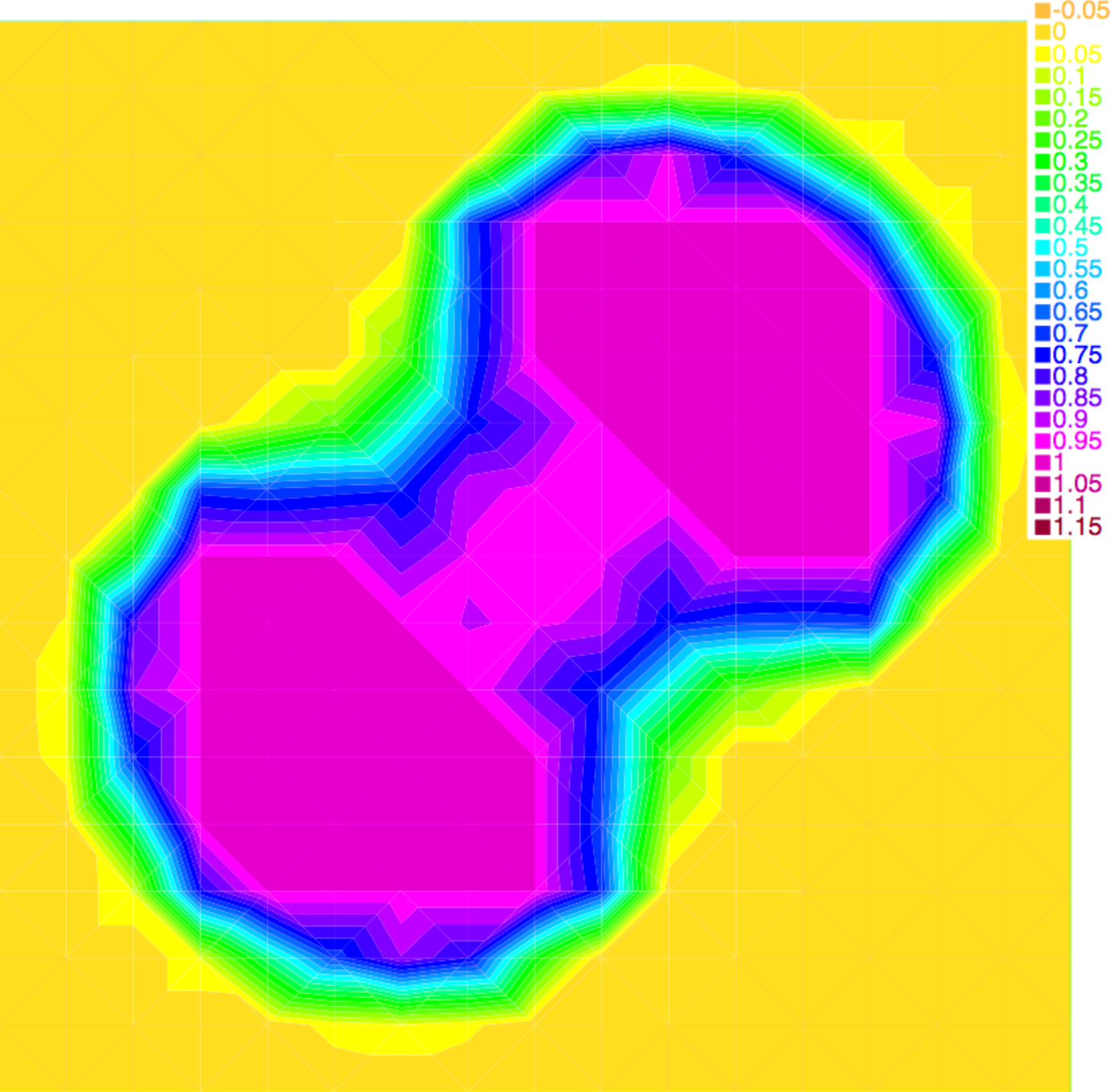}&
\includegraphics[ scale=0.175]{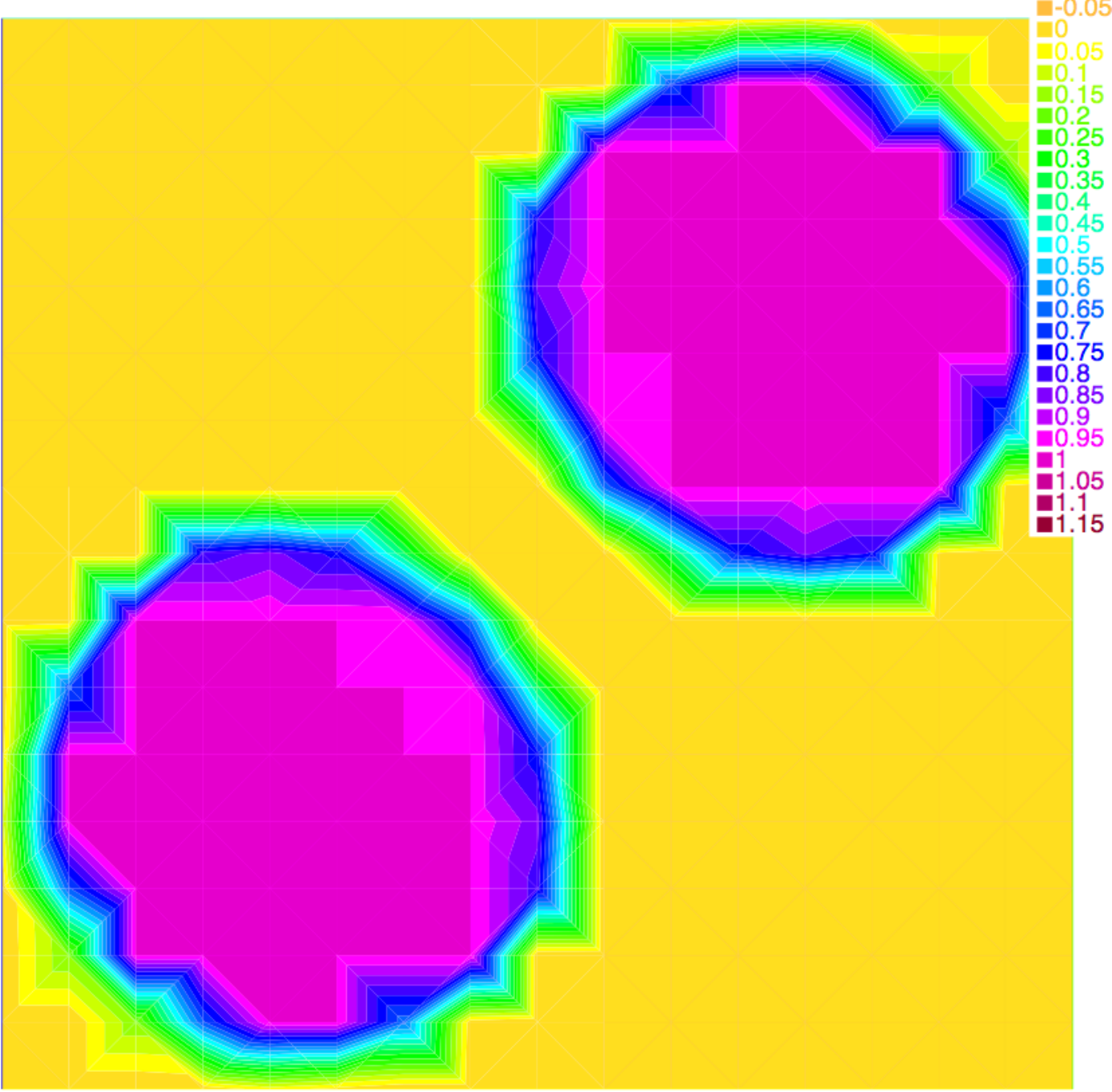}\\
\includegraphics[ scale=0.175]{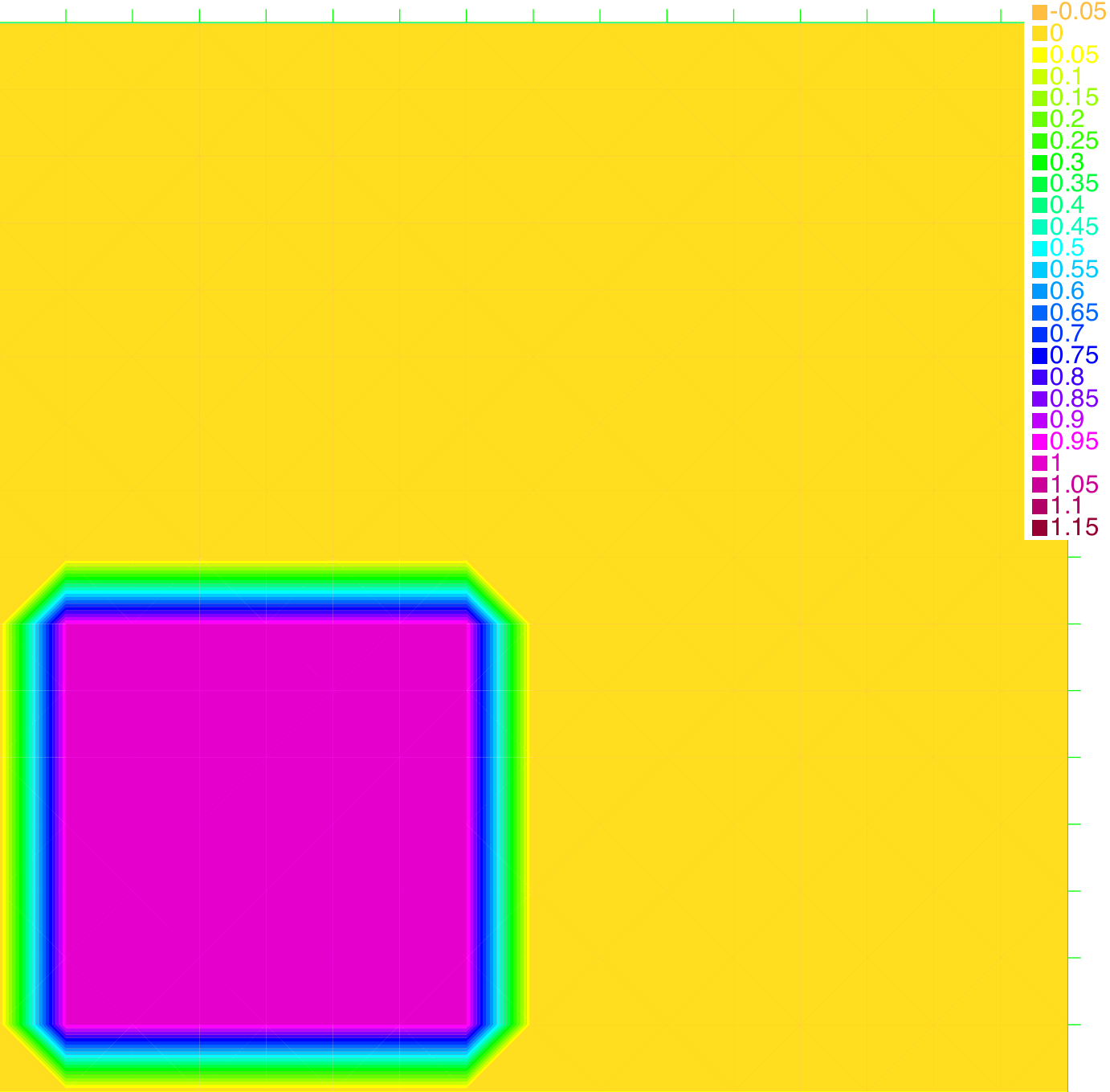}&
\includegraphics[ scale=0.175]{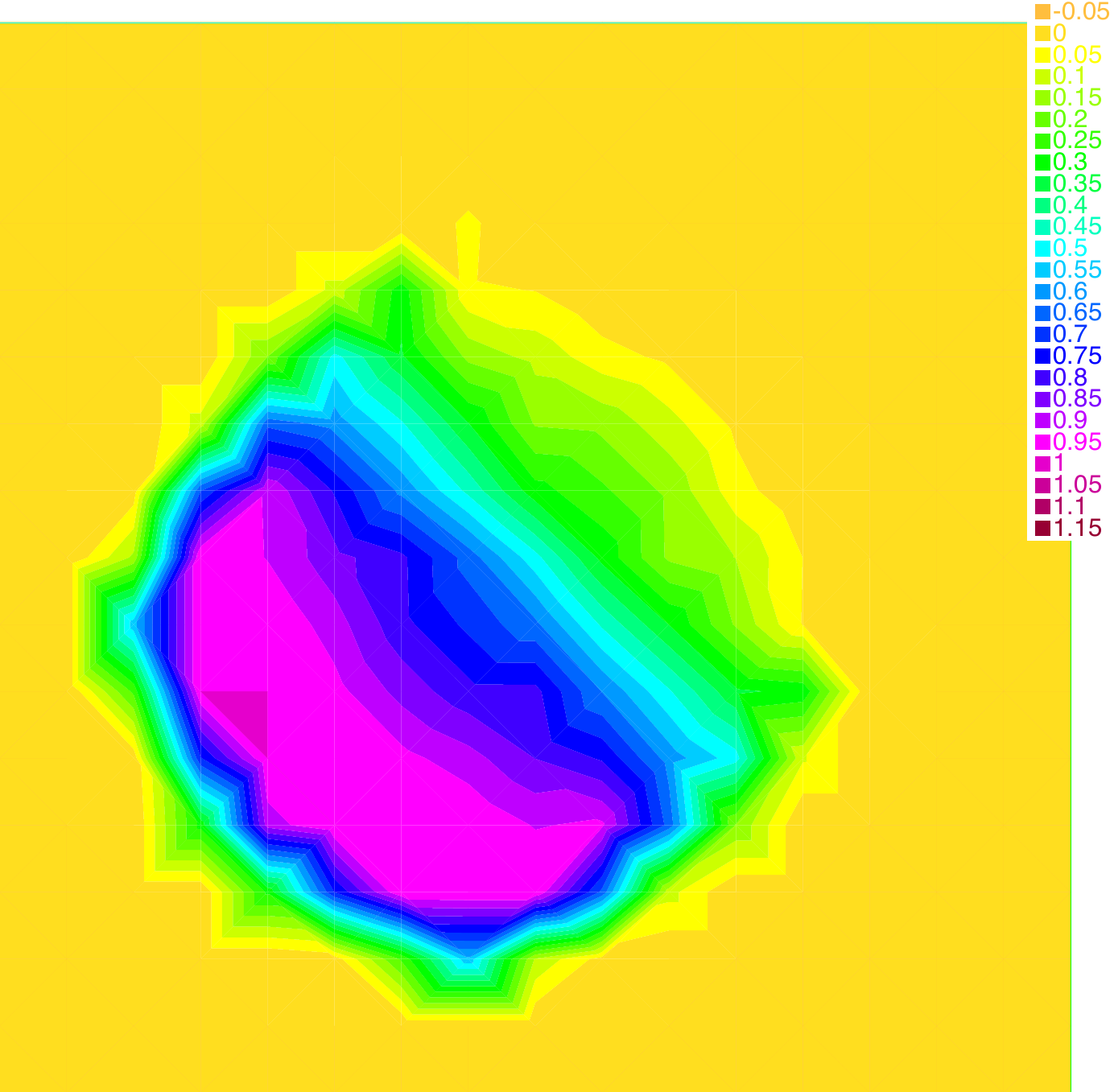}&
\includegraphics[ scale=0.175]{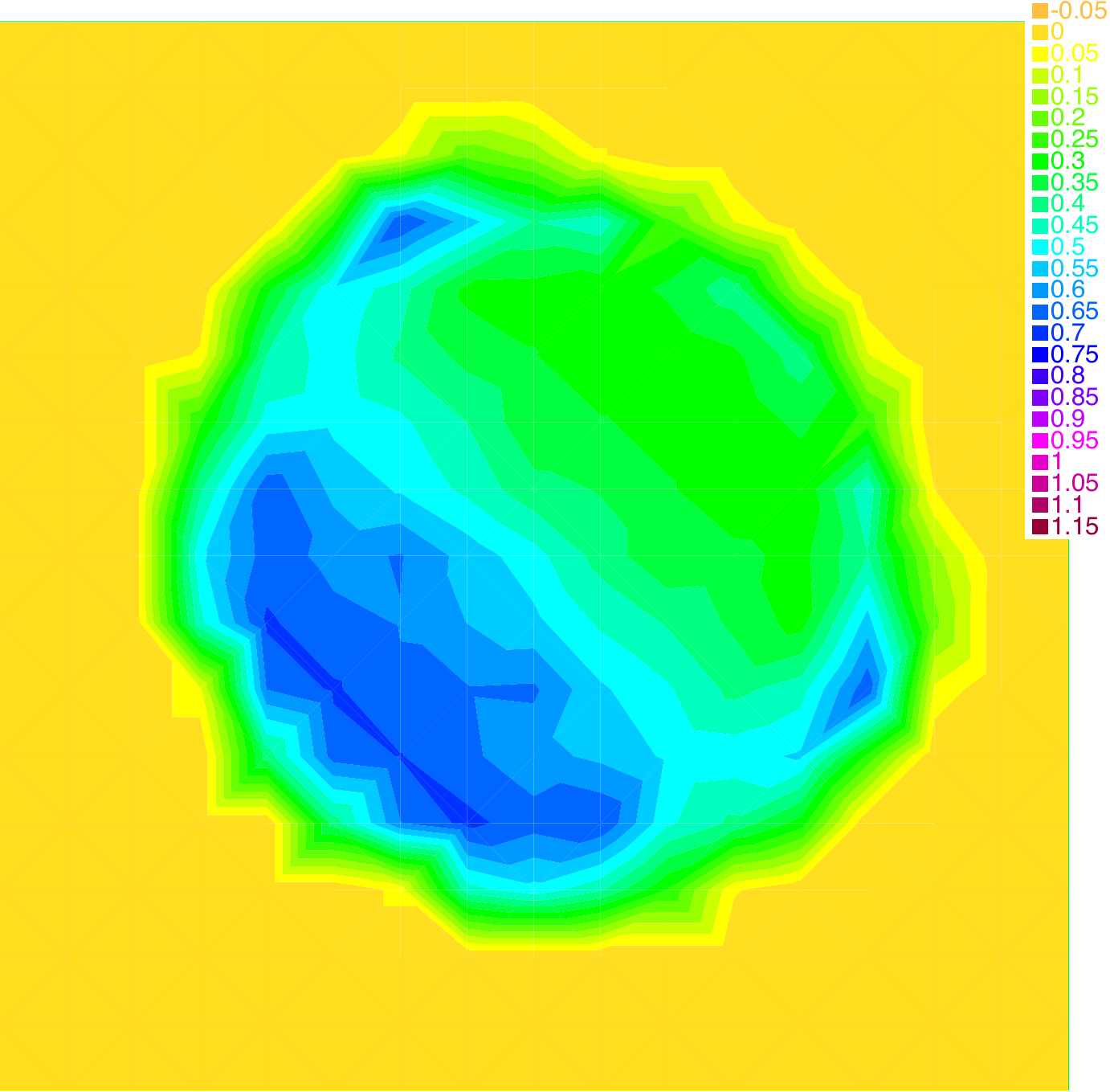}&
\includegraphics[ scale=0.175]{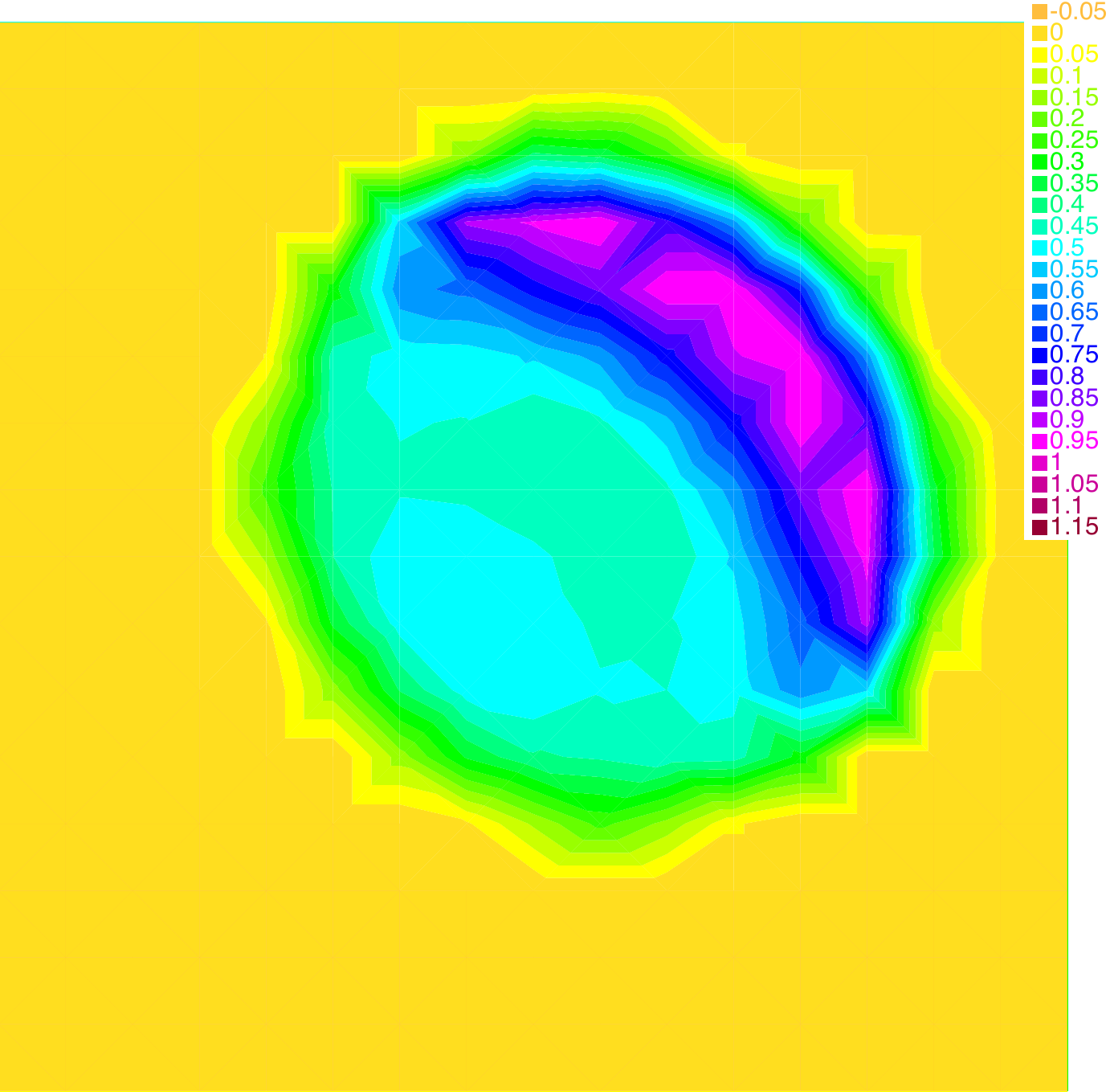}&
\includegraphics[ scale=0.175]{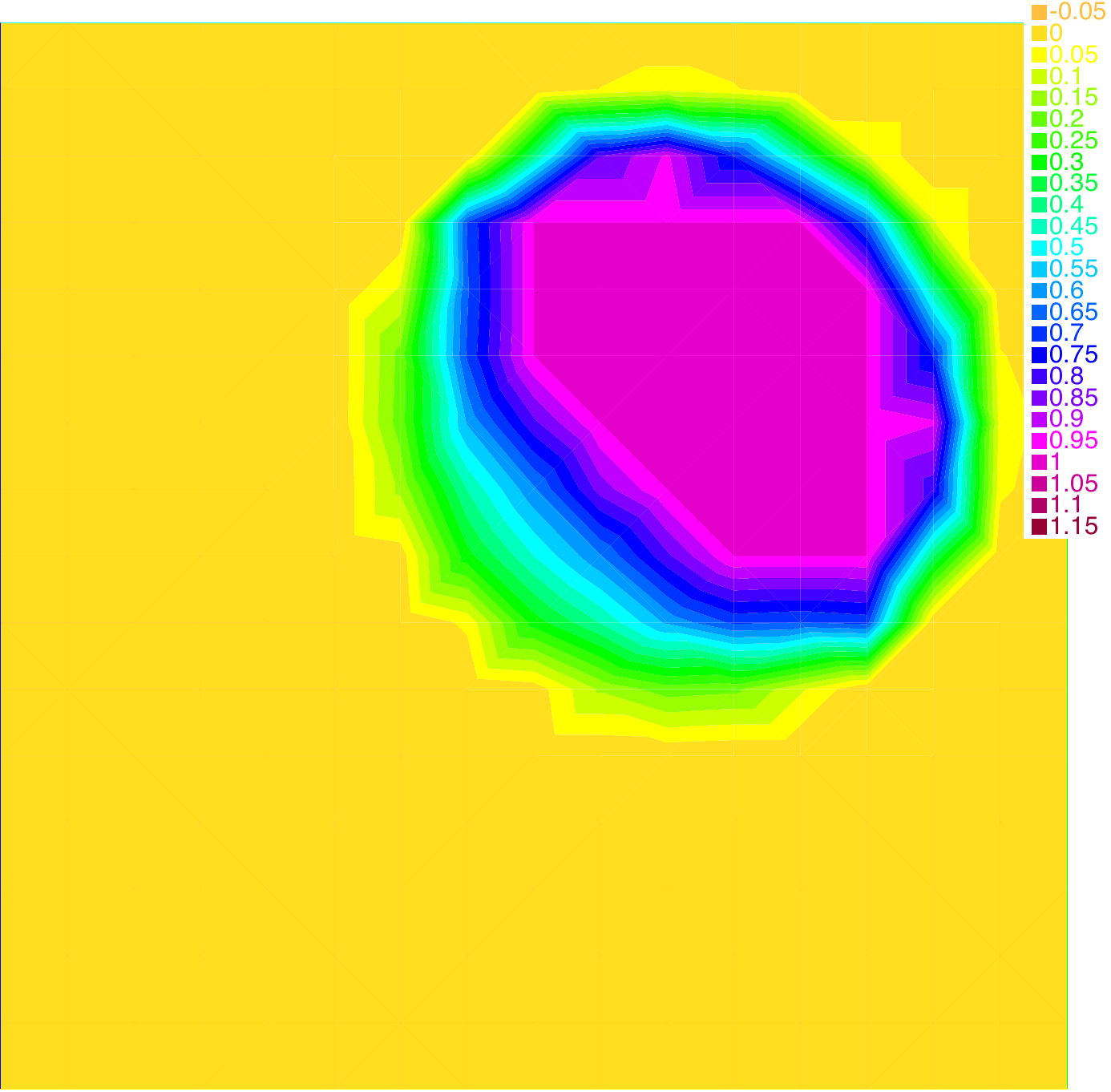}&
\includegraphics[ scale=0.175]{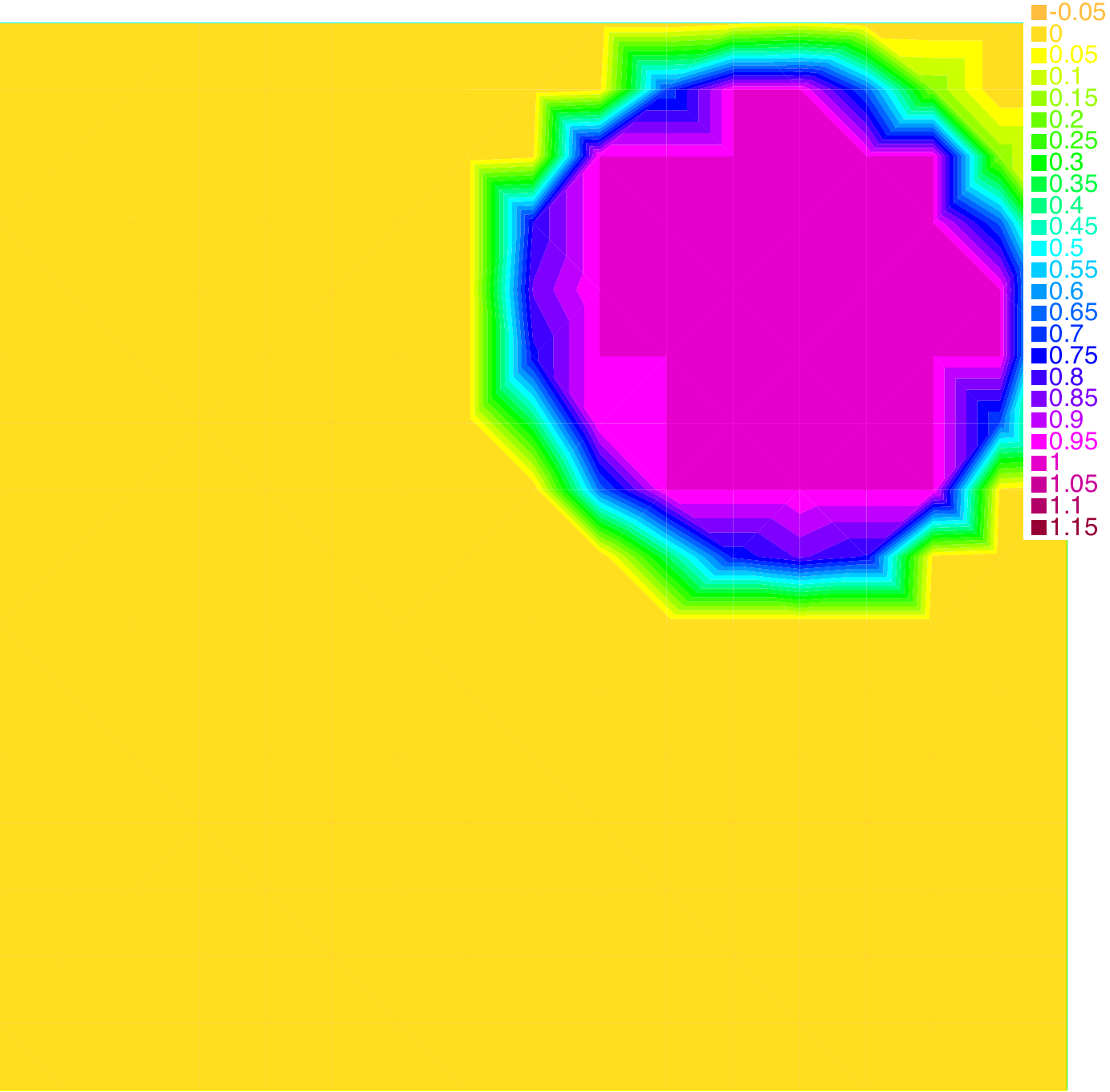}\\
$t=0$ & $t=0.05$ & $t=0.1$ & $t=0.15$ &$t=0.2$ & $t=0.3$\\
\end{tabular}
\caption{\textit{Evolution of two species crossing each other with density constraint. Top row: display of $\rho_1+\rho_2$. Bottom row: display of $\rho_1$.}}
\label{figure crowd motion 2}
\end{figure}

 \begin{figure}[h!]

\begin{tabular}{@{\hspace{0mm}}c@{\hspace{1mm}}c@{\hspace{1mm}}c@{\hspace{1mm}}c@{\hspace{1mm}}c@{\hspace{1mm}}c@{\hspace{1mm}}}

\centering
\includegraphics[ scale=0.175]{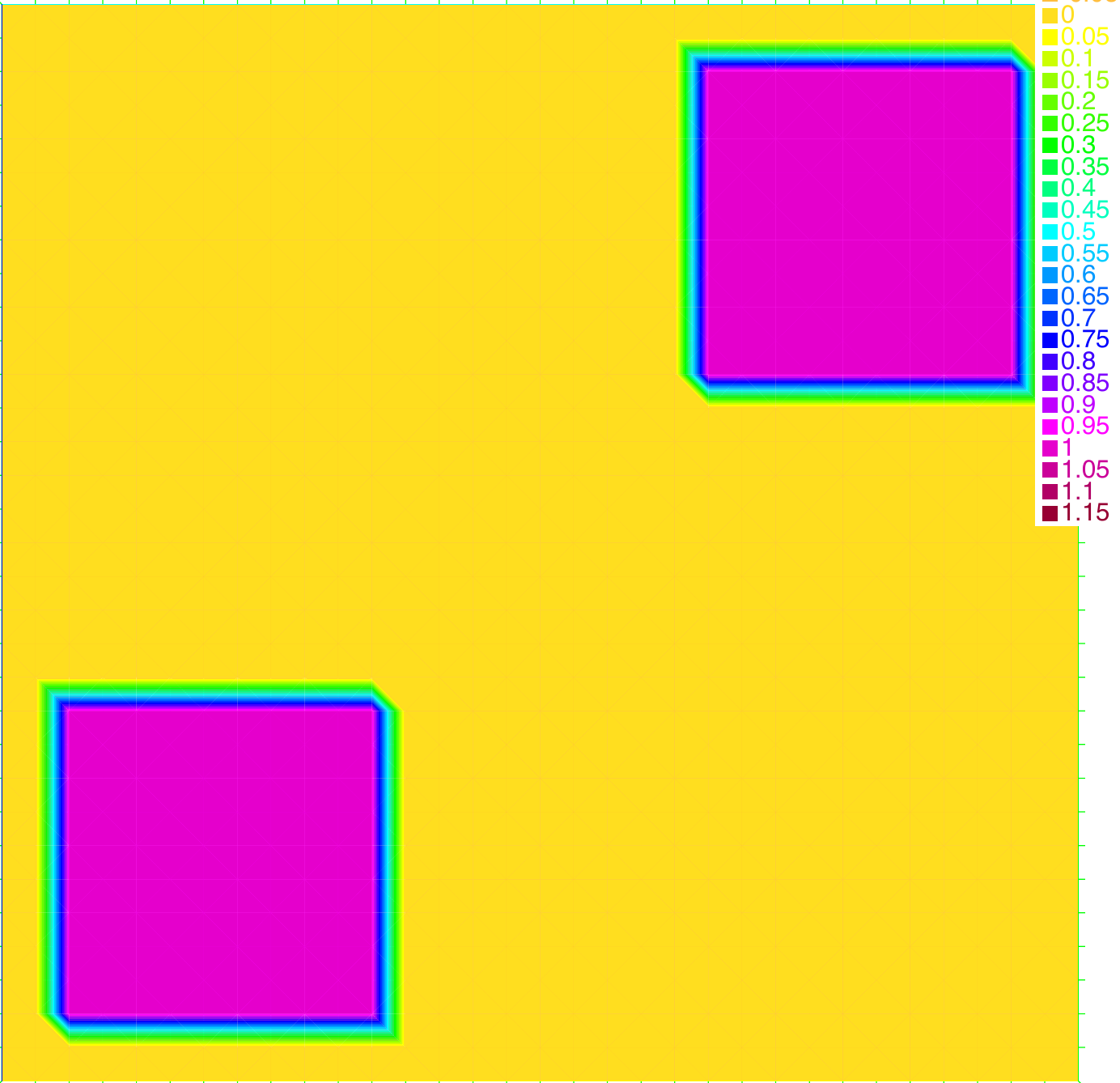}&
\includegraphics[ scale=0.175]{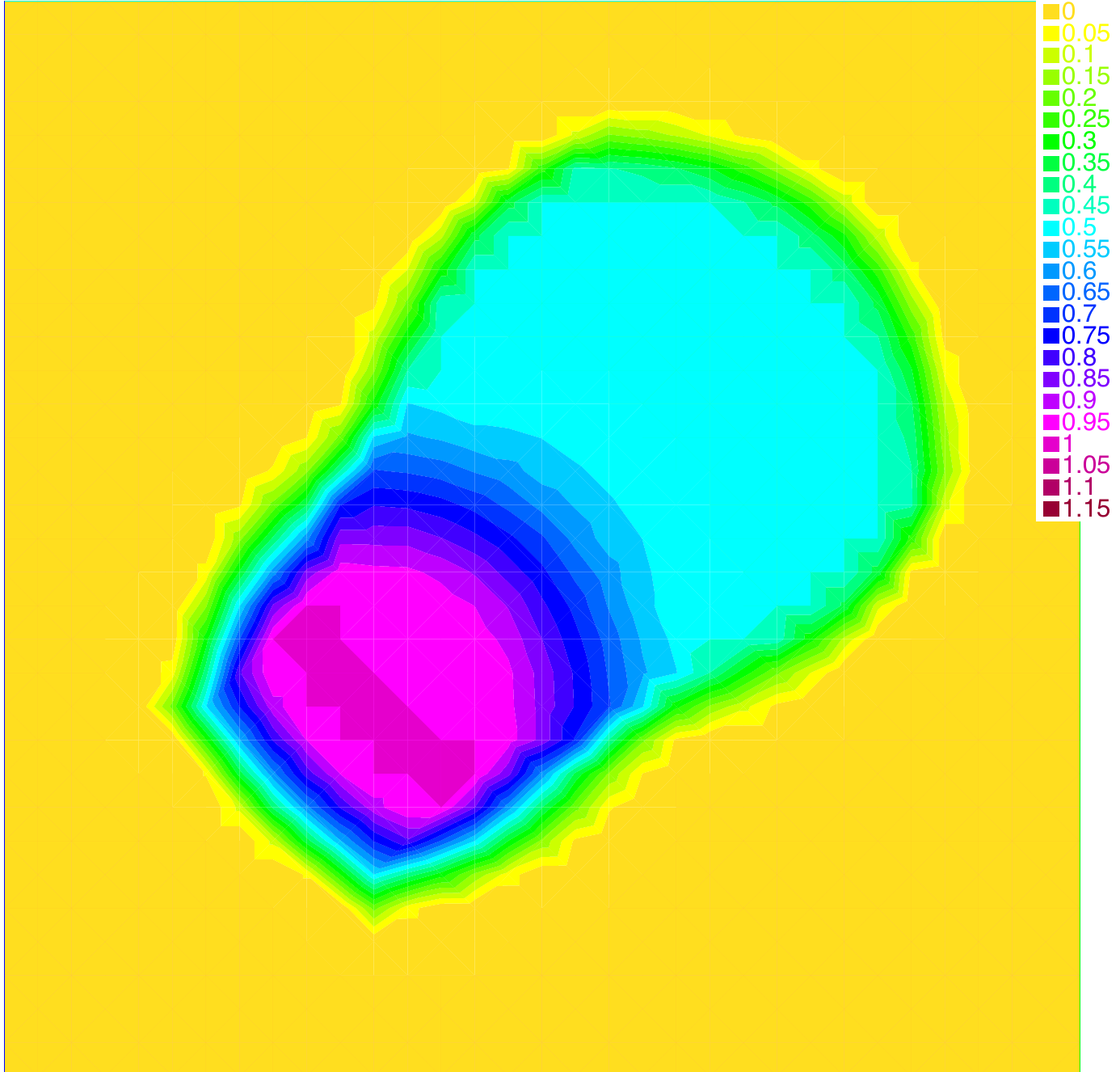}&
\includegraphics[ scale=0.175]{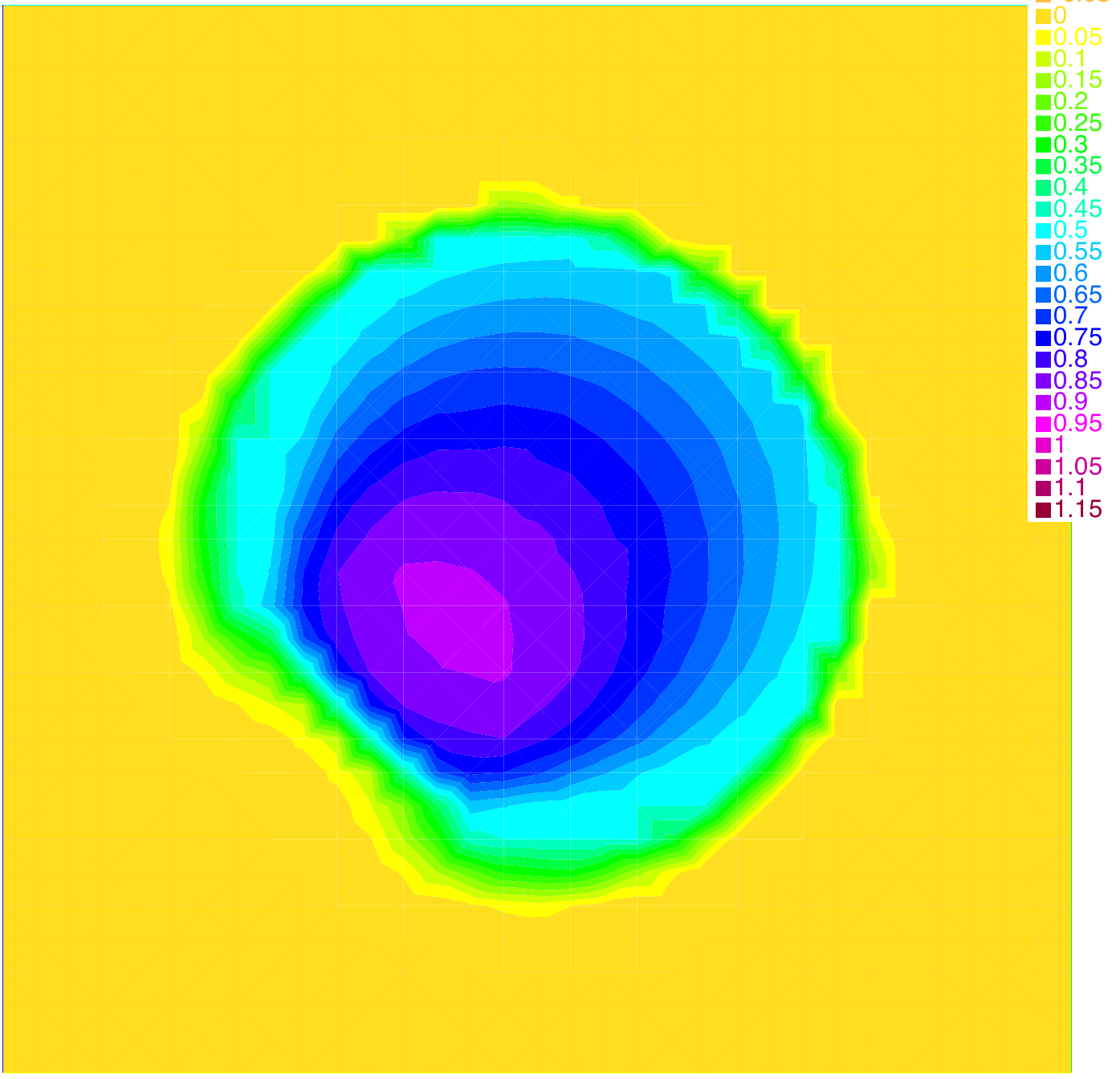}&
\includegraphics[ scale=0.175]{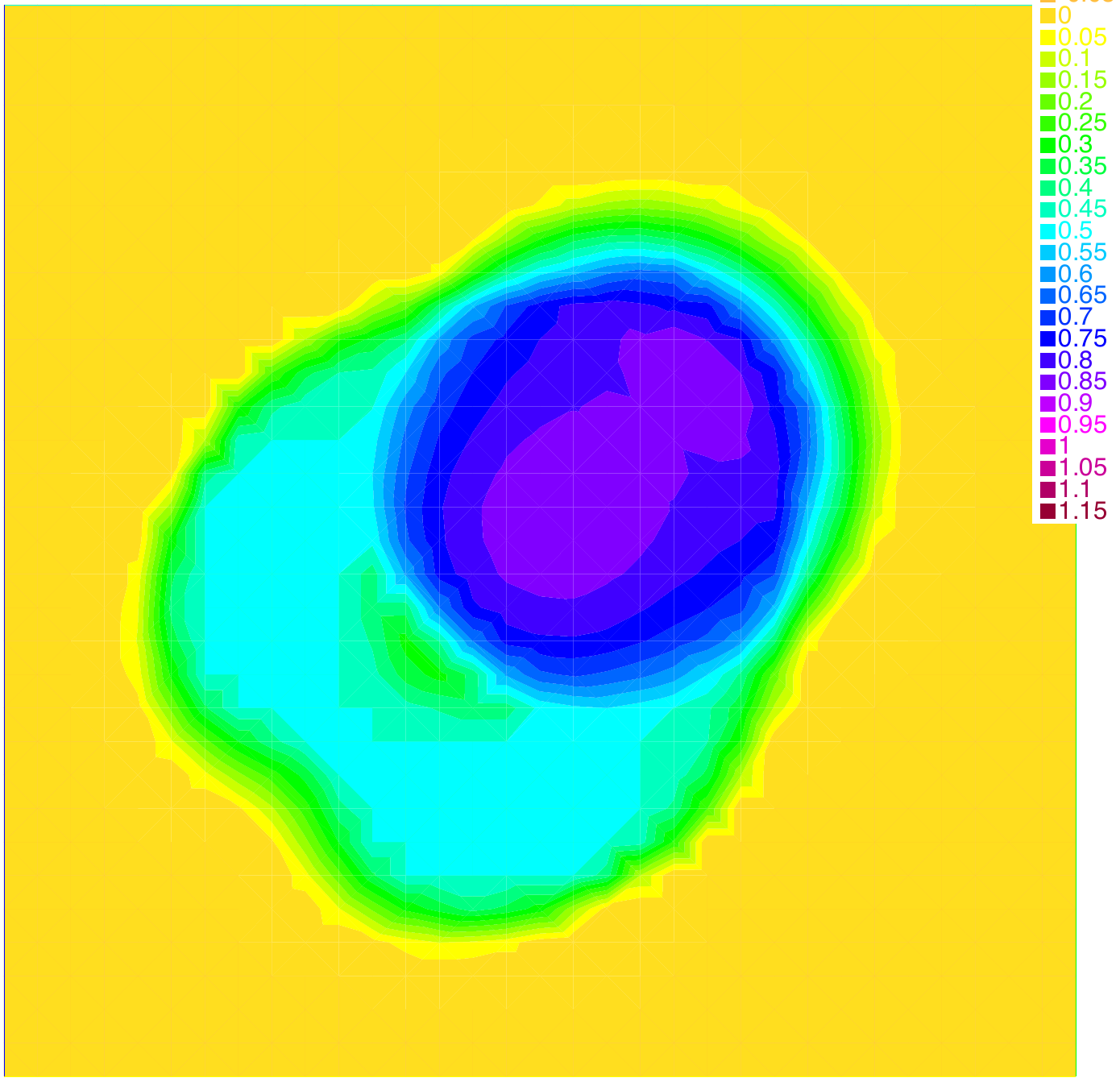}&
\includegraphics[ scale=0.175]{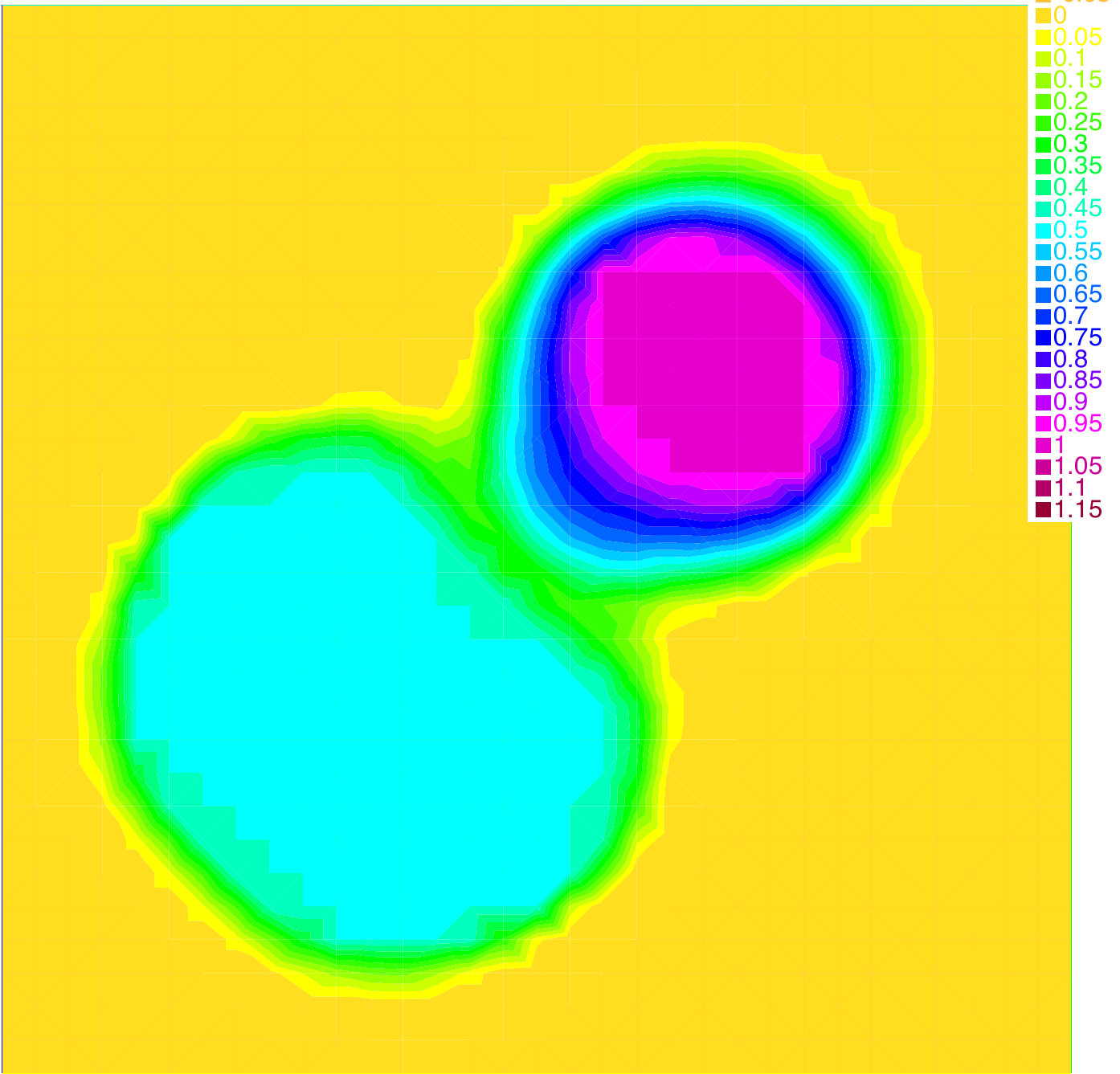}&
\includegraphics[ scale=0.175]{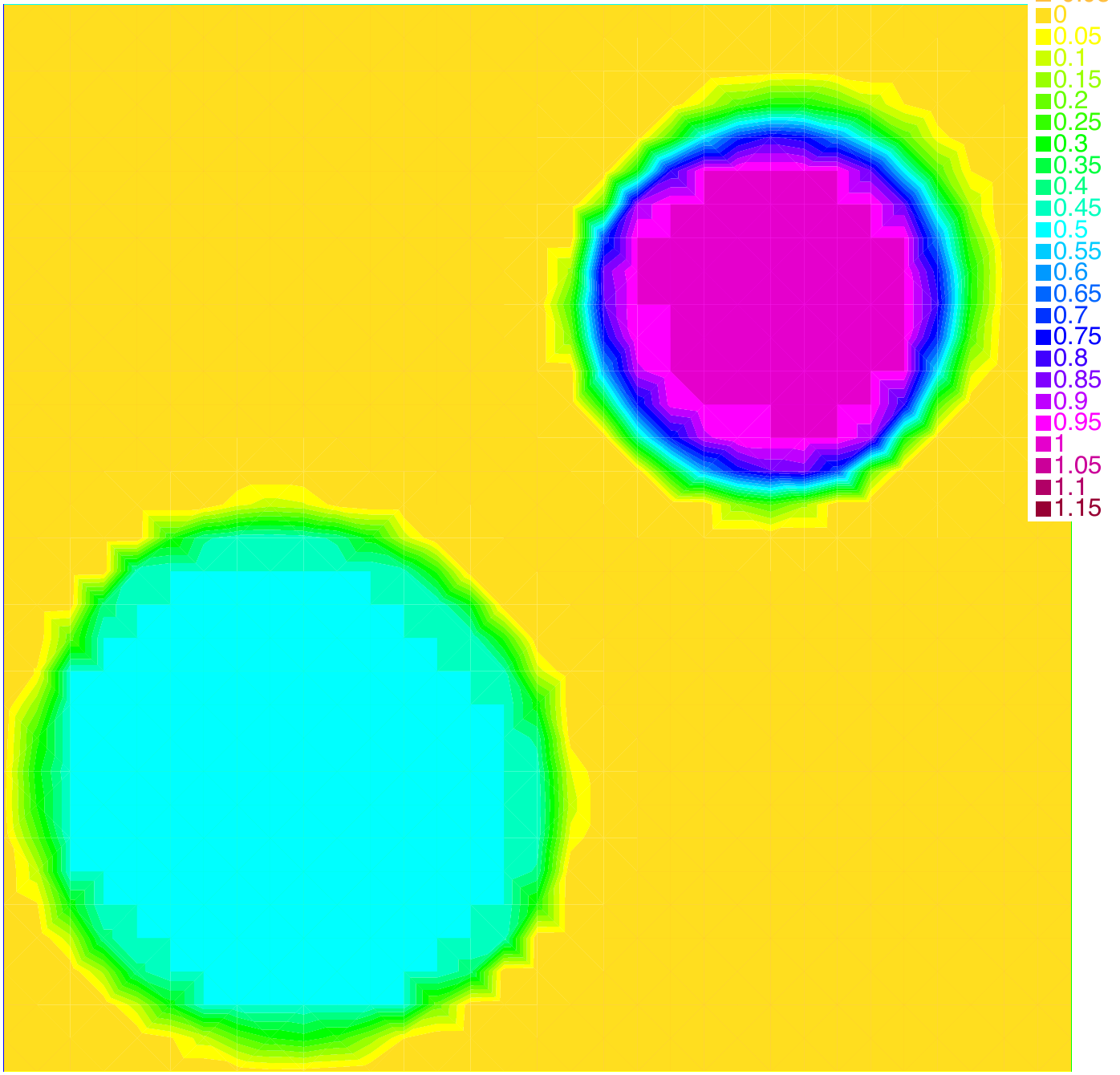}\\
\includegraphics[ scale=0.175]{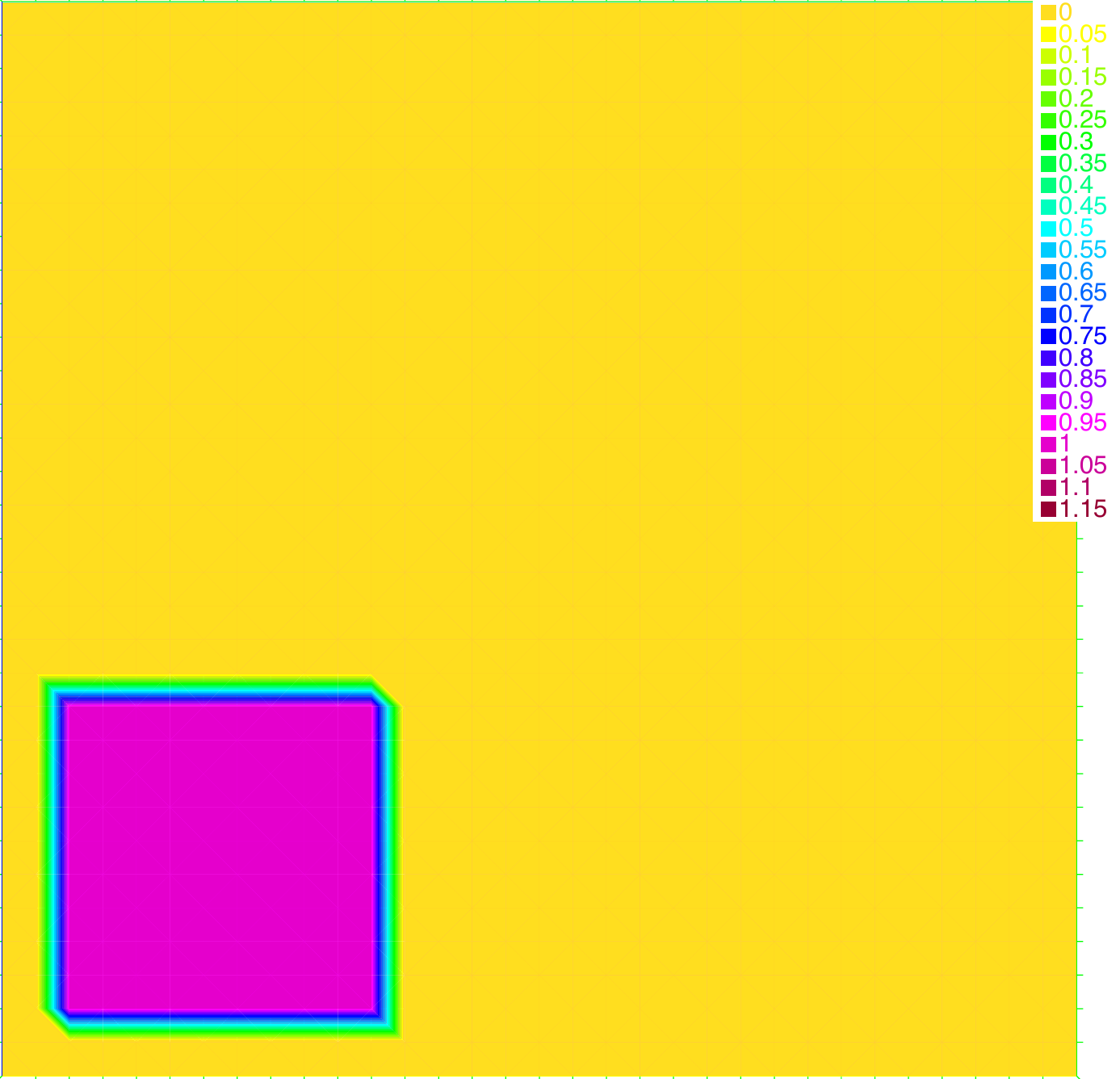}&
\includegraphics[ scale=0.175]{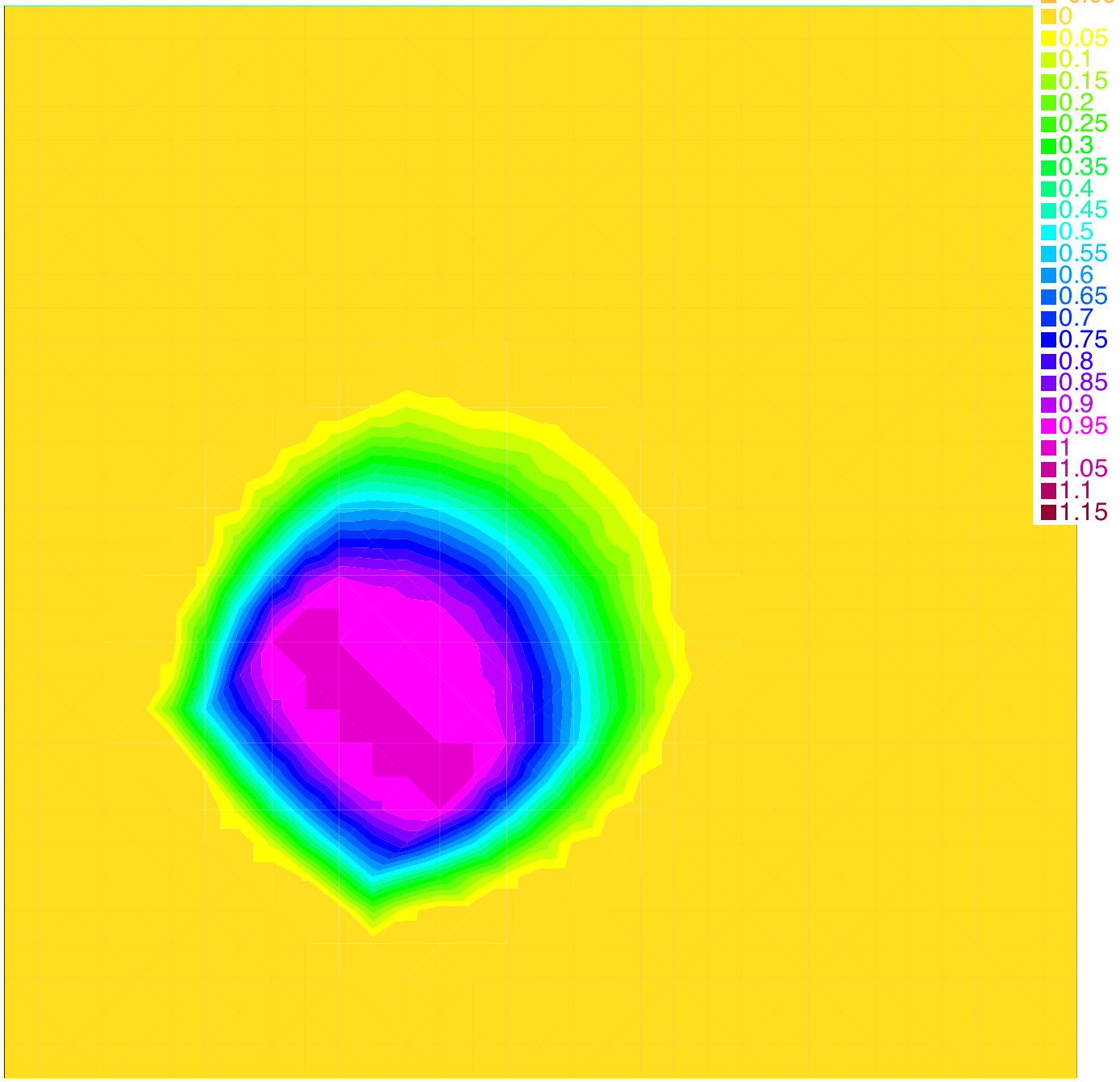}&
\includegraphics[ scale=0.175]{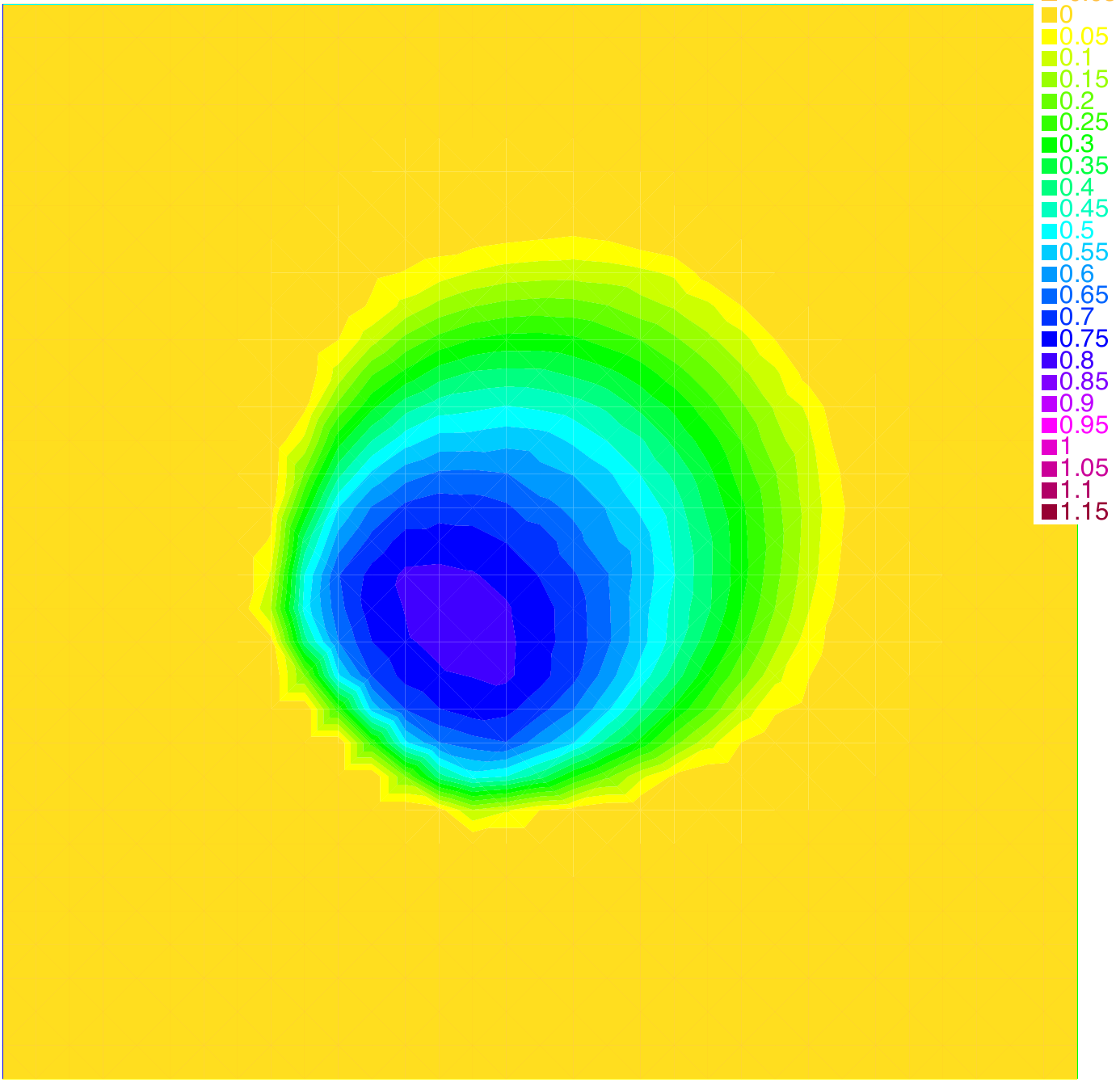}&
\includegraphics[ scale=0.175]{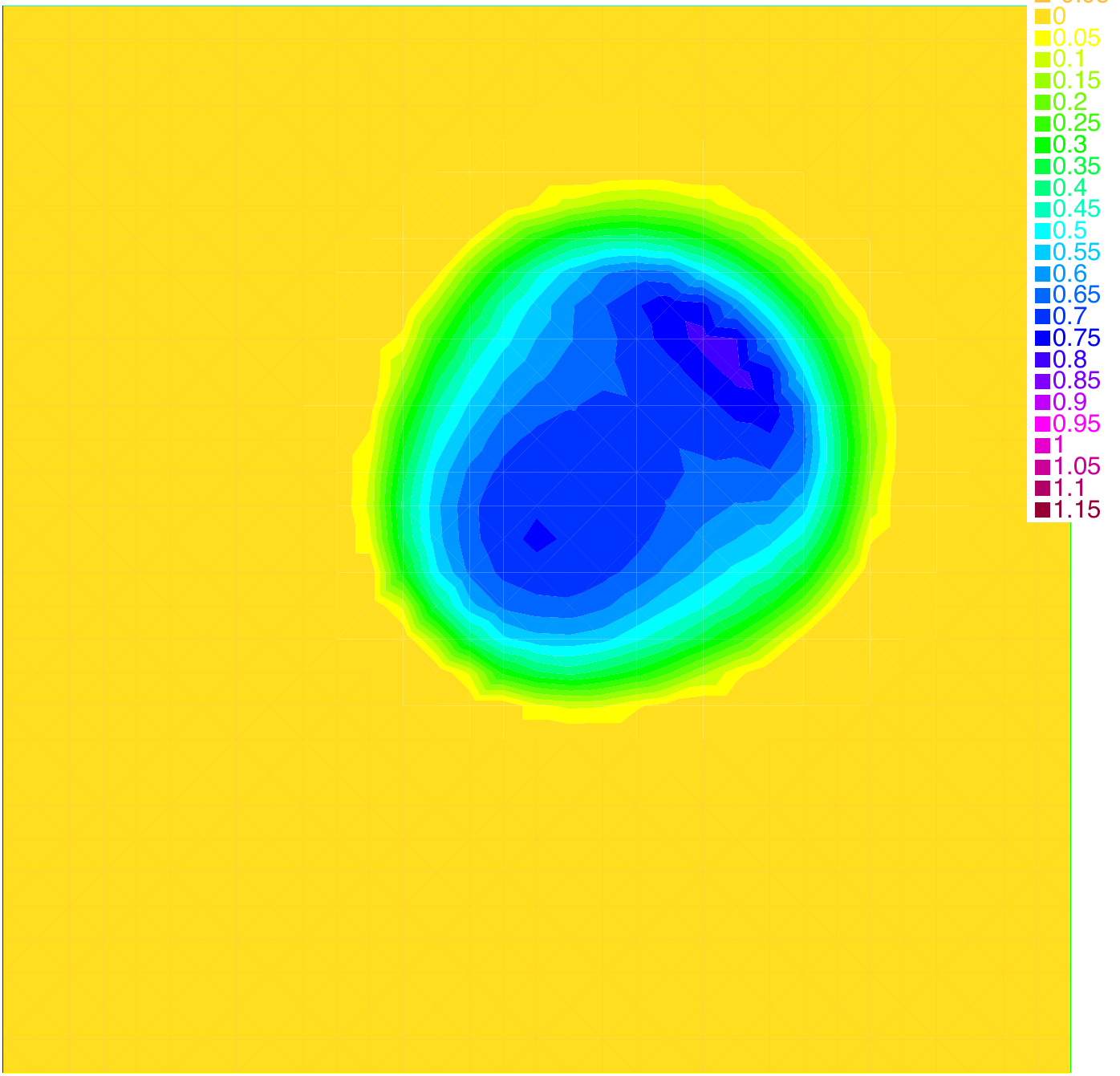}&
\includegraphics[ scale=0.175]{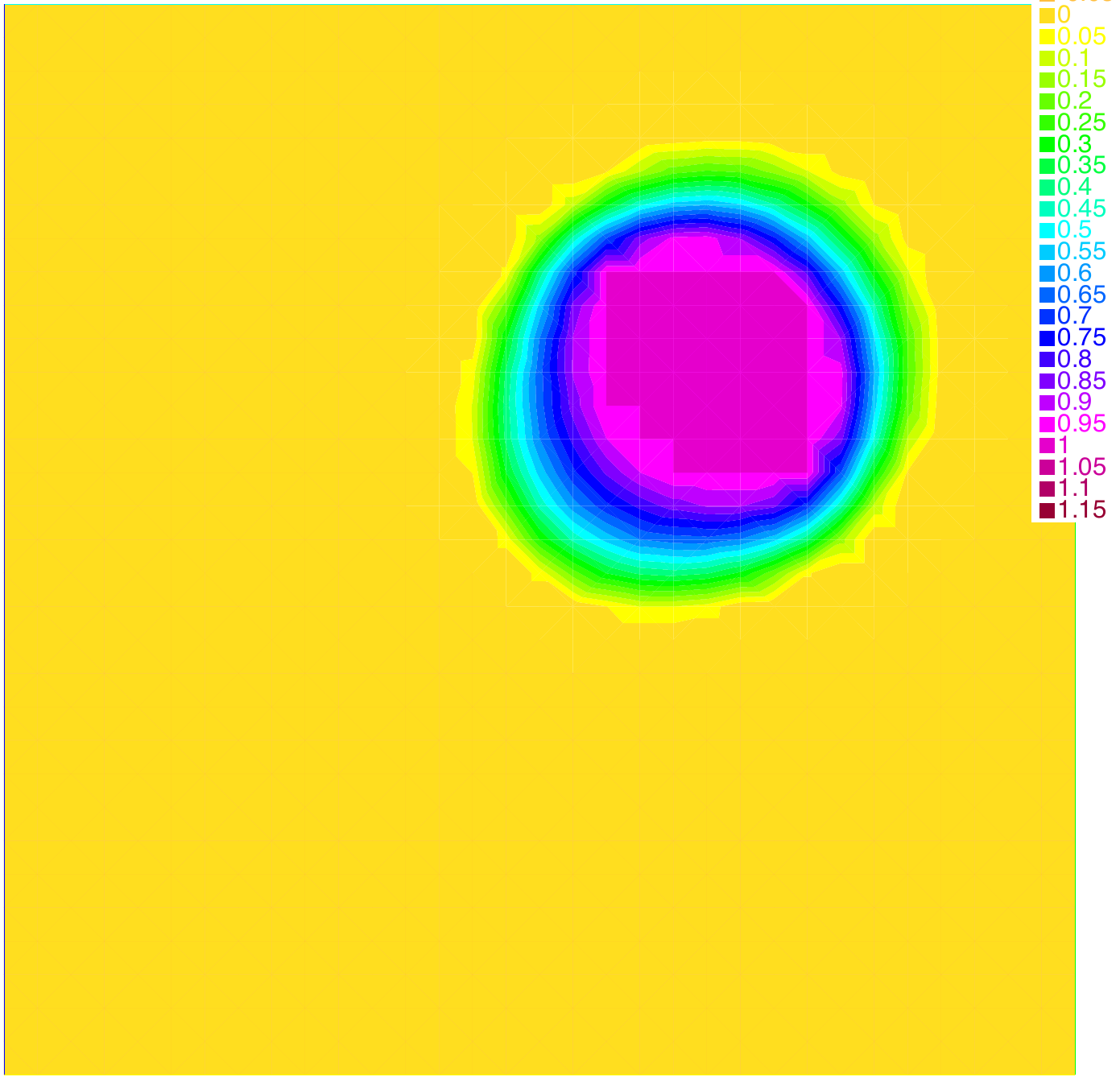}&
\includegraphics[ scale=0.175]{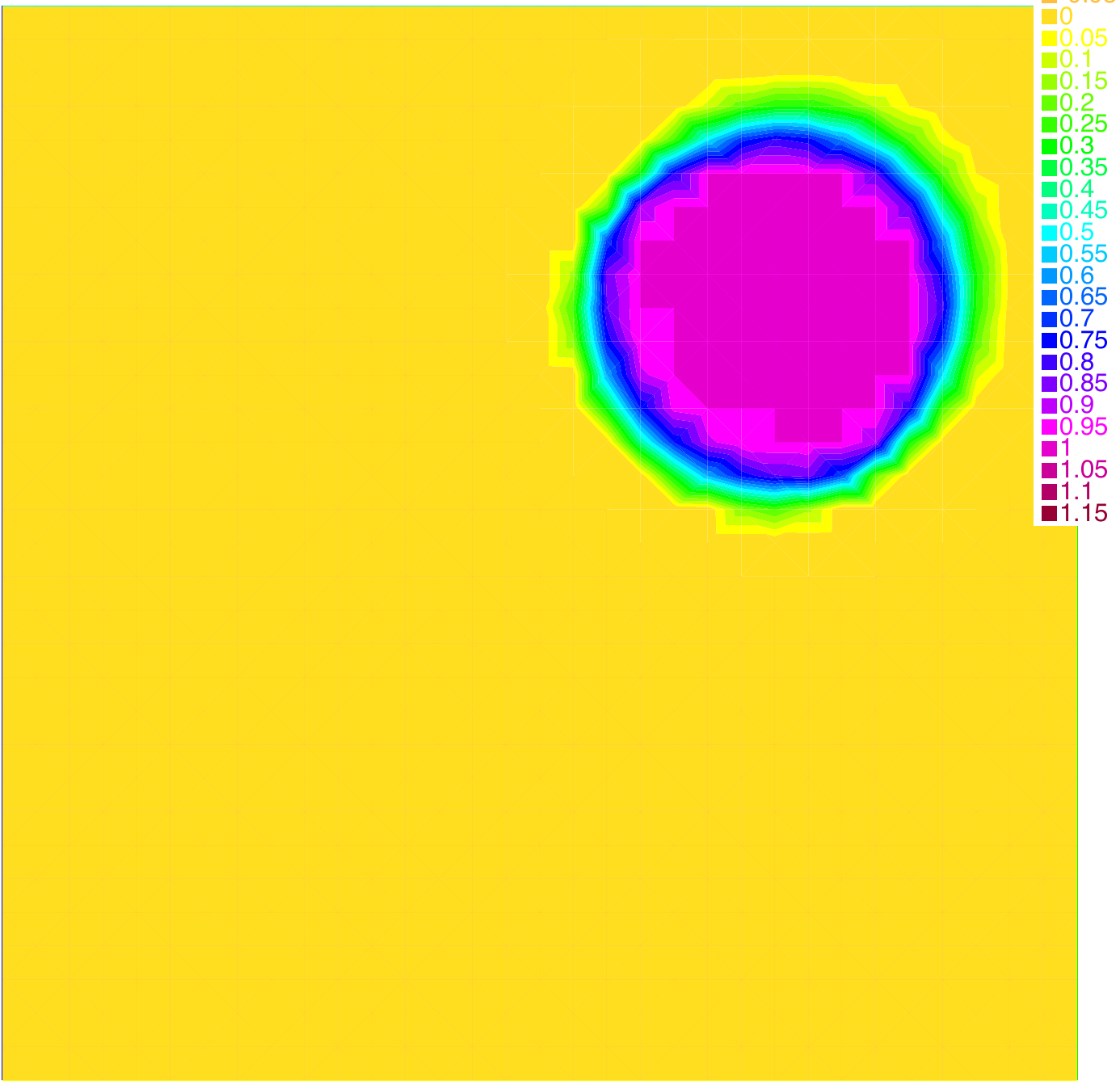}\\
\includegraphics[ scale=0.175]{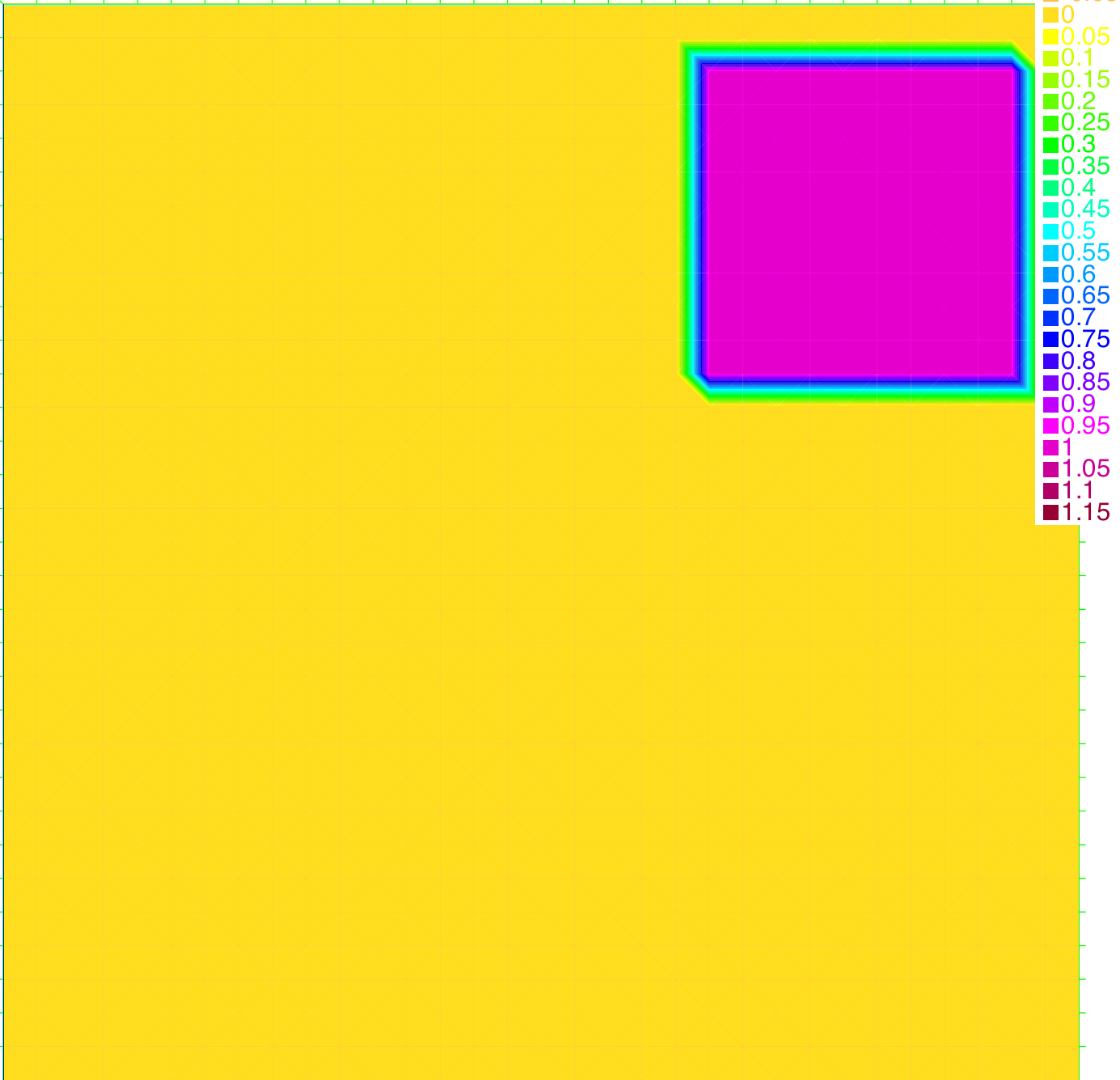}&
\includegraphics[ scale=0.175]{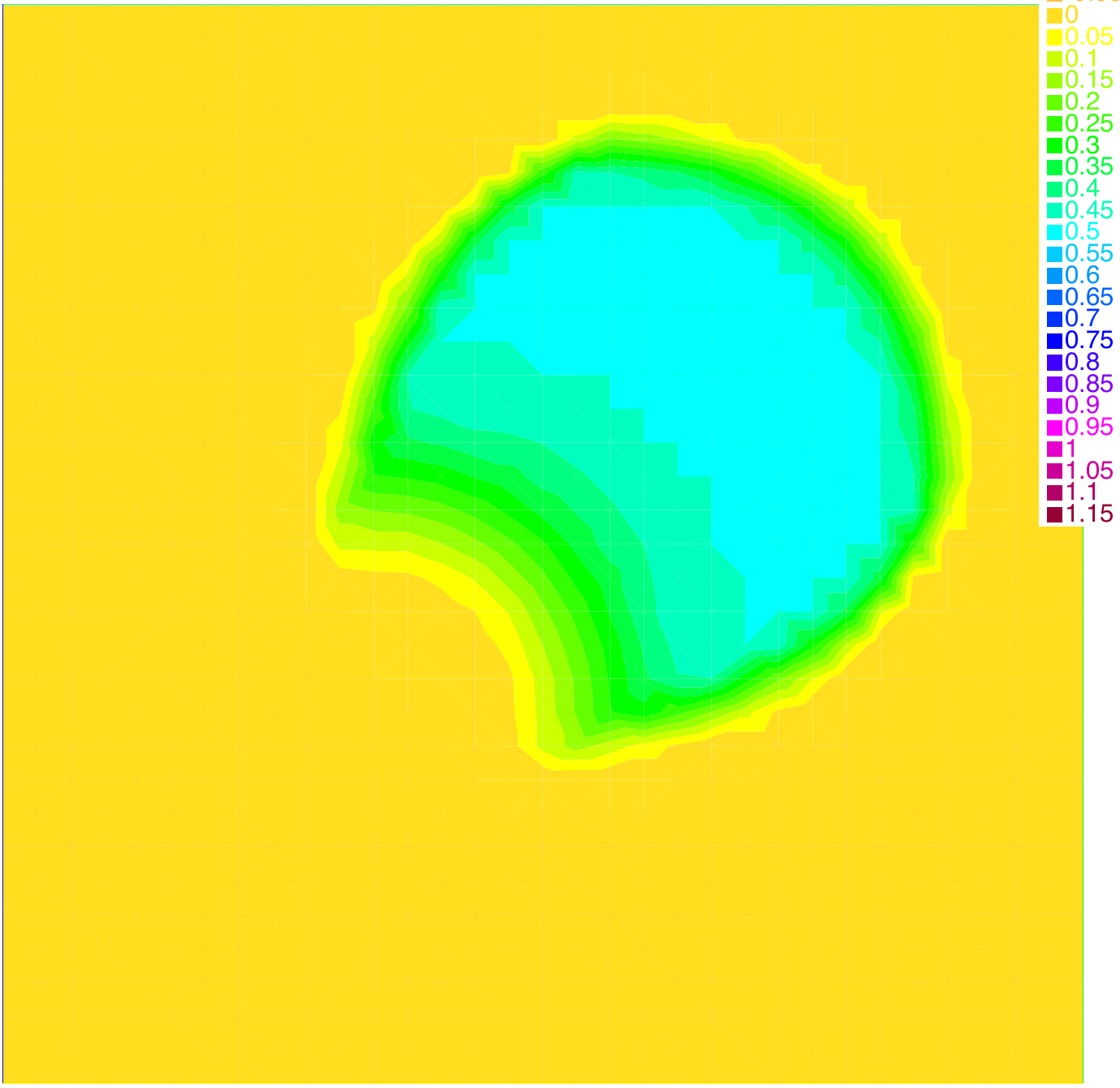}&
\includegraphics[ scale=0.175]{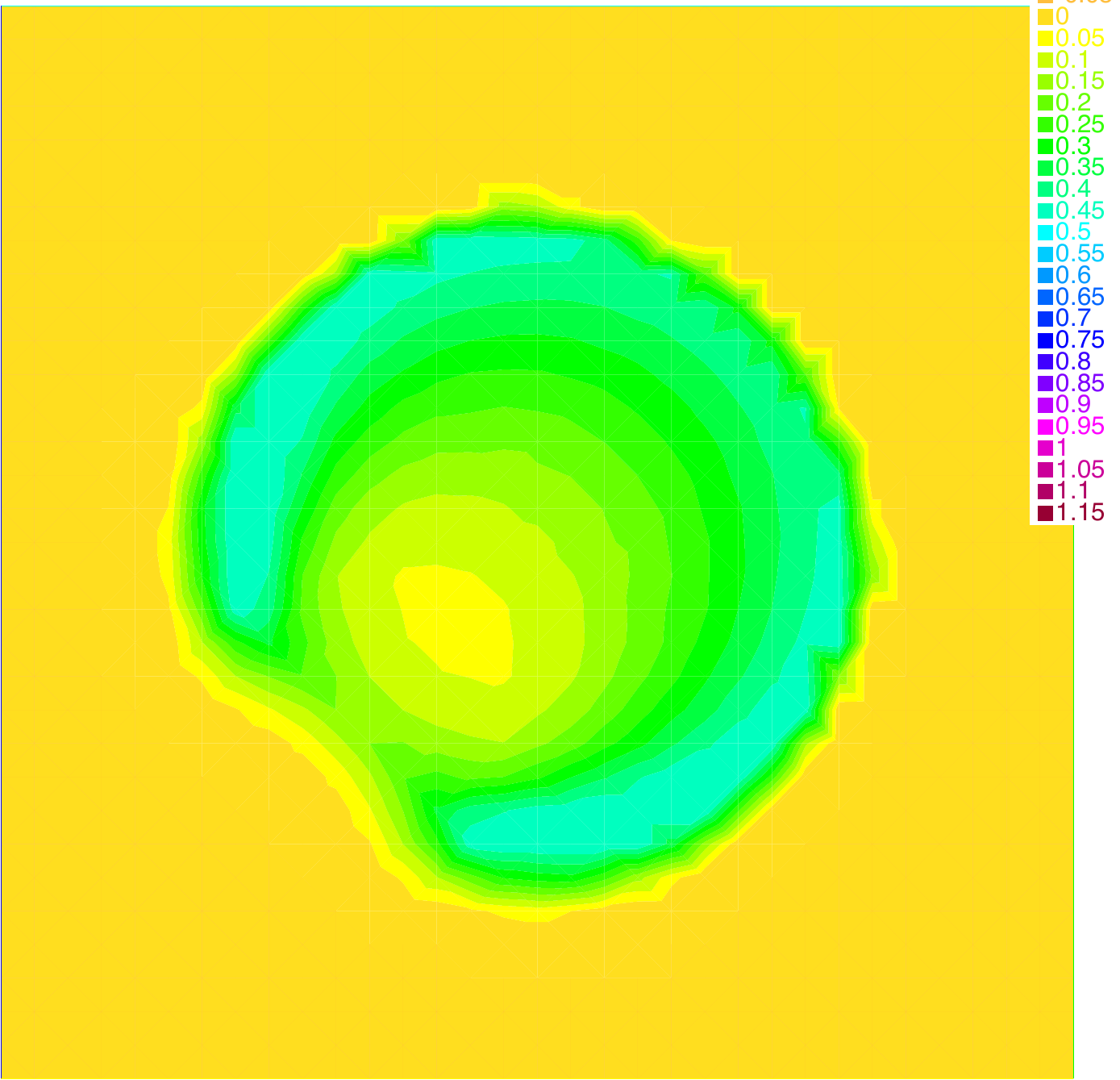}&
\includegraphics[ scale=0.175]{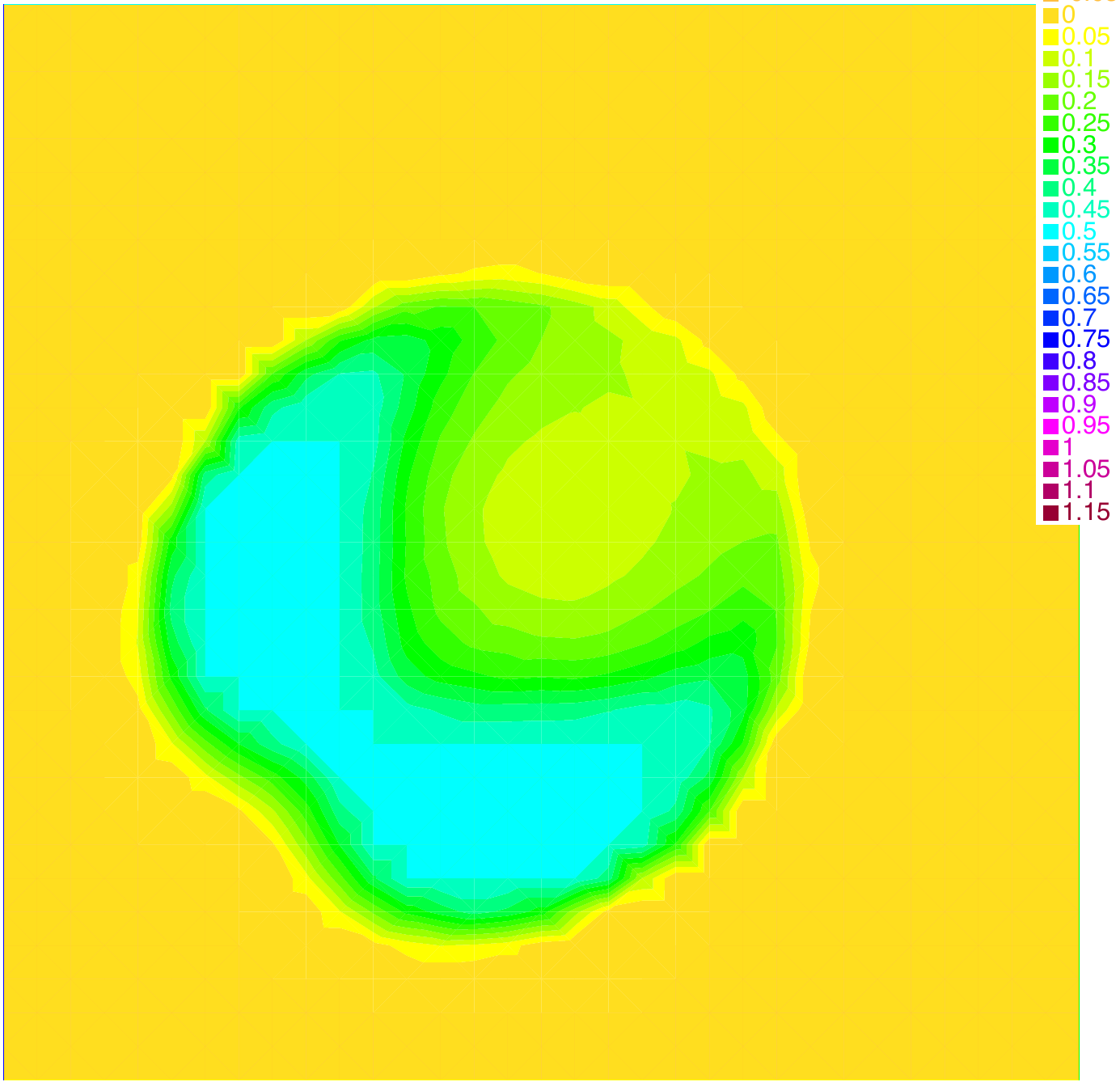}&
\includegraphics[ scale=0.175]{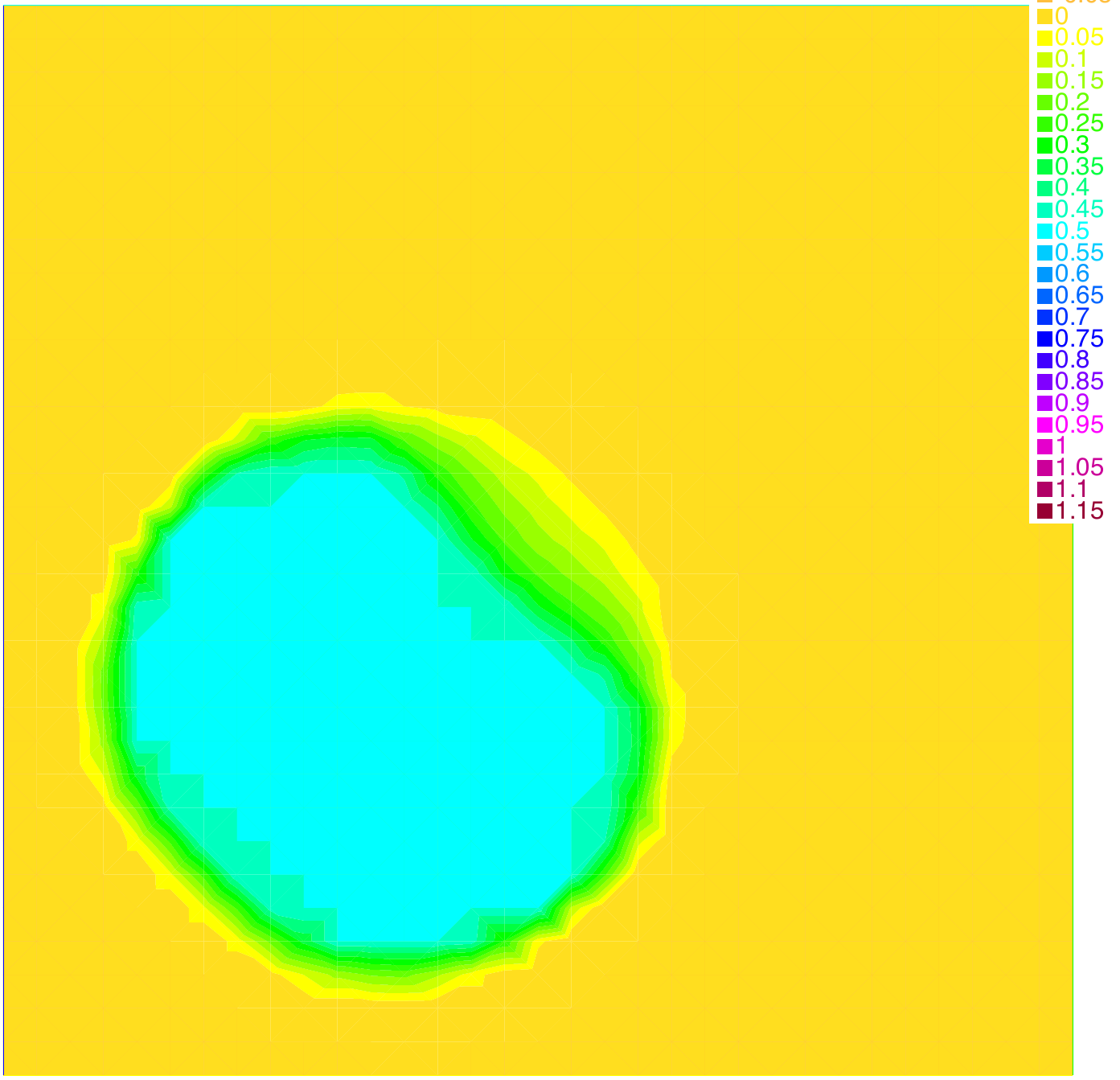}&
\includegraphics[ scale=0.175]{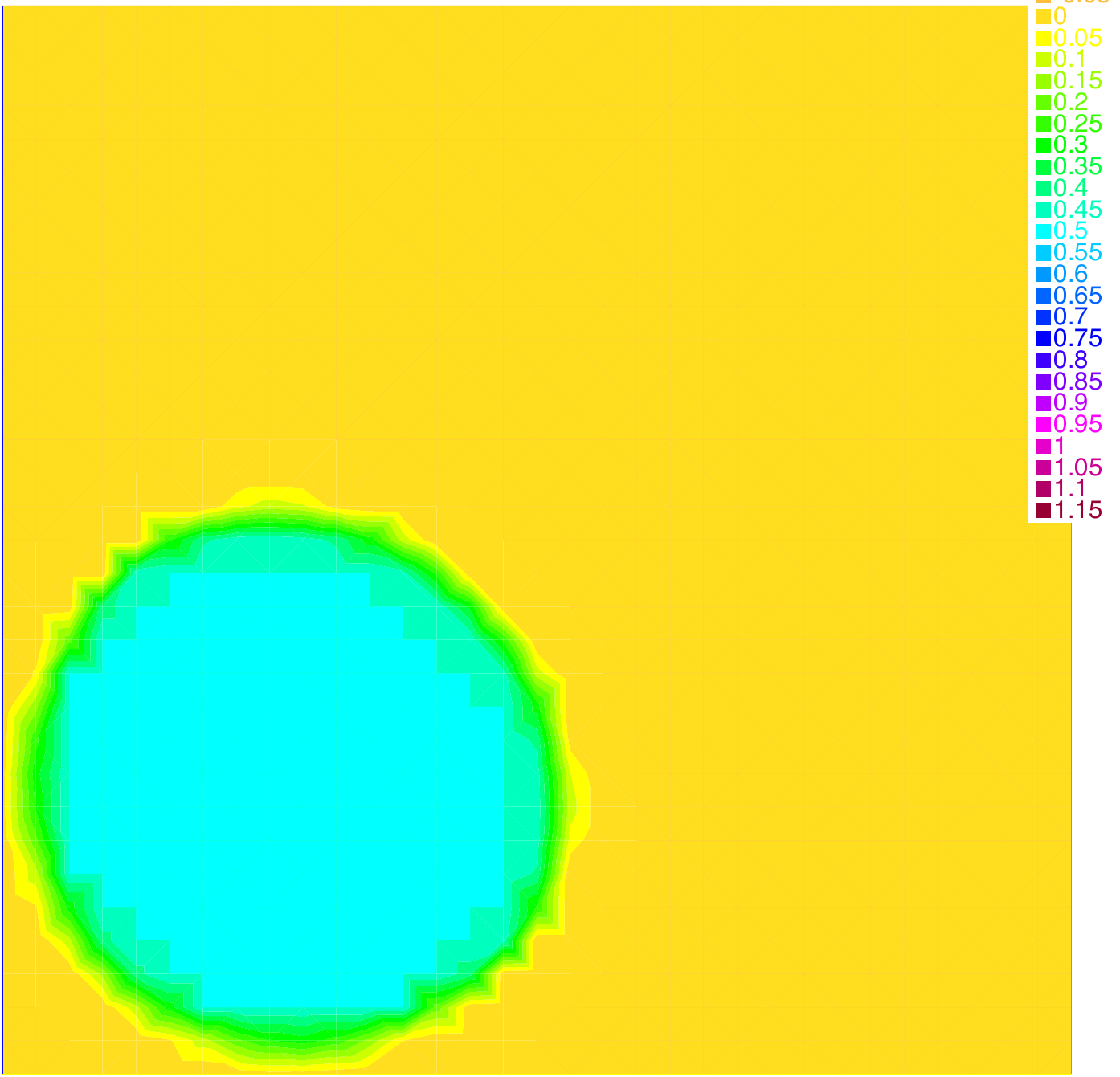}\\
$t=0$ & $t=0.05$ & $t=0.1$ & $t=0.15$ &$t=0.2$ & $t=0.3$\\
\end{tabular}
\caption{\textit{Evolution of two species crossing each other with weighted density constraint, $\rho_1+2\rho_2 \leqslant 1$. Top row: display of $\rho_1+\rho_2$. Middle row: display of $\rho_1$. Bottom row: display of $\rho_2$.}}
\label{figure crowd motion 2 weight}
\end{figure}

\newpage
\begin{figure}[h!]

\begin{tabular}{@{\hspace{0mm}}c@{\hspace{1mm}}c@{\hspace{1mm}}c@{\hspace{1mm}}c@{\hspace{1mm}}c@{\hspace{1mm}}c@{\hspace{1mm}}}
\centering
\includegraphics[ scale=0.175]{c-dens-jko-wfoule-00.pdf}&
\includegraphics[ scale=0.175]{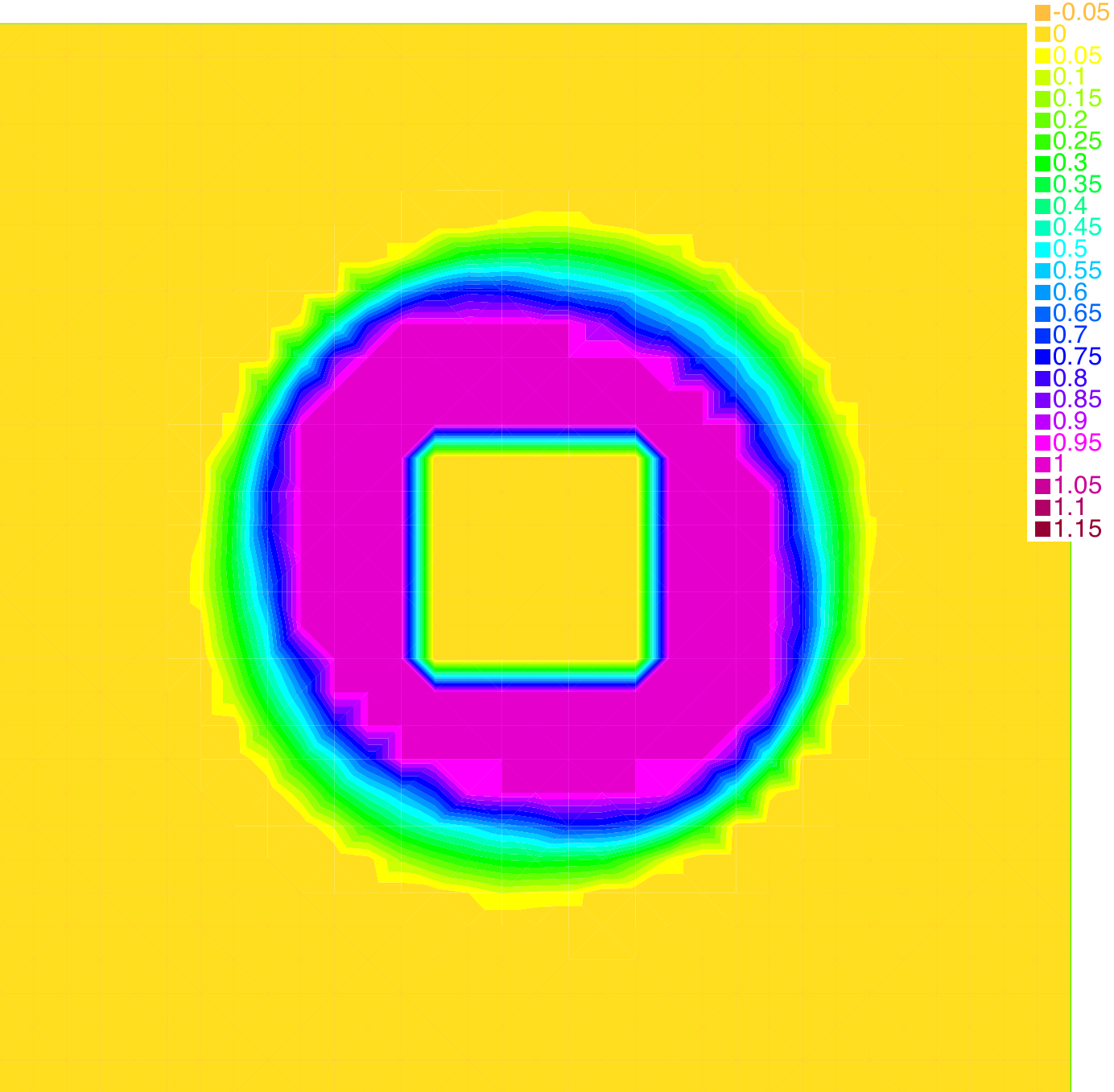}&
\includegraphics[ scale=0.175]{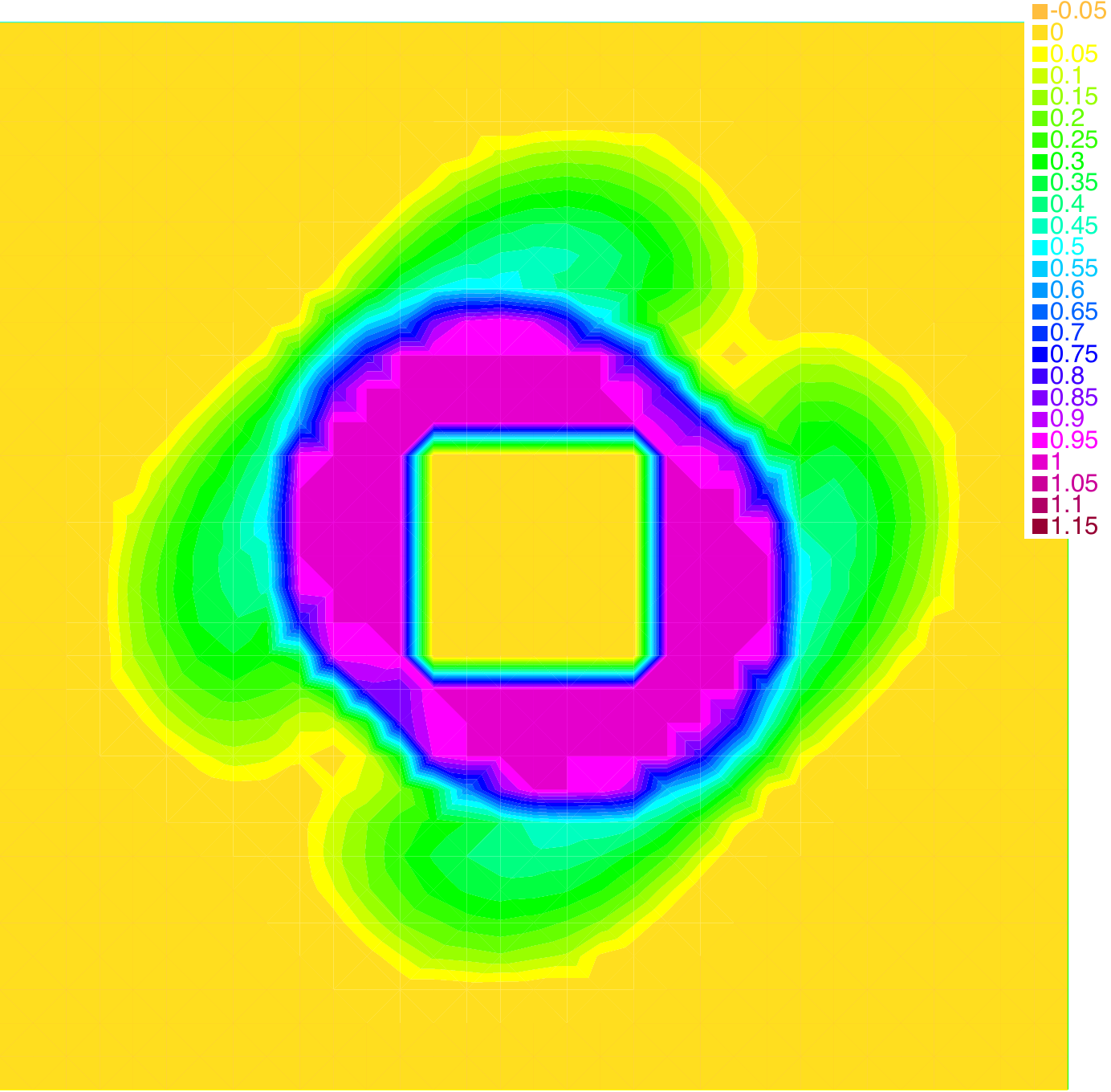}&
\includegraphics[ scale=0.175]{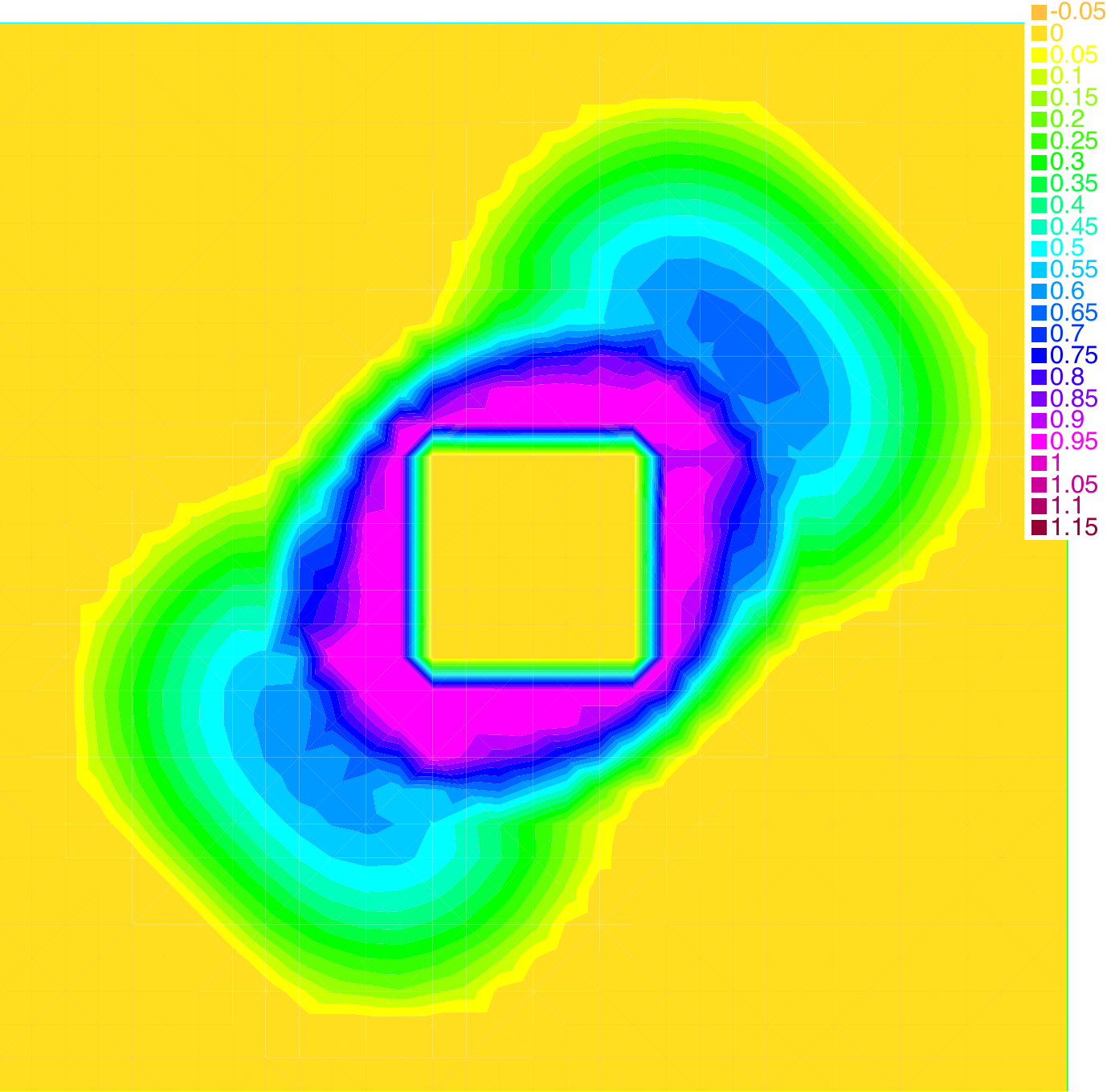}&
\includegraphics[ scale=0.175]{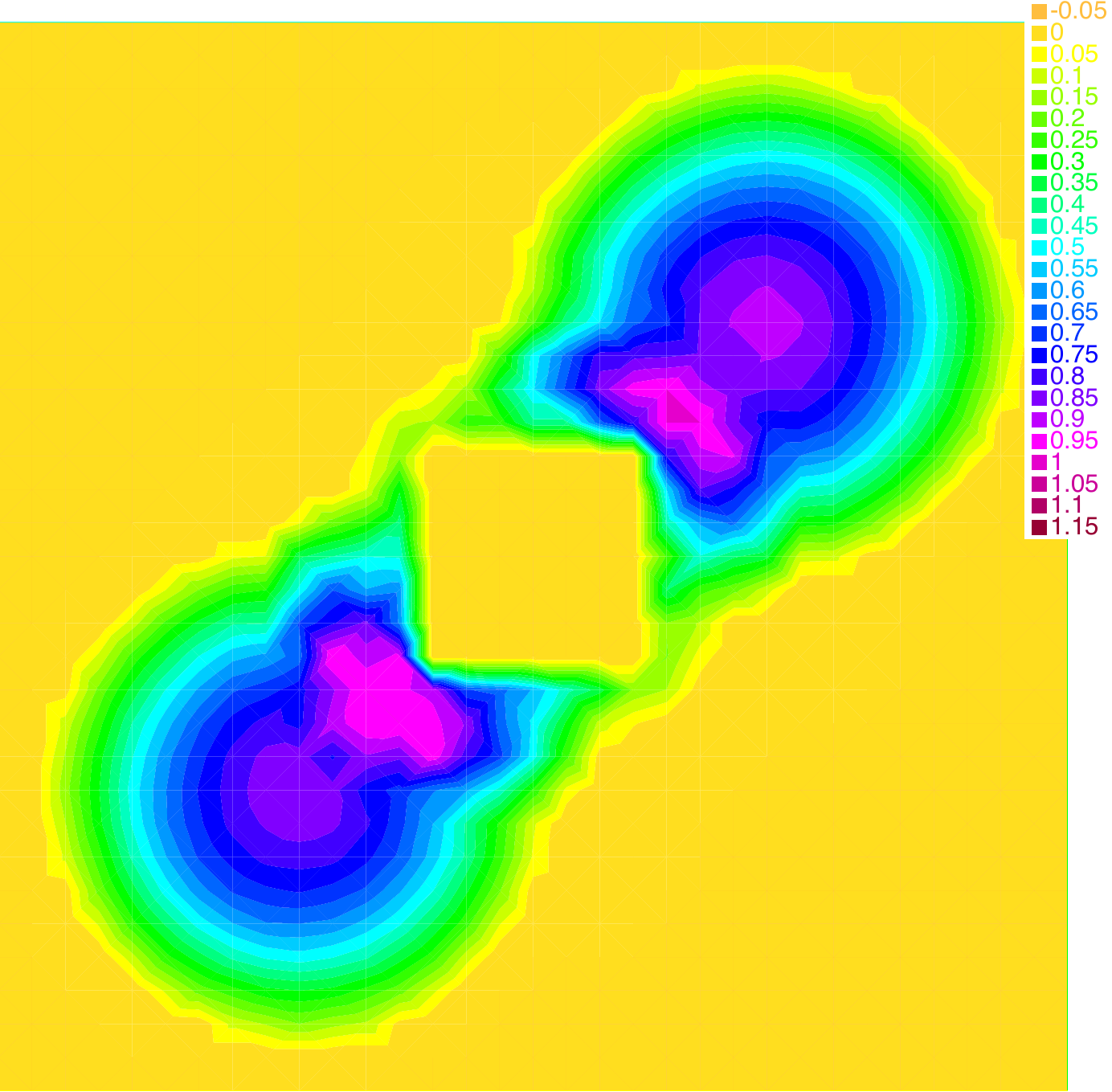}&
\includegraphics[ scale=0.175]{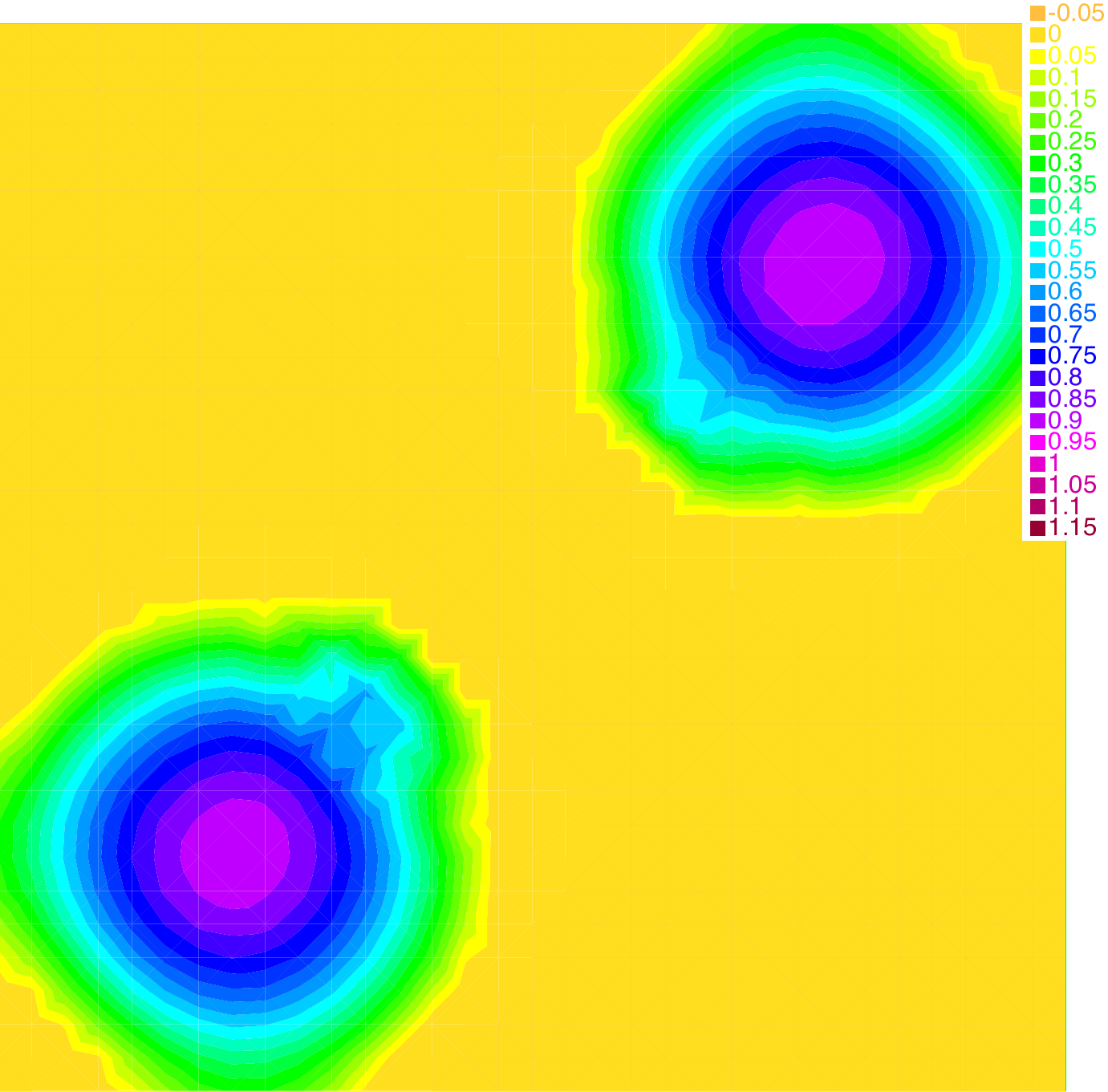}\\
\includegraphics[ scale=0.175]{b-dens-jko-wfoule-00.pdf}&
\includegraphics[ scale=0.19]{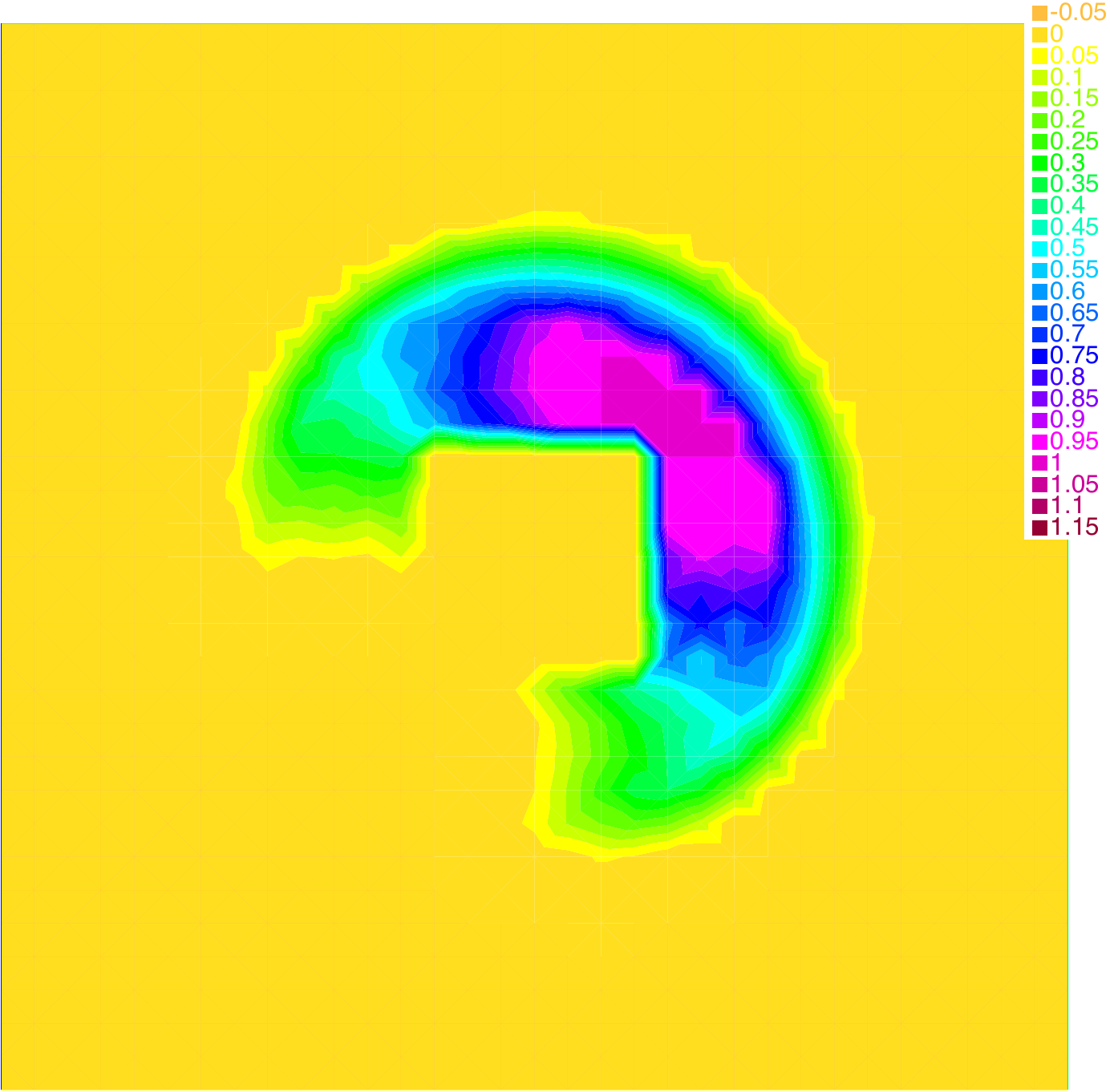}&
\includegraphics[ scale=0.175]{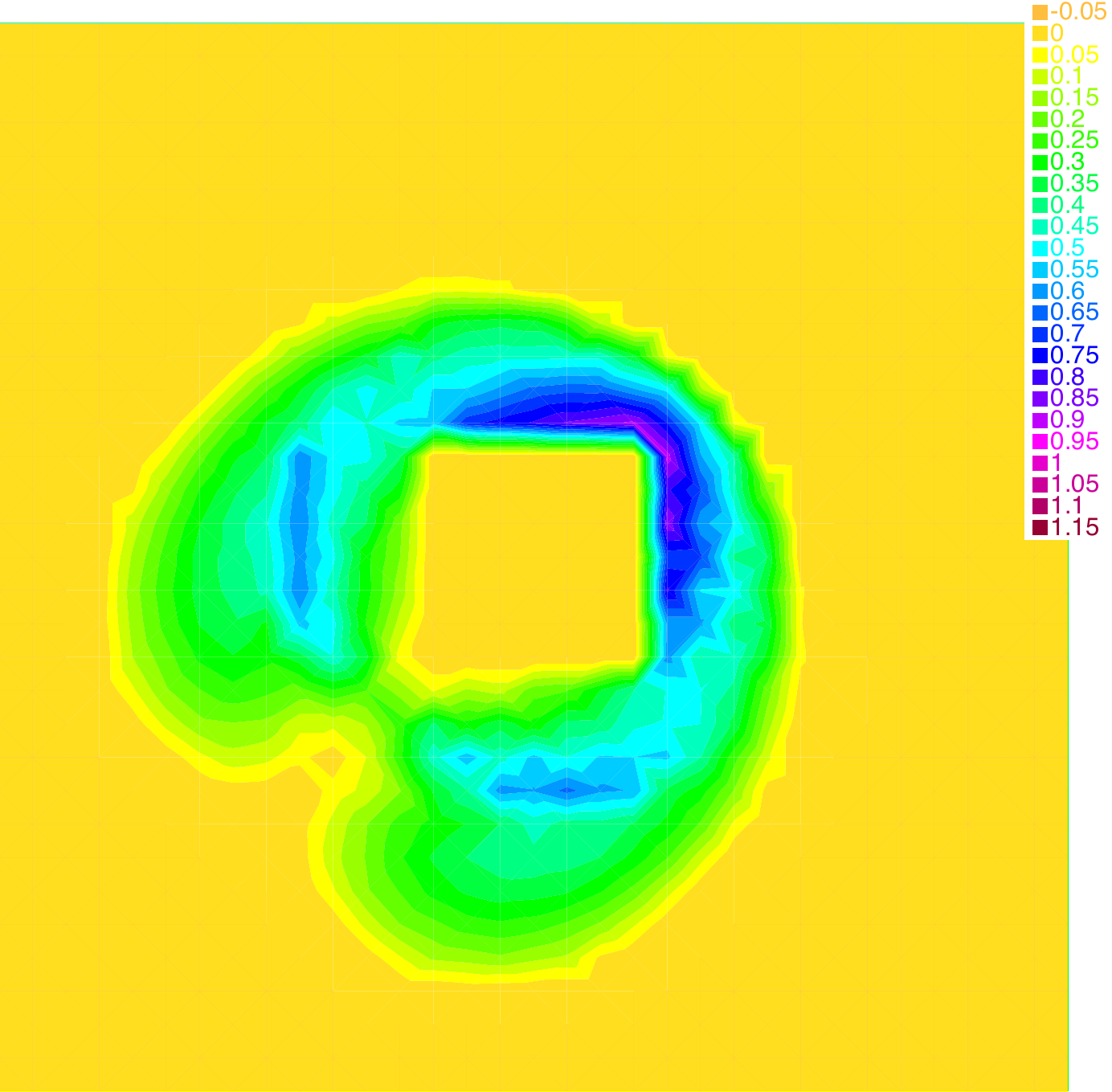}&
\includegraphics[ scale=0.175]{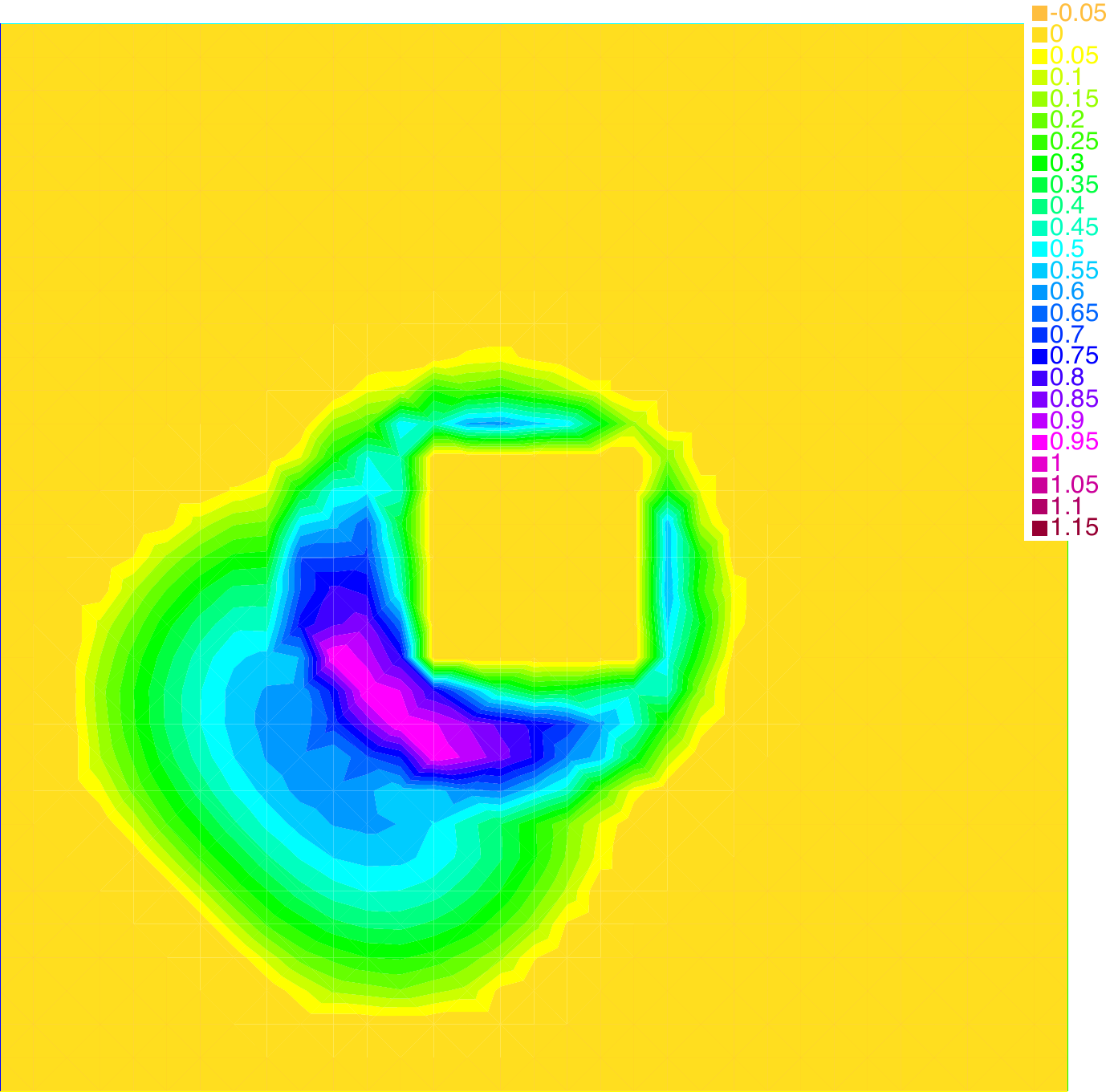}&
\includegraphics[ scale=0.175]{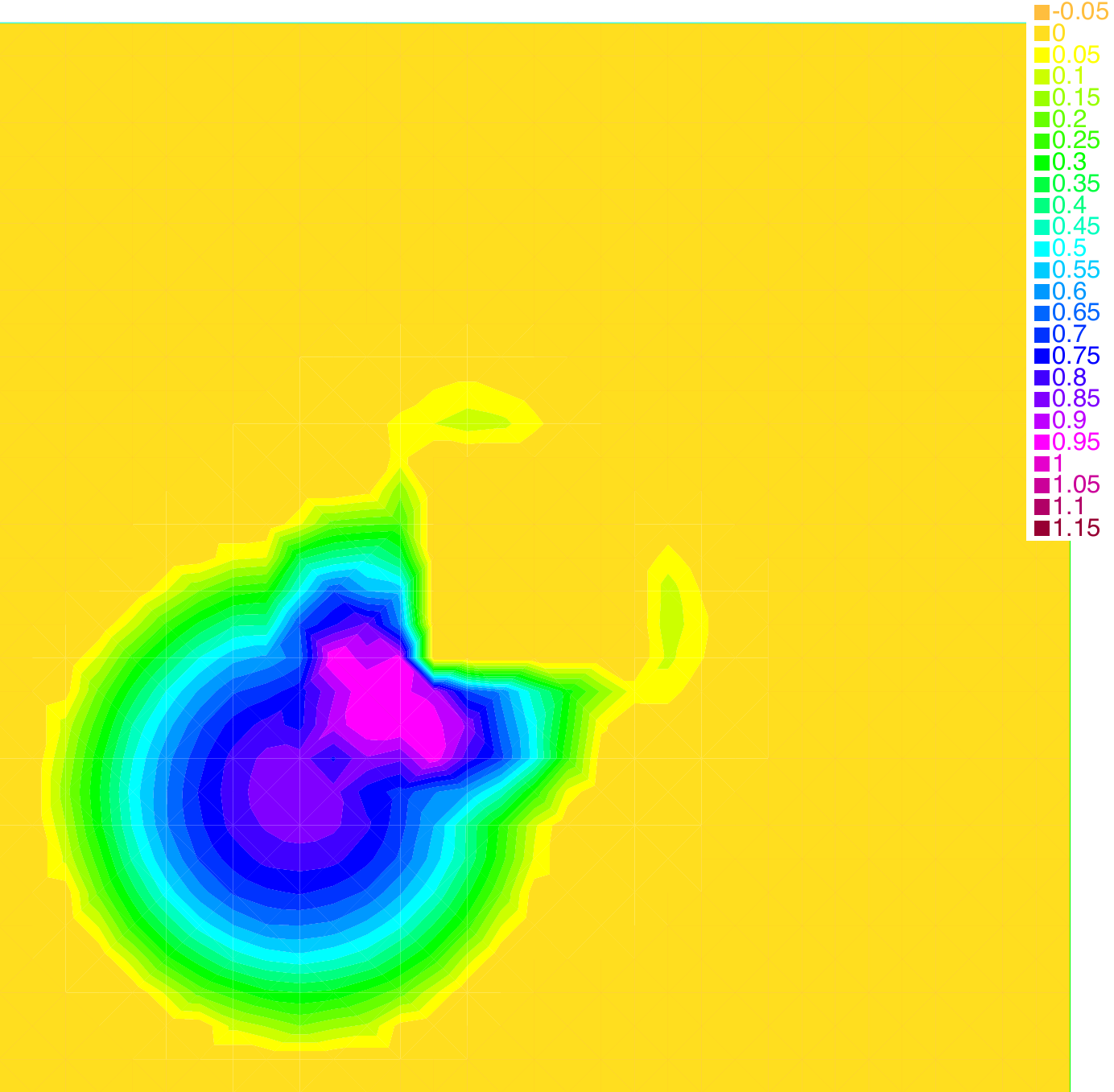}&
\includegraphics[ scale=0.175]{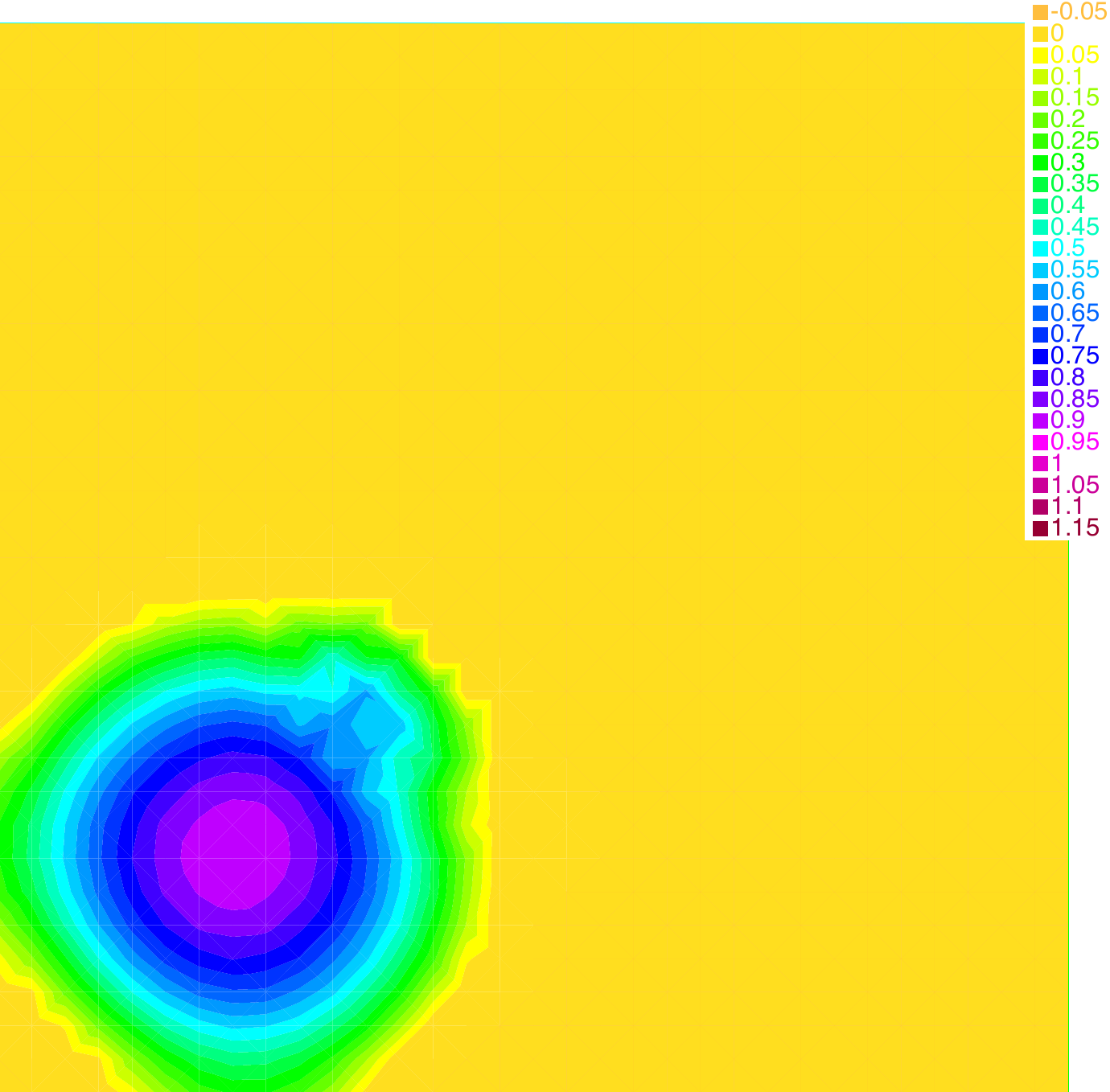}\\
$t=0$ & $t=0.1$ & $t=0.2$ & $t=0.3$ &$t=0.4$ & $t=0.5$\\
\end{tabular}
\caption{\textit{Evolution of two species crossing each other with density constraint and an obstacle. Top row: display of $\rho_1+\rho_2$. Bottom row: display of $\rho_1$.}}
\label{figure crowd motion obs}
\end{figure}

\begin{figure}[h!]

\begin{center}
\begin{tabular}{@{\hspace{0mm}}c@{\hspace{1mm}}c@{\hspace{1mm}}c@{\hspace{1mm}}c@{\hspace{1mm}}c@{\hspace{1mm}}c@{\hspace{1mm}}}
\includegraphics[ scale=0.175]{c-dens-jko-wfoule-00.pdf}&
\includegraphics[ scale=0.175]{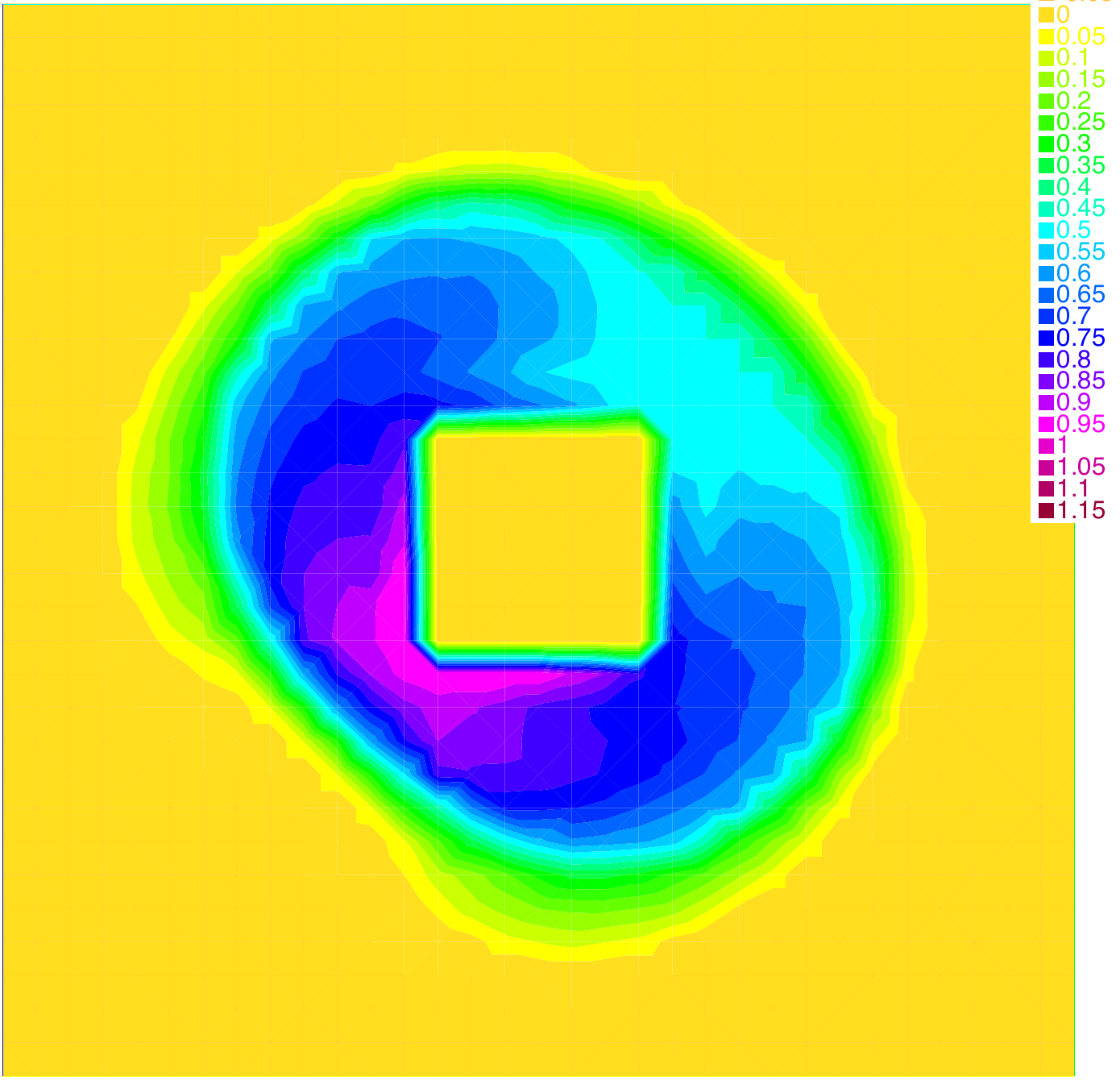}&
\includegraphics[ scale=0.175]{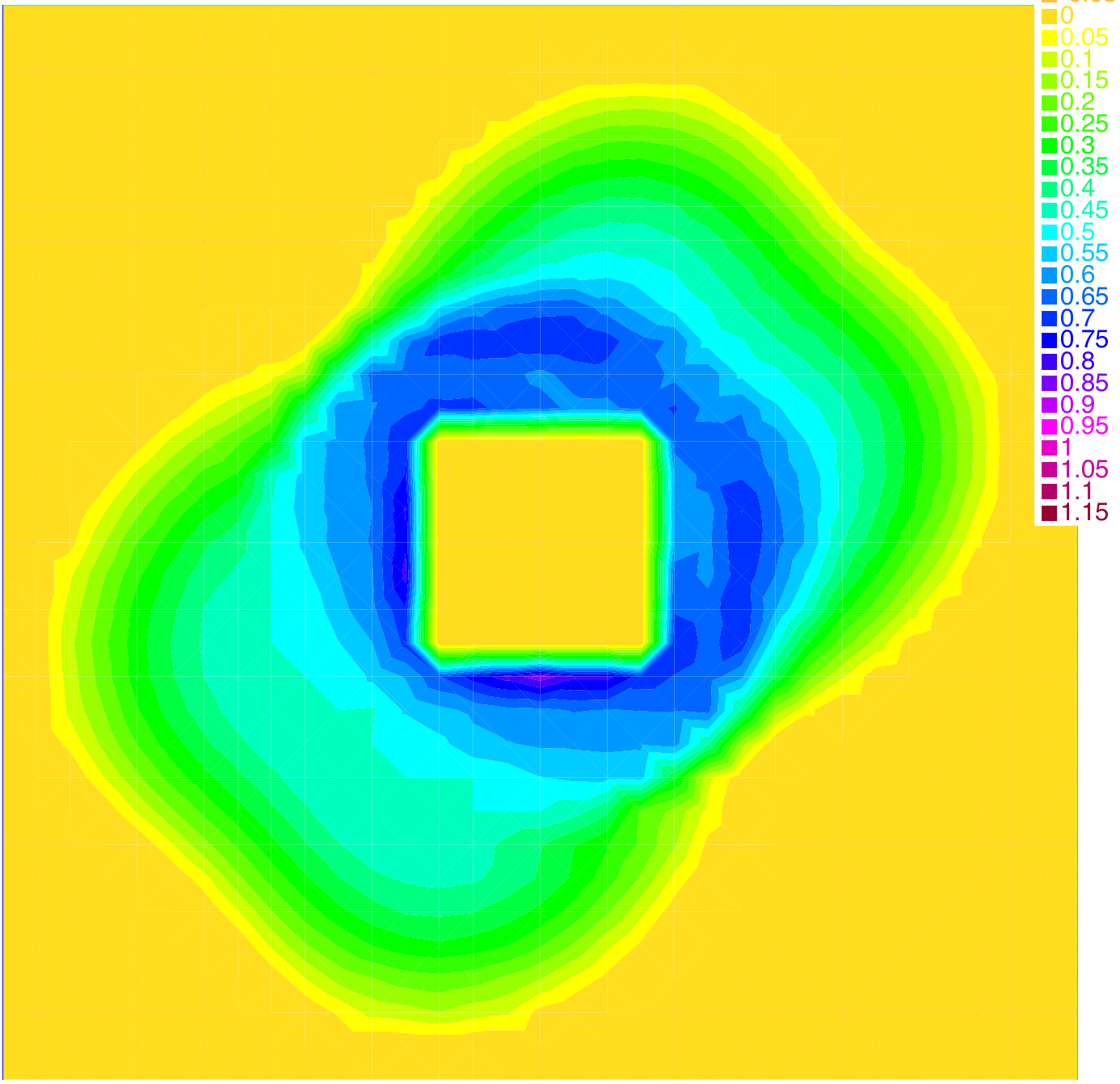}&
\includegraphics[ scale=0.175]{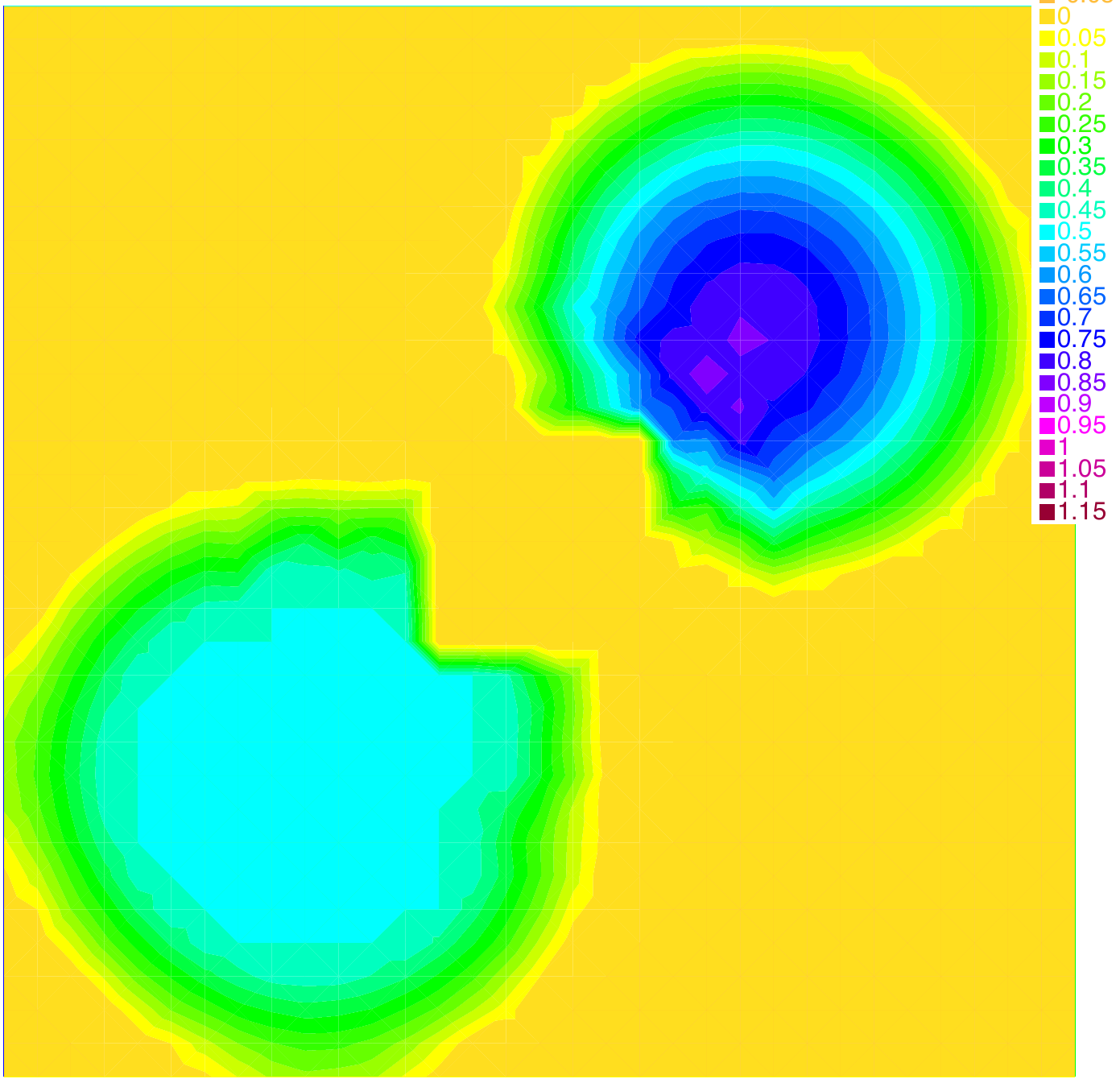}&
\includegraphics[ scale=0.175]{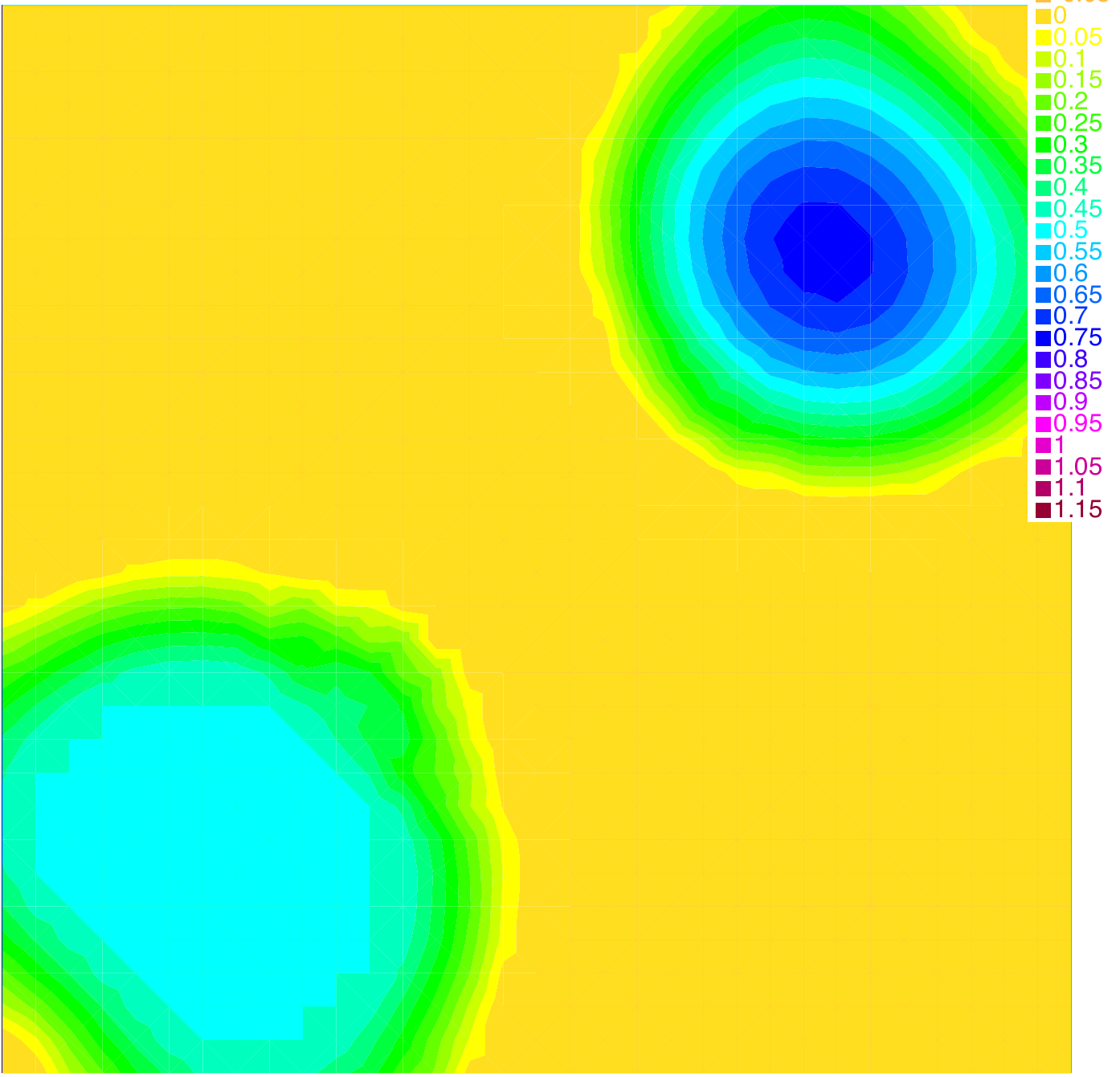}&
\includegraphics[ scale=0.175]{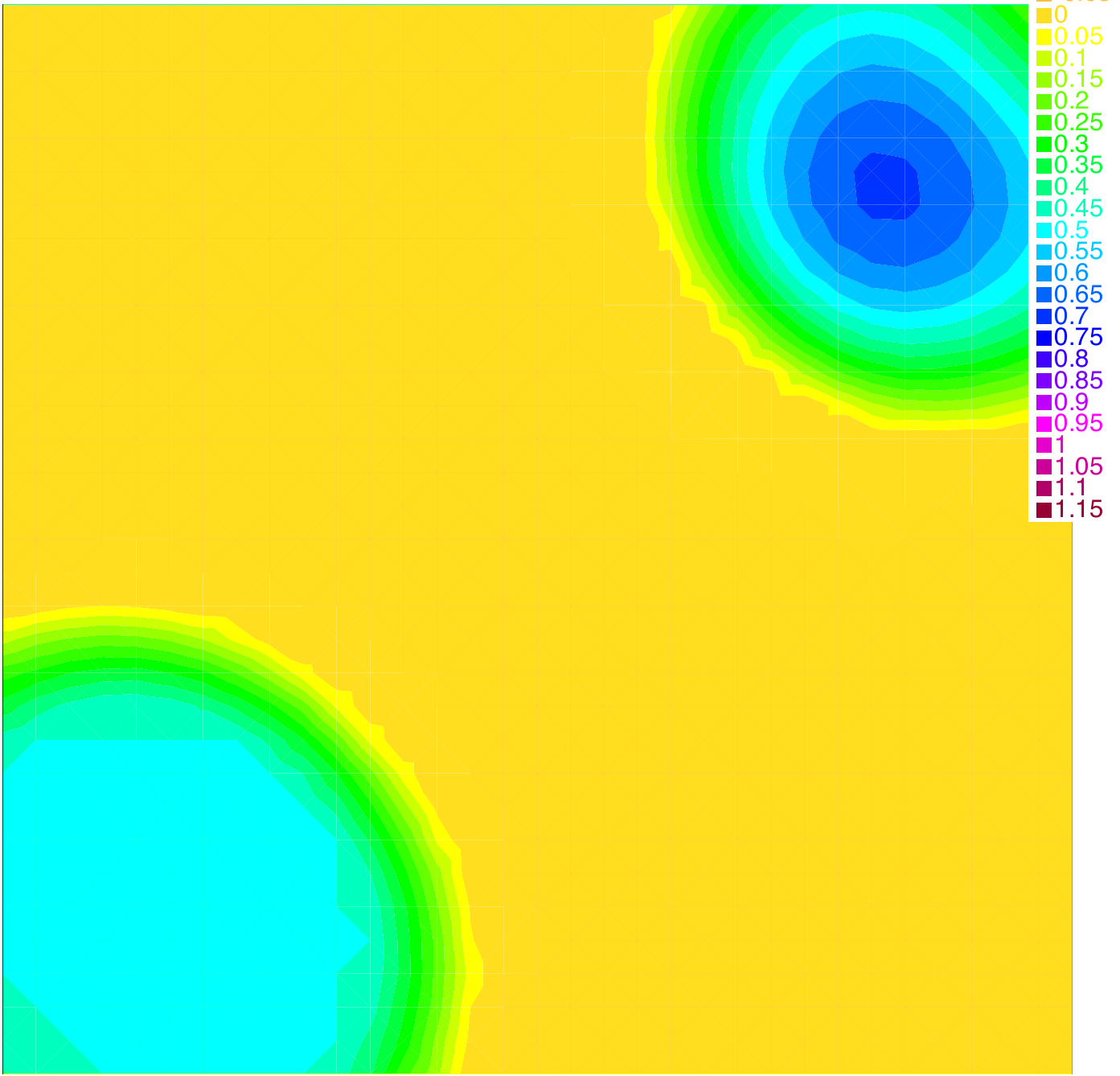}\\
\includegraphics[ scale=0.175]{a-dens-jko-wfoule-00.pdf}&
\includegraphics[ scale=0.175]{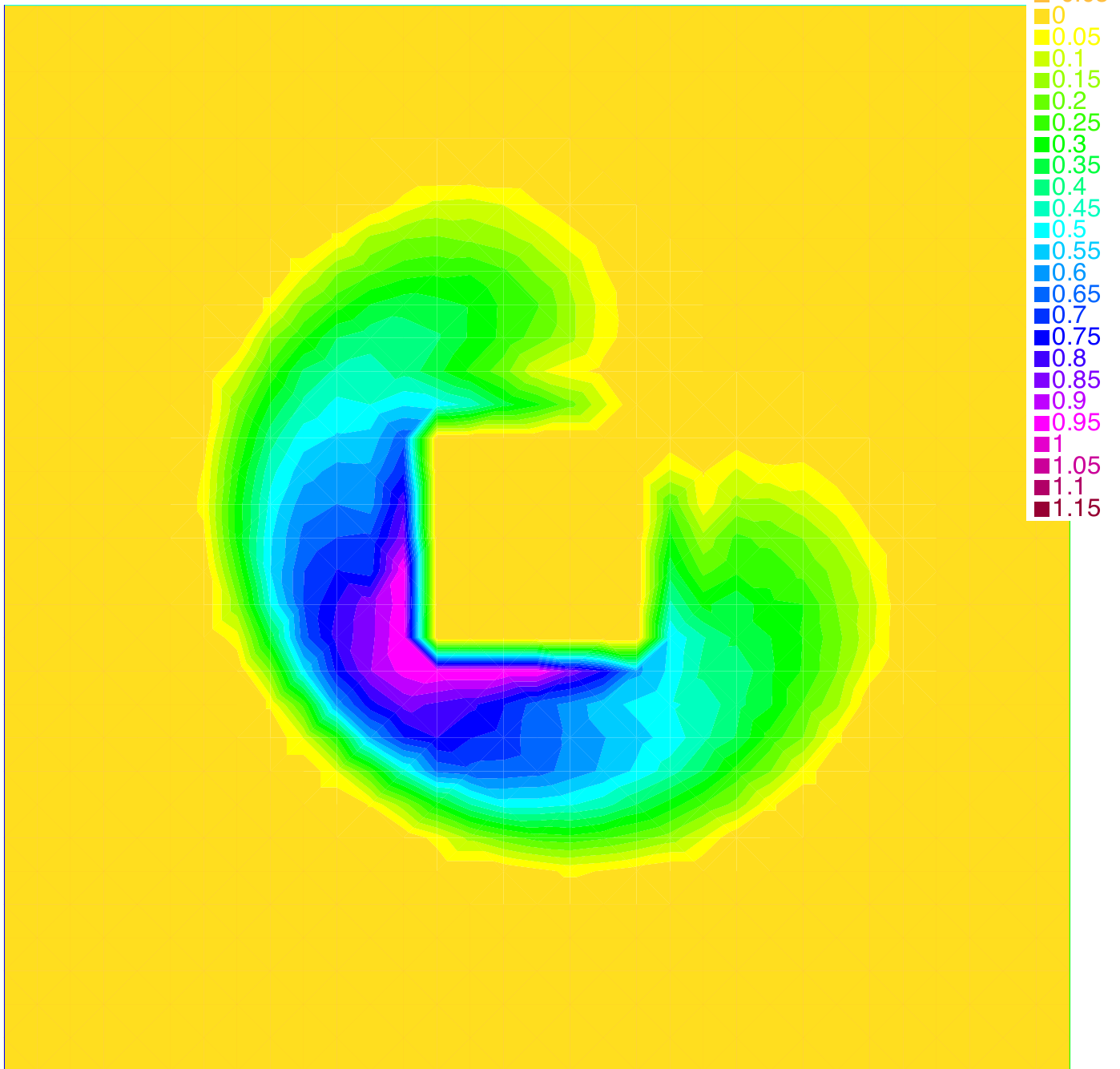}&
\includegraphics[ scale=0.175]{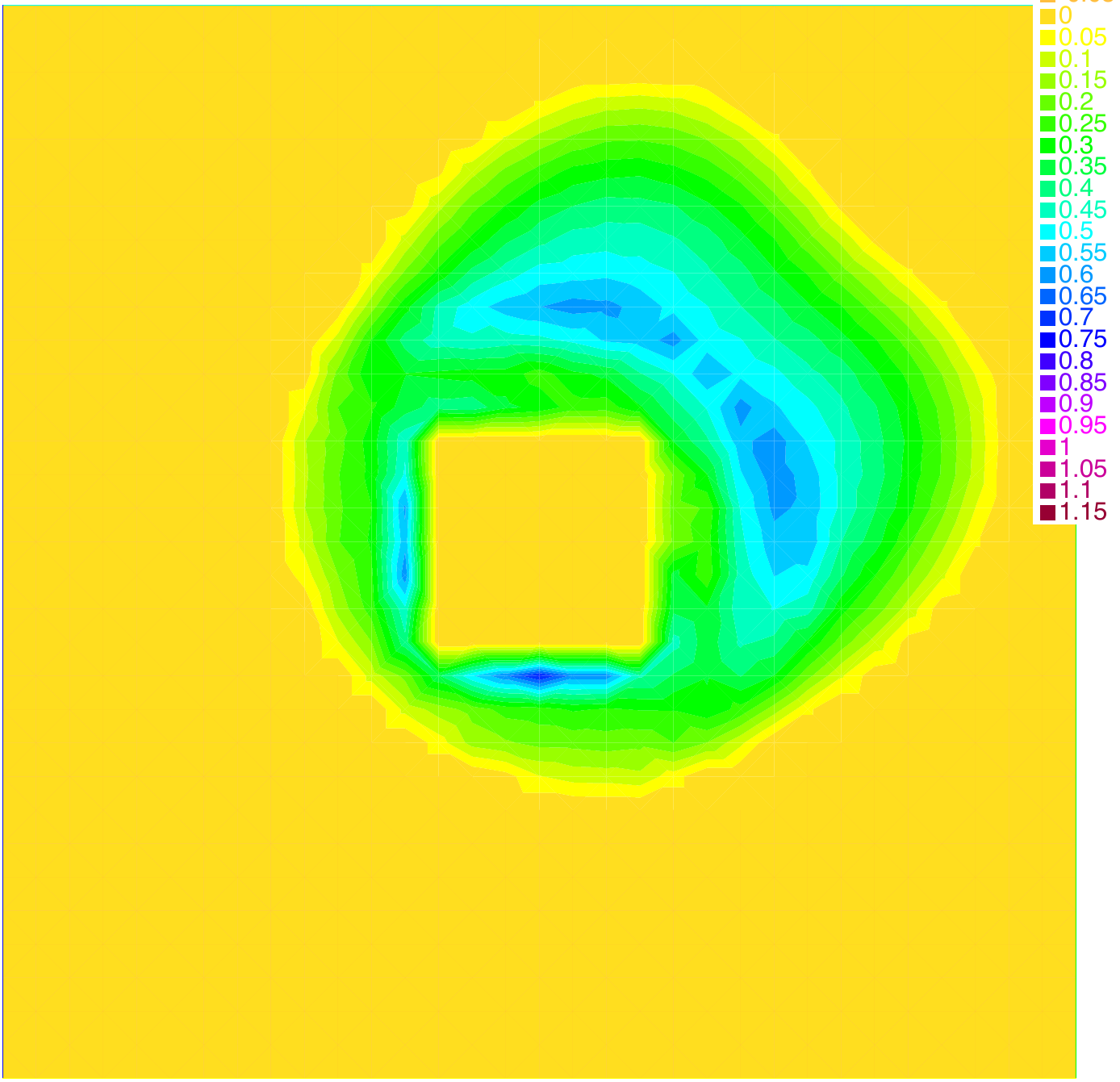}&
\includegraphics[ scale=0.175]{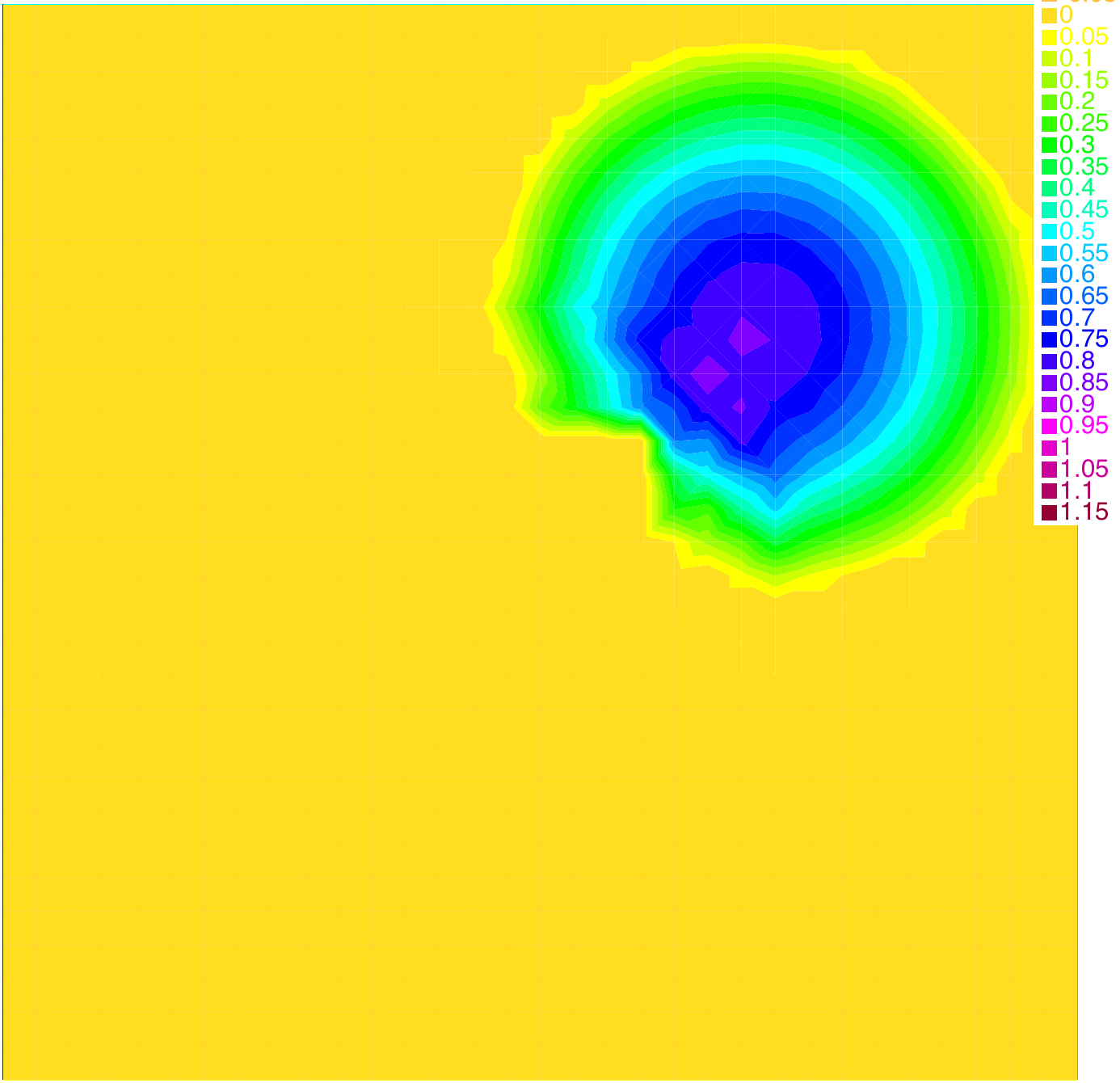}&
\includegraphics[ scale=0.175]{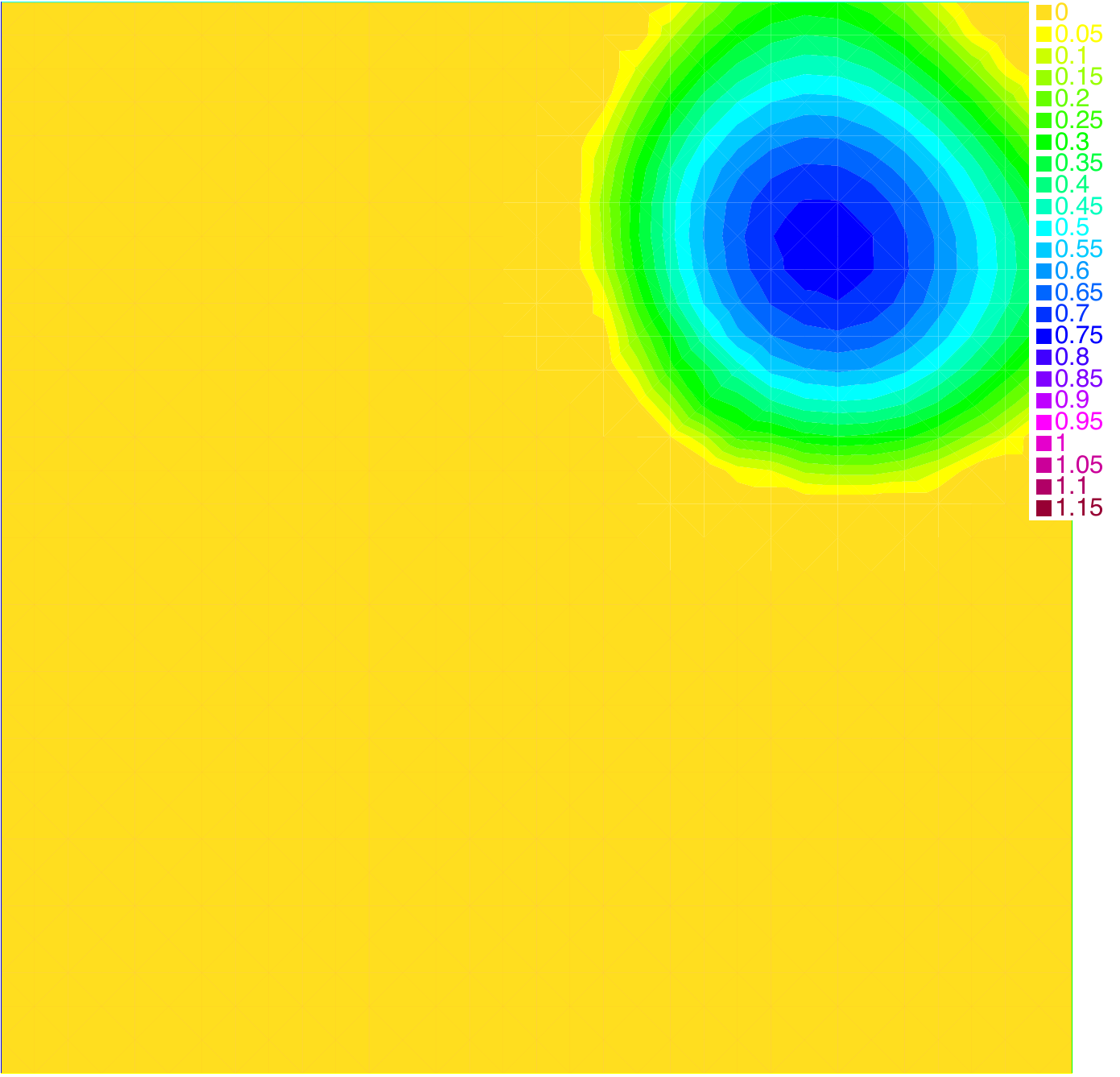}&
\includegraphics[ scale=0.175]{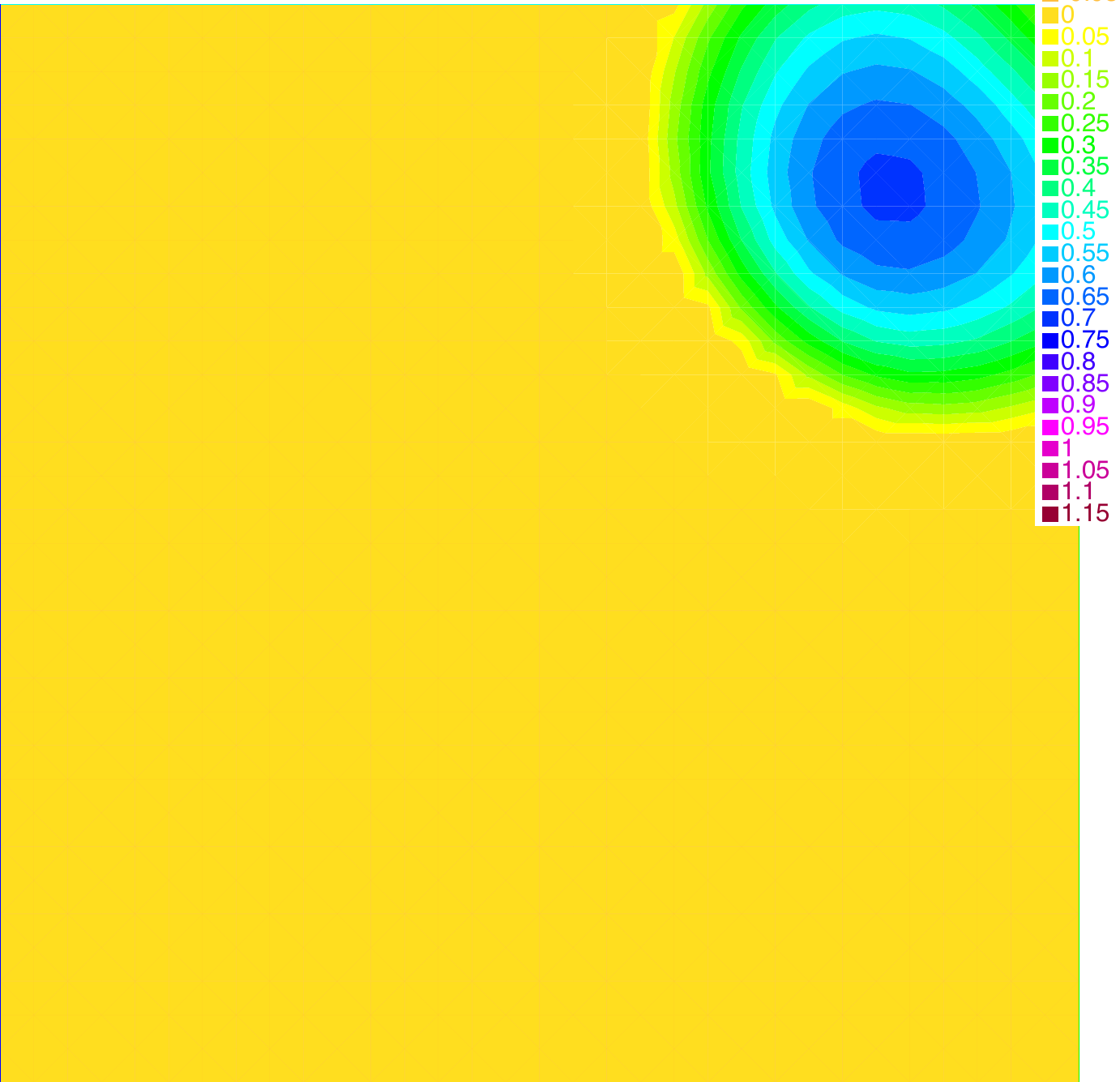}\\
\includegraphics[ scale=0.175]{b-dens-jko-wfoule-00.pdf}&
\includegraphics[ scale=0.175]{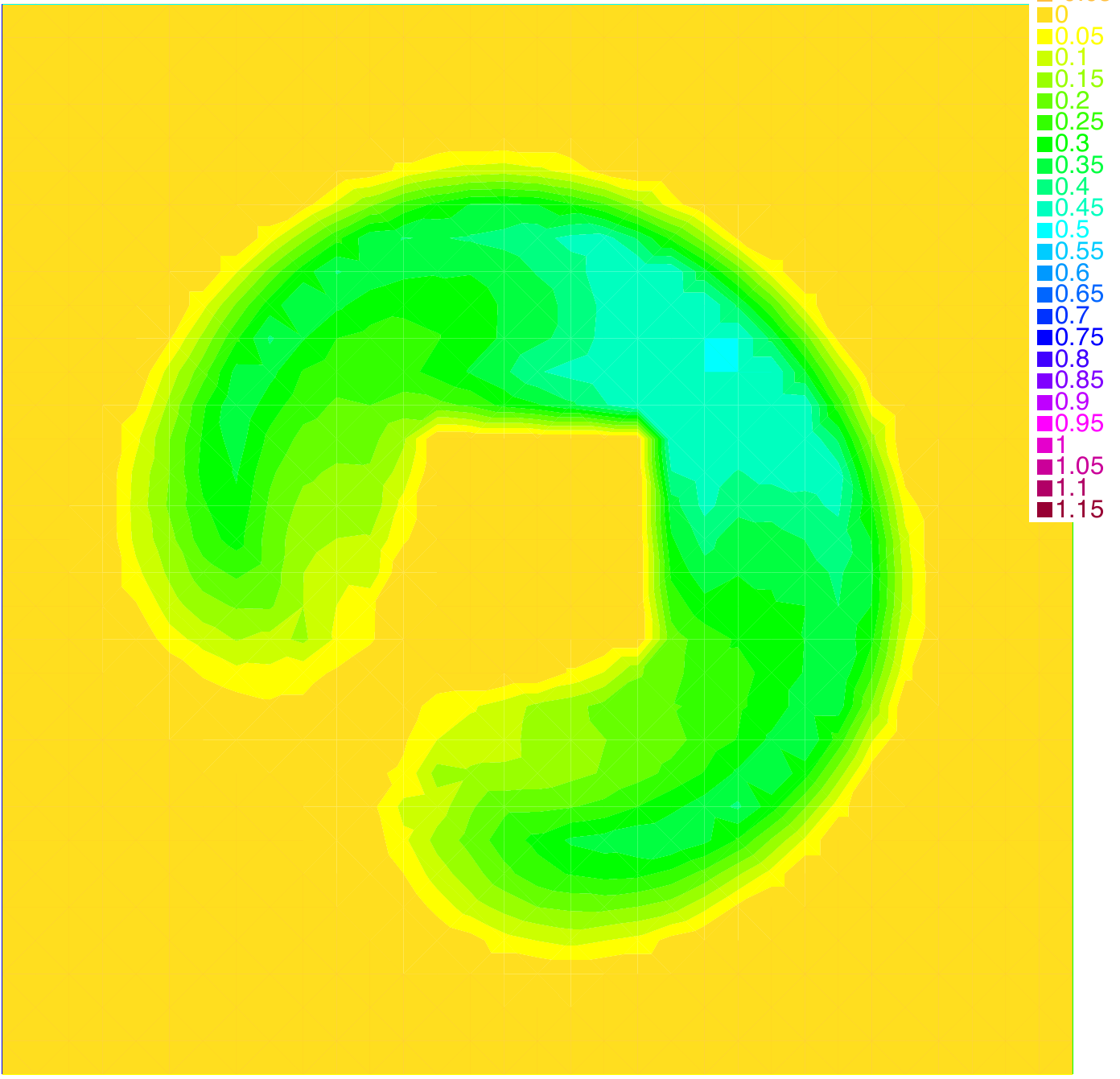}&
\includegraphics[ scale=0.175]{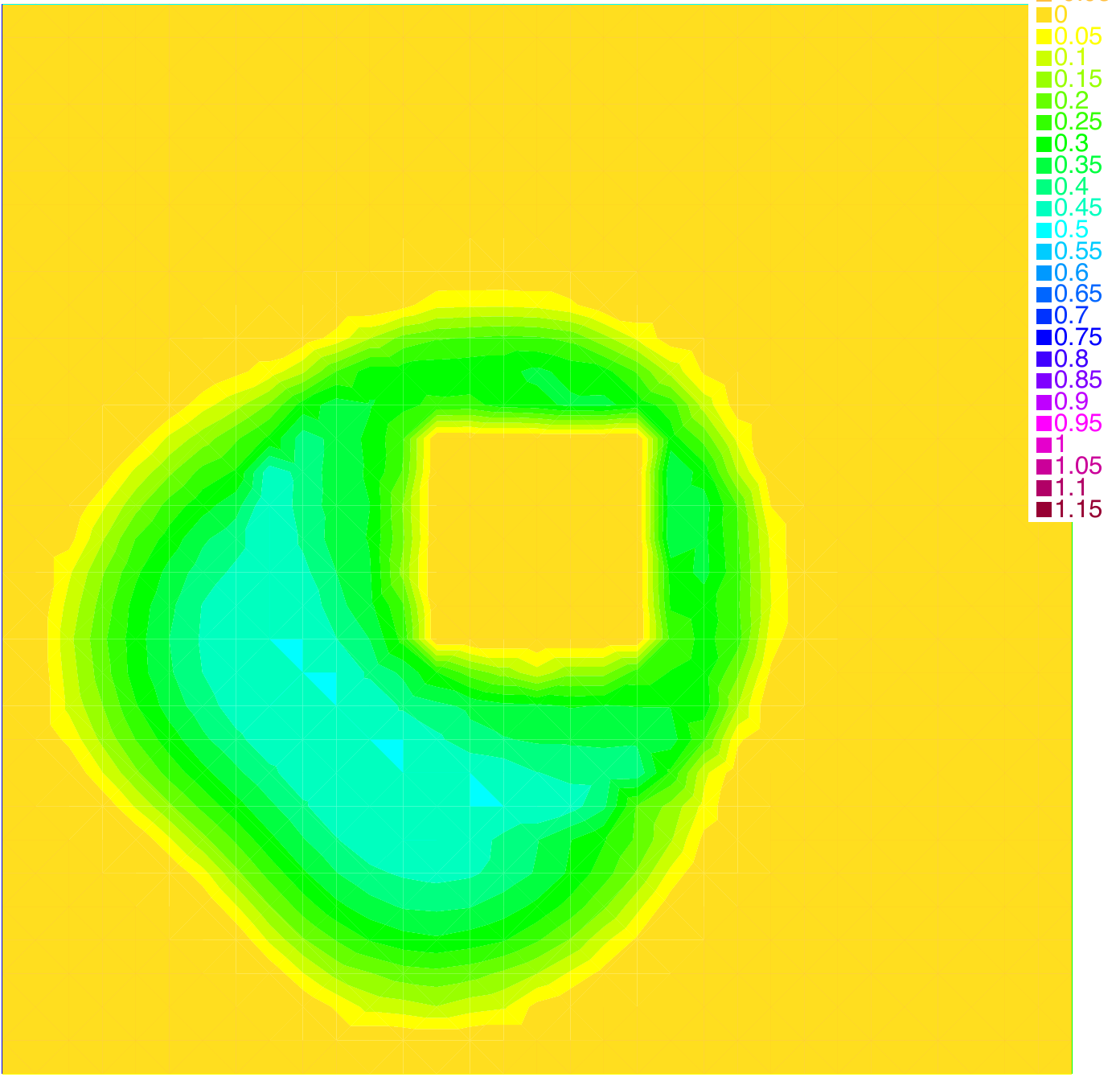}&
\includegraphics[ scale=0.175]{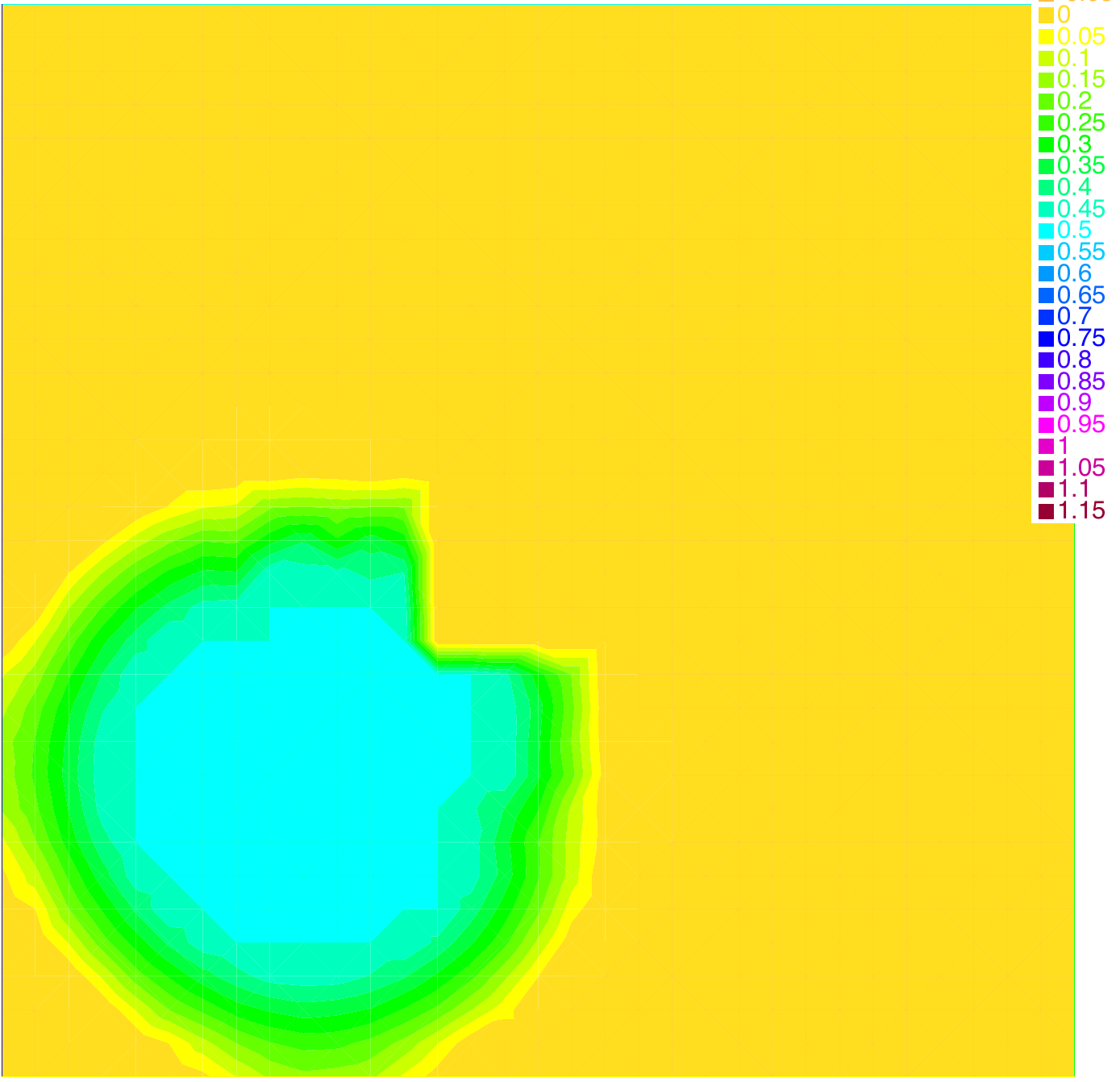}&
\includegraphics[ scale=0.175]{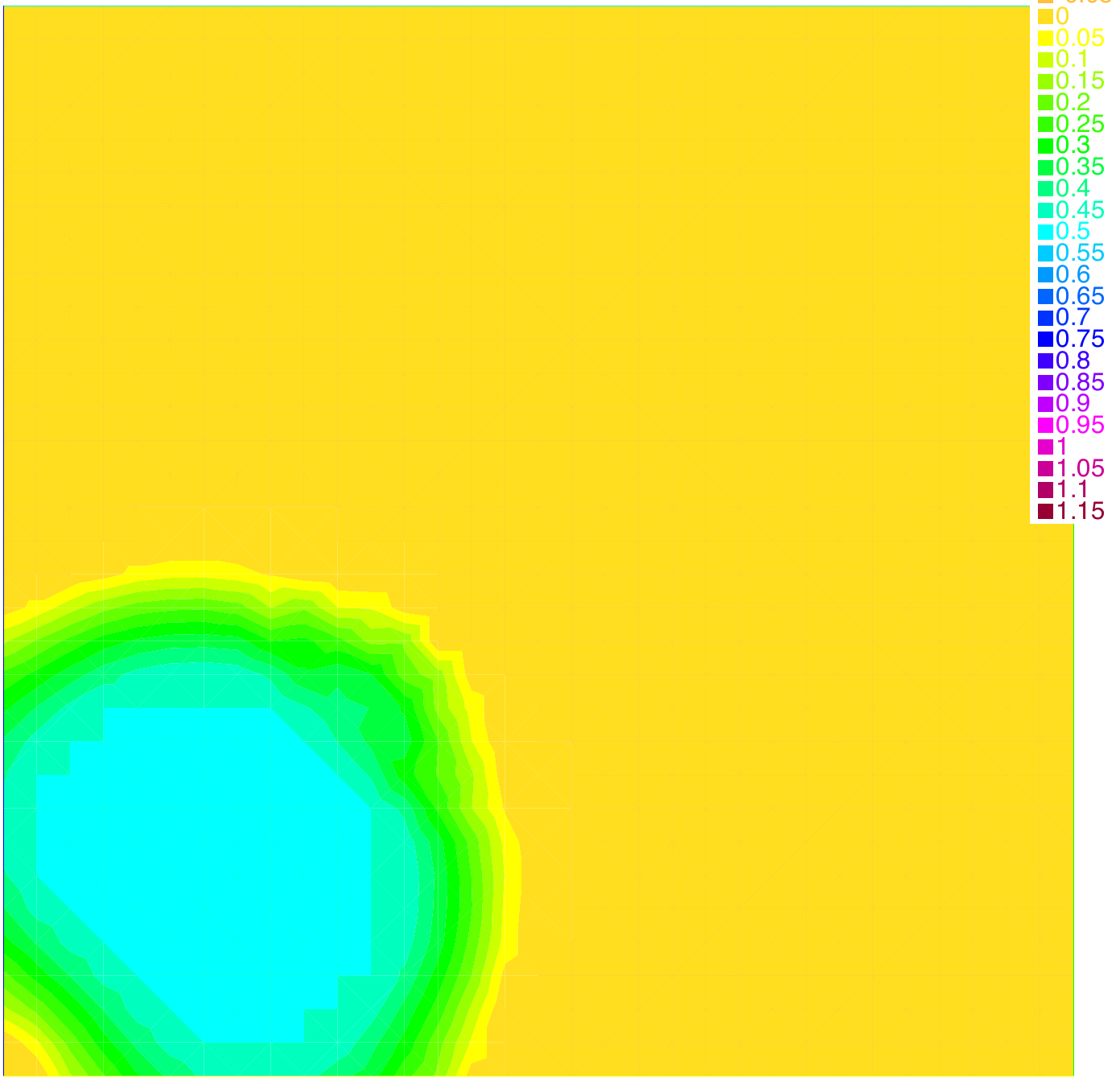}&
\includegraphics[ scale=0.175]{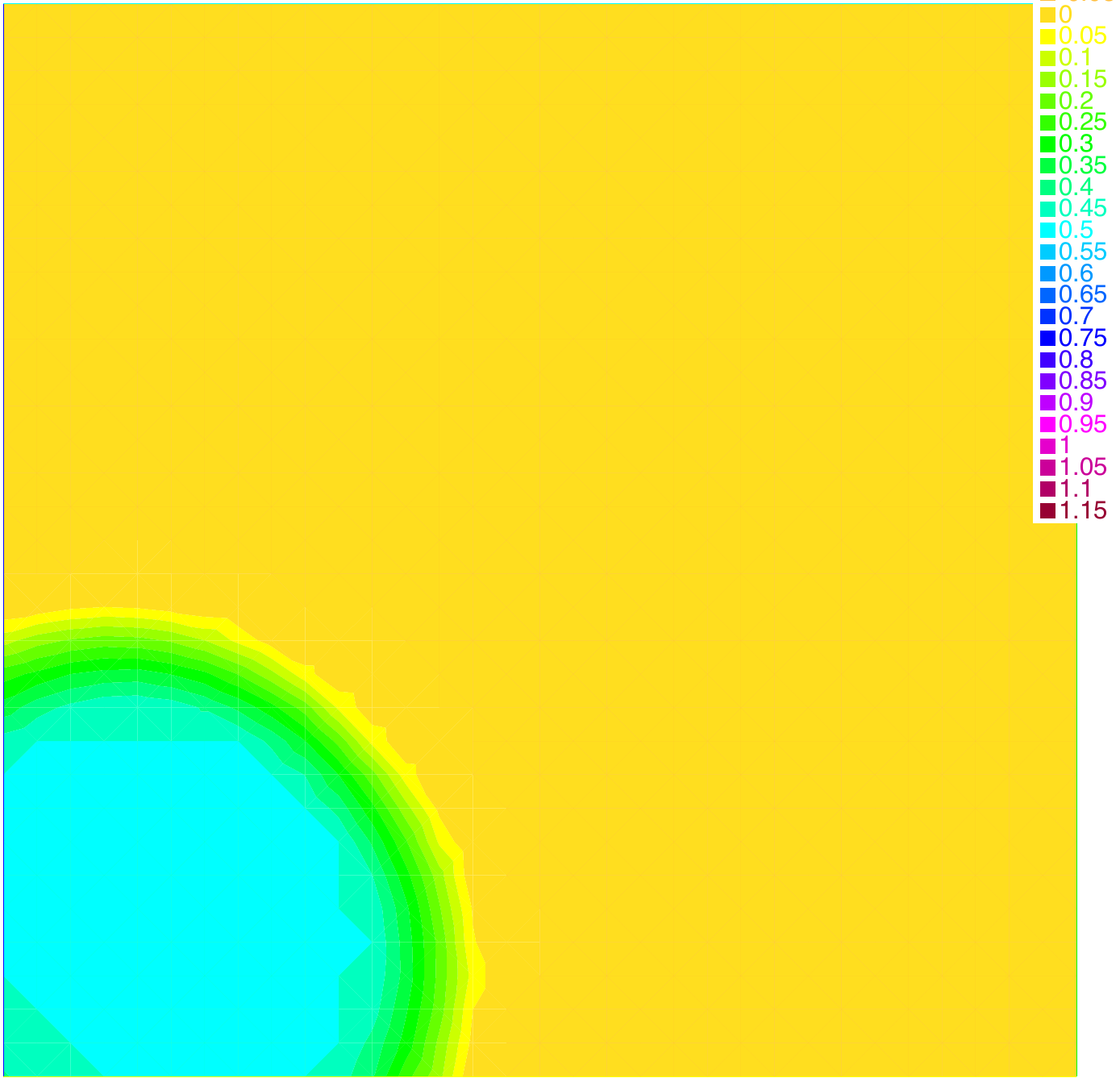}\\
$t=0$ & $t=0.1$ & $t=0.2$ & $t=0.3$ &$t=0.4$ & $t=0.5$\\
\end{tabular}
\end{center}

\caption{\textit{Evolution of two species crossing each other with weighted density constraint, $\rho_1+2\rho_2 \leqslant 1$, and an obstacle. Top row: display of $\rho_1+\rho_2$. Middle row: display of $\rho_1$. Bottom row: display of $\rho_2$.}}
\label{figure crowd motion obs weight}
\end{figure}
 
\subsection*{Acknowledgements} The author gratefully thanks G. Carlier for suggesting this problem and for fruitful discussions about this work.

\bibliographystyle{plain}
\bibliography{Laborde_bib(2)}

\begin{thebibliography}{10}

\bibitem{A}
M.~Agueh.
\newblock Existence of solutions to degenerate parabolic equations via the
  {M}onge-{K}antorovich theory.
\newblock {\em Adv. Differential Equations}, 10(3):309--360, 2005.

\bibitem{AKY}
D.~Alexander, I.~Kim, and Y.~Yao.
\newblock Quasi-static evolution and congested crowd transport.
\newblock {\em Nonlinearity}, 27(4):823--858, 2014.

\bibitem{AGS}
L.~Ambrosio, N.~Gigli, and G.~Savar{\'e}.
\newblock {\em Gradient flows in metric spaces and in the space of probability
  measures}.
\newblock Lectures in Mathematics ETH Z\"{u}rich. Birkh\"{a}user Verlag, Basel,
  2005.

\bibitem{BE}
A.~Bakhta and V.~Ehrlacher.
\newblock {Cross-diffusion systems with non-zero flux and moving boundary
  conditions}.
\newblock November 2016.
\newblock preprint.

\bibitem{guittet}
J.-D. Benamou, Y.~Brenier, and K.~Guittet.
\newblock Numerical analysis of a multi-phasic mass transport problem.
\newblock {\em Contemporary Mathematics}, 353:1--18, 2004.

\bibitem{BCL}
J.-D. Benamou, G.~Carlier, and M.~Laborde.
\newblock An augmented {L}agrangian approach to {W}asserstein gradient flows
  and applications.
\newblock In {\em Gradient flows: from theory to application}, volume~54 of
  {\em ESAIM Proc. Surveys}, pages 1--17. EDP Sci., Les Ulis, 2016.

\bibitem{B}
Y.~Brenier.
\newblock Polar factorization and monotone rearrangement of vector-valued
  functions.
\newblock {\em Comm. Pure Appl. Math.}, 44(4):375--417, 1991.

\bibitem{BS}
G.~Buttazzo and F.~Santambrogio.
\newblock A model for the optimal planning of an urban area.
\newblock {\em SIAM J. Math. Anal.}, 37(2):514--530, 2005.

\bibitem{CGM}
C.~Canc\`es, T.~O. Gallou\"et, and L.~Monsaingeon.
\newblock Incompressible immiscible multiphase flows in porous media: a
  variational approach.
\newblock {\em Anal. PDE}, 10(8):1845--1876, 2017.

\bibitem{CL1}
G.~Carlier and M.~Laborde.
\newblock A splitting method for nonlinear diffusions with nonlocal,
  nonpotential drifts.
\newblock {\em Nonlinear Analysis: Theory, Methods \& Applications}, 150:1 --
  18, 2017.

\bibitem{CS}
G.~Carlier and F.~Santambrogio.
\newblock A variational model for urban planning with traffic congestion.
\newblock {\em ESAIM Control Optim. Calc. Var.}, 11(4):595--613, 2005.

\bibitem{clm2}
R.~M. Colombo, M.~Garavello, and M.~L{\'e}cureux-Mercier.
\newblock A class of nonlocal models for pedestrian traffic.
\newblock {\em Math. Models Methods Appl. Sci.}, 22(4):1150023, 34, 2012.

\bibitem{clm}
R.~M. Colombo and M.~L{\'e}cureux-Mercier.
\newblock Nonlocal crowd dynamics models for several populations.
\newblock {\em Acta Math. Sci. Ser. B Engl. Ed.}, 32(1):177--196, 2012.

\bibitem{crippalm}
G.~Crippa and M.~L{\'e}cureux-Mercier.
\newblock Existence and uniqueness of measure solutions for a system of
  continuity equations with non-local flow.
\newblock {\em NoDEA Nonlinear Differential Equations Appl.}, 20(3):523--537,
  2013.

\bibitem{DMMRC}
J.~Dambrine, N.~Meunier, B.~Maury, and A.~Roudneff-Chupin.
\newblock A congestion model for cell migration.
\newblock {\em Commun. Pure Appl. Anal.}, 11(1):243--260, 2012.

\bibitem{DS}
S.~Daneri and G.~Savar{\'e}.
\newblock Eulerian calculus for the displacement convexity in the {W}asserstein
  distance.
\newblock {\em SIAM J. Math. Anal.}, 40(3):1104--1122, 2008.

\bibitem{DF}
D.~G. De~Figueiredo.
\newblock {\em Lectures on the Ekeland variational principle with applications
  and detours}.
\newblock Springer Berlin, 1989.

\bibitem{DLMT}
L.~Desvillettes, T.~Lepoutre, A.~Moussa, and A.~Trescases.
\newblock On the entropic structure of reaction-cross diffusion systems.
\newblock {\em Comm. Partial Differential Equations}, 40(9):1705--1747, 2015.

\bibitem{DFF}
M.~Di~Francesco and S.~Fagioli.
\newblock Measure solutions for non-local interaction {PDE}s with two species.
\newblock {\em Nonlinearity}, 26(10):2777--2808, 2013.

\bibitem{DFM}
M.~Di~Francesco and D.~Matthes.
\newblock Curves of steepest descent are entropy solutions for a class of
  degenerate convection-diffusion equations.
\newblock {\em Calc. Var. Partial Differential Equations}, 50(1-2):199--230,
  2014.

\bibitem{DMM}
S.~Di~Marino and A.~R. M\'esz\'aros.
\newblock Uniqueness issues for evolution equations with density constraints.
\newblock {\em Math. Models Methods Appl. Sci.}, 26(9):1761--1783, 2016.

\bibitem{GLM}
T.~O. Gallou{\"e}t, M.~Laborde, and L.~Monsaingeon.
\newblock {An unbalanced optimal transport splitting scheme for general
  advection-reaction-diffusion problems}.
\newblock {\em to appear in ESAIM Control. Optim. Calc. Var.}, 2017.

\bibitem{JKO}
R.~Jordan, D.~Kinderlehrer, and F.~Otto.
\newblock The variational formulation of the {F}okker-{P}lanck equation.
\newblock {\em SIAM J. Math. Anal.}, 29(1):1--17, 1998.

\bibitem{J}
A.~J{\"u}ngel.
\newblock The boundedness-by-entropy method for cross-diffusion systems.
\newblock {\em Nonlinearity}, 28(6):1963--2001, 2015.

\bibitem{JZ}
A.~J\"ungel and N.~Zamponi.
\newblock A cross-diffusion system derived from a {F}okker-{P}lanck equation
  with partial averaging.
\newblock {\em Z. Angew. Math. Phys.}, 68(1):Art. 28, 15, 2017.

\bibitem{KM}
I.~{Kim} and A.~R. {M{\'e}sz{\'a}ros}.
\newblock On nonlinear cross-diffusion systems: an optimal transport approach.
\newblock {\em Calc. Var. Partial Differential Equations}, 57(3):79, 2018.

\bibitem{KMVcrossdiff}
S.~Kondratyev, L.~Monsaingeon, and D.~Vorotnikov.
\newblock A fitness-driven cross-diffusion system from population dynamics as a
  gradient flow.
\newblock {\em J. Differential Equations}, 261(5):2784--2808, 2016.

\bibitem{L_these}
M.~Laborde.
\newblock {\em Interacting particles systems, Wasserstein gradient flow
  approach}.
\newblock PhD thesis, Paris-Dauphine University, 2016.

\bibitem{L}
M.~Laborde.
\newblock On some nonlinear evolution systems which are perturbations of
  {W}asserstein gradient flows.
\newblock In {\em Topological optimization and optimal transport}, volume~17 of
  {\em Radon Ser. Comput. Appl. Math.}, pages 304--332. De Gruyter, Berlin,
  2017.

\bibitem{LM}
P.~Lauren\c{c}ot and B.-V. Matioc.
\newblock A gradient flow approach to a thin film approximation of the {M}uskat
  problem.
\newblock {\em Calc. Var. Partial Differential Equations}, 47(1-2):319--341,
  2013.

\bibitem{LPR}
T.~Lepoutre, M.~Pierre, and G.~Rolland.
\newblock Global well-posedness of a conservative relaxed cross diffusion
  system.
\newblock {\em SIAM J. Math. Anal.}, 44(3):1674--1693, 2012.

\bibitem{MMCS}
D.~Matthes, R.~J. McCann, and G.~Savar{\'e}.
\newblock A family of nonlinear fourth order equations of gradient flow type.
\newblock {\em Comm. Partial Differential Equations}, 34(10-12):1352--1397,
  2009.

\bibitem{MRCS}
B.~Maury, A.~Roudneff-Chupin, and F.~Santambrogio.
\newblock A macroscopic crowd motion model of gradient flow type.
\newblock {\em Math. Models Methods Appl. Sci.}, 20(10):1787--1821, 2010.

\bibitem{MRCS1}
B.~Maury, A.~Roudneff-Chupin, and F.~Santambrogio.
\newblock Congestion-driven dendritic growth.
\newblock {\em Discrete Contin. Dyn. Syst.}, 34(4):1575--1604, 2014.

\bibitem{MRCSV}
B.~Maury, A.~Roudneff-Chupin, F.~Santambrogio, and J.~Venel.
\newblock Handling congestion in crowd motion modeling.
\newblock {\em Netw. Heterog. Media}, 6(3):485--519, 2011.

\bibitem{MV}
B.~Maury and J.~Venel.
\newblock {\em Handling of Contacts in Crowd Motion Simulations}, pages
  171--180.
\newblock Springer Berlin Heidelberg, 2009.

\bibitem{MC}
R.~J. McCann.
\newblock A convexity principle for interacting gases.
\newblock {\em Adv. Math.}, 128(1):153--179, 1997.

\bibitem{MS}
A.~R. M\'esz\'aros and F.~Santambrogio.
\newblock Advection-diffusion equations with density constraints.
\newblock {\em Anal. PDE}, 9(3):615--644, 2016.

\bibitem{O}
F.~Otto.
\newblock {\em Double Degenerate Diffusion Equations as Steepest Descent}.
\newblock Preprint. Bonn University, 1996.

\bibitem{Ouniqueness}
F.~Otto.
\newblock {$L^1$}-contraction and uniqueness for quasilinear elliptic-parabolic
  equations.
\newblock {\em J. Differential Equations}, 131(1):20--38, 1996.

\bibitem{PT}
L.~Petrelli and A.~Tudorascu.
\newblock Variational principle for general diffusion problems.
\newblock {\em Appl. Math. Optim.}, 50(3):229--257, 2004.

\bibitem{RS}
R.~Rossi and G.~Savar\'{e}.
\newblock Tightness, integral equicontinuity and compactness for evolution
  problems in {B}anach spaces.
\newblock {\em Ann. Sc. Norm. Super. Pisa Cl. Sci. (5)}, 2(2):395--431, 2003.

\bibitem{Sa}
F.~Santambrogio.
\newblock Gradient flows in {W}asserstein spaces and applications to crowd
  movement.
\newblock In {\em Seminaire: {E}quations aux {D}\'eriv\'ees {P}artielles.
  2009--2010}, S\'emin. \'Equ. D\'eriv. Partielles, pages Exp. No. XXVII, 16.
  \'Ecole Polytech., Palaiseau, 2012.

\bibitem{S}
F.~Santambrogio.
\newblock {\em Optimal Transport for Applied Mathematicians}.
\newblock Progress in Nonlinear Differential Equations and Their Applications
  87. Birkasauser Verlag, Basel, 2015.

\bibitem{V1}
C.~Villani.
\newblock {\em Topics in optimal transportation}, volume~58 of {\em Graduate
  Studies in Mathematics}.
\newblock American Mathematical Society, Providence, RI, 2003.

\bibitem{V2}
C.~Villani.
\newblock {\em Optimal transport}, volume 338 of {\em Grundlehren der
  Mathematischen Wissenschaften [Fundamental Principles of Mathematical
  Sciences]}.
\newblock Springer-Verlag, Berlin, 2009.
\newblock Old and new.

\end{thebibliography}
\end{document}